\documentclass[11pt]{amsart}
\usepackage{amsfonts,amssymb,amsthm}
\usepackage{amsmath,amscd}
\usepackage{pstricks}
\usepackage{pstricks,pst-node}
\usepackage{mathrsfs}
\usepackage[all]{xy}
\usepackage{color}
\usepackage{hyperref}

\DeclareMathAlphabet{\mathpzc}{OT1}{pzc}{m}{it}

\theoremstyle{plain}
\newtheorem{Thm}{Theorem}[section]
\newtheorem{Prop}[Thm]{Proposition}
\newtheorem{Not}[Thm]{Notation}
\newtheorem{Lem}[Thm]{Lemma}
\newtheorem{Coro}[Thm]{Corollary}
\theoremstyle{definition}

\newtheorem{Rem}[Thm]{Remark}

\newtheorem{example}{Example}
\numberwithin{equation}{section}

\newcommand{\sfP}{\mathsf{P}}
\newcommand{\sfH}{\mathsf{H}}
\newcommand{\sfS}{\mathsf{S}}
\newcommand{\sfZ}{\mathsf{Z}}
\newcommand{\sfF}{\mathsf{F}}
\newcommand{\sfQ}{\mathsf{Q}}
\newcommand{\sfT}{\mathsf{T}}

\newcommand{\cf}{\mathtt{cf}}

\newcommand{\Lcp}{\bar L}
\newcommand{\Mcp}{\bar M}

\newcommand{\Gk}{\h G_\field(n)}

\newcommand{\Ev}{{\mathsf{Ev}}}
\newcommand{\ev}{{\mathsf{ev}}}

\def\afS#1{\h{\mathcal S}_{\field}(#1)}

\def\afSk#1{\h{\mathcal S}_{\field}(n,#1)}
\newcommand{\afSrmbz}{\h{\mathcal S}_{\mathbb Z}(n,r)}
\newcommand{\afSrmbq}{\h{\mathcal S}_{\mathbb Q}(n,r)}
\newcommand{\afSrk}{\h{\mathcal S}_{\field}(n,r)}

\def\afxk#1{\widehat\xi_{#1,\field}}
\def\afsygr{{\h\fS_{r}}}
\newcommand{\affSr}{{\h\fS_{r}}}

\newcommand{\afVnq}{\h\sV_\mbq(n)}
\newcommand{\afunk}{\h\sU_\field(n)}

\newcommand{\afunrk}{\h\sU_\field(n,r)}

\newcommand{\afunz}{\h\sU_\mbz(n)}

\newcommand{\afunrz}{\h\sU_\mbz(n,r)}

\newcommand{\sAn}{{\h{\mathcal A}}_\field(n)}
\newcommand{\sAnr}{\h{\mathcal A}_\field(n,r)}
\newcommand{\An}{\h A_\field(n)}
\newcommand{\Anr}{\h A_\field(n,r)}

\newcommand{\afmsD}{{\mathscr D}^\vtg}
\newcommand{\afmbnn}{\mathbb N_\vtg^{n}}

\newcommand{\afThn}{\Theta_\vtg(n)}

\newcommand{\afThnpm}{\Theta_\vtg^\pm(n)}
\newcommand{\afThnp}{\Theta_\vtg^+(n)}
\newcommand{\afThnm}{\Theta_\vtg^-(n)}
\newcommand{\afThnr}{\Theta_\vtg(n,r)}
\newcommand{\afTh}{\Theta_\vtg}
\newcommand{\afMnq}{M_{\vtg,n}(\mathbb Q)}
\newcommand{\afMnk}{M_{\vtg,n}(\mpk)}
\newcommand{\afMnz}{M_{\vtg,n}(\mathbb Z)}

\newcommand{\afInr}{I_\vtg(n,r)}
\newcommand{\afLa}{\Lambda_\vtg}
\newcommand{\afLanr}{\Lambda_\vtg(n,r)}
\newcommand{\afbse}{\boldsymbol e^\vartriangle}
\newcommand{\afE}{E^\vartriangle}
\newcommand{\afjmath}{\jmath_\vtg}

\newcommand{\Pn}{\mathcal P_\field(n)}

\newcommand{\Qn}{\mathcal Q_\field(n)}
\newcommand{\Qnr}{\mathcal Q_\field(n,r)}
\newcommand{\tiQnr}{\ti{\mathcal Q}_\field(n,r)}
\newcommand{\QNr}{\mathcal Q_\field(N,r)}

\newcommand{\bse}{\boldsymbol{e}}

\newcommand{\bfi}{{\mathbf{i}}}
\newcommand{\bfj}{{\mathbf{j}}}
\newcommand{\bfk}{{\mathbf{k}}}
\newcommand{\bl}{{\mathbf{l}}}
\newcommand{\bfl}{{\mathbf{0}}}

\newcommand{\bfP}{{\mathbf{P}}}
\newcommand{\bfQ}{{\mathbf{Q}}}

\newcommand{\bfS}{{\mathbf{S}}}

\def\fS{{\frak S}}

\def\fA{{\frak A}}

\newcommand{\ms}{\mathscr}

\newcommand{\msD}{\mathscr D}

\def\hmod{{\text-}{\mathsf{mod}}}

\def\sfz{{\mathsf z}}

\def\sfF{{\mathsf F}}

\newcommand{\mpX}{\mathpzc X}

\newcommand{\mpk}{\mathpzc K}
\newcommand{\field}{\mathpzc K}

\def\sA{{\mathcal A}}

\def\sF{{\mathsf F}}
\def\sG{{\mathsf G}}

\def\sQ{{\mathcal Q}}

\def\sU{{\mathcal U}}
\def\sV{{\mathcal V}}

\def\sX{{\mathcal X}}

\newcommand{\mbnn}{\mathbb N^{n}}
\newcommand{\mbn}{\mathbb N}
\newcommand{\mbq}{\mathbb Q}
\newcommand{\mbc}{\mathbb C}
\newcommand{\mbz}{\mathbb Z}

\newcommand{\ttx}{\mathtt{x}}

\newcommand{\ttt}{\mathtt{t}}

\newcommand{\End}{\operatorname{End}}
\newcommand{\Hom}{\operatorname{Hom}}

\newcommand{\spann}{\operatorname{span}}
\newcommand{\diag}{\operatorname{diag}}

\newcommand{\ul}{\underline}

\newcommand{\la}{{\lambda}}
\newcommand{\La}{\Lambda}
\newcommand{\ga}{{\gamma}}
\newcommand{\Ga}{{\Gamma}}

\newcommand{\dt}{\delta}

\newcommand{\Og}{\Omega}
\newcommand{\Ogz}{\Omega_{\mathbb Z}}
\newcommand{\Ogk}{\Omega_{\mpk}}
\newcommand{\Ognk}{\Omega_{n,\mpk}}
\newcommand{\og}{\omega}

\newcommand{\vi}{\varphi}

\newcommand{\vep}{{\mathtt t}}

\newcommand{\al}{\alpha}
\newcommand{\bt}{\beta}
\newcommand{\sg}{\sigma}

\def\ggp#1#2{\left[\kern-3.2pt\left[{#1\atop #2}\right]\kern-3.2pt\right]}

\def\leq{\leqslant}\def\geq{\geqslant}
\def\le{\leqslant}

\newcommand{\bop}{\bigoplus}

\newcommand{\ot}{\otimes}

\newcommand{\han}{\subseteq}

\newcommand{\h}{\widehat}
\newcommand{\ti}{\widetilde}

\newcommand{\Lanr}{\Lambda(n,r)}

\newcommand{\lra}{\longrightarrow}
\newcommand{\ra}{\rightarrow}
\newcommand{\tra}{\twoheadrightarrow}

\newcommand{\xrk}{\xi_{r,\field}}

\newcommand{\afxr}{\widehat\xi_{r}}

\newcommand{\afzrk}{\widehat\zeta_{r,\field}}
\newcommand{\afxrk}{\widehat\xi_{r,\field}}
\newcommand{\afxrz}{\widehat\xi_{r}}
\newcommand{\afxi}{\widehat\xi}

\newcommand{\mnmod}{\!\!\!\mod\!}

\newcommand{\vtg}{{\!\vartriangle\!}}

\newcommand{\afuglq}{\sU_{\mathbb Q}(\widehat{\frak{gl}}_n)}
\newcommand{\afuslq}{\sU_{\mathbb Q}(\widehat{\frak{sl}}_n)}
\newcommand{\uglq}{\sU_{\mathbb Q}({\frak{gl}}_n)}
\newcommand{\afuglk}{\sU_\field(\widehat{\frak{gl}}_n)}

\newcommand{\afuglkN}{\sU_\field(\widehat{\frak{gl}}_N)}
\newcommand{\afuslk}{\sU_\field(\widehat{\frak{sl}}_n)}

\newcommand{\afuglkp}{\sU_\field^+(\widehat{\frak{gl}}_n)}
\newcommand{\afuglkm}{\sU_\field^-(\widehat{\frak{gl}}_n)}
\newcommand{\afuglkz}{\sU_\field^0(\widehat{\frak{gl}}_n)}

\newcommand{\afuglz}{\sU_\mbz(\widehat{\frak{gl}}_n)}

\newcommand{\tiafuglz}{\ti\sU_\mbz(\widehat{\frak{gl}}_n)}
\newcommand{\afuslz}{\sU_\mbz(\widehat{\frak{sl}}_n)}
\newcommand{\uglz}{\sU_\mbz({\frak{gl}}_n)}
\newcommand{\uglk}{\sU_\field({\frak{gl}}_n)}

\newcommand{\fSr}{\fS_r}

\newcommand{\afsl}{\widehat{\frak{sl}}_n}
\newcommand{\afgl}{\widehat{\frak{gl}}_n}
\newcommand{\afGL}{\widehat{GL}_n(\mpk)}

\newcommand{\Srk}{{\mathcal S}_\field(n,r)}

\begin{document}
\title{Affine Schur--Weyl Theory for Loop Semigroups and Polynomial Representations}

\author{Qiang Fu}
\address{School of Mathematical Sciences,
Key Laboratory of Intelligent Computing and Applications (Ministry of Education),
Tongji University, Shanghai, 200092, China.}
\email{q.fu@hotmail.com, q.fu@tongji.edu.cn}


\thanks{Supported by the National Natural Science Foundation
of China (12371032, 12431002)}

\begin{abstract}
Classical Schur--Weyl duality plays a fundamental role in the representation
theory of general linear and symmetric groups. In this paper, we develop an
affine Schur--Weyl theory for the Laurent polynomial loop semigroup.

Let $\widehat G_{\mathpzc K}(n)$ denote the Laurent polynomial loop semigroup
inside the loop group $GL_n(\mathpzc K((t)))$ over a field $\mathpzc K$, and let
$\widehat{\mathcal S}_{\mathpzc K}(n,r)$ be the affine Schur algebra.
The natural action of $\widehat G_{\mathpzc K}(n)$ on the affine
tensor space $\Omega_{\mathpzc K}^{\otimes r}$ induces an algebra
homomorphism
\(
\mathpzc K\widehat G_{\mathpzc K}(n)
\rightarrow
\widehat{\mathcal S}_{\mathpzc K}(n,r).
\)
We prove that this homomorphism is surjective for every field
$\mathpzc K$ satisfying $|\mathpzc K|>r$.
Furthermore, we establish the double centralizer property for affine Schur
algebras over
unique factorization domains for all \(n\geq2\). For \(n\geq2\) and any field
\(\mathpzc K\) with \(|\mathpzc K|>r\), this yields the corresponding centralizer property
for $\widehat G_{\mathpzc K}(n)$. In contrast,
for \(n=1\) and \(r\geq2\), the corresponding natural homomorphism is
not surjective over any field.
For $n\geq 2$, we use the double centralizer theorem to determine the center
of the affine Schur algebra and, when $\mathpzc K$ is a field with \(|\mathpzc K|>r\), show that $\widehat\zeta_{r,\mathpzc K}$ maps
the center of ${\mathpzc K}\widehat G_{\mathpzc K}(n)$ onto it.

 Over an algebraically closed field
$\mathpzc K$ of arbitrary characteristic, we prove that every finite-dimensional irreducible
$\widehat{\mathcal S}_{\mathpzc K}(n,r)$-module is a tensor product
of evaluation modules.
Moreover, we obtain explicit character formulas expressing these irreducible characters as products of characters of irreducible classical Schur algebra modules, showing that the finite-dimensional irreducible representation theory of affine
Schur algebras is controlled by that of classical Schur algebras.

Finally, we study polynomial representations of $\widehat G_{\mathpzc K}(n)$ over algebraically closed
fields $\mathpzc K$, and classify its finite-dimensional irreducible polynomial representations. For finite fields $\mathbb F_q$ with $q>r$, this yields a classification in the defining-characteristic setting.
\end{abstract}

 \sloppy \maketitle
\section{Introduction}

Schur algebras provide a fundamental connection between the
representation theory of general linear groups and symmetric groups.
In the classical setting, Schur algebras arise naturally from
polynomial representations of general linear groups. More precisely,
the diagonal action of $GL(V)$ on $V^{\otimes r}$ commutes with the
right permutation action of the symmetric group $\mathfrak S_r$, and
the resulting Schur--Weyl reciprocity gives rise to the classical
Schur algebras. The study of tensor representations goes back to the
classical works of Weyl \cite{Weyl}.
The modern theory of polynomial representations and Schur algebras, including modular and finite-field settings, has been developed extensively; see
\cite{Gr80,CL}.

The finite-field version of classical Schur--Weyl reciprocity has been studied by several authors.  Thrall \cite{Thrall}
initiated the study of this question by considering the surjectivity of
the natural homomorphism
\(
\mathpzc K GL_n(\mathcal{\mathpzc K})\rightarrow
\mathcal S_{\mathcal{\mathpzc K}}(n,r)
\). The precise field-size condition was later established by Benson and Doty,
who proved that the homomorphism is surjective for finite fields satisfying
$|\mathpzc K|>r$. This finite-field result provides an important model for affine generalizations of Schur--Weyl theory.
 Cruz \cite{Cruz} further studied the behavior of this
homomorphism over finite fields and commutative rings.
The quantum analogue of Schur--Weyl reciprocity was developed by
Jimbo \cite{Jimbo}. Its cyclotomic generalizations were further studied
in \cite{ATY,SS,Hu}.

The affine analogue of Schur--Weyl theory has been investigated from
several different viewpoints, including affine Hecke algebras,
affine quantum groups and affine $q$-Schur algebras. Affine
$q$-Schur algebras were introduced and studied in
\cite{Gr99}. Their connections with affine quantum
groups and geometric constructions have been developed in
\cite{GV,Lu99,DDF}. The categorical structures of affine $q$-Schur
algebras have also attracted considerable attention; see
\cite{MT1,MT2}.

Most existing constructions of affine Schur algebras are formulated
through affine quantum groups or enveloping algebras of loop
algebras. These approaches play a central role in the study of
quantum structures, canonical bases and categorifications.
From the original Schur-theoretic viewpoint, however, Schur algebras are closely related to polynomial representations of general linear groups, and the corresponding affine group-theoretic realization remains less developed.
A semigroup realization of affine Schur algebras was investigated by Yang \cite{Yang1,Yang2},
where affine Schur algebras were realized using semigroups of infinite periodic matrices over infinite fields. The semigroup considered in those works is different from the Laurent polynomial loop semigroup arising naturally from the loop group framework.

In this paper, we develop an affine Schur--Weyl theory for Laurent polynomial loop semigroups and establish a semigroup realization of affine Schur algebras over fields of arbitrary characteristic, including finite fields satisfying $|\mathpzc K|>r$.
 Let
$\widehat G_{\mathpzc K}(n)$ denote the affine loop semigroup
consisting of matrices in
$GL_n(\mathpzc K((\vep)))$ whose entries are Laurent polynomials in
$\vep$, where $\mathpzc K$ is a commutative ring with unity. Let
$\widehat{\mathcal S}_{\mathpzc K}(n,r)$ be the affine Schur algebra.
The natural action of $\widehat G_{\mathpzc K}(n)$ on the affine
tensor space $\Omega_{\mathpzc K}^{\otimes r}$ induces an algebra
homomorphism
\(
\widehat\zeta_{r,\mathpzc K}:
\mathpzc K\widehat G_{\mathpzc K}(n)
\longrightarrow
\widehat{\mathcal S}_{\mathpzc K}(n,r).
\)
The first main result of this paper establishes an affine analogue of finite-field Schur--Weyl reciprocity:  whenever
$\mathpzc K$ is a field with $|\mathpzc K|>r$, the above homomorphism
is surjective (Theorem~\ref{surjective2}). Consequently, the affine
Schur algebra can be realized as a quotient of a loop semigroup
algebra. This provides a semigroup-theoretic interpretation of affine
Schur algebras and gives an affine analogue of the classical Schur-theoretic construction.
The proof of this result requires genuinely affine arguments arising from the periodic structure of the extended affine symmetric group and the Laurent polynomial structure of the loop semigroup.  Moreover,
we show that the subgroup generated by \(\widehat G_{\mathpzc K}(n)\) is the rational loop group \(GL_n({\mathpzc K}(t))\) (Proposition \ref{rational loop group}), whereas
the natural action
does not extend to the rational or formal loop groups  $GL_n(\mathpzc K(t))$ and
$GL_n(\mathpzc K((t)))$, respectively (Proposition~\ref{not extension}).
Therefore, the loop semigroup is not merely an alternative presentation of a loop group approach, but an essential object underlying affine Schur--Weyl theory.

We establish the following  double  centralizer property for affine Schur
algebras. Let
\(\mathpzc K\) be a commutative unique factorization domain,
\(n\geq2\), and \(r\geq1\). Then the natural right action of
\(\widehat{\frak S}_r\) on \(\Omega_{\mathpzc K}^{\otimes r}\)
induces an algebra isomorphism
$
\rho_{r,\mathpzc K}:
\mathpzc K\widehat{\frak S}_r^{\,\mathrm{op}}
\xrightarrow{\sim}
\operatorname{End}_{\widehat{\mathcal S}_{\mathpzc K}(n,r)}
(\Omega_{\mathpzc K}^{\otimes r}),
$
where \(\widehat{\frak S}_r\) is the extended affine symmetric group
(Theorem~\ref{thm:second-centralizer-nge2}).
For \(n\geq r\), a simpler argument gives the same
property over arbitrary commutative rings (Theorem~\ref{thm:second-centralizer-stable}).
Together with Theorem~\ref{surjective2}, this yields the corresponding
centralizer property for $\widehat G_{\mathpzc K}(n)$ whenever $\mpk$ is a field with
\(|\mathpzc K|>r\) and \(n\geq2\) (Theorem \ref{cor:second-centralizer-loop-semigroup-nge}).
When
\(n=1\) and \(r\geq2\), the natural homomorphism \(
\rho_{r,\mathpzc K}:\mathpzc K\widehat{\mathfrak S}_r^{\,\mathrm{op}}
\rightarrow
\operatorname{End}_{\widehat S_{\mathpzc K}(1,r)}
(\Omega_{\mathpzc K}^{\otimes r})
\) is not
surjective over any field (Example 1 and Theorem \ref{thm:rank-one-counterexample}).
In the geometric setting of affine Hecke algebras and affine $q$-Schur
algebras, Pouchin~\cite[Theorem~8.1]{Pouchin} stated an analogous double
centralizer property in the case corresponding to $n<r$ in our notation.
A gap in the proof of this result was already pointed out in \cite{Fu13}.
 Our proof is independent of this geometric
construction and is based on the skew group algebra realization of
$\mathpzc K\widehat{\mathfrak S}_r$.
For $n\geq2$, as an application of the double centralizer property, we determine
the center of the affine Schur algebra and, when $\mathpzc K$ is a
field with $|\mathpzc K|>r$, show that the homomorphism
$\widehat\zeta_{r,\mathpzc K}$ maps the center of
$K\widehat G_{\mpk}(n)$ onto it.

We next study the representation theory of affine
Schur algebras. Over an algebraically closed field $\mathpzc K$ of
arbitrary characteristic, we prove that every finite-dimensional
irreducible
$\widehat{\mathcal S}_{\mathpzc K}(n,r)$-module is a tensor product
of evaluation modules. Moreover, the characters of these irreducible modules are
expressed explicitly as products of characters of irreducible
classical Schur algebra modules
(Theorems~\ref{classification}--\ref{character}). This yields a
characteristic-independent description of
irreducible modules of affine Schur algebras, showing that their finite-dimensional irreducible representation theory can be described in terms of the corresponding classical Schur algebra modules.
We further investigate the relationship between affine Schur
algebras and
their Lusztig subalgebras.
 We classify the finite-dimensional irreducible modules of the Lusztig subalgebras and show that they correspond naturally to those of affine Schur algebras
(Theorem~\ref{simple U(n,r)-modules}).

We also develop the polynomial representation theory of the Laurent polynomial loop semigroup.
In the affine setting, the relevant coordinate spaces are infinite-dimensional, which leads
naturally to a completed coefficient space. Using this framework, we prove that finite-dimensional
homogeneous polynomial representations of \(\widehat G_{\mathpzc K}(n)\) of degree \(r\) are
equivalent to finite-dimensional \(\widehat{\mathcal S}_{\mathpzc K}(n,r)\)-modules
(Theorem \ref{equivalence1}). Thus the affine Schur algebra governs the polynomial representation theory of
the natural loop semigroup in a manner analogous to the role of the classical Schur algebra
for \(GL_n\). Together with Theorem \ref{classification}, this gives a classification of the finite-dimensional
irreducible polynomial representations of \(\widehat G_{\mathpzc K}(n)\).

A further motivation for the loop semigroup approach is related to defining-characteristic representation theory of reductive groups over local fields.
Vign\'eras \cite{Vigneras} studied smooth representations of reductive
$p$-adic groups over coefficient fields
whose characteristic is different from the residue characteristic. She also
pointed out that the case where the characteristic of the coefficient
ring equals the residue characteristic requires further understanding.
Related to this direction, the present paper studies polynomial representations arising from the Laurent polynomial loop semigroup, providing a defining-characteristic framework for affine loop semigroups and their associated affine Schur algebras.
For a finite field $\mathbb F_q$ with $q>r$, we study
polynomial representations of
$\widehat G_{\mathbb F_q}(n)$ over algebraically closed coefficient
fields of characteristic $p$ (Theorem~\ref{classification of polynomial representations}).
This connects affine Schur algebras with finite-field Schur--Weyl theory in the defining-characteristic setting.

The paper is organized as follows. Section 2 recalls  affine Schur algebras and the  BLM realization of the enveloping algebra of $\widehat{\mathfrak{gl}}_n$.  Section 3 establishes the integral form of the enveloping algebra of $\widehat{\mathfrak{gl}}_n$ and proves the Frenkel--Mukhin conjecture in the classical setting.
Section~4
establishes the affine Schur--Weyl reciprocity for loop semigroups. Section~5
establishes the double centralizer property, gives a rank-one
counterexample,  and determines the center of the affine
Schur algebra.
 Sections 6--8 are devoted to evaluation maps and the
representation theory of affine Schur algebras, including tensor
product decompositions and character formulas. Section 9 studies
Lusztig algebras.   Section 10 studies polynomial representations of affine
loop semigroups and establishes their equivalence with affine Schur algebra modules, including the finite-field case in the defining-characteristic setting.

\begin{Not}\label{Notaion} \rm
Let $\field$ be a commutative ring with unity and $p=\text{char}\field\geq 0$.
For a positive integer $n$, let
$\afMnk$ be the set of all matrices
$A=(a_{i,j})_{i,j\in\mbz}$ with $a_{i,j}\in\field$ such that
\begin{itemize}
\item[(a)]$a_{i,j}=a_{i+n,j+n}$ for $i,j\in\mbz$; \item[(b)] for
every $i\in\mbz$, both sets $\{j\in\mbz\mid a_{i,j}\not=0\}$ and
$\{j\in\mbz\mid a_{j,i}\not=0\}$ are finite.
\end{itemize}
For $i,j\in\mbz$, let $\afE_{i,j}\in\afMnk$ be the matrix
$(e^{i,j}_{k,l})_{k,l\in\mbz}$ defined by
\begin{equation*}e_{k,l}^{i,j}=
\begin{cases}1,&\text{if $k=i+sn,l=j+sn$ for some $s\in\mbz$;}\\
0,&\text{otherwise}.\end{cases}
\end{equation*}
\end{Not}
\section{Affine Schur algebras and the BLM realization}
This section presents the BLM realization of the universal enveloping algebra of the affine Lie algebra $\widehat{\mathfrak{gl}}_n$.

\subsection{Affine Schur algebras via extended affine symmetric groups}
Let $\afsygr$ be the extended affine symmetric group, which consists of all permutations
$w:\mbz\ra\mbz$ such that $w(i+r)=w(i)+r$ for $i\in\mbz$.
For $1\leq i\leq r$  define the simple transposition $s_i \in \afsygr$ by
$$s_i(j)=\begin{cases}j, &\text{ for $j\not\equiv i,i+1\mnmod r$};\\
 j-1,&\text{ for
$j\equiv i+1\mnmod r$};\\
j+1,&\text{ for $j\equiv i\mnmod r$.}
\end{cases}$$
The subgroup of $\afsygr$ generated by $s_1,\ldots,s_{r-1}$
is isomorphic to the symmetric group $\fS_r$ on $r$ letters.
Let $\fA$ be the subgroup of $\afsygr$ consisting of permutations $y$ of
$\mbz$ satisfying $y(i)\equiv i(\text{mod}\,r)$.
This subgroup is isomorphic to
$\mbz^r$ via the map $y\mapsto
\la=(\la_1,\la_2,\ldots,\la_r)\in\mbz^r$, where $y(i)=\la_ir+i$, for
all $1\leq i\leq r$. We will henceforth identify $\fA$ with $\mbz^r$. For $1\leq i\leq r$ let $\bse_i=(0,\ldots,0,\underset {(i)}1,0,\ldots,0)\in\mbz^r$  denote the $i$-th standard basis vector.
The extended affine symmetric group $\afsygr$ is generated by the elements $s_i$ and $\bse_j$ for $1\leq i\leq r-1$ and $1\leq j\leq r$.

We now recall the definition of affine Schur algebras, which are central objects in the representation theory of $\widehat{\mathfrak{gl}}_n$.
Let $\Ogk=\field[\vep,\vep^{-1}]^n,$ where  $\field$ is a commutative ring with unity
and $\vep$ is an indeterminate. Then $\Ogk$ is a free $\field[\vep,\vep^{-1}]$-module with standard basis $\og_1,\og_2,\cdots,\og_n$. For $1\leq i\leq n$ and $l\in\mbz$ let
$$\og_{i+ln}=\vep^{-l}\og_i.$$
Then the set $\{\og_s\mid s\in\mbz\}$ forms a $\field$-basis for $\Ogk$.
For $\bfi=(i_1,\ldots,i_r)\in\mbz^r$, write
$$\og_\bfi=\og_{i_1}\ot\og_{i_2}\ot\cdots\ot \og_{i_r}.$$
The tensor space
$\Ogk^{\ot r}$ admits  a natural right $\affSr$-module structure defined by
\begin{equation}\label{action of affine symmetric group}
\begin{cases}
\og_\bfi s_j=\og_{i_1}\ot\cdots\ot\og_{i_{j+1}}\ot \og_{i_{j}}\ot\cdots\ot \og_{i_{r}}\\
\og_\bfi \bse_k=\og_{i_1}\ot\cdots\ot\og_{i_{k-1}}\ot \og_{i_{k}+n}\ot \og_{i_{k+1}}\ot\cdots\ot \og_{i_{r}}
\end{cases}
\end{equation}
where $\bfi\in\mbz^r$, $1\leq j\leq r-1$ and $1\leq k\leq r$.
The affine Schur algebras over $\field$ are defined as the endomorphism algebras
$$\afSrk:=\End_{\affSr}(\Ogk^{\ot r})$$
(cf. Ginzburg--Vasserot \cite{GV}, Lusztig \cite{Lu99}, Green \cite{Gr99}).
Representations of affine Schur algebras have been previously studied by Yang \cite{Yang1,Yang2}.

To further understand the structure of affine Schur algebras, consider the set
$$\afmbnn=\{(\la_i)_{i\in\mbz}\mid
\la_i\in\mbn,\,\la_i=\la_{i-n}\ \text{for}\ i\in\mbz\}.$$
For a fixed $r\in\mbn$ let
$$\afLa(n,r)=\big\{\la\in\afmbnn\mid\sum_{1\leq i\leq n}\la_i=r\big\}.$$
For $\la\in\afLanr$ let  $\fS_\la:=\fS_{(\la_1,\ldots,\la_n)}$
denote the corresponding standard Young subgroup of
the symmetric group $\fS_r$.
Then we have the isomorphism
$$\afSrk\cong\End_{\affSr}\biggl
(\bop_{\la\in\afLa(n,r)}\field\ul{\fS_\la}\affSr\biggr),$$
where
$\ul{\fS_\la}=\sum_{w\in\fS_\la}w$.
We will henceforth identify $\afSrk$ with $\End_{\affSr}\biggl
(\bop_{\la\in\afLa(n,r)}\field\ul{\fS_\la}\affSr\biggr)$.
Moreover, there is a base change property:
\begin{equation}\label{afSrk}
\afSrk\cong\afSrmbz\ot_\mbz\field,
\end{equation}
where $\afSrmbz$ denotes the affine Schur algebra over $\mathbb{Z}$.

\subsection{The Affine Schur homomorphism}
The connection between the universal enveloping algebra $\afuglq$ and affine Schur algebras is established through an algebra homomorphism, which we now describe in detail.

Let $\afuglq$ be the enveloping algebra of the loop
algebra $\afgl$, where  $\afgl:={\frak{gl}_n}(\mbq)\ot\mbq[\ttt,\ttt^{-1}]$.
Clearly, the map
$$\afMnq\lra\afgl ,\,\,\,\afE_{i,j+ln}\longmapsto E_{i,j}\ot \ttt^l, \,1\le i,j\le n,l\in\mbz $$
defines a Lie algebra isomorphism, where $E_{i,j}\in{\frak{gl}_n}(\mbq)$  denotes the elementary matrix $(\dt_{i,k}\dt_{j,l})_{1\leq k,l\leq n}$. Henceforth, we identify
$\afgl$ with $\afMnq$ under this isomorphism.

The vector space $\Og_\mbq$ carries a natural left $\afuglq$-module, with the action defined by
\begin{equation*}\label{natural module}
\afE_{i,j}\og_{k}=
\begin{cases}
\og_{i+tn}, &\text{if $k=j+tn$,}\\
0,&\text{otherwise.}
\end{cases}
\end{equation*}
This module structure extends to the tensor space $\Og_\mbq^{\ot r}$ via the Hopf algebra structure of $\afuglq$.
Crucially, the left action of $\afuglq$
commutes with the right action of the extended affine symmetric group
$\affSr$ on $\Og_\mbq^{\ot r}$, endowing $\Og_\mbq^{\ot r}$ with the structure of a $\afuglq$-$\affSr$ bimodule.  This bimodule structure  induces a $\mbq$-algebra homomorphism
\begin{equation}\label{afxr}
\afxr:\afuglq\ra\afSrmbq.
\end{equation}

\subsection{Integral bases of affine Schur algebras}
To understand the image of $\afxr$ and the structure of affine Schur algebras, we first need to describe explicit bases for $\afSrmbz$.
Let
$\afThn=\{A\in\afMnz\mid a_{i,j}\in\mbn,\,\forall i,j\}$ and let $$\afThnr=\{A\in\afThn\mid\sg(A)=r\},$$ where $\sg(A)=\sum_{1\leq i\leq n,\,j\in\mbz}a_{i,j}$.
For $\la,\mu\in\afLanr$, we define the sets
$$\afmsD_\la=\{d\mid d\in\affSr,\ell(wd)=\ell(w)+\ell(d)\text{ for
$w\in\fS_\la$}\}$$ and
$\afmsD_{\la,\mu}=\afmsD_{\la}\cap{\afmsD_{\mu}}^{-1}$, where $\ell$
denotes the length function on $\affSr$ and ${\afmsD_{\mu}}^{-1}=\{d^{-1}\mid d\in\afmsD_{\mu}\}$.

For $1\leq i\leq n$ and $k\in\mbz$ let
\begin{equation*}\label{set R}
R_{i+kn}^{\la}=\{\la_{k,i-1}+1,\la_{k,i-1}+2,\ldots,\la_{k,i-1}+\la_i=\la_{k,i}\},
\end{equation*}
where $\la_{k,i-1}=kr+\sum_{1\leq t\leq i-1}\la_t$.
There is
a bijective map
\begin{equation*}
{\afjmath}:\{(\la, d,\mu)\mid
d\in\afmsD_{\la,\mu},\la,\mu\in\afLanr\}\lra\afThnr
 \end{equation*}
sending $(\la, w,\mu)$ to $A=(a_{k,l})$, where $a_{k,l}=|R_k^\la\cap wR_l^\mu|$ for all $k,l\in\mbz$ (see Varagnolo--Vasserot \cite[\S7.4]{VV99}). This combinatorial parametrization is essential for constructing bases of affine Schur algebras.

For $A=\afjmath(\la,d,\mu)\in\afThnr$ with $\la,\mu\in\afLanr$, $d\in\afmsD_{\la,\mu}$, we define the element
$e_A\in\afSrmbz$ by
\begin{equation*}\label{def of standard basis}
e_A(\ul{\fS_\nu} h)=\dt_{\mu\nu}\sum_{w\in\fS_\la
d\fS_\mu}w h\
\end{equation*}
where $\nu\in\afLanr$ and $h\in\mbz\affSr$. By \cite[Th.~2.2.4]{Gr99} the set
$\{e_A\mid A\in\afThnr\}$ forms a $\mbz$-basis for $\afSrmbz$.

\subsection{The BLM realization of  the affine enveloping algebra}
We now review the BLM realization of the universal enveloping algebra $\afuglq$.
Let
$$\afThnpm=\{A\in\afThn\mid a_{i,i}
=0\text{ for all $i$}\}.$$
For $A\in\afThnpm$ and ${\bf j}\in \afmbnn$, define the elements
$$A[{\bf j},r]= \sum_{\mu\in\afLa(n,r-\sg(A))}
\mu^\bfj e_{A+\diag(\mu)}\in\afSrmbq,
$$
where $\mu^\bfj=\prod_{1\leq i\leq n}\mu_i^{j_i}$.
For $i\in\mbz$, let $\bar{i}$ denote the residue class modulo $n$, and define
$\afbse_i\in\afmbnn$ by
\begin{equation*}
(\afbse_i)_j=
\begin{cases}
1&\text{if $\bar j=\bar i $}\\
0&\text{otherwise}.
\end{cases}
\end{equation*}
By \cite[Th.~6.1.5]{DDF}, the homomorphism $\afxr$ has an explicit description on generators:
\begin{equation*}
\afxr(\afE_{i,j})=\afE_{i,j}[\bfl,r],\quad \afxr(H_i)=0[\afbse_i,r]
\end{equation*}
for $i\not=j$, where $H_i=\afE_{i,i}$. These formulas show how the Chevalley generators of the affine Lie algebra are represented in the affine Schur algebra.
The maps $\afxr$ induce an algebra homomorphism
\begin{equation*}\label{zeta}
\afxi=\prod_{r\geq 0}\afxr:\afuglq\ra\prod_{r\geq 0}\afSrmbq.
\end{equation*}

For $A\in\afThnpm$ and $\bfj\in\afmbnn$, we define
$$A[\bfj]=(A[\bfj,r])_{r\geq 0}\in\prod_{r\geq 0}\afSrmbq.$$
Let $\afVnq$ be the $\mbq$-subspace of $\prod_{r\geq 0}\afSrmbq$ spanned by
$A[\bfj]$ for $A\in\afThnpm$, $\bfj\in\afmbnn$.

\begin{Thm}[{\cite[Th.~6.3.4]{DDF}}]\label{realization}
The $\mbq$-vector space $\afVnq$ is a subalgebra of $\prod_{r\geq 0}\afSrmbq$ with $\mbq$-basis $\{A[\bfj]\mid A\in\afThnpm,\,\bfj\in\afmbnn\}$. Furthermore, the map $\afxi$ induces a $\mbq$-algebra isomorphism $\afxi:\afuglq\ra\afVnq$.
\end{Thm}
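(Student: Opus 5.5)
The plan follows the Beilinson--Lusztig--MacPherson strategy, run in the classical ($q=1$) affine setting. There are three steps: show that $\afVnq$ is closed under multiplication, show that the $A[\bfj]$ are linearly independent, and then identify $\afVnq$ with the image of $\afxi$ and prove injectivity.

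\textbf{Step 1: multiplication formulas.} For each $r$, I will compute products $\afE_{i,j}[\bfl,r]\cdot A[\bfj,r]$ and $0[\afbse_i,r]\cdot A[\bfj,r]$ in the basis $\{e_B\}$ of $\afSrmbz$. I take as input the known multiplication formulas for $e_{\afE_{i,j}+\diag(\la)}\, e_B$ in the affine Schur algebra. These are obtained from the double coset description $e_A(\ul{\fS_\nu}h)=\dt_{\mu\nu}\sum_{w\in\fS_\la d\fS_\mu}wh$ together with the bijection $\afjmath$. From them I will show that each product is a $\mbq$-linear combination of elements $B[\bfj',r]$, with coefficients that are polynomials in the shifts but are independent of $r$.

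\begin{itemize}
\item For the diagonal generators this is immediate. One has $0[\afbse_i,r]\,A[\bfj,r]=\sum_\mu(\mu_i+\mathrm{ro}(A)_i)\mu^{\bfj}e_{A+\diag(\mu)}$. Expanding $\mu_i+c$ gives a combination of $A[\bfj,r]$ and $A[\bfj+\afbse_i,r]$.
\item For $\afE_{i,j}$ with $i\neq j$, the product of $e_{\afE_{i,j}+\diag(\la)}$ with $e_{A+\diag(\mu)}$ is a sum over positions $k$ of terms $(a_{i,k}+1)\,e_{A+\afE_{i,k}-\afE_{j,k}+\ldots}$. Summing over $\mu$, each such term becomes a polynomial expression in $\mu$, which I re-express via finite differences in terms of $B[\bfj',r]$.
\end{itemize}

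Because the coefficients are uniform in $r$, the $\afE_{i,j}[\bfl]$ and $0[\afbse_i]$ generate a subalgebra spanned by the $A[\bfj]$. To pass from "generated by these" to "all $A[\bfj]$ lie in the span and the span is closed", I will use a triangularity argument. Order $\afThnpm$ by a degeneration-type partial order $\preceq$ together with $\sg$. Then show that a suitable ordered monomial in the generators $\afE_{i,j}[\bfl]$ equals $A[\bfl]$ plus lower terms; multiplying further by $0[\bfj]$ yields $A[\bfj]$. Hence the span of the $A[\bfj]$ coincides with the subalgebra generated by the images of Chevalley-type generators, so $\afVnq$ is a subalgebra.

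\textbf{Step 2: linear independence.} Fix a finite relation $\sum c_{A,\bfj}A[\bfj]=0$. Fix $A$ and take $r\ge\sg(A)$ large. The $r$-component is then $\sum_\mu\big(\sum_\bfj c_{A,\bfj}\mu^\bfj\big)e_{A+\diag(\mu)}$, and the $e_{A+\diag(\mu)}$ are distinct basis elements. So the polynomial $\sum_\bfj c_{A,\bfj}x^\bfj$ vanishes on all $\mu\in\afLa(n,r-\sg(A))$ for every large $r$, that is, on all of $\mbn^n$. Hence $c_{A,\bfj}=0$.

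\textbf{Step 3: isomorphism.} By the generator formulas from \cite[Th.~6.1.5]{DDF}, $\afxi(\afuglq)$ is the subalgebra generated by $\afE_{i,j}[\bfl]$ and $0[\afbse_i]$, which by Step 1 equals $\afVnq$. For injectivity, choose a PBW basis of $\afuglq$: ordered monomials $\prod \afE_{i,j+ln}^{(a)}$ (off-diagonal) times monomials in the $H_i$ and the diagonal loop elements. By the triangularity of Step 1, these map to elements of the form $A[\bfj]$ plus lower terms. This gives a bijection between the PBW basis and the basis $\{A[\bfj]\}$, and the images stay linearly independent by Step 2.

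The main obstacle is matching the diagonal loop elements $\afE_{i,i+ln}$ with $l\neq0$. The index set $\afThnpm$ excludes only $a_{i,i}$ but allows $a_{i,i+ln}$, so these elements correspond to $A$ with entries off the main diagonal but on the "same residue". The leading-term analysis there needs care, and I would first check it for $n=1$.
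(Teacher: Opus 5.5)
The paper gives no proof of its own here; it quotes \cite{DDF}. Your architecture is the standard BLM-type one: closure of $\afVnq$ under left multiplication by $\afE_{i,j}[\bfl]$ and $0[\afbse_i]$, linear independence by looking at large $r$, generation of $\afxi(\afuglq)$, and a PBW triangularity for injectivity. Step~2 and the closure half of Step~1 are correct.

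\textbf{The gap.} The triangularity is what gives both $\afVnq\subseteq\afxi(\afuglq)$ and injectivity, and you do not prove it. Worse, if the comparison is made by a degeneration-type order, it is false exactly where you flag trouble. Put $g_{i,l}=\afE_{i,i+ln}$. Theorem~\ref{Multiplication Formulas at v=1} (equivalently, Proposition~\ref{Laikl formula} with $k=2$) gives
\[
\tfrac12\,g_{i,l}^2=(2\afE_{i,i+ln})[\bfl]+\tfrac12\,\afE_{i,i+2ln}[\bfl].
\]
In the degeneration order $2\afE_{i,i+ln}\prec\afE_{i,i+2ln}$: the two matrices have the same dimension vector, and $\afE_{i,i+2ln}$ is the generic extension. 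So the PBW monomial $g_{i,l}^2/2$ would have the same leading matrix as the PBW element $g_{i,2l}$. Since the $g_{i,l}$ commute, no choice of ordering of factors repairs this, and the bijection between the PBW basis and $\{A[\bfj]\}$ breaks.

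\textbf{The missing idea.} You do not need a degeneration order at all. Every $\afE_{i,j}[\bfl]$ with $i\neq j$, same-residue ones included, is itself a generator, so it suffices to filter by $\sg(A)$, i.e.\ by PBW degree. Write $|\bfj|=\sum_k j_k$ and let $B\in\afThnpm$. Your multiplication formula (for $j=i+mn$ it is exactly Theorem~\ref{Multiplication Formulas at v=1}) gives
\[
\afE_{i,j}[\bfl]\,B[\bfj]=(b_{i,j}+1)\,(B+\afE_{i,j})[\bfj]+\sum_{|\bfj'|<|\bfj|}c_{\bfj'}\,(B+\afE_{i,j})[\bfj']+\sum_{\sg(B')\leq\sg(B)}c_{B',\bfj'}\,B'[\bfj'].
\]
This holds because the only term that raises $\sg$ is the one passing through the diagonal entry $(j,j)$ of $B+\diag(\mu)$.

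\begin{itemize}
\item \emph{Surjectivity onto $\afVnq$.} Write $A=B+\afE_{i,j}$ for any entry $a_{i,j}\geq1$, and induct on $(\sg(A),|\bfj|)$, starting from $0[\bfj]=\prod_k\bigl(0[\afbse_k]\bigr)^{j_k}$. This gives $A[\bfj]\in\afxi(\afuglq)$.
\item \emph{Triangularity of the PBW basis.} Iterating the same formula, for any ordering of the factors, $\afxi\bigl(\prod(\afE_{i,j})^{a_{i,j}}\prod_kH_k^{t_k}\bigr)$ equals $\bigl(\prod a_{i,j}!\bigr)A[\mathbf t]$ plus terms $A[\mathbf t']$ with $|\mathbf t'|<|\mathbf t|$ and terms $B[\bfj']$ with $\sg(B)<\sg(A)$. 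For the $H$-factors use $A[\bfl]\,0[\mathbf t]=\sum_\mu(\nu+\mu)^{\mathbf t}e_{A+\diag(\mu)}$, where $\nu$ is the column-sum vector of $A$.
\item \emph{Conclusion.} This is triangular with nonzero diagonal coefficients for the well-founded order on $(\sg(A),|\mathbf t|)$. Your Step~2 then gives injectivity, and $\afVnq=\afxi(\afuglq)$ is a subalgebra with the stated basis.
\end{itemize}

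Degeneration orders are needed in the original BLM setting only because there one multiplies by Chevalley generators alone.
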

 Henceforth,
we identify $\afuglq$ with $\afVnq$ via $\afxi$,
and consequently identify generators such as $\afE_{i,j}$ with their images  $\afE_{i,j}[\bfl]$, etc.
\section{Integral forms and the classical Frenkel--Mukhin conjecture}
The investigation of integral forms has played a significant role in the representation theory of Lie algebras.
In \cite[\S7.2]{FM}, Frenkel--Mukhin introduced  an integral form $U_q^{\text{res}}(\afgl)$ of quantum affine $\frak{gl}_n$, and conjectured that
this form is a Hopf subalgebra. We show in Theorem \ref{generators2} that this conjecture admits a positive resolution in the classical setting ($q=1$), establishing $\afuglz$ as a Hopf subalgebra of $\afuglq$.

\subsection{The integral form $\afuglz$ }
Following \cite{FM}, we define an integral form $\afuglz$ of $\afuglq$  through explicit generators.
For $1\leq i\leq n$ and $l\in\mbz$ with $l\not=0$ let
\begin{equation}\label{gil}
g_{i,l}=\afE_{i,i+ln}\in\afuglq.
\end{equation}
These elements satisfy the commutation relation
$g_{i,l}g_{j,m}=g_{j,m}g_{i,l}.$
For $1\leq i\leq n$ and $k\geq 0$ we define the elements $\La_{i,\pm k}$ in $\afuglq$ via
generating functions in an indeterminate $u$
\begin{equation}\label{Laik generating function}
\La_i^\pm(u):=\sum_{k\geq 0}\La_{i,\pm k}u^k=\exp\big(-\sum_{k\geq 1}\frac{g_{i,\pm k}}{k}u^k\big).
\end{equation}
The integral form $\afuglz$ is defined as the $\mbz$-subalgebra of $\afuglq$ generated by the elements $(\afE_{i,j})^{(k)}$, $\La_{i,\pm k}$ and $\big({H_i\atop t}\big)$ for $1\leq i\leq n$, $j\in\mbz$, $\bar i\not=\bar j$, $k,t\in\mbn$,
where $(\afE_{i,j})^{(k)}=\frac{(\afE_{i,j})^k}{k!}$, $H_i=\afE_{i,i}$ and
$\big({H_i\atop t}\big)=\frac{H_i(H_i-1)\cdots (H_{i}-t+1)}{t!}.$
\subsection{An alternative integral form $\tiafuglz$}
An alternative approach to integral forms arises from the BLM realization. Define
$$\tiafuglz=\spann_\mbz\big\{A[\bfl]\bigg({H\atop\la}\bigg)B[\bfl]\mid A\in\afThnp,\,B\in\afThnm,\,\la\in\afmbnn\big\}\han\afuglq,$$
where $\big({H\atop\la}\big)=\prod_{1\leq i\leq n}\big({H_i\atop\la_i}\big)$ and
$$\afThnp=\{A\in\afThn\mid a_{i,j}=0\text{ for }i\geq j\}\text{ and }\afThnm=\{A\in\afThn\mid a_{i,j}=0\text{ for }i\leq j\}.$$
By \cite{Fu13},
the $\mbz$-module $\tiafuglz$ constitutes a $\mbz$-subalgebra of $\afuglq$.
A fundamental result we establish is the equality  $\afuglz=\tiafuglz$.
The following result from \cite[Th. 4.9 and Lem. 4.10]{Fu19} characterizes the generators of $\tiafuglz$.

\begin{Prop}\label{generators}
$(1)$ The algebra $\tiafuglz$ is generated by the elements $(k\afE_{i,j})[\bfl]$ and $\big({H_i\atop t}\big)$ for $1\leq i\leq n$, $j\in\mbz$, $i\not=j$, $k,t\in\mbn$.

$(2)$ Let $k\in\mbn$ and $i,j\in\mbz$ with $\bar i\not=\bar j$. Then the following equation holds in $\afuglq$
\begin{equation*}
(\afE_{i,j})^{(k)}
=(k\afE_{i,j})\{\bfl\}.
\end{equation*}
\end{Prop}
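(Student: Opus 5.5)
The proof of $(2)$ comes first, because it feeds into $(1)$. For $A\in\afThnpm$, the element $A[\bfl]$ is the family $\big(\sum_{\mu\in\afLa(n,r-\sg(A))}e_{A+\diag(\mu)}\big)_{r\geq0}$. By Theorem~\ref{realization}, $\afxi$ is injective, so it suffices to prove the identity componentwise in each $\afSrmbq$.

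\textbf{Proof of $(2)$.} Fix $r$ and $i,j$ with $\bar i\neq\bar j$. I would prove by induction on $k$ that
\[
\afE_{i,j}[\bfl,r]\cdot (k\afE_{i,j})[\bfl,r]=(k+1)\,\big((k+1)\afE_{i,j}\big)[\bfl,r].
\]
Iterating this gives $(\afE_{i,j}[\bfl,r])^k=k!\,(k\afE_{i,j})[\bfl,r]$, which is the claim. To establish the displayed identity, I would use the affine analogue of the BLM multiplication formula for $e_{\afE_{i,j}+\diag(\la)}\,e_{B}$, where $B=k\afE_{i,j}+\diag(\mu)$. This can be computed directly from the definition of $e_A$ as a sum over double cosets $\fS_\la d\fS_\mu$.

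The product vanishes unless the column weight of the first factor equals the row weight of $B$. When the two factors match, the general formula moves one unit from the diagonal entry $b_{j,j}$ into the position $(i,j)$. The coefficient is the number of ways to choose this unit, which is $b_{i,j}+1=k+1$.

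The hypothesis $\bar i\neq\bar j$ is what makes this work. It ensures that $\afE_{i,j}$ does not touch the diagonal entries in row $i$, so the only $B'$ that occurs is $(k+1)\afE_{i,j}+\diag(\mu')$. Summing over all $\la$ and $\mu$ then gives $(k+1)\big((k+1)\afE_{i,j}\big)[\bfl,r]$.

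\textbf{Proof of $(1)$.} Let $\mathcal G$ be the $\mbz$-subalgebra generated by the listed elements. By \cite{Fu13}, $\tiafuglz$ is a $\mbz$-subalgebra, and each generator $(k\afE_{i,j})[\bfl]$ and $\big({H_i\atop t}\big)$ lies in it (take $A$ or $B$ equal to $k\afE_{i,j}$ or $0$, and $\la=t\afbse_i$). Hence $\mathcal G\han\tiafuglz$.

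For the reverse inclusion, it suffices to show that $A[\bfl]\in\mathcal G$ for every $A\in\afThnp$, and symmetrically for $\afThnm$. The middle factors $\big({H\atop\la}\big)$ are already products of generators. I would argue by induction on the degeneration order $\preceq$ on $\afThnp$, in the style of BLM.

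\emph{Main step: triangularity.} Fix a suitable total order on the positions $(s,t)$ with $s<t$, $1\leq s\leq n$. Consider the ordered product $\prod_{(s,t)}(a_{s,t}\afE_{s,t})[\bfl]$. Using the same multiplication formulas as in $(2)$, I expect this product to equal
\[
A[\bfl]+\sum_{A'\prec A}c_{A'}\,A'[\bfl]\,\Big({H\atop\la'}\Big)
\]
with $c_{A'}\in\mbz$. The lower terms lie in $\mathcal G$ by induction, so $A[\bfl]\in\mathcal G$.

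The key is the choice of ordering, which must make the leading coefficient exactly $1$ rather than a binomial coefficient. The natural choice is to order by increasing $t-s$ within each column, as in the BLM triangular relation.

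Entries $a_{s,s+ln}$ with $l\neq0$ need separate attention, because $\bar s=\overline{s+ln}$. Part $(2)$ does not apply to them, and the multiplication formula picks up diagonal corrections. These corrections are of the form $A'[\bfj]$, and I would absorb them into lower terms by expressing $A'[\bfj]$ integrally through $A'[\bfl]\big({H\atop\la}\big)$.

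\textbf{Main obstacle.} Controlling these diagonal-type corrections integrally is the step I expect to be hardest. The plan is to handle it with the explicit multiplication formula for $e_{\afE_{s,s+ln}+\diag(\la)}$ and the binomial identities satisfied by the $\big({H_i\atop t}\big)$.
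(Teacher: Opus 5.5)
Your part $(2)$ is correct; the relevant point is that row $j$ of $k\afE_{i,j}+\diag(\mu)$ contains only its diagonal entry when $\bar i\neq\bar j$. The inclusion $\mathcal G\subseteq\tiafuglz$ is also fine. The paper gives no proof of its own here; it quotes the proposition from Fu's earlier work.

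\textbf{The gap.} It lies in the reverse inclusion in $(1)$, which is the real content of the statement. You only ``expect'' the triangular expansion, and as you formulate it the expansion is false. In the BLM convention you invoke ($A'\prec A$ meaning $M(A')$ is a proper degeneration of $M(A)$), the correction terms in a product of the generators lie \emph{above} $A$, not below. They arise when two moves land on the same tensor position and merge into one longer entry. For instance, for $n\geq3$,
\[
\afE_{1,2}[\bfl]\,\afE_{2,3}[\bfl]=(\afE_{1,2}+\afE_{2,3})[\bfl]+\afE_{1,3}[\bfl].
\]
The imaginary generators $(a\afE_{1,1+n})[\bfl]$ and $(b\afE_{1,1+2n})[\bfl]$ commute, and their product is $\sum_{0\leq c\leq\min(a,b)}\big((a-c)\afE_{1,1+n}+(b-c)\afE_{1,1+2n}+c\afE_{1,1+3n}\big)[\bfl]$. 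Its extra terms are more generic than $A$ whatever the order of the factors, so no ordering such as ``increasing $t-s$'' makes an upward induction on $\preceq$ work.

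The obstacle you single out also does not exist. When all factors are strictly upper triangular, no diagonal corrections occur: with $\bfj=\bfl$, the only $\bfj$-shifting sum in Theorem~\ref{Multiplication Formulas at v=1} needs $a_{h,h-mn}\geq1$. So there are never $A'[\bfj]$ or $\big({H\atop\la'}\big)$ terms to absorb.

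\textbf{The missing idea.} What you need is a direct tensor-space description. Let $X^{(m)}_{s,t}$ denote $\afE_{s,t}$ acting in the $m$-th factor of $\Og_\mbq^{\ot r}$.
\begin{itemize}
\item From the definition of $e_A$, for $A\in\afThnp$ the element $A[\bfl,r]$ acts as $\sum\prod_{(s,t)}\prod_{m\in S_{s,t}}X^{(m)}_{s,t}$. The sum runs over families of \emph{pairwise disjoint} sets $S_{s,t}\subseteq\{1,\ldots,r\}$ with $|S_{s,t}|=a_{s,t}$.
\item In particular $(a\afE_{s,t})[\bfl,r]$ acts as $\sum_{|S|=a}\prod_{m\in S}X^{(m)}_{s,t}$ (compare Lemmas~\ref{xrz(Eij)} and \ref{xrz(Laiml)}).
\item Expand an ordered product of the generators over families that need not be disjoint. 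The disjoint families give exactly $A[\bfl]$.
\item At a position lying in several sets, the composite $\afE_{s,t}\afE_{s',t'}\cdots$ is either $0$ or a single $\afE_{s,t''}$ with $t''>s$.
\item By symmetry under permuting positions, these terms regroup as $\sum c_{A'}A'[\bfl]$. Here $c_{A'}\in\mbn$ is independent of $r$, $A'\in\afThnp$, and $\sum a'_{s,t}<\sum a_{s,t}$.
\end{itemize}
This holds for \emph{any} order of the factors. So induct on the number $\sum_{1\leq s\leq n,\,t\in\mbz}a_{s,t}$ of off-diagonal entries, not on $\preceq$; this gives $A[\bfl]\in\mathcal G$. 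The case $\afThnm$ is identical.
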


\subsection{Proof of the classical Frenkel-Mukhin conjecture}
To establish the Frenkel--Mukhin conjecture in the classical case, we require several technical results. We begin with multiplication formulas from \cite{DDF}.

\begin{Thm}\label{Multiplication Formulas at v=1}
Assume $h\in\mbz$, $m\in \mbz\backslash\{0\}$, $\bfj\in\afmbnn$, and $A\in\afThnpm$. The
following multiplication formulas hold in $\afuglq$
\begin{equation*}
\begin{split}
\afE_{h,h+mn}[\bfl]A[\bfj] &=\sum_{s\not\in\{h,h-mn\}\atop a_{h,s}\geq 1}(a_{h,s+mn}+1)(A+\afE_{h,s+mn}-\afE_{h,s})[\bfj]\\
&\quad+\sum_{0\leq t\leq j_h\atop a_{h,h-mn}\geq 1}(-1)^t\left({j_h\atop t}
\right)(A-\afE_{h,h-mn})[\bfj+(1-t)\afbse_h]\\
&\quad+(a_{h,h+mn}+1)\sum_{0\leq t\leq j_h}\left({j_h\atop
t}\right)(A+\afE_{h,h+mn})[\bfj-t\afbse_h].\\
\end{split}
\end{equation*}
\end{Thm}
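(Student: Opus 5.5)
We prove the formula componentwise. By Theorem~\ref{realization}, an element of $\afuglq$ is determined by its images in all the $\afSrmbq$. So it suffices to fix $r\ge0$ and show that
\[
\afE_{h,h+mn}[\bfl,r]\,A[\bfj,r]
\]
equals the $r$-th component of the right-hand side in $\afSrmbq$. Expanding the definitions gives
\[
\afE_{h,h+mn}[\bfl,r]=\sum_{\la}e_{\afE_{h,h+mn}+\diag(\la)},\qquad A[\bfj,r]=\sum_\mu\mu^\bfj e_{A+\diag(\mu)}.
\]
We therefore need a multiplication formula for $e_{\afE_{h,h+mn}+\diag(\la)}\,e_{A+\diag(\mu)}$ in the $\mbz$-basis $\{e_B\}$ of Green.

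The key step is this basis-level formula, which we derive from the double coset description of $e_B$ via $\afjmath$, in the style of the BLM fundamental multiplication formula. The product is nonzero only if the column weight of the first factor equals the row weight of $A+\diag(\mu)$. In that case the product is a sum over the ways of moving one unit in row $h$. For each column $s$ with $(A+\diag(\mu))_{h,s}\ge1$, one unit is taken from entry $(h,s)$ and placed at $(h,s+mn)$. The coefficient is $(A+\diag(\mu))_{h,s+mn}+1$, which counts the choices of the new position inside the double coset.

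We obtain this formula from the corresponding quantum formula at $v=1$ of \cite{DDF}. Alternatively, one can argue directly that $e_{\afE_{h,h+mn}+\diag(\la)}$ acts on $\Og^{\ot r}$ by replacing one tensor factor $\og_k$ with $k\equiv h$ by $\og_{k+mn}$, summed over positions. This is the image $\afxr(\afE_{h,h+mn})$, and the coefficient then follows by counting tensor positions.

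Next we sum over $\mu$ and sort the terms by $s$, in three cases.
\begin{itemize}
\item If $s\notin\{h,h-mn\}$, the diagonal is unchanged. The coefficient $a_{h,s+mn}+1$ is independent of $\mu$, so these terms give $(a_{h,s+mn}+1)(A+\afE_{h,s+mn}-\afE_{h,s})[\bfj,r]$.
\item If $s=h$, a unit moves from the diagonal entry $\mu_h$ to $(h,h+mn)$. The new diagonal is $\mu'=\mu-\afbse_h$, so the weight becomes $\mu^\bfj=(\mu'_h+1)^{j_h}\cdots$. The binomial expansion $\sum_t\binom{j_h}{t}\mu_h'^{\,j_h-t}$ then yields the third line of the theorem, $(a_{h,h+mn}+1)\sum_t\binom{j_h}{t}(A+\afE_{h,h+mn})[\bfj-t\afbse_h]$. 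The multiplicity of $\afbse_h$-shifts must be reindexed periodically, since $\mu$ is $n$-periodic.
\item If $s=h-mn$, the unit moves from $(h,h-mn)$ onto the diagonal entry $(h,h)$. The coefficient is $\mu_h+1$, where $\mu_h$ is the old diagonal entry. Writing $\mu'=\mu+\afbse_h$, the coefficient is $\mu'_h$ and the weight is $(\mu'_h-1)^{j_h}$. The product $\mu'_h(\mu'_h-1)^{j_h}$, expanded binomially, gives $\sum_t(-1)^t\binom{j_h}{t}\mu_h'^{\,j_h+1-t}$, which is the second line of the theorem.
\end{itemize}

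The main obstacle is to get the coefficients and index ranges of the basis-level formula exactly right in the affine periodic setting. In particular, when $mn$-shifts interact with the periodicity, several positions $s$ in the same row class can contribute, and we must check that $s$ and $s+mn$ never coincide with the diagonal except in the two exceptional cases above. We also need to verify boundary cases where $\mu_h=0$: those terms vanish naturally because $e_B=0$ is not defined for negative entries, and $\mu'^{\,\cdot}$ vanishes when the relevant entry is zero. This makes the sums over $\afLa(n,r-\sg)$ match exactly.
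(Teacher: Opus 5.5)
Your argument is correct. The paper gives no proof of this statement; it simply imports the formula from \cite{DDF}. Your proposal reconstructs the computation behind that citation, so it is more self-contained. The only external input it needs is the basis-level rule
$e_{\afE_{h,h+mn}+\diag(\la)}e_B=\sum_{b_{h,s}\geq1}(b_{h,s+mn}+1)e_{B+\afE_{h,s+mn}-\afE_{h,s}}$,
and that rule can be checked by counting tensor positions, using the description of $e_B$ on $\og_{\bfi}$ via orbits.

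Your steps go through as follows.
\begin{itemize}
\item The componentwise reduction is legitimate: by Theorem~\ref{realization}, $\afxi$ is injective, each $A[\bfj]$ is by definition the tuple $(A[\bfj,r])_r$, and multiplication in $\prod_r\afSrmbq$ is componentwise.
\item For fixed $\mu$, with $B=A+\diag(\mu)$, only the unique $\la$ with $\la+\afbse_h$ equal to the row weight of $B$ contributes.
\item The three cases reproduce the three lines exactly. For $s=h$, the map $\mu\mapsto\mu-\afbse_h$ is a bijection from $\{\mu:\mu_h\geq1\}$ onto $\afLa(n,r-\sg(A)-1)$. For $s=h-mn$, the terms with $\mu'_h=0$ may be added freely because of the factor $\mu'_h$ in $\mu'_h(\mu'_h-1)^{j_h}$; this also handles the edge case $r=\sg(A)-1$.
\end{itemize}

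The \emph{main obstacle} you flag is not really one. The only diagonal position in row $h$ is $(h,h)$. Distinct $s\in\mbz$ give distinct periodicity orbits of entries in row $h$. So there is no overcounting, and the only special cases are $s=h$ and $s+mn=h$.

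There is one small slip in your tensor-space remark. With the paper's convention $\afE_{i,j}\og_k=\og_{i+tn}$ for $k=j+tn$, the operator $\afE_{h,h+mn}$ sends $\og_k$ to $\og_{k-mn}$ for $k\equiv h$, not to $\og_{k+mn}$. This is the direction that moves a unit from $(h,s)$ to $(h,s+mn)$, as your basis formula requires. With $\og_{k+mn}$ you would get the move to $(h,s-mn)$ instead.
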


For $1\leq i\leq n$, $l>0$ and $k\geq 0$ we define refined elements $\La_{i,\pm k,l}$ in $\afuglq$ through
generating functions
\begin{equation}\label{Laikl generating function}
\sum_{k\geq 0}\La_{i,\pm k,l}u^k=\exp\big(-\sum_{k\geq 1}\frac{g_{i,\pm kl}}{k}u^k\big).
\end{equation}
Note that $\La_{i,\pm k}=\La_{i,\pm k,1}$  by definition.
These elements satisfy the following property.
\begin{Lem}\label{Laikl}
We have $\La_{i,0,l}=1$ and
\begin{equation*}
k\La_{i,\pm k,l}=-\sum_{0\leq s\leq k-1}g_{i,\pm(k-s)l}\La_{i,\pm s,l}
\end{equation*}
for $1\leq i\leq n$, $k\geq 1$ and $l>0$.
\end{Lem}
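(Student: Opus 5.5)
The plan is to prove the recursion by differentiating the defining generating function \eqref{Laikl generating function} and comparing coefficients; this is the standard Newton-type identity for an exponential of a power series. Fix $1\leq i\leq n$, $l>0$ and a sign, and work in the formal power series ring $\afuglq[[u]]$. By the relation $g_{i,l}g_{j,m}=g_{j,m}g_{i,l}$ recorded after \eqref{gil}, all the $g_{i,\pm kl}$ with $k\geq1$ commute with one another. Hence the subalgebra they generate is commutative, and the usual rules for $\exp$ of a power series without constant term hold there. In particular, $\exp(F)\exp(-F)=1$ and $\frac{d}{du}\exp(F)=F'\exp(F)$, where
$$F(u)=-\sum_{k\geq1}\frac{g_{i,\pm kl}}{k}u^k.$$

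The first step is the constant term. Since $F$ has zero constant term, the constant term of $\exp(F)$ is $1$, which gives $\La_{i,0,l}=1$.

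For the recursion, write $L(u)=\sum_{k\geq0}\La_{i,\pm k,l}u^k=\exp(F(u))$. Differentiating gives $L'(u)=F'(u)L(u)$, where
$$F'(u)=-\sum_{m\geq1}g_{i,\pm ml}u^{m-1}.$$
On the left, the coefficient of $u^{k-1}$ in $L'$ is $k\La_{i,\pm k,l}$. On the right, the coefficient of $u^{k-1}$ in $F'L$ is $-\sum_{m+s=k,\,m\geq1,\,s\geq0}g_{i,\pm ml}\La_{i,\pm s,l}$. Putting $m=k-s$ with $0\leq s\leq k-1$ turns this into exactly the claimed formula.

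The only point needing care, and the nearest thing to an obstacle, is justifying the identity $\frac{d}{du}\exp(F)=F'\exp(F)$ in a noncommutative ambient algebra. I will handle it by noting that every coefficient involved lies in the commutative $\mbq$-subalgebra generated by the $g_{i,\pm ml}$. Over such a subalgebra, expanding $\exp(F)=\sum_{N}F^N/N!$ termwise gives $(F^N)'=NF^{N-1}F'$, since $F$ and $F'$ commute, and each coefficient involves only finitely many terms.
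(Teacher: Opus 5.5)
Your argument is correct. Differentiating $\sum_{k\geq0}\La_{i,\pm k,l}u^k=\exp(F(u))$ and comparing coefficients of $u^{k-1}$ gives the stated recursion, and since the $g_{i,\pm ml}$ all commute, $\frac{d}{du}\exp(F)=F'\exp(F)$ holds coefficientwise, which closes the only potential gap. The paper states this lemma without proof, treating it as an immediate consequence of the generating function \eqref{Laikl generating function}; your computation is the standard verification it takes for granted.
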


The following identity is a key combinatorial identity that relates these $\Lambda$-elements to the BLM basis.

\begin{Prop}\label{Laikl formula}
For $1\leq i\leq n$, $k\geq 0$ and $l>0$ we have
$\La_{i,\pm k,l}=(-1)^k(k\afE_{i,i\pm ln})[\bfl]$.
\end{Prop}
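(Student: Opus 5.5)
We prove $\La_{i,\pm k,l}=(-1)^k(k\afE_{i,i\pm ln})[\bfl]$ by induction on $k$, using the recursion of Lemma~\ref{Laikl} together with the multiplication formula of Theorem~\ref{Multiplication Formulas at v=1} with $h=i$ and $m=\pm l$. For $k=0$ both sides equal $1$, since $\La_{i,0,l}=1$ and $(0)[\bfl]=1$ in $\afVnq$. Set $X_s:=(s\afE_{i,i\pm ln})[\bfl]$. Then $X_1=g_{i,\pm l}$, and the claim becomes
\[
kX_k=\sum_{0\le s\le k-1}(-1)^{k-s-1}g_{i,\pm(k-s)l}X_s .
\]
By Lemma~\ref{Laikl}, it suffices to verify this identity once the induction hypothesis $\La_{i,\pm s,l}=(-1)^sX_s$ for $s<k$ is available.

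The first step is to compute the products $g_{i,\pm ql}X_s=\afE_{i,i\pm qln}[\bfl]\,(s\afE_{i,i\pm ln})[\bfl]$ for $q\ge1$ using Theorem~\ref{Multiplication Formulas at v=1}, with $h=i$, $m=\pm ql$ and $A=s\afE_{i,i\pm ln}$. In row $i$, the only nonzero entry of $A$ sits in column $s_0=i\pm ln$.
\begin{itemize}
\item If $q\ge2$, then $s_0\notin\{i,i\mp qln\}$. The first sum therefore contributes $s\,(s\afE_{i,i\pm ln}-\afE_{i,i\pm ln}+\afE_{i,i\pm(q+1)ln})[\bfl]$ when $s\ge1$.
\item The second sum vanishes, because $a_{i,i\mp qln}=0$.
\item The third sum contributes $(s\afE_{i,i\pm ln}+\afE_{i,i\pm qln})[\bfl]$, since $\bfj=\bfl$ forces $t=0$.
\item If $q=1$, the first sum is empty and the third term is $(s+1)X_{s+1}$.
\end{itemize}
Consequently the right-hand side of the claimed identity is a signed combination of elements $B[\bfl]$, where $B$ has row-$i$ off-diagonal entries only in columns $i\pm ln$ and $i\pm qln$ for $q\ge 2$. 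We must show that all such terms cancel except $kX_k$.

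To organize this cancellation we need the product $g_{i,\pm ql}$ with more general matrices. The reason is that $X_s$ alone does not close up: the products above produce matrices with two distinct nonzero entries. I would therefore avoid tracking general products and instead compare generating functions. It is cleaner to work with the monomial description: for $A$ supported on row $i$ (and its periodic translates) with entries $a_{i,i\pm pln}=c_p$, the third and first terms of Theorem~\ref{Multiplication Formulas at v=1} say that $g_{i,\pm ql}$ acts on $A[\bfl]$ like a ``creation'' operator $c_q\mapsto c_q+1$ with weight $(c_q+1)$, plus ``shift'' terms $c_p\mapsto c_p-1,\ c_{p+q}\mapsto c_{p+q}+1$ with weight $c_{p+q}+1$.

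This is precisely the action of multiplication by $x_q$ on the basis of divided-power monomials $\prod_p x_p^{c_p}/c_p!$ modulo the relation ``$x_p x_q$ contributes to $x_{p+q}$''. More precisely, it matches the identification of the elements $A[\bfl]$ with $\prod_p g_{i,\pm pl}^{(c_p)}$ corrected by lower terms, in the same way as the symmetric-function identity between elementary symmetric functions $e_k$ and power sums $p_q$ via $\exp(-\sum p_q u^q/q)$. Concretely, I would prove the auxiliary formula
\[
\sum_{k\ge0}(-1)^k X_k u^k=\exp\Big(-\sum_{q\ge1}\frac{g_{i,\pm ql}}{q}u^q\Big)
\]
by checking that both sides satisfy the same first-order differential equation in $u$. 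The derivative of the right-hand side equals $-\big(\sum_q g_{i,\pm ql}u^{q-1}\big)\cdot(\text{RHS})$, and this is exactly the content of Lemma~\ref{Laikl}. For the left-hand side, one must verify Newton's identity $kX_k=\sum_s(-1)^{k-s-1}g_{i,\pm(k-s)l}X_s$ directly from the multiplication formula.

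The main obstacle is that cancellation in the Newton identity. The terms with a two-entry matrix $(s-1)\afE_{i,i\pm ln}+\afE_{i,i\pm(q+1)ln}$ occur twice: once from $g_{i,\pm ql}X_s$ via the shift term, with coefficient $s$ and sign $(-1)^{k-s-1}$ where $q=k-s$, and once from $g_{i,\pm(q+1)l}X_{s-1}$ via the creation term, with coefficient $1$ and opposite sign $(-1)^{k-s}$. The coefficients $s$ versus $1$ do not match, so I expect this naive telescoping to fail. The fix would be to verify the multiplication formula coefficients carefully, since $e_A$ normalizations may give $a_{h,s+mn}+1$ equal to $1$ here. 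If it still fails, I would fall back to computing both sides in the representations $\afSrmbq$ on $\Og_\mbq^{\ot r}$: there $g_{i,\pm ql}$ acts as $\sum_{j}\vep^{\mp ql}$ on the tensor factors equal to $\og_i$ (up to translation), and $X_k$ acts as the $k$-th elementary symmetric operator in the $\vep^{\mp l}$-shifts. The identity then reduces to Newton's identity $e_k\leftrightarrow p_q$ for commuting operators, and injectivity of $\afxi$ (Theorem~\ref{realization}) finishes the proof.
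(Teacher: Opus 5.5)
Your overall plan is exactly the paper's: induction on $k$ through Lemma~\ref{Laikl}, reduction to the Newton-type identity $kX_k=\sum_{0\leq s\leq k-1}(-1)^{k-s-1}g_{i,\pm(k-s)l}X_s$, and evaluation of each product with Theorem~\ref{Multiplication Formulas at v=1}. The gap is that you evaluate the first sum of that theorem wrongly, and this is the only reason you conclude that the telescoping fails.

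There are two errors, both for $A=s\afE_{i,i\pm ln}$ ($s\geq1$) and $m=\pm ql$.
\begin{itemize}
\item The only column $c$ with $a_{i,c}\geq1$ is $c=i\pm ln$. This column never lies in the excluded set $\{h,h-mn\}=\{i,i\mp qln\}$, for any $q\geq1$; for $q=1$ it equals $h+mn$, not $h-mn$. So the first sum is also present when $q=1$, contrary to your last bullet.
\item The weight of that term is $a_{h,c+mn}+1=a_{i,i\pm(q+1)ln}+1=1$, the \emph{target} entry plus one, not $s$. This is exactly the weight $c_{p+q}+1$ in your own ``shift'' description.
\end{itemize}
The correct product is
\[
g_{i,\pm ql}X_s=\bigl((s-1)\afE_{i,i\pm ln}+\afE_{i,i\pm(q+1)ln}\bigr)[\bfl]+c_{q,s}\bigl(s\afE_{i,i\pm ln}+\afE_{i,i\pm qln}\bigr)[\bfl],\qquad c_{q,s}=\begin{cases}1,&q\geq2,\\ s+1,&q=1,\end{cases}
\]
which is the formula the paper uses, with $q=k-s$.

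With this, the pairing you identified cancels exactly. For $1\leq s\leq k-1$, the shift term of $g_{i,\pm(k-s)l}X_s$ and the creation term of $g_{i,\pm(k-s+1)l}X_{s-1}$ are both $\bigl((s-1)\afE_{i,i\pm ln}+\afE_{i,i\pm(k-s+1)ln}\bigr)[\bfl]$. Both have coefficient $1$, with signs $(-1)^{k-s-1}$ and $(-1)^{k-s}$; for $s=1$ the partner is $g_{i,\pm kl}X_0=g_{i,\pm kl}$. The only survivor is the $q=1$ creation term $kX_k$ coming from $s=k-1$. As submitted, however, the proposal does not prove the statement: its central computation is incorrect, and it ends by hedging between a ``fix'' and a fallback without carrying either out.

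The fallback is a legitimate alternative route, but its decisive step is only asserted. Its two ingredients are sound:
\begin{itemize}
\item the images of $\La_{i,\pm k,l}$ on tensor space are signed elementary symmetric expressions in the commuting one-factor shift operators (this is the computation of Lemma~\ref{xrz(Laiml)}, which relies only on Lemma~\ref{Laikl});
\item equality in every $\afSrmbq$ forces equality in $\afuglq$, by Theorem~\ref{realization}.
\end{itemize}
What you have not shown is that $(k\afE_{i,i\pm ln})[\bfl,r]=\sum_{\mu}e_{k\afE_{i,i\pm ln}+\diag(\mu)}$ acts on $\Og_\mbq^{\ot r}$ as the same $k$-th elementary symmetric operator. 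That requires unpacking how $e_A$ acts on tensor space, for instance via the orbit description behind $\afjmath$ and Theorem~\ref{res dual}. Done properly, this trades the matrix bookkeeping for a tensor-space computation, but once the coefficient is corrected it is unnecessary.
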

\begin{proof}
We argue by induction on $k$. The case where $k=0,1$ is trivial.
For $k\geq 2$, the induction hypothesis and Lemma \ref{Laikl} yield
\begin{equation*}
k\La_{i,\pm k,l}=-\sum_{0\leq s\leq k-1}g_{i,\pm(k-s)l}(-1)^s(s\afE_{i,i\pm ln})[\bfl].
\end{equation*}
Applying  Theorem \ref{Multiplication Formulas at v=1}, we have for $1\leq s\leq k-1$
\begin{equation*}
\begin{split}
g_{i,\pm(k-s)l}(s\afE_{i,i\pm ln})[\bfl]
&=((s-1)\afE_{i,i\pm ln}+\afE_{i,i\pm(k+1-s)ln})[\bfl]\\
&\qquad+(\dt_{s,k-1}(k-1)+1)
(s\afE_{i,i\pm ln}+\afE_{i,i\pm(k-s)ln})[\bfl].
\end{split}
\end{equation*}
It follows that
\begin{equation*}
\begin{split}
k\La_{i,\pm k,l}&=-g_{i,\pm kl}-\sum_{1\leq s\leq k-1}(-1)^s g_{i,\pm(k-s)l}(s\afE_{i,i\pm ln})[\bfl]\\
&=-g_{i,\pm kl}-\sum_{1\leq s\leq k-1}(-1)^s
((s-1)\afE_{i,i\pm ln}+\afE_{i,i\pm(k+1-s)ln})[\bfl]\\
&\qquad-\sum_{1\leq s\leq k-2}(-1)^s(s\afE_{i,i\pm ln}+\afE_{i,i\pm(k-s)ln})[\bfl]
-(-1)^{k-1}k(k\afE_{i,i\pm ln})[\bfl].
\end{split}
\end{equation*}
Now observe that
\begin{equation*}
\begin{split}
&\quad -g_{i,\pm kl}-\sum_{1\leq s\leq k-2}(-1)^s(s\afE_{i,i\pm ln}+\afE_{i,i\pm(k-s)ln})[\bfl]\\
&=-\sum_{0\leq s\leq k-2}(-1)^s(s\afE_{i,i\pm ln}+\afE_{i,i\pm(k-s)ln})[\bfl]\\
&=\sum_{1\leq s\leq k-1}(-1)^s
((s-1)\afE_{i,i\pm ln}+\afE_{i,i\pm(k+1-s)ln})[\bfl].
\end{split}
\end{equation*}
Substituting this into the previous expression for $k\La_{i,\pm k,l}$, we see that the two summations cancel exactly, leaving only the final term
$k\La_{i,\pm k,l}=(-1)^kk(k\afE_{i,i\pm ln})[\bfl]$. Dividing both sides by $k$ yields the desired identity.
\end{proof}

We now prove the classical case of the Frenkel-Mukhin conjecture in \cite[\S 7.2]{FM}.

\begin{Thm}\label{generators2}
$(1)$ We have $\afuglz=\tiafuglz$ and $\afuglz$ is a $\mbz$-Hopf subalgebra of $\afuglq$.

$(2)$ For $r\in\mbn$, we have $\afxr(\afuglz)=\afSrmbz$.
\end{Thm}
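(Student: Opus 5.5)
The equality $\afuglz=\tiafuglz$ will be proved by two inclusions, comparing generators via Proposition \ref{generators}. The Hopf statement and part (2) will then follow from the structure of $\tiafuglz$.

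For $\afuglz\han\tiafuglz$, I check that each defining generator of $\afuglz$ lies in $\tiafuglz$. For $\bar i\neq\bar j$, Proposition \ref{generators}(2) gives $(\afE_{i,j})^{(k)}=(k\afE_{i,j})\{\bfl\}$. I would argue that $(k\afE_{i,j})\{\bfl\}$ equals $(k\afE_{i,j})[\bfl]$: since $k\afE_{i,j}$ has no diagonal part, the normalized element should coincide with the BLM basis element. Hence $(\afE_{i,j})^{(k)}\in\tiafuglz$. The elements $\big({H_i\atop t}\big)$ lie in $\tiafuglz$ by definition, taking $A=B=0$. For the loop generators, Proposition \ref{Laikl formula} with $l=1$ gives $\La_{i,\pm k}=(-1)^k(k\afE_{i,i\pm n})[\bfl]$. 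The matrix $k\afE_{i,i\pm n}$ lies in $\afThnp$ or $\afThnm$, so $\La_{i,\pm k}\in\tiafuglz$.

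For $\tiafuglz\han\afuglz$, by Proposition \ref{generators}(1) it suffices to show $(k\afE_{i,j})[\bfl]\in\afuglz$ for all $i\neq j$. When $\bar i\neq\bar j$ this is the divided power $(\afE_{i,j})^{(k)}$. The remaining case is $j=i+ln$ with $l\neq0$. By Proposition \ref{Laikl formula}, $(k\afE_{i,i\pm ln})[\bfl]=(-1)^k\La_{i,\pm k,l}$, so it suffices to show $\La_{i,\pm k,l}\in\afuglz$.

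This last point is the main obstacle. Writing $\La_i^\pm(u)=\exp(-\sum_k g_{i,\pm k}u^k/k)$, the series $\sum_k\La_{i,\pm k,l}u^k$ is obtained by keeping only the exponents $g_{i,\pm kl}$. Over $\mbq[[u]]$ it equals
\[\prod_{\zeta^l=1}\La_i^\pm(\zeta u^{1/l}).\]
This product is the symmetric-function statement $e_k(x^l)\in\mbz[e_1,e_2,\ldots]$, where the $\La_{i,\pm k}$ play the role of signed elementary symmetric functions in formal variables whose power sums are the $g_{i,\pm k}$. Since power sums of $l$-th powers of the roots are integral polynomials in the elementary symmetric functions, and the $g_{i,\pm k}$ commute, $\La_{i,\pm k,l}$ is an integral polynomial in the $\La_{i,\pm s}$. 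Therefore it lies in $\afuglz$. This completes $\afuglz=\tiafuglz$.

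For the Hopf property, I would compute the comultiplication on the generators. Since $\Delta(x)=x\ot1+1\ot x$ on $\afgl$, the divided powers and the binomials $\big({H_i\atop t}\big)$ have integral coproducts. Because the $g$'s are primitive and commuting, $\Delta\La_i^\pm(u)=\La_i^\pm(u)\ot\La_i^\pm(u)$, which is integral. The antipode sends $\La_i^\pm(u)$ to $\La_i^\pm(u)^{-1}$, whose coefficients are the complete symmetric analogues and again integral. Hence $\afuglz$ is a $\mbz$-Hopf subalgebra.

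For part (2), note that $\afxr(A[\bfl]\big({H\atop\la}\big)B[\bfl])$ lies in $\afSrmbz$, because $\afxr\big({H\atop\la}\big)$ is a $\mbz$-combination of the idempotents $e_{\diag(\mu)}$. Conversely, the triangular relation from \cite{Fu13,Fu19} expresses $e_A$, for $A\in\afThnr$, as a leading term $\afxr(A^+[\bfl]\big({H\atop\la}\big)A^-[\bfl])$ plus lower terms with respect to the BLM order. Induction on that order then gives surjectivity onto $\afSrmbz$.
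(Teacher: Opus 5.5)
Your proof is correct. For the equality $\afuglz=\tiafuglz$ it follows the paper's skeleton: Propositions \ref{generators} and \ref{Laikl formula} reduce everything to showing that $\La_{i,\pm k,l}$ lies in the subring generated by the $\La_{i,\pm s}$.

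\textbf{The integrality step.} Where the paper cites \cite[Lem.~1.6]{JM07}, you prove it yourself. The ring map $\mbq[p_1,p_2,\ldots]\to\afuglq$, $p_k\mapsto g_{i,\pm k}$, is legitimate because the $g_{i,\pm k}$ commute. It sends $(-1)^ke_k\mapsto\La_{i,\pm k}$ and $(-1)^ke_k(x_1^l,x_2^l,\ldots)\mapsto\La_{i,\pm k,l}$. Since $e_k(x^l)\in\mbz[e_1,e_2,\ldots]$ by the fundamental theorem of symmetric functions, you are done.

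Two small points in how you wrote this. Your one-line justification via ``power sums of $l$-th powers'' does not by itself give integrality: Newton's identities would reintroduce denominators. The roots-of-unity product also needs $\zeta$ and $u^{1/l}$ adjoined, so it does not live in $\mbq[[u]]$. Neither is needed once you apply the fundamental theorem directly to $e_k(x^l)$.

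\textbf{The Hopf property.} This is where you genuinely diverge. The paper transports \cite[Cor.~9.3]{Fu18} for $\tiafuglz$ across the equality. You instead check the comultiplication, antipode and counit on the defining generators of $\afuglz$. This works because classically the $g_{i,\pm k}$ are primitive and commuting, which gives the following.
\begin{itemize}
\item $\Delta\La_i^\pm(u)=\La_i^\pm(u)\ot\La_i^\pm(u)$.
\item $S\La_i^\pm(u)=\La_i^\pm(u)^{-1}$, a power series with constant term $1$ and hence integral coefficients.
\item $S\binom{H_i}{t}=(-1)^t\binom{H_i+t-1}{t}$, an integral combination of the $\binom{H_i}{s}$.
\end{itemize}
You implicitly use that $\afuglz\ot_\mbz\afuglz\to\afuglq\ot\afuglq$ is injective; this holds because $\afuglz$ is torsion-free.

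Your route is more elementary. It shows that in the classical case the Hopf half of the Frenkel--Mukhin statement is essentially immediate from the defining generators, and that the real content is the equality with the BLM form. The paper's route needs no computation but depends on \cite{Fu18}.

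\textbf{Part (2).} Both arguments rest on \cite{Fu13}. The paper cites Th.~6.7 directly. You sketch the mechanism behind it: first $\afxr(\tiafuglz)\subseteq\afSrmbz$, then the triangular relation $\afxr\bigl(A^+[\bfl]\binom{H}{\la}A^-[\bfl]\bigr)=e_A+(\text{lower terms})$, then induction on the BLM order.
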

\begin{proof}
By Proposition \ref{generators} and \ref{Laikl formula},  the algebra $\tiafuglz$ is generated by the elements $(\afE_{i,j})^{(k)}$, $\La_{i,\pm k,l}$ and $\big({H_i\atop t}\big)$ for $1\leq i\leq n$, $j\in\mbz$, $\bar i\not=\bar j$, $l>0$, $k,t\in\mbn$. Applying  \cite[Lem. 1.6]{JM07} we conclude that $\afuglz=\tiafuglz$.
Hence by \cite[Cor. 9.3]{Fu18} and \cite[Th. 6.7]{Fu13} we see that  $\afuglz$ is a $\mbz$-Hopf subalgebra of $\afuglq$ and $\afxr(\afuglz)=\afSrmbz$.
\end{proof}

This result provides a complete proof of the Frenkel-Mukhin conjecture in the classical setting, verifying that their proposed integral form indeed possesses the desired Hopf algebra structure.

\subsection{Base change to general coefficient rings}
The integral form $\afuglz$ enables the extension of our construction to arbitrary commutative rings.
Let $\field$ be a commutative ring with unity, and define
\begin{equation}\label{Unk}
\afuglk=\afuglz\ot_\mbz\field.
\end{equation}
We refer to $\afuglk$ as the hyper loop algebra of $\afgl$ over $\field$.
We shall denote the images of $(\afE_{i,j})^{(k)}$, $\La_{i,\pm k,l}$, $\big({H_i\atop k}\big)$, $A[\bfj]$, etc. in $\afuglk$ by the same letters.

Recall from \eqref{afSrk} that the affine Schur algebra over $\field$ satisfies $\afSrk\cong\afSrmbz\otimes\field$. We will identify $\afSrk$ with $\afSrmbz\ot \field$. Again,
by abuse of notation, we shall denote the images of $e_A,A[\la,r]$, etc. in $\afSrk$ by the same letters.

By Theorem \ref{generators2} (2), the restriction of $\afxr$ to $\afuglz$ gives a surjective $\mbz$-algebra homomorphism
\begin{equation*}
\afxrz:\afuglz\tra\afSrmbz.
\end{equation*}
Base change then yields a surjective algebra homomorphism
\begin{equation}\label{afxrk}
\afxrk:=\afxrz\ot 1:\afuglk\tra\afSrk.
\end{equation}

\section{Affine Schur--Weyl reciprocity for loop semigroups}
This section establishes an affine analogue of Schur--Weyl reciprocity for  $\Gk$ over the commutative ring $\field$. We construct an explicit algebra homomorphism from the   algebra $\field \Gk$ to the affine Schur algebra $\afSrk$ and prove its surjectivity under mild conditions on $\field$.
Our results extend the classical finite field case studied by Benson--Doty \cite{BD} to the affine setting and provide explicit combinatorial formulas generalizing the Carter--Lusztig approach \cite{CL}.

Although Theorem~\ref{surjective2} is an affine analogue of the classical
Schur--Weyl reciprocity, its proof requires new arguments arising
from the periodic structure of the extended affine symmetric group
and the infinite-dimensional nature of the loop module
$\Omega_{\mpk}^{\otimes r}$.
In particular, the Laurent polynomial structure of the loop
semigroup $\widehat G_{\mpk}(n)$ is essential for realizing the affine
Schur algebra as a quotient of its semigroup algebra via the
natural action on the affine tensor space.
\subsection{The loop semigroup action on affine tensor space}
Let $M_n(\field[\vep,\vep^{-1}])$ be the set of
$n\times n$ matrices with coefficients in the Laurent polynomial ring $\field[\vep,\vep^{-1}]$, where $\vep$ is an indeterminate.
The $\mpk$-linear map
\begin{equation}\label{vartheta}
\vartheta:M_n(\mpk[\vep,\vep^{-1}])\lra \afMnk,\,\,\,E_{i,j} \vep^l\longmapsto \afE_{i,j+ln}, \,1\le i,j\le n,l\in\mbz
\end{equation}
is a ring isomorphism, where $E_{i,j}$ denotes the elementary matrix $(\delta_{k,i}\delta_{j,l})_{1\le
k,l\le n}$ in $M_n(\field[\vep,\vep^{-1}])$. We will consistently  identify $\afMnk$ with $M_n(\mpk[\vep,\vep^{-1}])$ via this isomorphism.

Let $\afGL:=GL_n(\field((\vep)))$ be the group of invertible
$n\times n$ matrices with coefficients in $\field((\vep))$, where $\field((\vep))$ is the ring of formal Laurent series in $\vep$ with coefficients in $\field$.
Let \begin{equation}\label{the definition of G}
\Gk=\{A\in \afGL\mid a_{i,j}\in\field[\vep,\vep^{-1}],\,\forall i,j\}
=\afGL\cap M_n(\field[\vep,\vep^{-1}]).
\end{equation}
Let $\field \Gk$ be the semigroup $\field$-algebra of $\Gk$. Recall that $\Ogk=\mpk[\vep,\vep^{-1}]^n$ and
$\Ogk^{\ot r}$ carries a natural right $\affSr$-module structure defined in \eqref{action of affine symmetric group}. $\Gk$ acts on $\Ogk$
by matrix multiplication, and thus diagonally on  $\Ogk^{\ot r}$.
Clearly,
the action of $\Gk$ and $\affSr$ on $\Ogk^{\ot r}$ commute. Hence  $\Ogk^{\ot r}$ is a $\Gk$-$\affSr$-bimodule.
This bimodule structure  induces a $\field$-algebra homomorphism
\begin{equation}\label{afzrk}
\afzrk:\field \Gk\ra\afSrk.
\end{equation}
Our main goal is to prove the surjectivity of this map under appropriate conditions on $\field$.

\subsection{Generalization of the Carter--Lusztig Formula}
We now establish affine analogues of the Carter--Lusztig formulas \cite[(25)]{CL}, which give explicit combinatorial descriptions of the images under $\afzrk$ of certain natural elements in $\field \Gk$. In what follows, we define a family of operators on the tensor space $\Ogz^{\ot r}$ that will play a key role in these analogues.

For $i,j\in\mbz$ and a strictly increasing sequence $\la\in\mbn^m$ (i.e., $1\leq\la_s<\la_t\leq r$ for $s<t$), we define
$$F_{i,j,\la}=1^{\ot\la_1-1}\ot\afE_{i,j}\ot 1^{\la_2-\la_1-1}\ot\afE_{i,j}
\ot\cdots\ot 1^{\ot\la_m-\la_{m-1}-1}\ot\afE_{i,j}\ot 1^{\ot r-\la_m}\in\End_\mbz(\Ogz^{\ot r}).$$
More generally, for $\mu\in\mbn^m$ such that $1\leq\mu_s\leq r$ and $\mu_s\not=\mu_t$ whenever $s\not=t$, there exists a permutation $\sg\in\fS_m$ such that
$\la=\mu\sg=(\mu_{\sg(1)},\mu_{\sg(2)},\ldots,\mu_{\sg(m)})$
 is strictly increasing.  We then define $$F_{i,j,\mu}=F_{i,j,\la}\in\End_\mbz(\Ogz^{\ot r}).$$

\begin{Lem}\label{xrz(Eij)}
Let $i,j\in\mbz$ with $\bar i\not=\bar j$. Then we have
$$\afxrz((\afE_{i,j})^{(m)})(w)=
\sum_{\la\in\mbn^m\atop 1\leq\la_1<\la_2<\cdots<\la_m\leq r}
F_{i,j,\la}(w)$$
for $m\geq 1$ and $w\in\Ogz^{\ot r}$.
\end{Lem}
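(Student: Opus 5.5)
The plan is to compute $\afxrz((\afE_{i,j})^{(m)})$ as the action of $(\afE_{i,j})^m/m!$ on $\Ogz^{\ot r}$. This works because $\afxr$ is induced by the left $\afuglq$-action on the tensor space. Restricting to $\afuglz$ and to the lattice $\Ogz^{\ot r}\subseteq\Og_\mbq^{\ot r}$ is harmless, since the images lie in $\afSrmbz$ by Theorem~\ref{generators2}(2). So the claim is an identity of operators on $\Og_\mbq^{\ot r}$, and it suffices to verify it on each basis vector $\og_\bfi$.

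\emph{Step 1: expand the $m$-th power.} Since $\afE_{i,j}$ is primitive, it acts on $\Og_\mbq^{\ot r}$ as
\[
\Delta^{(r)}(\afE_{i,j})=\sum_{k=1}^r X_k,\qquad X_k:=1^{\ot k-1}\ot\afE_{i,j}\ot 1^{\ot r-k}.
\]
The operators $X_1,\dots,X_r$ act in different tensor factors, so they pairwise commute. The multinomial theorem then gives
\[
(\afE_{i,j})^m=\sum_{m_1+\cdots+m_r=m}\frac{m!}{m_1!\cdots m_r!}\,X_1^{m_1}\cdots X_r^{m_r}.
\]

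\emph{Step 2: nilpotency on a single factor.} Because $\bar i\neq\bar j$, the square of $\afE_{i,j}$ vanishes on $\Og_\mbq$. Indeed, $\afE_{i,j}\og_k$ is nonzero only when $k\equiv j \pmod n$, and then it equals $\og_{i+tn}$. Since $i+tn\not\equiv j\pmod n$, applying $\afE_{i,j}$ a second time gives $0$. (Alternatively, $\afE_{i,j}^2=0$ already in the matrix ring $\afMnk$.) Hence $X_k^2=0$, and only the terms with every $m_k\in\{0,1\}$ survive. Each surviving term has multinomial coefficient $m!$, so
\[
(\afE_{i,j})^m=m!\sum_{1\le\la_1<\cdots<\la_m\le r}X_{\la_1}\cdots X_{\la_m}.
\]

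\emph{Step 3: identify the summands and conclude.} The product $X_{\la_1}\cdots X_{\la_m}$ is exactly $F_{i,j,\la}$. Dividing by $m!$ in $\End_\mbq(\Og_\mbq^{\ot r})$ gives the stated formula. Both sides preserve $\Ogz^{\ot r}$, so the identity restricts to $\Ogz^{\ot r}$.

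The only real point is Step 2: the hypothesis $\bar i\neq\bar j$ is what makes $\afE_{i,j}$ square to zero in each tensor factor. The remaining care is bookkeeping: one should note that the $\afuglq$-action on $\Og_\mbq^{\ot r}$ is via the iterated coproduct with $\afE_{i,j}$ primitive, and this is the same action through which the homomorphism $\afxr$ of \eqref{afxr} was defined.
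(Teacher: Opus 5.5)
Your proof is correct and is essentially the paper's argument: both rest on $\afE_{i,j}$ acting on $\Og_\mbq^{\ot r}$ as $\sum_k X_k$ together with $(\afE_{i,j})^2\Og_\mbq=0$ for $\bar i\neq\bar j$. The paper organizes the counting as an induction on $m$, using $(\afE_{i,j})^{(m)}=\frac1m\afE_{i,j}(\afE_{i,j})^{(m-1)}$ and the count $|X_\mu|=m$, while you obtain the factor $m!$ in one step from the multinomial theorem for the commuting operators $X_k$.
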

\begin{proof}
We  proceed by induction on $m$. The case where $m=1$ is trivial.
Assume $m\geq 2$ and that the formula holds for $m-1$. By the induction hypothesis we have
\begin{equation*}
\begin{split}
\afxrz((\afE_{i,j})^{(m)})(w)&=\frac{1}{m}\afE_{i,j}(\afxrz((\afE_{i,j})^{(m-1)})(w))
=\frac{1}{m}\afE_{i,j}\sum_{\la\in\mbn^{m-1}\atop 1\leq\la_1<\la_2<\cdots<\la_{m-1}\leq r}
F_{i,j,\la}(w).
\end{split}
\end{equation*}
Since $(\afE_{i,j})^2\Ogz=0$, we conclude that
\begin{equation*}
\begin{split}
\afxrz((\afE_{i,j})^{(m)})(w)&=\frac{1}{m}
\sum_{\tiny\mbox{$\begin{array}{c}
\la\in\mbn^{m-1} \\ 1\leq\la_1<\la_2<\cdots<\la_{m-1}\leq r
\\ 1\leq k\leq r,\,k\not=\la_s,\,\forall s\end{array}$}} \!\!
F_{i,j,(k,\la)}(w)\\
&=\frac{1}{m}\sum_{\mu\in\mbn^m\atop 1\leq\mu_s<\mu_t \leq r,\,\forall s<t}
|X_\mu| F_{i,j,\mu}(w)
\end{split}
\end{equation*}
where $X_\mu$ is the set of the elements $(k,\la)\in\mbn^m$ such that $\la\in\mbn^{m-1}$, $1\leq \la_1<\la_2<\cdots<\la_{m-1}\leq r$, $1\leq k\leq r$, $k\not=\la_s$, $\forall s$ and   $\{\mu_1,\mu_2,\ldots,\mu_{m}\}=\{k,\la_1,\la_2,\ldots,\la_{m-1}\}$. Since $|X_\mu|=m$ for any $\mu$ we conclude that $$\afxrz((\afE_{i,j})^{(m)})(w)=\sum_{\mu\in\mbn^m\atop 1\leq\mu_1<\mu_2<\cdots<\mu_m\leq r} F_{i,j,\mu}(w).$$
The proof is completed.
\end{proof}

\begin{Lem}
Let $i\in\mbz$. Then we have
$$\afxrz\bigg(\bigg({H_i\atop m}\bigg)\bigg)(w)=
\sum_{\la\in\mbn^m\atop 1\leq\la_1<\la_2<\cdots<\la_m\leq r}
F_{i,i,\la}(w)$$
for $m\geq 1$ and $w\in\Ogz^{\ot r}$.
\end{Lem}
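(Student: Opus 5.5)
Since $\afE_{i,i}$ is an idempotent-type operator on $\Ogz$ rather than nilpotent, the induction in Lemma \ref{xrz(Eij)} does not carry over verbatim. I will instead compute everything on basis vectors. Write $H_i=\afE_{i,i}$. By definition of the natural module, $\afE_{i,i}$ acts on $\Ogz$ as the projection $e$ sending $\og_k\mapsto\og_k$ if $\bar k=\bar i$ and $\og_k\mapsto 0$ otherwise; note $e^2=e$. Then $H_i$ acts on $\Ogz^{\ot r}$ via the coproduct as $\sum_{s=1}^r 1^{\ot s-1}\ot e\ot 1^{\ot r-s}$. Since $\afxrz$ is the restriction of $\afxr$, it suffices to compute the action of $\big({H_i\atop m}\big)$ on $\Og_\mbq^{\ot r}$ and note that the answer is integral.

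Fix a basis vector $\og_\bfk=\og_{k_1}\ot\cdots\ot\og_{k_r}$ and let $c=|\{s\mid \bar k_s=\bar i\}|$. Each summand of $H_i$ fixes or kills $\og_\bfk$, so $H_i\og_\bfk=c\,\og_\bfk$. Hence
\[
\bigg({H_i\atop m}\bigg)\og_\bfk=\frac{c(c-1)\cdots(c-m+1)}{m!}\og_\bfk=\bigg({c\atop m}\bigg)\og_\bfk .
\]
On the other side, $F_{i,i,\la}$ is the tensor product of copies of $e$ in positions $\la_1,\dots,\la_m$ and identities elsewhere. So $F_{i,i,\la}\og_\bfk=\og_\bfk$ if $\bar k_{\la_s}=\bar i$ for all $s$, and $F_{i,i,\la}\og_\bfk=0$ otherwise. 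Summing over strictly increasing $\la$ therefore counts the $m$-element subsets of the $c$ positions congruent to $i$, giving $\big({c\atop m}\big)\og_\bfk$. The two sides agree on a $\mbz$-basis of $\Ogz^{\ot r}$, and both are $\mbz$-linear, so they agree for all $w$.

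Alternatively, one could run an induction on $m$ paralleling Lemma \ref{xrz(Eij)}, using $\big({H_i\atop m}\big)=\frac{1}{m}(H_i-m+1)\big({H_i\atop m-1}\big)$ together with $e^2=e$. The $e^2=e$ terms then produce the correction that the $-(m-1)$ cancels. The diagonal computation above is cleaner, so I would use it.

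The only point needing care is confirming that $H_i$ acts through the coproduct $\Delta(x)=x\ot1+1\ot x$ as a sum of commuting diagonal projections. This is immediate from the definition of the bimodule structure on $\Og_\mbq^{\ot r}$, and no step presents a real obstacle.
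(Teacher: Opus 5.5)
Your proof is correct, but it takes a different route from the paper. The alternative you sketch in your third paragraph is exactly what the paper does. It inducts on $m$ in parallel with Lemma \ref{xrz(Eij)}, writing $\big({H_i\atop m}\big)=\frac{H_i-m+1}{m}\big({H_i\atop m-1}\big)$. It then uses $H_i^2=H_i$ on $\Ogz$ to get
\[
H_iF_{i,i,\la}=(m-1)F_{i,i,\la}+\sum_{1\leq k\leq r,\,k\ne\la_s}F_{i,i,(k,\la)}
\]
for strictly increasing $\la\in\mbn^{m-1}$, so that the $-(m-1)$ cancels the diagonal contribution. Finally it counts that each strictly increasing $\mu\in\mbn^m$ arises from exactly $m$ pairs $(k,\la)$.

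Your main argument replaces this bookkeeping with a single observation: both sides are diagonal in the basis $\{\og_\bfk\}$, with common eigenvalue $\binom{c}{m}$, where $c=|\{s\mid \bar k_s=\bar i\}|$. On the left this is because $H_i$ acts on $\og_\bfk$ by $c$. On the right it is because the sum counts the $m$-subsets of those $c$ positions. This is shorter, makes integrality transparent, and handles $m>r$ (or $m>c$), where both sides vanish, with no extra comment.

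What the paper's inductive format buys is uniformity with Lemmas \ref{xrz(Eij)} and \ref{xrz(Laiml)}, whose operators shift indices and are not diagonal. Your eigenvalue count is special to the Cartan case and would not carry over to those two lemmas.
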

\begin{proof}
We proceed by induction on $m$. The case where $m=1$ is trivial.
Assume $m \geq 2$ and that the formula holds for $m-1$.
By the induction hypothesis we have
\begin{equation*}
\begin{split}
\afxrz\bigg(\bigg({H_i\atop m}\bigg)\bigg)(w)&=\frac{H_i-m+1}{m}\afxrz\bigg(\bigg({H_i\atop m-1}\bigg)\bigg)(w)
=\frac{H_i-m+1}{m} \sum_{\la\in\mbn^{m-1}\atop 1\leq\la_1<\la_2<\cdots<\la_{m-1}\leq r}
F_{i,i,\la}(w).
\end{split}
\end{equation*}
A key property of  $H_i$ is that
$H_i^2(\og_j)=\dt_{\bar i,\bar j}\og_j=H_i(\og_j)$ for any $j\in\mbz$. It follows that
$$H_iF_{i,i,\la}(w)=(m-1)F_{i,i,\la}(w)+\sum_{1\leq k\leq r,\,k\not=\la_s,\,\forall s}
F_{i,i,(k,\la)}(w)$$
for $\la\in\mbn^{m-1}$ with $1\leq\la_s<\la_t\leq r,\,\forall s<t$.
This implies that
\begin{equation*}
\begin{split}
\afxrz\bigg(\bigg({H_i\atop m}\bigg)\bigg)(w)&=\frac{1}{m}
\!\!\sum_{\tiny\mbox{$\begin{array}{c}
\la\in\mbn^{m-1} \\ 1\leq\la_1<\la_2<\cdots<\la_{m-1}\leq r
\\ 1\leq k\leq r,\,k\not=\la_s,\,\forall s\end{array}$}}
F_{i,i,(k,\la)}(w) =\sum_{\mu\in\mbn^m\atop 1\leq\mu_1<\mu_2<\cdots<\mu_m\leq r} F_{i,i,\mu}(w).
\end{split}
\end{equation*}
The assertion follows.
\end{proof}

For $i\in\mbz$, $1\leq k\leq r$ and $l\in\mbz$ with $l\not=0$ let
$$R_{i,l,k}=1^{\ot k-1}\ot\afE_{i,i+ln}\ot 1^{\ot r-k}\in\End_\mbz(\Ogz^{\ot r}).$$
Here, $1$ denotes the identity endomorphism on $\Ogz$.
For $i\in\mbz$, $\la\in\mbn^m$, $\mu\in\mbz^m$ with
$1\leq\la_s\not=\la_t\leq r$ for $s\not=t$ and $\mu_s\not=0$ for all $s$, we set
$$F_{i,\mu,\la}=R_{i,\mu_1,\la_1}R_{i,\mu_2,\la_2}\cdots R_{i,\mu_m,\la_m}
\in\End_\mbz(\Ogz^{\ot r}).$$
Then by definition we have
$$F_{i,i+ln,\la}=F_{i,l{\bf 1}_m,\la}$$
for $\la\in\mbn^m$, $1\leq\la_s\not=\la_t\leq r$ for $s\not=t$, $1\leq i\leq n$ and $l\in\mbz$ with $l\not=0$, where ${\bf 1}_m=(1,1,\ldots,1)\in\mbn^m$
denote the constant sequence of length $m$  with all entries $1$.

\begin{Lem}\label{xrz(Laiml)}
Let $1\leq i\leq n$, $l\in\mbz$ with $l>0$. Then we have
$$\afxrz((-1)^m\La_{i,\pm m,l})(w)=
\sum_{\la\in\mbn^m\atop 1\leq\la_1<\la_2<\cdots<\la_m\leq r}
F_{i,i\pm ln,\la}(w)$$
for $m\geq 1$ and $w\in\Ogz^{\ot r}$.
\end{Lem}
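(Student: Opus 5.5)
We argue by induction on $m$, using the recursion of Lemma~\ref{Laikl} instead of the generating function. For $m=1$, Lemma~\ref{Laikl} gives $\La_{i,\pm1,l}=-g_{i,\pm l}=-\afE_{i,i\pm ln}$, so $\afxrz(-\La_{i,\pm1,l})$ is the action of $\afE_{i,i\pm ln}$ on $\Ogz^{\ot r}$. Via the coproduct, this equals $\sum_{1\le k\le r}R_{i,\pm l,k}=\sum_k F_{i,i\pm ln,(k)}$, as required.

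For the inductive step, set $\Phi_s=\sum_{1\le\la_1<\cdots<\la_s\le r}F_{i,i\pm ln,\la}$, with $\Phi_0=1$. Multiplying Lemma~\ref{Laikl} by $(-1)^m$ and applying the induction hypothesis gives
\begin{equation*}
m\,\afxrz((-1)^m\La_{i,\pm m,l})=\sum_{0\le s\le m-1}(-1)^{m-s-1}\,\afxrz(g_{i,\pm(m-s)l})\,\Phi_s .
\end{equation*}
Here $\afxrz(g_{i,\pm ql})=\sum_{k=1}^r R_{i,\pm q,k}$. The key local fact is $R_{i,a,k}R_{i,b,k}=R_{i,a+b,k}$ for $ab>0$. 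It holds because $\afE_{i,i+an}\afE_{i,i+bn}=\afE_{i,i+(a+b)n}$ in $\afMnz$, equivalently $E_{i,i}\vep^{a}\cdot E_{i,i}\vep^{b}=E_{i,i}\vep^{a+b}$. Also, $R$'s acting in distinct tensor slots commute.

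So we expand $\afxrz(g_{i,\pm ql})\Phi_s$ by sorting terms according to whether the new slot $k$ lies in $\la$ or not. If $k\notin\la$, we get a product $F_{i,\mu,\nu}$ with $s+1$ distinct slots, where one slot carries exponent $\pm q$ and the others carry $\pm1$. If $k\in\la$, one slot carries exponent $\pm(q+1)$ and $s-1$ slots carry $\pm1$.

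Introduce, for each set $S$ of slots and each distinguished slot $k\in S$ with exponent $\pm q$ (the rest $\pm1$), the symbol $T_{q,S,k}$; write $T_q^{(j)}=\sum_{|S|=j,\,k\in S}T_{q,S,k}$. Then
\begin{equation*}
\afxrz(g_{i,\pm ql})\Phi_s=T_q^{(s+1)}+T_{q+1}^{(s)}\quad (q\ge2),
\end{equation*}
while for $q=1$ the first term is $(s+1)\Phi_{s+1}$, since each $(s+1)$-set arises from $s+1$ choices of $k$, and the second is $T_2^{(s)}$. Substituting $q=m-s$ into the alternating sum, the terms $T_{m-s+1}^{(s)}$ and $T_{m-(s-1)}^{(s)}$ appear with opposite signs for consecutive values of $s$, so they cancel telescopically. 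This mirrors exactly the cancellation in the proof of Proposition~\ref{Laikl formula}. The edge cases are: $s=0$ contributes $T_m^{(1)}+0$, and it cancels against the $k\in\la$ part of $s=1$. The $s=m-1$ term contributes $m\Phi_m$ plus $T_2^{(m-1)}$, and the latter cancels against the $k\notin\la$ part of $s=m-2$. What remains is $m\Phi_m$, and dividing by $m$ finishes the proof. This is legitimate because all operators lie in the torsion-free $\End_\mbz(\Ogz^{\ot r})$, or alternatively we work over $\mbq$ first.

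The main obstacle is the bookkeeping of this telescoping: checking that the coefficients of the mixed terms $T_{q}^{(j)}$ match exactly with sign $(-1)^{m-s-1}$, and verifying the multiplicity $s+1$ in the $q=1$ case. An alternative that avoids this is to note $\afxrz$ is a coproduct action and that $\exp(-\sum g_{i,\pm kl}u^k/k)$ is grouplike in $u$. This would give $\afxrz(\La_{i,\pm}(u))=\bigotimes_k\bigl(1-u\,\afE_{i,i\pm ln}\bigr)$ on $\Ogz^{\ot r}$, because on a single factor $\sum_k g_{i,\pm kl}u^k/k$ acts as $-\log(1-u\afE_{i,i\pm ln})$, using $R$-additivity. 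Extracting the coefficient of $u^m$ then gives the claim directly. I would try this generating-function route first, as it is cleaner, and keep the induction as a backup.
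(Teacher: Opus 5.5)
Your main argument is the paper's proof: it applies $\afxrz$ to the recursion of Lemma~\ref{Laikl} and splits $\afxrz(g_{i,\pm(m-k)l})F_{i,i\pm ln,\la}$ into new-slot and merged-slot terms $a_{i,k}$, $b_{i,k}$. It then telescopes via $a_{i,k}=-b_{i,k+1}$ and $b_{i,0}=0$ to leave $m$ times the desired sum, exactly as in your $T$-bookkeeping. One small slip: you should write $\afxrz(g_{i,\pm ql})=\sum_k R_{i,\pm ql,k}$, not $R_{i,\pm q,k}$, unless you consistently measure exponents in units of $l$. Your generating-function alternative is also correct; it is the argument the paper itself uses later for $C_k^\pm$ in Lemma~\ref{lem:center-elements-hyper-loop}.
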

\begin{proof}
We proceed  by induction on $m$. The case where $m=1$ is trivial.
Assume now that $m\geq 2$.
By Lemma \ref{Laikl} we have the recurrence:
\begin{equation*}
m\La_{i,\pm m,l}=-\sum_{0\leq k\leq m-1}g_{i,\pm(m-k)l}\La_{i,\pm k,l}
\end{equation*}
Applying $\afxrz$ and using the inductive hypothesis:
\begin{equation*}
\begin{split}
\afxrz(g_{i,\pm(m-k)l}\La_{i,\pm k,l})(w)&=(-1)^k\afxrz(g_{i,\pm(m-k)l})
\sum_{\la\in\mbn^{k}\atop 1\leq\la_1<\la_2<\cdots<\la_k\leq r }
F_{i,i\pm ln,\la}(w)
\end{split}
\end{equation*}
for $0\leq k\leq m-1$. Note that we have $$\afE_{i,s}\afE_{i,t}(\og_j)=\dt_{\bar i,\bar j}\og_{j-(s+t)n}=\afE_{i,s+t}(\og_j)$$
for $j\in\mbz$ and $s,t\in\mbz$.
This implies that
\begin{equation*}
\begin{split}
\afxrz(g_{i,\pm(m-k)l})
F_{i,i\pm ln,\la}(w)&=\sum_{1\leq t\leq r\atop t\not=\la_s,\,\forall s}F_{i,(\pm(m-k)l,\pm l{\bf 1}_k),(t,\la)}(w)
 +\sum_{1\leq j\leq k}F_{i,\pm l{\bf 1}_k\pm(m-k)l\bse_j,\la}(w)
\end{split}
\end{equation*}
for $0\leq k\leq m-1$, $\la\in\mbn^{k}$ with $1\leq\la_s<\la_t\leq r$, $\forall s<t$,
where $\bse_j=(0,\ldots,0,\underset {(j)}1,0,\ldots,0)\in\mbz^k$.
It follows that
\begin{equation*}
\afxrz(g_{i,\pm(m-k)l}\La_{i,\pm k,l})(w)=a_{i,k}+b_{i,k},
\end{equation*}
for $0\leq k\leq m-1$
where
\begin{equation*}
\begin{split}
a_{i,k}&=\sum_{\tiny\mbox{$\begin{array}{c}
\la\in\mbn^{k}\\ 1\leq\la_1<\la_2<\cdots<\la_k\leq r
\\1\leq t\leq r,\,t\not=\la_s,\,\forall s\end{array}$}}\!\!(-1)^kF_{i,(\pm(m-k)l,\pm l{\bf 1}_k),(t,\la)}(w)\\
b_{i,k}&=\sum_{\tiny\mbox{$\begin{array}{c}
\la\in\mbn^{k}\\ 1\leq\la_1<\la_2<\cdots<\la_k\leq r
\\ 1\leq j\leq k\end{array}$}}\!\!(-1)^kF_{i,\pm(m-k)l\bse_j\pm l{\bf 1}_k,\la}(w).
\end{split}
\end{equation*}
Since $a_{i,k}=-b_{i,k+1}$ for $0\leq k<m-1$ and $b_{i,0}=0$ we have
\begin{equation*}
\begin{split}
\afxrz(m\La_{i,\pm m,l})(w)&=-\sum_{0\leq k\leq m-1}(a_{i,k}+b_{i,k})=-a_{i,m-1}\\
&=(-1)^{m}\sum_{\tiny\mbox{$\begin{array}{c}
\la\in\mbn^{m-1}\\ 1\leq\la_1<\la_2<\cdots<\la_{m-1}\leq r
\\1\leq t\leq r,\,t\not=\la_s,\,\forall s\end{array}$}}\!\!F_{i,i\pm ln,(t,\la)}(w)\\
&=(-1)^{m}\sum_{\mu\in\mbn^m\atop 1\leq\mu_1<\mu_2<\cdots<\mu_m\leq r}
|X_\mu| F_{i,i\pm ln,\mu}(w),
\end{split}
\end{equation*}
where $X_\mu$ is the set of the elements $(t,\la)\in\mbn^m$ such that $\la\in\mbn^{m-1}$, $1\leq \la_1<\la_2<\cdots<\la_{m-1}\leq r$, $1\leq t\leq r$, $t\not=\la_s$, $\forall s$ and  $\{\mu_1,\mu_2,\ldots,\mu_m\}=\{t,\la_1,\la_2,\ldots,\la_{m-1}\}$. Since $|X_\mu|=m$ we have
$$\afxrz((-1)^m\La_{i,\pm m,l})(w)=
\sum_{\mu\in\mbn^m\atop 1\leq\mu_1<\mu_2<\cdots<\mu_m\leq r}
F_{i,i\pm ln,\mu}(w).$$
The proof is completed.
\end{proof}

For $i,j\in\mbz$ and $x\in\field$ let $\afE_{i,j}(x)=I+x\afE_{i,j}\in \afMnk$, where $I\in\afMnk$ is the identity matrix.
\begin{Prop}\label{zrk(Eij(x))}
Let $x\in\field$.

$(1)$ Let \(i\ne j\in\mathbb Z\). If \(j=i-ln\) for some \(l>0\), assume in addition that
\(x=0\) or \(x\) is invertible in \(\mpk\).
Then $\afE_{i,j}(x)\in \Gk$ and
$$\afzrk(\afE_{i,j}(x))=
\begin{cases}
\sum_{0\leq m\leq r}x^m\afxrk((\afE_{i,j})^{(m)})
&\text{if $\bar i\not=\bar j$}\\
\sum_{0\leq m\leq r}x^m\afxrk((-1)^m\La_{i,\pm m,l})
&\text{if $j=i\pm ln$ for some $l>0$}
\end{cases}$$

$(2)$ If $x+1$ is invertible in $\field$ then we have
$\afE_{i,i}(x)\in \Gk$ and
$$\afzrk(\afE_{i,i}(x))=\sum_{0\leq m\leq r}x^m\afxrk\bigg(\bigg({H_i\atop m}\bigg)\bigg).$$
\end{Prop}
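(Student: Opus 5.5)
The plan has two parts. First I show the three matrices are invertible in $\afGL$. Then I compute their action on a basis vector $\og_\bfi$ of $\Ogk^{\ot r}$ and compare with the operator formulas in Lemma~\ref{xrz(Eij)}, the lemma on $\big({H_i\atop m}\big)$, and Lemma~\ref{xrz(Laiml)}.

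\emph{Invertibility.} Under the identification $\vartheta$, $\afE_{i,j}$ corresponds to $E_{i',j'}\vep^{l}$ for suitable $1\le i',j'\le n$ and $l\in\mbz$.
\begin{itemize}
\item If $\bar i\ne\bar j$, then $i'\ne j'$. The matrix $I+xE_{i',j'}\vep^l$ has inverse $I-xE_{i',j'}\vep^l$, which is again Laurent polynomial.
\item If $j=i+ln$ with $l>0$, we get $I+xE_{i',i'}\vep^{l}$. This is diagonal with entry $1+x\vep^l$, a unit in $\field[[\vep]]\subset\field((\vep))$ because its constant term is $1$.
\item If $j=i-ln$ with $l>0$, the entry is $1+x\vep^{-l}=x\vep^{-l}(1+x^{-1}\vep^{l})$. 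This is a unit in $\field((\vep))$ when $x$ is invertible, and trivially when $x=0$.
\item In case (2) the entry is $1+x$, which is assumed invertible.
\end{itemize}
In every case the matrix lies in $M_n(\field[\vep,\vep^{-1}])\cap\afGL=\Gk$.

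\emph{Action on tensors.} Write $g=I+xX$ with $X=\afE_{i,j}$. The semigroup element acts diagonally, so
\[
g\,\og_\bfi=(\og_{i_1}+xX\og_{i_1})\ot\cdots\ot(\og_{i_r}+xX\og_{i_r}).
\]
Expanding the product and grouping terms by the set of positions where $X$ is applied gives
\[
g\,\og_\bfi=\sum_{m=0}^{r}x^m\sum_{1\le\la_1<\cdots<\la_m\le r}F_{i,j,\la}(\og_\bfi).
\]
By Lemma~\ref{xrz(Eij)}, the $m$-th inner sum is exactly $\afxrz((\afE_{i,j})^{(m)})$ when $\bar i\ne\bar j$.
\begin{itemize}
\item For $j=i\pm ln$, note $F_{i,i\pm ln,\la}=F_{i,\pm l{\bf 1}_m,\la}$. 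Lemma~\ref{xrz(Laiml)} then identifies the inner sum with $\afxrz((-1)^m\La_{i,\pm m,l})$.
\item For $j=i$, the lemma on $\big({H_i\atop m}\big)$ identifies it with $\afxrz\big(\big({H_i\atop m}\big)\big)$.
\end{itemize}
These are identities of $\mbz$-endomorphisms, each a sum of $F$'s with integer coefficients. They base change to $\field$ via $\afxrk=\afxrz\ot1$: the operators $F_{i,j,\la}$ are defined over $\mbz$, and their $\field$-versions act on $\Ogk^{\ot r}=\Ogz^{\ot r}\ot\field$.

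\emph{Remaining checks.} Two points need care. First, $\afzrk(g)$ is by definition the endomorphism $w\mapsto gw$, so the identity above is the required formula in $\afSrk=\afSrmbz\ot\field$. Second, the index convention in the $\La$ case: $j=i+ln$ corresponds to $\vep^{-l}$ or $\vep^{l}$ depending on $\og_{i+ln}=\vep^{-l}\og_i$. This only changes the sign label $\pm$ and is harmless since both signs are covered. I expect no genuine obstacle. The main point is that $X^2$ need not vanish on $\Ogk$ when $\bar i=\bar j$, but the expansion only ever applies $X$ at most once in each tensor factor, so it holds regardless.
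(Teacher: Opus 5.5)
Your proposal is correct and follows the paper's route. You expand the diagonal action of $I+x\afE_{i,j}$ on $\og_\bfi$ as $\sum_m x^m\sum_{\la}F_{i,j,\la}$ and identify the inner sums via Lemma~\ref{xrz(Eij)}, Lemma~\ref{xrz(Laiml)} and the lemma on $\big({H_i\atop m}\big)$, which is exactly what the paper does. You also check invertibility explicitly (only $1+x\vep^{-l}$, i.e.\ the case $j=i-ln$, needs $x$ invertible) and note the base change, both of which the paper treats as clear; your closing sign-convention caveat is unnecessary, since the sign in $j=i\pm ln$ matches $F_{i,i\pm ln,\la}$ directly.
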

\begin{proof}
Clearly we have $\afE_{i,j}(x)\in \Gk$. Furthermore, by definition we have
\begin{equation*}
\begin{split}
&\qquad\afzrk(\afE_{i,j}(x))(w_1\ot w_2\ot\cdots\ot w_r)\\
&=(I+x\afE_{i,j})(w_1)\ot (I+x\afE_{i,j})(w_2)\ot\cdots\ot
(I+x\afE_{i,j})(w_r)\\
&=w_1\ot w_2\ot\cdots\ot w_r+
\sum_{\tiny\mbox{$\begin{array}{c}1\leq m\leq r\\
\la\in\mbn^m\\
1\leq\la_1<\la_2<\cdots<\la_m\leq r\end{array}$}} x^m F_{i,j,\la}(w_1\ot w_2\ot\cdots\ot w_r)
\end{split}
\end{equation*}
for $w_1\ot w_2\ot\cdots\ot w_r\in\Ogk^{\ot r}$. Now assertion (1) follows from Lemma
\ref{xrz(Eij)} and \ref{xrz(Laiml)}. Assertion (2) can be proved similarly.
\end{proof}

Combining Propositions \ref{Laikl formula}, \ref{generators}(2), and \ref{zrk(Eij(x))} we obtain the following result.
\begin{Coro}\label{zrk(Eij(x)) coro}
Let $i\not=j\in\mbz$ and $x\in\field$.
If \(j=i-ln\) for some \(l>0\), assume in addition that
\(x=0\) or \(x\) is invertible in \(\mpk\). Then
$$\afzrk(\afE_{i,j}(x))=\sum_{0\leq m\leq r}x^m\afxrk((m\afE_{i,j})[\bfl])
=\sum_{0\leq m\leq r}x^m (m\afE_{i,j})[\bfl,r].$$
\end{Coro}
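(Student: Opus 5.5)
The plan is to split into the two cases of Proposition~\ref{zrk(Eij(x))}(1), namely $\bar i\neq\bar j$ and $j=i\pm ln$ with $l>0$. In each case I will rewrite the right-hand side term by term using the identities already available. Then I will pass from $\afxrk$ applied to an element $A[\bfl]$ to the component $A[\bfl,r]$, via the BLM realization.

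\emph{Case $\bar i\neq\bar j$.} Proposition~\ref{zrk(Eij(x))}(1) gives
\[
\afzrk(\afE_{i,j}(x))=\sum_{0\le m\le r}x^m\afxrk\big((\afE_{i,j})^{(m)}\big).
\]
By Proposition~\ref{generators}(2), $(\afE_{i,j})^{(m)}=(m\afE_{i,j})\{\bfl\}$ in $\afuglq$. Here I need the fact that for $\bfj=\bfl$ the element $A\{\bfl\}$ coincides with $A[\bfl]$. The notation $A\{\bfj\}$ is not defined in the text above, so this is the one point where I rely on the convention of \cite{Fu19,DDF}. There $A\{\bfj\}$ differs from $A[\bfj]$ only through the binomial-type modification in the diagonal index $\bfj$, and this modification is trivial at $\bfj=\bfl$.

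\emph{Case $j=i\pm ln$.} Proposition~\ref{Laikl formula} gives
\[
(-1)^m\La_{i,\pm m,l}=(-1)^m(-1)^m(m\afE_{i,i\pm ln})[\bfl]=(m\afE_{i,j})[\bfl].
\]
Note that when $j=i\pm ln$ with $1\le i\le n$, the matrix $m\afE_{i,j}$ lies in $\afThnpm$, so the expression makes sense. For general $i\in\mbz$ I reduce to $1\le i\le n$ by periodicity, since $\afE_{i,j}=\afE_{i+sn,j+sn}$. In both cases this yields the first equality.

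For the second equality, the identification of Theorem~\ref{realization} means that $A[\bfl]=(A[\bfl,s])_{s\ge0}$ and $\afxr$ is the $r$-th projection, so $\afxr(A[\bfl])=A[\bfl,r]$. This holds over $\mbq$, and both sides lie in the integral forms: $A[\bfl]\in\tiafuglz=\afuglz$ by Theorem~\ref{generators2}, and $A[\bfl,r]=\sum_\mu e_{A+\diag(\mu)}\in\afSrmbz$. So the identity holds for $\afxrz$. Tensoring with $\field$ gives $\afxrk((m\afE_{i,j})[\bfl])=(m\afE_{i,j})[\bfl,r]$ in $\afSrk$, where images are denoted by the same letters as the paper stipulates. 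For $m>r-\sg(A)$ the sum over $\afLa(n,r-m)$ is empty, so those terms vanish consistently.

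The main obstacle is the bookkeeping rather than any real difficulty. The points to check are that the notation $A\{\bfl\}$ reduces to $A[\bfl]$, that the sign conventions in Proposition~\ref{Laikl formula} match those in Proposition~\ref{zrk(Eij(x))}, and that the hypothesis on invertibility of $x$ is needed only for membership $\afE_{i,j}(x)\in\Gk$, which Proposition~\ref{zrk(Eij(x))} already supplies.
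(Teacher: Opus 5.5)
Your proposal is correct and is essentially the paper's argument, which just combines Propositions~\ref{Laikl formula}, \ref{generators}(2) and \ref{zrk(Eij(x))} in the two cases $\bar i\neq\bar j$ and $j=i\pm ln$. Your remarks that $A\{\bfl\}=A[\bfl]$ and that $\afxrk\bigl((m\afE_{i,j})[\bfl]\bigr)=(m\afE_{i,j})[\bfl,r]$ (by Theorem~\ref{realization} plus base change) make explicit what the paper leaves implicit; the aside about ``$m>r-\sg(A)$'' should read $m>r$, and is not needed anyway since the sum stops at $m=r$.
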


\subsection{Surjectivity of the loop semigroup representation}
We now generalize the Schur-Weyl reciprocity over finite fields established  by Benson-Doty \cite[Th. 4.3]{BD} to the affine case.
This extension clarifies the relationship between representations of
\(\mathpzc K\widehat G_{\mathpzc K}(n)\) and affine Schur algebras in
positive characteristic.

\begin{Thm}\label{surjective1}
Assume that there is a subset $\mpX$ of $\field$ such that $|\mpX|>r$ and $a-b$ is invertible in $\field$ for any $a\not=b\in\mpX$. Then we have $\afzrk(\field \Gk)=\afSrk$.
\end{Thm}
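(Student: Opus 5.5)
The plan is to follow the Benson--Doty strategy, a Vandermonde argument, and reduce surjectivity to the generation result of Theorem~\ref{generators2}(2). Since $\afxrk:\afuglk\to\afSrk$ is surjective and $\afuglk$ is generated by $(\afE_{i,j})^{(m)}$, $\La_{i,\pm m,l}$ (equivalently, by Proposition~\ref{Laikl formula}, the elements $(m\afE_{i,i\pm ln})[\bfl]$) and $\big({H_i\atop m}\big)$, it suffices to show that the images of all these generators lie in the subalgebra $\afzrk(\field\Gk)$. The image $\afzrk(\field\Gk)$ is a $\field$-subalgebra, since $\afzrk$ is an algebra homomorphism. Hence generators landing in it force $\afSrk=\afxrk(\afuglk)\subseteq\afzrk(\field\Gk)$.

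For a fixed off-diagonal pair $i\ne j$, choose $r+1$ distinct elements $x_0,\dots,x_r\in\mpX$. If $j=i-ln$ with $l>0$, the elements must also be invertible in $\field$ (or zero), which is an additional requirement. Corollary~\ref{zrk(Eij(x)) coro} gives
\[
\afzrk(\afE_{i,j}(x_s))=\sum_{0\le m\le r}x_s^m\,(m\afE_{i,j})[\bfl,r],\qquad 0\le s\le r.
\]
The Vandermonde matrix $(x_s^m)$ has determinant $\prod_{s<t}(x_t-x_s)$, which is a unit by hypothesis. Inverting it expresses each $(m\afE_{i,j})[\bfl,r]=\afxrk((m\afE_{i,j})[\bfl])$, for $0\le m\le r$, as a $\field$-linear combination of the $\afzrk(\afE_{i,j}(x_s))$. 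For $m>r$ the element $(m\afE_{i,j})[\bfl,r]$ vanishes, because $\sg(m\afE_{i,j})>r$. The diagonal generators are handled the same way via Proposition~\ref{zrk(Eij(x))}(2), using points $x_s$ with $x_s+1$ invertible. Replacing $\mpX$ by $\mpX-y$ for a suitable $y\in\mpX$ keeps differences invertible and makes $x+1=x'-y$ invertible for $x'\ne y$. Dropping one point leaves only $r$ points, however, so this case needs care (see below).

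The main obstacle is exactly these invertibility side conditions: $x_s\in\Gk$ requires $x_s$ invertible for $j=i-ln$, and $1+x_s$ invertible for the diagonal. Only $|\mpX|\ge r+1$ points are available, and removing a single bad point ($0$, or $-1$) leaves $r$ values, one short of a full Vandermonde system.

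For the lower loop case I would try two remedies in order.
\begin{itemize}
\item First, include $x=0$ itself, which is allowed by Corollary~\ref{zrk(Eij(x)) coro}. After translating $\mpX$ so that $0\in\mpX$, the remaining $x_s$ satisfy $x_s=x_s-0$, which is invertible. This gives all $r+1$ points.
\item Second, for the diagonal, translate so that $-1\in\mpX$. Then $x=-1$ gives a non-invertible matrix, and we lose one point. I would circumvent this by using the degree bound more precisely. Choose $r$ points $x_s\ne-1$, and note that the top coefficient $\afxrk\big(\big({H_i\atop r}\big)\big)$ can be recovered separately. One route is to obtain $\big({H_i\atop m}\big)$ from the commutator relations between the already-obtained $(\afE_{i,j})^{(a)}$ and $(\afE_{j,i})^{(b)}$, in the style of Kostant's formula in $\afuglz$; equivalently, one can use that the idempotents $0[\la]$-type elements $e_{\diag(\la)}$ are generated by the off-diagonal part together with the identity.
\end{itemize}
Alternatively, I would avoid the diagonal generators entirely. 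Theorem~\ref{generators2} and the $\tiafuglz$ description suggest that the divided powers $(\afE_{i,j})^{(m)}$, $\bar i\ne\bar j$, together with the $\La$'s, already generate $\afSrk$ modulo the degree-$r$ weight idempotents. These idempotents come from $\afE_{i,j}^{(a)}\afE_{j,i}^{(a)}$-products via the classical Schur algebra argument.
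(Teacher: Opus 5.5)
Your proposal is essentially the paper's proof. The paper also translates ($a_t=k_t-k_0$) so that $0$ is among the $r+1$ points, inverts the Vandermonde matrix via Corollary~\ref{zrk(Eij(x)) coro} to get every $(m\afE_{i,j})[\bfl,r]$ with $i\neq j$, $0\leq m\leq r$, and then disposes of the diagonal generators exactly by your ``idempotents come from the off-diagonal part'' remedy, citing \cite[Th.~9.3]{DFW12} for the fact that $\afSrk$ is generated by the $(m\afE_{i,j})[\bfl,r]$ with $i\neq j$ alone, so the $\afE_{i,i}(x)$ Vandermonde detour is unnecessary. If you prove that generation fact yourself instead of citing it, note that Kostant-type straightening of $(\afE_{i,j})^{(a)}(\afE_{j,i})^{(b)}$ only yields elements $\binom{H_i-H_j+c}{t}$, not $\binom{H_i}{m}$; you therefore have to work in $\afSrk$, where $\sum_k H_k$ acts as the scalar $r$, and use a weight-separation argument to extract the idempotents $e_{\diag(\la)}$.
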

\begin{proof}
By hypothesis, we can choose distinct elements  $k_0,k_1,\ldots,k_r\in\mpX\han\field$ such that $k_s-k_t$ is invertible in $\field$ for $0\leq s\not=t\leq r$.
Let
$a_t=k_t-k_0$ for $(0\leq t\leq r).$
Then \(a_0=0\) and \(a_t\) is invertible in $\mpk$ for \(1\leq t\leq r\).
Moreover,
$a_s-a_t=k_s-k_t$ for $s\ne t.$
For fixed $i \neq j$, consider the elements $\afE_{i,j}(a_t) = I + a_t \afE_{i,j} \in \Gk$ for $t = 0, 1, \ldots, r$.
By Corollary \ref{zrk(Eij(x)) coro} for $i\not=j$ we have
$$(\afzrk(\afE_{i,j}(a_0)),\afzrk(\afE_{i,j}(a_1)),\ldots,\afzrk(\afE_{i,j}(a_r)))
=(1,\afE_{i,j}[\bfl,r],\ldots,(r\afE_{i,j})[\bfl,r])B,$$
where $B$ is the Vandermonde matrix:
\begin{equation}\label{Vandermonde matrix}
B=\begin{pmatrix}
1&1&\cdots&1\\
a_0&a_1&\cdots&a_r\\
\hdotsfor{4}\\
a_0^r&a_1^r&\cdots&a_r^r
\end{pmatrix}.
\end{equation}
The determinant of $B$ is $\det(B) = \prod_{0 \leq t < s \leq r} (a_s - a_t)$, which is invertible in $\field$ by our choice of $a_t$'s.
Therefore, $B$ is invertible, and we can express each $(m\afE_{i,j})[\bfl,r]$ as a linear combination of the $\afzrk(\afE_{i,j}(a_t))$'s.
It follows that
$(m\afE_{i,j})[\bfl,r]\in\afzrk(\field \Gk)$ for $0\leq m\leq r$ and $i\not=j$.
Now, by Theorem \ref{generators2} (2) and Proposition \ref{generators},  the algebra $\afSrk$ is generated by the elements $(m\afE_{i,j})[\bfl,r]$ and $\afxrk\big(\big({H_i\atop t}\big)\big)$ for $1\leq i\leq n$, $j\in\mbz$ with $i\not=j$ and $m,t\in\mbn$. Hence by \cite[Th. 9.3]{DFW12}, the algebra $\afSrk$ is generated by the elements $(m\afE_{i,j})[\bfl,r]$ for $1\leq i\leq n$, $j\in\mbz$ with $i\not=j$ and $m\in\mbn$.
However,
note that $(m\afE_{i,j})[\bfl,r]=0$ for $m>r$ and $i\not=j$. Therefore $\afSrk$ is actually generated by the elements $(m\afE_{i,j})[\bfl,r]$
 for $1\leq i\leq n$, $j\in\mbz$ with $i\not=j$ and $0\leq m\leq r$. Since all these generators belong to $\afzrk(\field \Gk)$, we conclude that $\afzrk(\field \Gk)=\afSrk$.
\end{proof}
\begin{Rem}
The condition on $\mpX$ ensures we have enough evaluation points to make the Vandermonde matrix invertible. This is analogous to the situation in classical Schur-Weyl duality where we require the field to be large enough to guarantee surjectivity.
\end{Rem}

As an immediate consequence of Theorem \ref{surjective1}, we obtain the following result.
\begin{Thm}\label{surjective2}
Assume that $\field$ is a field and $|\field|>r$. Then the algebra homomorphism $\afzrk:\field \Gk\ra\afSrk$ is surjective.
\end{Thm}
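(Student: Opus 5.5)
The plan is to deduce Theorem~\ref{surjective2} directly from Theorem~\ref{surjective1} by taking $\mpX=\field$ itself. The only thing to check is that the hypothesis of Theorem~\ref{surjective1} holds. We need a subset $\mpX\han\field$ with $|\mpX|>r$ such that $a-b$ is invertible in $\field$ for all distinct $a,b\in\mpX$. Since $\field$ is a field, every nonzero element is invertible. Hence for $a\neq b$ in $\field$ the difference $a-b$ is a unit, and the cardinality condition $|\mpX|=|\field|>r$ is exactly the assumption of the theorem.

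With this choice, Theorem~\ref{surjective1} gives $\afzrk(\field\Gk)=\afSrk$, which is the asserted surjectivity. If $\field$ is infinite, one may equally take any subset of $\field$ with $r+1$ elements; only $r+1$ distinct points are needed, namely the Vandermonde nodes $k_0,\dots,k_r$ used in the proof of Theorem~\ref{surjective1}.

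The argument involves no real obstacle. The point worth noting is why the hypothesis $|\field|>r$ is the right one. The proof of Theorem~\ref{surjective1} uses the values $a_t=k_t-k_0$, with $a_0=0$ and the remaining $a_t$ units, both as arguments of $\afE_{i,j}(a_t)$ for $j=i-ln$ (where Proposition~\ref{zrk(Eij(x))} requires $x=0$ or $x$ invertible) and as nodes of the Vandermonde matrix \eqref{Vandermonde matrix}. Over a field both requirements are automatic once $r+1$ distinct elements exist, and this is exactly what $|\field|>r$ provides.
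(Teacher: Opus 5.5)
Your proposal is correct and matches the paper, which states Theorem~\ref{surjective2} as an immediate consequence of Theorem~\ref{surjective1}; this amounts to taking $\mpX=\field$, where every difference of distinct elements is a unit. Your remark on why $|\field|>r$ suffices also reflects how the hypothesis is used in the proof of Theorem~\ref{surjective1}: there are $r+1$ Vandermonde nodes $a_t$, with $a_0=0$ and $a_t$ invertible for $t\geq1$, as Proposition~\ref{zrk(Eij(x))} requires when $j=i-ln$.
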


\begin{Rem}
The finite-field case of Theorem~\ref{surjective2} has a natural connection with
representation theory of reductive groups over local fields considered by Vign\'eras~\cite{Vigneras}.
Let $q=p^f$. Then
\(
\widehat G_{\mathbb F_q}(n)\subseteq GL_n(\mathbb F_q((\vep))),
\)
where $\mathbb F_q((t))$ is a non-archimedean local field. If $q>r$,
Theorem~\ref{surjective2} gives a surjective algebra homomorphism
\[
\mathbb F_q\widehat G_{\mathbb F_q}(n)
\longrightarrow
\widehat {\mathcal S}_{\mathbb F_q}(n,r).
\]
Consequently, for every field extension $R/\mathbb F_q$, base change yields
a surjective algebra homomorphism
\begin{equation}\label{hat zeta^q_{r,R}}
\widehat\zeta^{\,q}_{r,R}:
R\widehat G_{\mathbb F_q}(n)
\cong
R\otimes_{\mathbb F_q}
\mathbb F_q\widehat G_{\mathbb F_q}(n)
\longrightarrow
\widehat {\mathcal S}_R(n,r).
\end{equation}
In particular, if $R$ is an algebraically closed extension of
$\mathbb F_q$, then $\operatorname{char} R=p$.

Vign\'eras~\cite{Vigneras} studies smooth $R$-representations of $GL_n(F)$,
where $F$ is a non-archimedean local field of residual
characteristic $p$,
over algebraically closed coefficient fields $R$ whose characteristic is
different from $p$, and remarks that
``the case where the characteristic of $R$ is $p$ remains mysterious.''
In the special case $F=\mathbb F_q((\vep))$, the present construction
therefore lies on the defining-characteristic side, but in a different
representation-theoretic setting: we consider polynomial representations
arising from the natural Laurent-polynomial semigroup
$\widehat G_{\mathbb F_q}(n)$, rather than smooth representations of the
full group $GL_n(\mathbb F_q((\vep)))$. Thus, at the level of this natural subsemigroup, our construction provides a defining-characteristic counterpart to the cross-characteristic setting considered by Vign\'eras.
\end{Rem}
\subsection{Failure of extension from the loop semigroup to loop groups}

Throughout this subsection, we assume that $\mathpzc K$ is a
field. Although $\widehat G_{\mathpzc K}(n)$ is only a semigroup,
it generates a natural subgroup of the loop group
\(
\widehat{GL}_n(\mathpzc K)
=
GL_n(\mathpzc K((\vep))).
\)
The following observation identifies this subgroup explicitly.

\begin{Prop}\label{rational loop group}
The subgroup of $GL_n(\mathpzc K((\vep)))$ generated by
$\widehat G_{\mathpzc K}(n)$ is $GL_n(\mathpzc K(\vep))$. In other
words,
\(
\bigl\langle\widehat G_{\mathpzc K}(n)\bigr\rangle
=
GL_n(\mathpzc K(\vep)).
\)
\end{Prop}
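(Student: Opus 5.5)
We prove the two inclusions separately. Write $\mathpzc K[\vep^{\pm1}]:=\mathpzc K[\vep,\vep^{-1}]$ and regard $\mathpzc K(\vep)$ as a subfield of $\mathpzc K((\vep))$.

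First, $\langle\widehat G_{\mathpzc K}(n)\rangle\subseteq GL_n(\mathpzc K(\vep))$. Take $A\in\widehat G_{\mathpzc K}(n)$. Then $A\in M_n(\mathpzc K(\vep))$, and $\det A\in\mathpzc K[\vep^{\pm1}]$ is a unit of $\mathpzc K((\vep))$, hence nonzero. Since $\mathpzc K(\vep)$ is a field, $A$ is invertible in $M_n(\mathpzc K(\vep))$, and by uniqueness of inverses $A^{-1}$ computed in $\mathpzc K((\vep))$ lies in $GL_n(\mathpzc K(\vep))$. So the generated subgroup is contained in $GL_n(\mathpzc K(\vep))$.

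Second, the reverse inclusion. Let $g\in GL_n(\mathpzc K(\vep))$. Choose a common denominator $0\neq f\in\mathpzc K[\vep]$ with $fg\in M_n(\mathpzc K[\vep])$. Then $g=(fI)^{-1}(fg)$, and it suffices to show that $fI$ and $fg$ lie in $\langle\widehat G_{\mathpzc K}(n)\rangle$. Both are nonzero-determinant matrices over $\mathpzc K[\vep]$, since $\det(fg)=f^n\det g\neq0$. Any nonzero Laurent polynomial is a unit in $\mathpzc K((\vep))$: write it as $c\vep^m(1+\vep h(\vep))$ with $c\in\mathpzc K^\times$. Hence every matrix in $M_n(\mathpzc K[\vep^{\pm1}])$ with nonzero determinant lies in $GL_n(\mathpzc K((\vep)))$, and therefore in $\widehat G_{\mathpzc K}(n)$ by \eqref{the definition of G}. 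So $fI$ and $fg$ already belong to $\widehat G_{\mathpzc K}(n)$, and $g=(fI)^{-1}(fg)$ lies in the generated group.

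The only point needing care is the observation that $\widehat G_{\mathpzc K}(n)$ is exactly the set of Laurent-polynomial matrices with nonzero determinant. This uses that $\mathpzc K$ is a field, so that every nonzero element of $\mathpzc K[\vep^{\pm1}]$ is invertible in $\mathpzc K((\vep))$; the proof uses this fact in both directions.
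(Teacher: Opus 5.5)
Your proof is correct and follows the same approach as the paper: clear denominators to get $fg$ with Laurent-polynomial entries and nonzero determinant, observe that both $fg$ and $fI$ lie in $\widehat G_{\mathpzc K}(n)$ because nonzero Laurent polynomials are units in the field $\mathpzc K((\vep))$, and write $g=(fI)^{-1}(fg)$. You also spell out the easy inclusion and the unit argument, which the paper treats as evident.
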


\begin{proof}
Clearly we have
\(
\bigl\langle\widehat G_{\mathpzc K}(n)\bigr\rangle
\subseteq
GL_n(\mathpzc K(\vep)).
\)
Conversely, let
\(
A\in GL_n(\mathpzc K(\vep)).
\)
There exists a nonzero element
$d\in\mathpzc K[\vep,\vep^{-1}]$ such that
\(
dA\in M_n(\mathpzc K[\vep,\vep^{-1}]).
\)
Since
\(
\det(dA)=d^n\det(A)\ne0,
\)
the matrix $dA$ is invertible over $\mathpzc K(\vep)$, and hence
over $\mathpzc K((\vep))$. Therefore
\(
dA\in\widehat G_{\mathpzc K}(n).
\)
Similarly,
\(
dI\in\widehat G_{\mathpzc K}(n)
\)
where $I\in M_n(\mathpzc K[\vep,\vep^{-1}])$ is the identity matrix.
It follows that
\(
A=(dI )^{-1}(dA)
\in
\bigl\langle\widehat G_{\mathpzc K}(n)\bigr\rangle.
\)
Thus
\(
GL_n(\mathpzc K(\vep))
\subseteq
\bigl\langle\widehat G_{\mathpzc K}(n)\bigr\rangle,
\)
and the assertion follows.
\end{proof}

One might therefore ask whether the homomorphism
$\widehat\zeta_{r,\mathpzc K}$ defined in \textup{(4.3)}
extends from the semigroup algebra
$\mathpzc K\widehat G_{\mathpzc K}(n)$ to the group algebra of
the rational loop group $GL_n(\mathpzc K(\vep))$. The following
result shows that this is not the case.

\begin{Prop}\label{not extension}
Assume that $r>0$. The algebra homomorphism
\(
\widehat\zeta_{r,\mathpzc K}:
\mathpzc K\widehat G_{\mathpzc K}(n)
\rightarrow
\widehat{\mathcal S}_{\mathpzc K}(n,r)
\)
does not extend to an algebra homomorphism
\(
\mathpzc KGL_n(\mathpzc K(\vep))
\rightarrow
\widehat{\mathcal S}_{\mathpzc K}(n,r).
\)
Consequently, it does not extend to an algebra homomorphism
\(
\mathpzc KGL_n(\mathpzc K((\vep)))
\rightarrow
\widehat{\mathcal S}_{\mathpzc K}(n,r).
\)
\end{Prop}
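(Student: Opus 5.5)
We argue by contradiction: an extension would force certain elements of $\widehat G_{\mathpzc K}(n)$ to act invertibly on $\Omega_{\mathpzc K}^{\otimes r}$, which they do not. Suppose $\Phi:\mathpzc KGL_n(\mathpzc K(\vep))\to\widehat{\mathcal S}_{\mathpzc K}(n,r)$ is an algebra homomorphism with $\Phi|_{\mathpzc K\widehat G_{\mathpzc K}(n)}=\widehat\zeta_{r,\mathpzc K}$. In the group algebra every group element $g$ is a unit with inverse $g^{-1}$. Hence for $g\in\widehat G_{\mathpzc K}(n)$ we get
\[
\widehat\zeta_{r,\mathpzc K}(g)\,\Phi(g^{-1})=\Phi(g)\,\Phi(g^{-1})=\Phi(1)=1.
\]
Here we use that $\Phi(1)=\widehat\zeta_{r,\mathpzc K}(I)$ is the identity endomorphism of $\Omega_{\mathpzc K}^{\otimes r}$, since $I\in\widehat G_{\mathpzc K}(n)$. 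Therefore $\widehat\zeta_{r,\mathpzc K}(g)$ would be invertible in $\widehat{\mathcal S}_{\mathpzc K}(n,r)\subseteq\operatorname{End}_{\mathpzc K}(\Omega_{\mathpzc K}^{\otimes r})$. In particular, its action on $\Omega_{\mathpzc K}^{\otimes r}$ would be surjective.

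To get a contradiction, take $g=dI$ with $d=1+\vep\in\mathpzc K[\vep,\vep^{-1}]$. This $g$ lies in $\widehat G_{\mathpzc K}(n)$, since $1+\vep$ is a unit in $\mathpzc K((\vep))$, with inverse $\sum_k(-\vep)^k$. Its action on $\Omega_{\mathpzc K}^{\otimes r}$ is multiplication by $d^{\otimes r}$, acting on $\Omega_{\mathpzc K}^{\otimes r}$ viewed as a module over $\mathpzc K[\vep,\vep^{-1}]^{\otimes r}\cong\mathpzc K[\vep_1^{\pm1},\dots,\vep_r^{\pm1}]$. Concretely, $\Omega_{\mathpzc K}^{\otimes r}\cong\mathpzc K[\vep_1^{\pm1},\dots,\vep_r^{\pm1}]^{n^r}$ is a free module, and $g$ acts as multiplication by $\prod_{k=1}^r(1+\vep_k)$.

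This is not a unit of the Laurent polynomial ring $\mathpzc K[\vep_1^{\pm1},\dots,\vep_r^{\pm1}]$, because the units there are exactly the monomials $c\,\vep_1^{a_1}\cdots\vep_r^{a_r}$ with $c\in\mathpzc K^\times$, which holds since $\mathpzc K$ is a field. Multiplication by a nonunit on a free module of positive rank is not surjective: otherwise $1\cdot\og_{\bf 1}$ would equal $\prod_k(1+\vep_k)f\,\og_{\bf 1}$ for some Laurent polynomial $f$, which is impossible. We need $r>0$ here, so that the product is nonconstant. So $\widehat\zeta_{r,\mathpzc K}(g)$ is not invertible, a contradiction.

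The consequence for $GL_n(\mathpzc K((\vep)))$ follows because $GL_n(\mathpzc K(\vep))\subseteq GL_n(\mathpzc K((\vep)))$. Restricting a putative extension to $\mathpzc KGL_n(\mathpzc K(\vep))$ would give an extension of the first kind. Alternatively, the same argument with $g=(1+\vep)I$ applies directly.

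The only delicate step is identifying the action of $dI$ on the tensor space with multiplication by $\prod_k(1+\vep_k)$ and checking carefully that it is non-surjective. The rest is formal.
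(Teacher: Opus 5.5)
Your proof is correct and follows essentially the same route as the paper. Both take a scalar matrix $dI\in\widehat G_{\mathpzc K}(n)$ with $d$ a non-unit Laurent polynomial, show that $\widehat\zeta_{r,\mathpzc K}(dI)$ is not surjective, and conclude that it cannot be invertible. The paper uses $d=1-\vep$ and witnesses non-surjectivity through the evaluation map $\varepsilon^{\otimes r}$ at $\vep=1$, which kills the image; you use $d=1+\vep$ and the fact that $\prod_k(1+\vep_k)$ is a non-unit in $\mathpzc K[\vep_1^{\pm1},\ldots,\vep_r^{\pm1}]$, which is a cosmetic difference. Your explicit check that $\Phi(1)=\widehat\zeta_{r,\mathpzc K}(I)=\mathrm{id}$, so that a one-sided inverse suffices, makes precise a point the paper leaves implicit.
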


\begin{proof}
Let
\(
g=(1-\vep)I ,
\) where $I\in M_n(\mathpzc K[\vep,\vep^{-1}])$ is the identity matrix. Then
we have
\(
g\in\widehat G_{\mathpzc K}(n).
\)
On the other hand, $\widehat\zeta_{r,\mathpzc K}(g)$ is not invertible
in $\widehat{\mathcal S}_{\mathpzc K}(n,r)$.
 Indeed, let
\(
\varepsilon:\Omega_{\mathpzc K}\rightarrow\mathpzc K^n
\)
be the evaluation map at $t=1$. Then
\(
\varepsilon^{\otimes r}\circ
\widehat\zeta_{r,\mathpzc K}(g)=0,
\)
while $\varepsilon^{\otimes r}\ne0$ since $r>0$. Thus
$\widehat\zeta_{r,\mathpzc K}(g)$ is not surjective, and hence is
not invertible.

If $\widehat\zeta_{r,\mathpzc K}$ extended to
$\mathpzc KGL_n(\mathpzc K(t))$, then the image of the invertible
element $g$ would have to be invertible, a contradiction. Hence no
such extension exists. The assertion for
$GL_n(\mathpzc K((t)))$ follows immediately from
\(
GL_n(\mathpzc K(t))\subseteq GL_n(\mathpzc K((t))).
\)
\end{proof}

This shows that the semigroup framework is essential for the
tensor-space construction considered here, since the natural
action of $\widehat G_{\mathpzc K}(n)$ does not extend to the
rational loop group $GL_n(\mathpzc K(\vep))$ that it generates.

\section{The double centralizer property and centers of affine Schur algebras}

In Theorem~\ref{surjective2} we established one direction of affine Schur--Weyl reciprocity for the Laurent polynomial loop semigroup. We now consider the opposite centralizer. We first establish the
double centralizer property in the case \(n\geq r\) over arbitrary
commutative rings.  We then remove the restriction \(n\geq r\) over unique
factorization domains and prove the result for all \(n\geq2\).
We also show that,  for \(n=1\), the corresponding natural homomorphism is not
surjective for \(r\geq2\). Finally, as an application of the
double centralizer property, we determine the center of the affine
Schur algebra and relate it to the centers of the hyper loop algebra
and the loop semigroup algebra.

\subsection{The case \(n\geq r\) over arbitrary commutative rings}
We begin with the case \(n\geq r\), where the double centralizer property
follows directly from the standard idempotent realization of the affine
tensor space. This argument works over an arbitrary commutative ring.

The right action of $\widehat{\frak S}_r$ on $\Omega_{\mathpzc K}^{\otimes r}$
induces an algebra homomorphism
\[
\rho_{r,\mathpzc K}:
\mathpzc K\widehat{\frak S}_r^{\,\mathrm{op}}
\longrightarrow
\operatorname{End}_{\widehat{\mathcal S}_{\mathpzc K}(n,r)}
(\Omega_{\mathpzc K}^{\otimes r}).
\]

\begin{Thm}\label{thm:second-centralizer-stable}
Assume that $n\geq r$.

$(1)$ The algebra homomorphism
\(
\rho_{r,\mathpzc K}:
\mathpzc K\widehat{\frak S}_r^{\,\mathrm{op}}
\rightarrow
\operatorname{End}_{\widehat{\mathcal S}_{\mathpzc K}(n,r)}
(\Omega_{\mathpzc K}^{\otimes r})
\)
is an isomorphism.

$(2)$ If there is a subset $\mpX$ of $\field$ such that $|\mpX|>r$ and $a-b$ is invertible in $\field$ for any $a\not=b\in\mpX$, then
\(
\rho_{r,\mathpzc K}:
\mathpzc K\widehat{\frak S}_r^{\,\mathrm{op}}
\rightarrow
\operatorname{End}_{\widehat G_{\mathpzc K}(n)}
(\Omega_{\mathpzc K}^{\otimes r})
\)
is an isomorphism.
\end{Thm}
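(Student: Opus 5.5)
Part (1) reduces to the standard idempotent realization. Since $n\geq r$, take $\omega=(1,1,\ldots,1,0,\ldots,0)\in\afLanr$, with $r$ ones followed by $n-r$ zeros. Then $\fS_\omega=1$ and $\ul{\fS_\omega}=1$, so the summand $\field\ul{\fS_\omega}\affSr=\field\affSr$ is the regular right module and appears in $\Ogk^{\ot r}\cong\bop_\la\field\ul{\fS_\la}\affSr$.

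Let $e=e_{\diag(\omega)}\in\afSrk$ be the projection onto this summand. Concretely, $e$ projects $\Ogk^{\ot r}$ onto the span of the $\og_\bfi$ with $\bfi$ in the $\affSr$-orbit of $(1,2,\ldots,r)$. This gives
\[
\Ogk^{\ot r}e'=e\,\Ogk^{\ot r}\cong\field\affSr,
\]
and more generally $\afSrk e\cong\Hom_{\affSr}(\field\affSr,\Ogk^{\ot r})\cong\Ogk^{\ot r}$ as left $\afSrk$-modules, via $f\mapsto f(1)$. Also $e\afSrk e\cong\End_{\affSr}(\field\affSr)\cong\field\affSr$.

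Now take $\phi\in\End_{\afSrk}(\Ogk^{\ot r})$ and set $v_0=\og_1\ot\cdots\ot\og_r$. Then $\phi(v_0)=\phi(ev_0)=e\phi(v_0)$ lies in $e\Ogk^{\ot r}=v_0\field\affSr$, say $\phi(v_0)=v_0h$. For any $v\in\Ogk^{\ot r}$ there is $s\in\afSrk$ with $v=s v_0$; this holds because $\Ogk^{\ot r}=\afSrk v_0$, using the map $s\mapsto s(v_0)$ from $\afSrk e$. Hence
\[
\phi(v)=s\phi(v_0)=s v_0h=vh,
\]
so $\phi=\rho_{r,\field}(h)$, which gives surjectivity. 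Injectivity holds because $v_0\field\affSr$ is free: $v_0h=0$ forces $h=0$, since the $v_0w$ for distinct $w\in\affSr$ are distinct basis vectors $\og_{w\text{-permuted}}$. The faithfulness of the regular summand is what makes this work.

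The one point needing care is the identification $\Ogk^{\ot r}=\afSrk v_0$. This requires checking that for each basis vector $\og_\bfi$ there is an $\affSr$-equivariant map $\field\affSr\to\Ogk^{\ot r}$ sending $1$ to $\og_\bfi$. Such a map exists since $\field\affSr$ is free of rank one, namely $h\mapsto\og_\bfi h$, and it extends by zero on the other summands to an element of $\afSrk$. Hence every vector is in $\afSrk v_0$, and the argument is valid over any commutative ring $\field$.

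For (2), under the hypothesis on $\mpX$, Theorem~\ref{surjective1} gives $\afzrk(\field\Gk)=\afSrk$ as operators on $\Ogk^{\ot r}$. So the $\Gk$-commutant coincides with the $\afSrk$-commutant,
\[
\End_{\Gk}(\Ogk^{\ot r})=\End_{\afSrk}(\Ogk^{\ot r}),
\]
and (2) follows from (1).
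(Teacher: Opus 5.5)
Your argument is correct and is essentially the paper's. The paper identifies $\Ogk^{\ot r}\cong\afSrk e$ with $e=e_{\diag(\varpi)}$ and uses $\End_{\afSrk}(\afSrk e)\cong(e\afSrk e)^{\mathrm{op}}\cong\field\affSr^{\,\mathrm{op}}$; your computation with the cyclic vector $v_0=ev_0$ unpacks this, and it also justifies $\Ogk^{\ot r}=\afSrk v_0$, which the paper only asserts. Part (2) is deduced from Theorem~\ref{surjective1} exactly as you do, and the only blemish is the undefined ``$\Ogk^{\ot r}e'$'' in your display, which should read $e\,\Ogk^{\ot r}=v_0\,\field\affSr\cong\field\affSr$.
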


\begin{proof}
 Let $\varpi=(\varpi_1,\cdots,\varpi_n)\in\afLanr$, where $\varpi_i=1$ if $1\leq i\leq r$, and $\varpi_{i}=0$ if $r<i\leq n$.
Since $n\geq r$ we have $\Omega_{\mathpzc K}^{\otimes r}\cong \afSrk e_{\diag(\varpi)}$. It follows that $$\operatorname{End}_{\widehat{\mathcal S}_{\mathpzc K}(n,r)}
(\Omega_{\mathpzc K}^{\otimes r})\cong (e_{\diag(\varpi)}\afSrk e_{\diag(\varpi)})^{\,\mathrm{op}}\cong \mathpzc K\widehat{\frak S}_r^{\,\mathrm{op}}.$$
The assertion follows. Under these identifications, the resulting isomorphism is precisely \(\rho_{r,\mathpzc K}\). This proves (1).

For (2), by Theorem~\ref{surjective1}, the images of
$\mathpzc K\widehat G_{\mathpzc K}(n)$ and
$\widehat{\mathcal S}_{\mathpzc K}(n,r)$ in
$\operatorname{End}_{\mathpzc K}(\Omega_{\mathpzc K}^{\otimes r})$
coincide. Hence
\(
\operatorname{End}_{\widehat G_{\mathpzc K}(n)}
(\Omega_{\mathpzc K}^{\otimes r})
=
\operatorname{End}_{\widehat{\mathcal S}_{\mathpzc K}(n,r)}
(\Omega_{\mathpzc K}^{\otimes r}),
\)
and the assertion follows from (1).
\end{proof}

\subsection{The case \(n\geq 2\) over unique factorization domains}

In the geometric setting of affine Hecke algebras and affine
$q$-Schur algebras, Pouchin~\cite[Theorem~8.1]{Pouchin} stated an
analogous double centralizer property in the case corresponding to
$n<r$ in our notation. A gap in the proof of this result was already
pointed out in \cite{Fu13}. More precisely, for the surjection
\(
\phi_I:X\longrightarrow Y_I,
\)
the proof defines an injection
\(
\psi_I:\mbc_G(Y_I\times Y_I)
\longrightarrow
\mbc_G(Y_I\times X).
\)
However, the proof subsequently applies $\psi_I$ to an arbitrary element
$g\in \mbc_G(Y_I\times X)$. Since the domain of $\psi_I$ is
$\mbc_G(Y_I\times Y_I)$, the expression $\psi_I(g)$ is not defined for such
a general $g$. Consequently, the final argument does not, as written,
establish the asserted surjectivity.
The double centralizer property for affine quantum Schur
algebras was proved in \cite{FuHu} over fields of characteristic zero
when the Hecke parameter is not a root of unity.

We now remove the restriction \(n\geq r\) under the additional
assumption that the coefficient ring is a unique factorization
domain. Our argument below is independent of Pouchin's geometric construction.
We first extend scalars to a suitable field, where the relevant
skew group algebra becomes a matrix algebra and the double centralizer
property can be proved directly. We then prove that the same property
holds over the original coefficient ring.

Throughout this subsection, $\mathpzc K$ is a commutative unique factorization domain,
$n\geq 2$, and $r\geq 1$.  Set
\[
\sfT=\Omega_{\mathpzc K}^{\otimes r},
\qquad
\sfH=\mathpzc K\widehat{\frak S}_r,
\qquad
\sfS=
\widehat{\mathcal S}_{\mathpzc K}(n,r)
=\operatorname{End}_{\sfH}(\sfT),\qquad \sfP=\mathpzc K[x_1^{\pm1},\ldots,x_r^{\pm1}].
\]
Since $
\mathpzc K\mathbb Z^r
\simeq
\sfP$, the natural action of \({\frak S}_r\) on \(\mathbb Z^r\) induces the action on \(\sfP\) given by
$w(x_i)=x_{w(i)}$ for $w\in {\frak S}_r$.
Consequently, there is a natural algebra isomorphism
\begin{equation}\label{sfH iso P rtimes Sr}
\sfH
=
\mathpzc K\widehat{\mathfrak S}_r\cong
\mathpzc K(\mathbb Z^r\rtimes {\frak S}_r)
\cong
\sfP\rtimes {\frak S}_r ,
\end{equation}
where \(\sfP\rtimes {\frak S}_r\) denotes the skew group algebra associated with this action.
We shall identify \(\sfH\) with \(\sfP\rtimes {\frak S}_r\) via this isomorphism. Note that for
$f\in \sfP$, $w\in {\frak S}_r,$ we have
$
wfw^{-1}=w(f) .$

Let $I(n,r)=\{\mathbf i =(i_1,\ldots,i_r)\mid 1\leq i_j\leq n,\,\text{for}\,1\leq j\leq r\}$.
Since $\omega_{i+ln}=\ttt^{-l}\omega_i$, we have
$$
\sfT=
\bigoplus_{\mathbf i\in I(n,r)}
\omega_{\mathbf i}\sfP,$$
where $\omega_{\mathbf i}
=\omega_{i_1}\otimes\cdots\otimes\omega_{i_r}.$
For $w\in {\frak S}_r$, write
\[
(\mathbf i w)_j=i_{w(j)}
\qquad (1\leq j\leq r).
\]
Then we have $\omega_{\mathbf i}w=\omega_{\mathbf i w}$.
Let
$\sfZ=\sfP^{{\frak S}_r}$. We first prove the following basic freeness property.
\begin{Lem}\label{lem:sc-freeness}
Both \(\sfH\) and \(\sfT\) are free \(\sfZ\)-modules of finite rank.
\end{Lem}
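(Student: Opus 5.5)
The plan is to reduce both freeness claims to the classical fact that $\sfP=\mathpzc K[x_1^{\pm1},\ldots,x_r^{\pm1}]$ is a free $\sfZ=\sfP^{\fS_r}$-module of rank $r!$. Once that fact is available, everything else is a direct-sum decomposition.

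\emph{Step 1: $\sfP$ over $\sfZ$.} Let $\sfP_0=\mathpzc K[x_1,\ldots,x_r]$ and let $e_1,\ldots,e_r$ be the elementary symmetric polynomials. The fundamental theorem of symmetric polynomials holds over any commutative ring, so $\sfP_0^{\fS_r}=\mathpzc K[e_1,\ldots,e_r]$. Moreover $\sfP_0$ is free over it with basis the monomials $x^{a}=x_1^{a_1}\cdots x_r^{a_r}$ for $0\le a_i\le i-1$ (Artin's basis). This basis is obtained by successive monic division, so it works over $\mathbb Z$ and hence over any $\mathpzc K$ by base change.

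Next we localize. Since $e_r=x_1\cdots x_r$, we have $\sfP=\sfP_0[e_r^{-1}]$. Because $e_r$ is $\fS_r$-invariant, taking invariants commutes with inverting $e_r$: if $f/e_r^m$ is invariant, then $f$ is invariant. Hence
\[
\sfZ=\sfP_0^{\fS_r}[e_r^{-1}]=\mathpzc K[e_1,\ldots,e_{r-1},e_r^{\pm1}],
\]
and $\sfP=\sfP_0\otimes_{\sfP_0^{\fS_r}}\sfZ$ is free over $\sfZ$ with the same $r!$ monomials as a basis. The only point to verify carefully is this compatibility of invariants with localization; the argument above settles it.

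\emph{Step 2: $\sfH$.} Under the identification \eqref{sfH iso P rtimes Sr}, $\sfH=\bigoplus_{w\in\fS_r}\sfP w$ as a left $\sfP$-module. The subalgebra $\sfZ$ is central in $\sfH$, since $wz=w(z)w=zw$ for $z\in\sfZ$. So $\sfZ$ acts on each summand $\sfP w$ through $\sfP$, and each summand is free of rank $r!$ over $\sfZ$. Hence $\sfH$ is free over $\sfZ$ of rank $(r!)^2$.

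\emph{Step 3: $\sfT$.} Here $\sfT=\bigoplus_{\mathbf i\in I(n,r)}\omega_{\mathbf i}\sfP$, where $\sfP$ acts through the elements $\bse_k$, each summand is a free rank-one $\sfP$-module, and $\sfZ\subseteq\sfP$ acts by right multiplication. We must check that $\omega_{\mathbf i}f=0$ forces $f=0$. This holds because the $\omega_{\mathbf i}x^{\lambda}$ are exactly the basis vectors $\omega_{\mathbf j}$ with $\mathbf j\in\mathbb Z^r$, which are pairwise distinct. Therefore $\sfT$ is free over $\sfZ$ of rank $n^r\cdot r!$.

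Note that the UFD hypothesis is not needed for this lemma; it works for any commutative ring $\mathpzc K$.
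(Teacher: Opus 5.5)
Your proof is correct and follows the paper's argument. Both reduce the lemma to the freeness of $\sfP$ over $\sfZ$ and then use the decompositions $\sfH=\bigoplus_{w}\sfP w$ and $\sfT=\bigoplus_{\mathbf i}\omega_{\mathbf i}\sfP$. The only difference is that the paper cites Rouquier for the freeness of $\sfP$ over $\sfZ$, whereas you prove it (Artin basis over $\mathbb Z$, base change, and localization at the invariant non-zero-divisor $e_r$). You also check explicitly that each $\omega_{\mathbf i}\sfP$ is free of rank one, which gives the explicit ranks $(r!)^2$ and $n^r\, r!$.
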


\begin{proof}
By \cite[Rem.~2.14]{Rouquier}, \(\sfP\) is a free
\(\sfZ\)-module of finite rank. Since
$
\sfH=\bigoplus_{w\in{\frak S}_r}\sfP w
$
and
$
\sfT=
\bigoplus_{\mathbf i\in I(n,r)}
\omega_{\mathbf i}\sfP$, it follows that
both \(\sfH\) and \(\sfT\) are free \(\sfZ\)-modules of finite rank.
\end{proof}

The following lemma describes the center of the skew group algebra
$
\sfH=\sfP\rtimes\mathfrak S_r .$
This description may be viewed as the \(v=1\) degeneration of the
Bernstein center theorem for affine Hecke algebras. In the present
group algebra case, however, it follows directly from the skew group
algebra structure.

\begin{Lem}\label{center}
Let \(Z(\sfH)\) denote the center of the algebra \(\sfH\). Then
$Z(\sfH)=\sfZ .$
\end{Lem}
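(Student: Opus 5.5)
We prove the two inclusions $\sfZ\subseteq Z(\sfH)$ and $Z(\sfH)\subseteq\sfZ$ directly inside the skew group algebra $\sfH=\sfP\rtimes\fS_r=\bigoplus_{w\in\fS_r}\sfP w$. Every element of $\sfH$ can be written uniquely as $h=\sum_{w\in\fS_r}f_w w$ with $f_w\in\sfP$. The multiplication rule is $wf=w(f)w$.

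\emph{Step 1: $\sfZ\subseteq Z(\sfH)$.} Let $z\in\sfZ=\sfP^{\fS_r}$. Then $z$ commutes with every $f\in\sfP$, since $\sfP$ is commutative. It also commutes with every $w\in\fS_r$, because $wzw^{-1}=w(z)=z$. These elements generate $\sfH$, so $z$ is central.

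\emph{Step 2: a central element lies in $\sfP$.} Let $h=\sum_w f_w w$ be central. For each $1\le i\le r$ we have $x_ih=hx_i$. Comparing coefficients of $w$ in the $\sfP$-basis $\{w\}$ gives
\[
x_if_w=f_w\,w(x_i)=f_w x_{w(i)}, \qquad\text{i.e.}\qquad (x_i-x_{w(i)})f_w=0 .
\]
Fix $w\neq 1$ and choose $i$ with $w(i)\neq i$. Then $x_i-x_{w(i)}$ is a nonzero element of $\sfP$. Since $\mpk$ is a domain (a UFD), the Laurent polynomial ring $\sfP$ is also a domain. Hence $f_w=0$, and so $h=f_1\in\sfP$.

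\emph{Step 3: invariance.} For $w\in\fS_r$, centrality gives $wf_1=f_1w$. The left side equals $w(f_1)w$, so uniqueness of the decomposition yields $w(f_1)=f_1$. Thus $h\in\sfP^{\fS_r}=\sfZ$.

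The only point needing care is the cancellation in Step 2. It uses that $\sfP$ has no zero divisors, which holds because $\mpk$ is a domain (the UFD hypothesis is more than enough). Apart from this, the argument consists of coefficient comparisons in the free $\sfP$-module decomposition of $\sfH$.
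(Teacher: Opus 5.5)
Your proof is correct and essentially the same as the paper's. Both compare coefficients in $\bigoplus_{w}\sfP w$ after commuting a central element with elements of $\sfP$; you use $x_i$ directly, while the paper takes an arbitrary $f$ and then specializes to $x_i$. Both then conclude $f_w=0$ for $w\neq 1$ because $\sfP$ is a domain, and obtain $f_1\in\sfP^{\fS_r}=\sfZ$ from commutation with $\fS_r$.
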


\begin{proof}
It is clear that
$
\sfZ\subseteq Z(\sfH).$
Conversely, let
$z=\sum_{w\in\mathfrak S_r}f_w w\in Z(\sfH)$,
where $f_w\in\sfP .$
For each \(f\in\sfP\), we have
$
\sum_{w\in\mathfrak S_r}f_w w(f)w=zf=fz=
\sum_{w\in\mathfrak S_r}ff_w w.$
It follows that
$f_w(w(f)-f)=0$
for
$f\in\sfP$ and $w\in\mathfrak S_r$.
If \(w\neq1\), choose \(i\) with \(w(x_i)\neq x_i\).
Then
$f_w(w(x_i)-x_i)=0,$
and hence \(f_w=0\), since \(\sfP\) is an integral domain.
Thus
$
z=f_1\in\sfP .$
Finally, since \(z\) commutes with every element of
\(\mathfrak S_r\), we have
$
wf_1=f_1w ,$
which implies
$f_1\in\sfP^{\mathfrak S_r}=\sfZ.$
Therefore
$
Z(\sfH)=\sfZ.$
\end{proof}

Since $\mathpzc K$ is an integral domain, both $\sfP$ and $\sfZ$ are integral domains. Put
\[
\sfQ=\operatorname{Frac}(\sfZ),
\qquad
\sfF=\operatorname{Frac}(\sfP).
\]
We next describe the relation among \(\sfP\), \(\sfQ\), and \(\sfF\).
\begin{Lem}\label{lem:sc-generic-coefficient-field}
There is a natural isomorphism
$
\sfP\otimes_\sfZ \sfQ\cong \sfF.
$
Moreover,
$\sfF^{\frak S_r}=\sfQ$
 and $[\sfF:\sfQ]=|{\frak S}_r|=r!.$
\end{Lem}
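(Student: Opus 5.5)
We will prove Lemma \ref{lem:sc-generic-coefficient-field} using only Lemma \ref{lem:sc-freeness}, the fact that $\sfP$ is an integral domain integral over $\sfZ$, and Galois theory for the faithful action of $\frak S_r$ on $\sfF$.

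\emph{Step 1: $\sfP\otimes_\sfZ\sfQ\cong\sfF$.} Since $\sfQ$ is the localization $S^{-1}\sfZ$ with $S=\sfZ\setminus\{0\}$, the tensor product $\sfP\otimes_\sfZ\sfQ$ is the localization $S^{-1}\sfP$. Because $\sfP$ is a domain containing $\sfZ$, the natural map $S^{-1}\sfP\to\sfF$ is injective; its image is a subring of $\sfF$ containing $\sfP$.

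To see the image is all of $\sfF$, note first that every $f\in\sfP$ is integral over $\sfZ$: it is a root of the monic polynomial $\prod_{w\in\frak S_r}(X-w(f))$, whose coefficients lie in $\sfZ$. Hence $S^{-1}\sfP$ is a domain integral over the field $\sfQ$. An integral domain that is integral over a field is itself a field: for $0\ne f$, take a minimal monic relation $f^m+c_{m-1}f^{m-1}+\cdots+c_0=0$; minimality and the domain property force $c_0\ne0$, so $f^{-1}\in S^{-1}\sfP$. Thus $S^{-1}\sfP$ is a field containing $\sfP$, hence equals $\sfF$.

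Alternatively, one can invert $g\ne0$ in $\sfP$ directly: set $N(g)=\prod_w w(g)\in\sfZ\setminus\{0\}$, so that
\[
g^{-1}=\Big(\prod_{w\ne1}w(g)\Big)N(g)^{-1}.
\]

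\emph{Step 2: $\sfF^{\frak S_r}=\sfQ$.} The action of $\frak S_r$ on $\sfP$ extends to $\sfF$, and it is faithful because the $x_i$ are permuted. Clearly $\sfQ\subseteq\sfF^{\frak S_r}$. Conversely, let $h\in\sfF^{\frak S_r}$. By Step 1 we may write $h=f/z$ with $f\in\sfP$ and $z\in\sfZ\setminus\{0\}$. Then $f=hz$ is invariant, so $f\in\sfP^{\frak S_r}=\sfZ$, and therefore $h\in\sfQ$.

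\emph{Step 3: $[\sfF:\sfQ]=r!$.} By Artin's theorem, a faithful action of the finite group $\frak S_r$ on the field $\sfF$ gives $[\sfF:\sfF^{\frak S_r}]=|\frak S_r|=r!$. Combined with Step 2, this yields $[\sfF:\sfQ]=r!$.

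As a consistency check, Lemma \ref{lem:sc-freeness} says $\sfP$ is free of finite rank over $\sfZ$, so by Step 1 this rank equals $\dim_\sfQ\sfF=r!$.

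The only point needing care is the surjectivity in Step 1, that is, that localizing $\sfP$ at $\sfZ\setminus\{0\}$ already yields all of $\sfF$. The norm argument above handles this directly, so no serious obstacle is expected.
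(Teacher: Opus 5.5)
Your proof is correct and follows essentially the same route as the paper. Both show $S^{-1}\sfP=\sfF$ using the norm $N(g)=\prod_{w}w(g)\in\sfZ\setminus\{0\}$, deduce $\sfF^{\frak S_r}=\sfQ$ by clearing denominators into $\sfZ$, and obtain $[\sfF:\sfQ]=r!$ from Artin's theorem; your integrality argument in Step~1 is only a harmless alternative to the norm computation, which you also give.
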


\begin{proof}
Let $S=\sfZ\setminus\{0\}$.  Since $\sfP$ is a domain and $S\subseteq \sfP\setminus\{0\}$, we have
\(
S^{-1}\sfP\subseteq \sfF
\).
Conversely, let $0\ne f\in \sfP$ and define
$
N(f)=\prod_{w\in {\frak S}_r}w(f).$
Then $N(f)\in \sfP^{{\frak S}_r}=\sfZ$ and $N(f)\ne0$.  Hence $N(f)$ is invertible in
$S^{-1}\sfP$, and
$f^{-1}
=
\frac{1}{N(f)}\cdot \prod_{ {w\in {\frak S}_r,\,w\ne1}}w(f)\in S^{-1}\sfP.
$
Thus $S^{-1}\sfP=\sfF$. It follows that
$
\sfP\otimes_\sfZ\sfQ=\sfP\otimes_\sfZ S^{-1}\sfZ\cong S^{-1}\sfP=\sfF.$

Clearly $\sfQ\subseteq \sfF^{{\frak S}_r}$.  On the other hand, let
$x=a/b\in \sfF^{{\frak S}_r}$ with $a,b\in \sfP$ and $b\ne0$. Since $x\in\sfF^{{\frak S}_r}$ and $N(b)\in\sfP^{{\frak S}_r}$, we have
\[
xN(b)
=a\prod_{\substack{w\in {\frak S}_r,\,w\ne1}}w(b)
\in \sfP\cap \sfF^{{\frak S}_r}=\sfP^{{\frak S}_r}=\sfZ.
\]
It follows that $x\in \sfQ$. Therefore $\sfF^{{\frak S}_r}=\sfQ$.

By \cite[Th. V.2.15]{Hun},   \(\sfF\) is a finite Galois extension of $\sfF^{{\frak S}_r}$ with Galois group ${\frak S}_r $. Hence by  \cite[Th. V.2.5(i)]{Hun},
we have
$
[\sfF:\sfQ]=[\sfF:\sfF^{{\frak S}_r}]
=
|\operatorname{Aut}_{\sfF^{{\frak S}_r}}\sfF|
=
|{\frak S}_r|
=
r!.$
\end{proof}

We shall use the following linear independence result for ring homomorphisms.
\begin{Lem}\label{lem:sc-hom-independence}
Let $A$ be a commutative ring, let $L$ be a field, and let
$
\psi_1,\ldots,\psi_s:A\rightarrow L$
be pairwise distinct unital ring homomorphisms.  Then
$\psi_1,\ldots,\psi_s$, regarded as functions from $A$ to $L$, are linearly
independent over $L$.
\end{Lem}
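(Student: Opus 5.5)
This is Dedekind--Artin independence of characters, and the plan is the standard minimal-counterexample argument. Suppose $\psi_1,\ldots,\psi_s$ are linearly dependent over $L$. Among all nontrivial relations $\sum_{k\in J}c_k\psi_k=0$ (as functions $A\to L$) with $c_k\in L$, $c_k\neq0$ for $k\in J$, choose one for which $|J|$ is minimal. Relabel so that $J=\{1,\ldots,m\}$. Evaluating at $1\in A$ and using unitality gives $\sum_k c_k=0$; more to the point, $m=1$ is impossible, because $c_1\psi_1(1)=c_1\neq0$. Hence $m\geq2$.

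Since $\psi_1\neq\psi_2$, we can choose $b\in A$ with $\psi_1(b)\neq\psi_2(b)$. We produce two relations from the original one.

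\begin{itemize}
\item Evaluating the relation at $ab$ and using multiplicativity gives $\sum_{k=1}^m c_k\psi_k(b)\psi_k(a)=0$ for all $a\in A$.
\item Multiplying the original relation by $\psi_1(b)$ gives $\sum_{k=1}^m c_k\psi_1(b)\psi_k(a)=0$ for all $a\in A$.
\end{itemize}

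Subtracting the second from the first yields
\[
\sum_{k=2}^m c_k\bigl(\psi_k(b)-\psi_1(b)\bigr)\psi_k=0 .
\]
This relation involves at most $m-1$ of the homomorphisms. Its coefficient at $k=2$ is $c_2(\psi_2(b)-\psi_1(b))$, which is nonzero because $L$ is a field and both factors are nonzero. So it is a nontrivial relation of smaller length, contradicting the minimality of $m$. Therefore $\psi_1,\ldots,\psi_s$ are linearly independent over $L$.

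The only points needing care are:

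\begin{itemize}
\item the base case, which uses $\psi_k(1)=1$;
\item the fact that a product of nonzero elements of $L$ is nonzero.
\end{itemize}

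Both are immediate, so I expect no real obstacle. Commutativity of $A$ is not even needed.
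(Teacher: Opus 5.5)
Your proof is correct and follows the paper's argument: take a minimal nontrivial relation, pick $b$ separating two of the homomorphisms, evaluate at a product with $b$, and subtract a scalar multiple of the original relation to get a shorter nontrivial relation. Your use of $\psi_k(1)=1$ to show the minimal length is at least $2$ fills in a step the paper asserts without comment.
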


\begin{proof}
The proof is the same as the standard argument for the linear
independence of characters; see, for example,
\cite[Ch.~VI, \S4, Th.~4.1]{Lang}.
Suppose that a nontrivial linear relation exists. Choose one with the
smallest possible number of nonzero terms. Thus there exist distinct
indices
$i_1,\ldots,i_t\in\{1,\ldots,s\}$
and nonzero coefficients
$c_1,\ldots,c_t\in L$
such that
$\sum_{j=1}^{t}c_j\psi_{i_j}(a)=0$ for
$a\in A,$
where \(t\) is minimal. Then  we have $t\geq 2$.
Since  $
\psi_{i_1}\neq\psi_{i_t}$, there exists \(b\in A\) such that
$\psi_{i_1}(b)\neq\psi_{i_t}(b).$
Replacing \(a\) by \(ba\) in the above relation and subtracting
\(\psi_{i_t}(b)\) times the original relation, we obtain
\[
\sum_{j=1}^{t}
c_j\psi_{i_j}(ba)-\sum_{j=1}^{t}
c_j\psi_{i_t}(b)\psi_{i_j}(a)=\sum_{j=1}^{t-1}
c_j\bigl(\psi_{i_j}(b)-\psi_{i_t}(b)\bigr)
\psi_{i_j}(a)=0
\qquad(a\in A).
\]
The coefficient of \(\psi_{i_1}\) is nonzero, so this gives a nontrivial
linear relation involving fewer than \(t\) homomorphisms, contradicting
the minimality of \(t\).
\end{proof}
We now apply the preceding results to identify the skew group algebra \(\sfF\rtimes{\frak S}_r\).
 \begin{Lem}\label{lem:sc-skew-matrix}
There is an isomorphism of $\sfQ$-algebras
$
\sfF\rtimes {\frak S}_r
\cong
\operatorname{End}_{\sfQ}(\sfF).$
Consequently,
$\sfF\rtimes {\frak S}_r
\cong
\operatorname{Mat}_{r!}(\sfQ).$
\end{Lem}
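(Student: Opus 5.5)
The plan is to use the natural action of $\sfF\rtimes{\frak S}_r$ on $\sfF$: elements of $\sfF$ act by multiplication and $w\in{\frak S}_r$ acts by the field automorphism $f\mapsto w(f)$. First I will check that this gives a well-defined $\sfQ$-algebra homomorphism
\[
\Phi:\sfF\rtimes{\frak S}_r\longrightarrow \operatorname{End}_{\sfQ}(\sfF),\qquad \Phi(fw)(a)=f\,w(a).
\]
The relation $wfw^{-1}=w(f)$ is respected, since $w\circ m_f\circ w^{-1}=m_{w(f)}$ as operators on $\sfF$. Each $w$ fixes $\sfQ=\sfF^{{\frak S}_r}$ (Lemma~\ref{lem:sc-generic-coefficient-field}), so $\Phi(w)$ is $\sfQ$-linear. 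Also $\sfQ\subseteq\sfF$ is central in $\sfF\rtimes{\frak S}_r$ because it is ${\frak S}_r$-invariant, so $\Phi$ is a map of $\sfQ$-algebras.

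Next I will show that $\Phi$ is injective. Suppose $\sum_{w}f_w\,w(a)=0$ for all $a\in\sfF$. The automorphisms $w\in{\frak S}_r$ are pairwise distinct unital ring homomorphisms $\sfF\to\sfF$: the action of ${\frak S}_r$ on $\sfF$ is faithful, since $w\neq1$ moves some $x_i$. Lemma~\ref{lem:sc-hom-independence}, applied with $A=L=\sfF$, then gives $f_w=0$ for all $w$.

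For surjectivity I will compare dimensions over $\sfQ$. Since $\sfF\rtimes{\frak S}_r=\bigoplus_w\sfF w$, we get $\dim_\sfQ(\sfF\rtimes{\frak S}_r)=r!\,[\sfF:\sfQ]=(r!)^2$ by Lemma~\ref{lem:sc-generic-coefficient-field}. On the other side, $\dim_\sfQ\operatorname{End}_\sfQ(\sfF)=[\sfF:\sfQ]^2=(r!)^2$. An injective $\sfQ$-linear map between $\sfQ$-spaces of the same finite dimension is an isomorphism.

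Choosing a $\sfQ$-basis of $\sfF$, which has $r!$ elements, identifies $\operatorname{End}_\sfQ(\sfF)$ with $\operatorname{Mat}_{r!}(\sfQ)$ and gives the second assertion. There is no serious obstacle here. The only point needing care is that the multiplication in the skew group algebra matches composition of operators, and that the pieces are genuinely $\sfQ$-linear. This is exactly where $\sfF^{{\frak S}_r}=\sfQ$ and the finiteness $[\sfF:\sfQ]=r!$ from Lemma~\ref{lem:sc-generic-coefficient-field} are used.
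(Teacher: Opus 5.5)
Your proposal is correct and follows essentially the same route as the paper: the natural action map $\sfF\rtimes{\frak S}_r\to\operatorname{End}_{\sfQ}(\sfF)$ is injective by Lemma~\ref{lem:sc-hom-independence}, and it is surjective because both sides have $\sfQ$-dimension $(r!)^2$. Your extra checks (compatibility with $wfw^{-1}=w(f)$, faithfulness of the ${\frak S}_r$-action, and centrality of $\sfQ$) are details the paper leaves implicit.
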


\begin{proof}
Consider the natural action homomorphism
\begin{equation}\label{eq:sc-skew-action}
\sfF\rtimes {\frak S}_r
\longrightarrow
\operatorname{End}_{\sfQ}(\sfF),
\qquad
\sum_{w\in{\frak S}_r}a_w w
\longmapsto
\left(
x\longmapsto
\sum_{w\in{\frak S}_r}a_w w(x)
\right).
\end{equation}
Since
$
\sfQ=\sfF^{{\frak S}_r},$
the above map is well defined and $\sfQ$-linear.
By Lemma~\ref{lem:sc-hom-independence}, the automorphisms
$
w:\sfF\rightarrow\sfF
\
(w\in{\frak S}_r)$
are linearly independent over $\sfF$. Hence the map
\eqref{eq:sc-skew-action} is injective.
On the other hand, by Lemma~\ref{lem:sc-generic-coefficient-field},
$
\dim_{\sfQ}(\sfF\rtimes{\frak S}_r)
=
[\sfF:\sfQ]\cdot |{\frak S}_r|
=
(r!)^2=\dim_{\sfQ}\operatorname{End}_{\sfQ}(\sfF).$
Thus
we have
$\sfF\rtimes{\frak S}_r
\cong
\operatorname{End}_{\sfQ}(\sfF)
\cong
\operatorname{Mat}_{r!}(\sfQ).$
\end{proof}

Let
$\sfH_\sfQ=\sfH\otimes_{\sfZ}\sfQ$ and $\sfT_\sfQ=\sfT\otimes_{\sfZ}\sfQ.$
The following consequence of Lemma~\ref{lem:sc-freeness} will be used
throughout the sequel.
\begin{Lem}\label{lem:sc-injective-extension}
The natural maps
\[
\sfH\longrightarrow\sfH_\sfQ,\qquad h\longmapsto h\otimes1,
\]
and
\[
\sfT\longrightarrow\sfT_\sfQ,\qquad m\longmapsto m\otimes1,
\]
are injective. Hence we identify \(\sfH\) with its image in
\(\sfH_\sfQ\) and \(\sfT\) with its image in
\(\sfT_\sfQ\).
\end{Lem}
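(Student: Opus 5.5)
The plan is to deduce injectivity from freeness (Lemma~\ref{lem:sc-freeness}) together with the fact that $\sfZ$ is an integral domain. By Lemma~\ref{lem:sc-freeness}, $\sfH\cong\sfZ^{a}$ and $\sfT\cong\sfZ^{b}$ as $\sfZ$-modules for some finite $a,b$. Tensor products commute with finite direct sums, so the map $\sfH\to\sfH_\sfQ$, $h\mapsto h\otimes1$, becomes under these identifications the map
\[
\sfZ^{a}\longrightarrow (\sfZ\otimes_\sfZ\sfQ)^{a}\cong\sfQ^{a}.
\]
This map is induced coordinatewise by the inclusion $\sfZ\hookrightarrow\sfQ=\operatorname{Frac}(\sfZ)$. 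The same description holds for $\sfT\to\sfT_\sfQ$ with $b$ in place of $a$.

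It therefore remains to check that $\sfZ\to\sfQ$, $z\mapsto z\otimes 1$, is injective. The canonical isomorphism $\sfZ\otimes_\sfZ\sfQ\cong\sfQ$ sends $z\otimes1$ to $z/1$. Since $\sfZ\subseteq\sfP$ and $\sfP$ is an integral domain, $\sfZ$ is a domain, so $z/1=0$ forces $z=0$. A direct sum of injective maps is injective, so both natural maps in the statement are injective.

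Equivalently, one can phrase the argument through localization. With $S=\sfZ\setminus\{0\}$, we have $M\otimes_\sfZ\sfQ\cong S^{-1}M$, and the kernel of $M\to S^{-1}M$ is the $S$-torsion of $M$. Free modules over a domain are torsion-free, which again gives injectivity.

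There is no real obstacle here. The only point needing care is that the identification relies on $\sfH$ and $\sfT$ being free, or more generally torsion-free, over $\sfZ$; this is exactly what Lemma~\ref{lem:sc-freeness} supplies. The underlying freeness of $\sfP$ over $\sfZ=\sfP^{\mathfrak S_r}$ is quoted from \cite{Rouquier}.
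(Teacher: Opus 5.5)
Your proof is correct and is the argument the paper intends. The paper gives no separate proof and records the lemma simply as a consequence of Lemma~\ref{lem:sc-freeness}. Your reduction, via freeness of $\sfH$ and $\sfT$ over the domain $\sfZ$ (finite rank is not even needed), to the injectivity of $\sfZ\hookrightarrow\sfQ=\operatorname{Frac}(\sfZ)$, is exactly that deduction written out.
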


Combining the preceding results, we obtain the following description of
\(\sfH_\sfQ\).
{ \begin{Lem}\label{lem:sc-HQ-matrix}
There is an isomorphism of $\sfQ$-algebras
$
\sfH_\sfQ
\cong
\sfF\rtimes{\frak S}_r
\cong
\operatorname{Mat}_{r!}(\sfQ).$
\end{Lem}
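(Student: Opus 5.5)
The plan is to reduce $\sfH_\sfQ$ to the skew group algebra over $\sfF$ and then invoke Lemma~\ref{lem:sc-skew-matrix}.

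\emph{Step 1: identify $\sfH_\sfQ$ with $\sfF\rtimes{\frak S}_r$ as $\sfQ$-modules.} Since $\sfZ=\sfP^{{\frak S}_r}$ is central in $\sfH$ by Lemma~\ref{center}, the tensor product $\sfH_\sfQ=\sfH\otimes_\sfZ\sfQ$ is a $\sfQ$-algebra. The decomposition $\sfH=\bigoplus_{w\in{\frak S}_r}\sfP w$ is a decomposition of $\sfZ$-modules; the left $\sfP$-module and right $\sfZ$-module structures on each summand agree, because $fw=w\,w^{-1}(f)=wf$ for $f\in\sfZ$. Since tensor products commute with finite direct sums, we obtain
\[
\sfH_\sfQ\cong\bigoplus_{w\in{\frak S}_r}(\sfP\otimes_\sfZ\sfQ)\,w .
\]
By Lemma~\ref{lem:sc-generic-coefficient-field}, $\sfP\otimes_\sfZ\sfQ\cong\sfF$ via $f\otimes q\mapsto fq$. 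This gives a $\sfQ$-linear bijection
\[
\Phi:\sfH_\sfQ\to\sfF\rtimes{\frak S}_r,\qquad (fw)\otimes q\mapsto (fq)w .
\]

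\emph{Step 2: check that $\Phi$ is multiplicative.} It suffices to check this on elements of the form $(fw)\otimes q$. We compute
\[
\bigl((fw)\otimes q\bigr)\bigl((gu)\otimes q'\bigr)=\bigl(f\,w(g)\,wu\bigr)\otimes qq' ,
\]
which $\Phi$ sends to $f\,w(g)\,qq'\,wu$. In $\sfF\rtimes{\frak S}_r$, the product $(fq\,w)(gq'\,u)$ equals $fq\,w(g)\,w(q')\,wu$. These agree because $w(q')=q'$ for $q'\in\sfQ=\sfF^{{\frak S}_r}$, which is part of Lemma~\ref{lem:sc-generic-coefficient-field}. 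Hence $\Phi$ is a $\sfQ$-algebra isomorphism.

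\emph{Step 3: conclude.} Composing $\Phi$ with the isomorphism $\sfF\rtimes{\frak S}_r\cong\operatorname{Mat}_{r!}(\sfQ)$ of Lemma~\ref{lem:sc-skew-matrix} gives the claim.

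The main point needing care is the well-definedness of the identification $\sfP\otimes_\sfZ\sfQ\cong\sfF$ inside the twisted multiplication. The tensor product is taken over the central subring $\sfZ$, and the $\sfQ$-coefficients must commute past $w$; both facts are secured by the ${\frak S}_r$-invariance of $\sfQ$. Beyond this, no freeness argument is needed, since localization at $S=\sfZ\setminus\{0\}$ is exact. Alternatively, one may write $\sfH_\sfQ=S^{-1}\sfH$, a central localization, and note that $S^{-1}\sfH=S^{-1}\sfP\rtimes{\frak S}_r=\sfF\rtimes{\frak S}_r$ directly.
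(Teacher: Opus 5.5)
Your proof is correct and takes the same route as the paper. The paper also uses $\sfH=\bigoplus_{w\in{\frak S}_r}\sfP w$ to get the natural $\sfQ$-algebra isomorphism $\sfH\otimes_{\sfZ}\sfQ\cong(\sfP\otimes_{\sfZ}\sfQ)\rtimes{\frak S}_r$, $(fw)\otimes g\mapsto(f\otimes g)w$, and then invokes Lemmas~\ref{lem:sc-generic-coefficient-field} and~\ref{lem:sc-skew-matrix}; your multiplicativity check, using $w(q')=q'$ for $q'\in\sfQ$, just makes explicit a step the paper calls ``natural.''
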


\begin{proof}
By \eqref{sfH iso P rtimes Sr}
$
\sfH=\sfP\rtimes{\frak S}_r
=
\bigoplus_{w\in{\frak S}_r}\sfP w.
$
Hence there is a natural $\sfQ$-algebra isomorphism
\[
\sfH\otimes_{\sfZ}\sfQ
\longrightarrow
(\sfP\otimes_{\sfZ}\sfQ)\rtimes{\frak S}_r
\]
given by
$
(fw)\otimes g
\longmapsto
(f\otimes g)w ,
\
f\in\sfP,\ g\in\sfQ,\ w\in{\frak S}_r .
$ Therefore by Lemmas \ref{lem:sc-generic-coefficient-field} and \ref{lem:sc-skew-matrix} we have
$
\sfH_\sfQ
\cong
\sfF\rtimes{\frak S}_r\cong
\operatorname{Mat}_{r!}(\sfQ).$
\end{proof}}

Let
\[
\sfS_\sfQ=
\operatorname{End}_{\sfH_\sfQ}(\sfT_\sfQ).
\]
We now establish the double centralizer property over \(\sfQ\).

\begin{Lem}\label{lem:sc-generic-double-centralizer}
The right action of $\sfH_\sfQ$ on $\sfT_\sfQ$ induces an algebra
isomorphism
$\rho_\sfQ:
\sfH_\sfQ^{\mathrm{op}}
\rightarrow
\operatorname{End}_{\sfS_\sfQ}(\sfT_\sfQ).$
\end{Lem}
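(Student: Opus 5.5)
The argument is classical double centralizer theory over a field. By Lemma~\ref{lem:sc-HQ-matrix}, $\sfH_\sfQ\cong\operatorname{Mat}_{r!}(\sfQ)$ is a split simple $\sfQ$-algebra. By Lemma~\ref{lem:sc-freeness}, $\sfT_\sfQ$ is a finite-dimensional right $\sfH_\sfQ$-module, and it is nonzero. Every such module is semisimple and isomorphic to $V^{\oplus m}$, where $V\cong\sfQ^{r!}$ is the unique simple module. We therefore get $\sfS_\sfQ=\operatorname{End}_{\sfH_\sfQ}(\sfT_\sfQ)\cong\operatorname{Mat}_m(\End_{\sfH_\sfQ}(V))=\operatorname{Mat}_m(\sfQ)$, using that $\End_{\sfH_\sfQ}(V)=\sfQ$ for a matrix algebra.

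We then apply the double centralizer theorem for semisimple modules, in the form: if $M$ is a faithful semisimple module of finite length over a semisimple algebra $A$, then $A\to\End_{\End_A(M)}(M)$ is an isomorphism. Concretely, $\End_{\operatorname{Mat}_m(\sfQ)}(V^{\oplus m})$ computes back to $\End_\sfQ(V)\cong\operatorname{Mat}_{r!}(\sfQ)$. Since $\sfH_\sfQ$ is simple and $\sfT_\sfQ\neq0$, the action map $\rho_\sfQ$ has zero kernel, so it is injective. It lands in a $\sfQ$-algebra of dimension $(r!)^2=\dim_\sfQ\sfH_\sfQ$, so it is bijective. Alternatively, Jacobson density gives surjectivity directly.

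The only point needing care is $\sfT_\sfQ\neq0$, which is needed for faithfulness. It follows from Lemma~\ref{lem:sc-injective-extension}, since $\sfT\neq0$ embeds into $\sfT_\sfQ$. Alternatively, $\sfT$ is free of positive rank over $\sfZ$. We also check that the $\sfH_\sfQ$-action on $\sfT_\sfQ$ is the $\sfQ$-linear extension of the $\sfH$-action, which holds because $\sfZ$ is central in $\sfH$ by Lemma~\ref{center}, so all tensor products over $\sfZ$ are compatible.

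The main (mild) obstacle is identifying $\operatorname{End}_{\sfS_\sfQ}(\sfT_\sfQ)$ correctly with $\sfQ$-linear endomorphisms. Here $\sfS_\sfQ$ contains the scalars $\sfQ$, since $\sfQ$ is central in $\sfH_\sfQ$ and acts on $\sfT_\sfQ$. Hence $\sfS_\sfQ$-linear maps are automatically $\sfQ$-linear, and the dimension count above applies.
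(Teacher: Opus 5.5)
Your proof is correct and follows essentially the same route as the paper. Simplicity of $\sfH_\sfQ\cong\operatorname{Mat}_{r!}(\sfQ)$ together with $\sfT_\sfQ\neq0$ gives injectivity of $\rho_\sfQ$, and the double centralizer theorem for finite-dimensional semisimple algebras gives surjectivity; the only difference is that you carry out the double centralizer step explicitly (computing $\sfS_\sfQ\cong\operatorname{Mat}_m(\sfQ)$, identifying its commutant with $\End_\sfQ(V)$, and counting dimensions), whereas the paper simply cites the theorem.
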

\begin{proof}
The right action of $\sfH_\sfQ$ on $\sfT_\sfQ$ induces an algebra homomorphism
$
\rho_\sfQ:
\sfH_\sfQ^{\mathrm{op}}
\rightarrow
\operatorname{End}_{\sfS_\sfQ}(\sfT_\sfQ).
$
By Lemma~\ref{lem:sc-HQ-matrix},
$\sfH_\sfQ\cong\operatorname{Mat}_{r!}(\sfQ),$
so \(\sfH_\sfQ\) is a simple $\sfQ$-algebra. Hence $\ker(\rho_\sfQ)=0$ or $\ker(\rho_\sfQ)=\sfH_\sfQ^{\mathrm{op}}$. Since \(\sfT_\sfQ\neq0\), the latter case cannot occur. Therefore
\(\rho_\sfQ\) is injective.

Let
$A=\rho_\sfQ(\sfH_\sfQ^{\mathrm{op}})
\subseteq
\operatorname{End}_\sfQ(\sfT_\sfQ).$
Then
$A\cong\sfH_\sfQ^{\mathrm{op}}$ is a simple algebra. Moreover,
$
\operatorname{End}_{A}(\sfT_\sfQ)
=
\operatorname{End}_{\sfH_\sfQ}(\sfT_\sfQ)
=
\sfS_\sfQ .
$
By the double centralizer theorem for finite dimensional semisimple algebras
(see, for example, \cite[Th. 5.18.1]{EGHLSVY}), we have
$A=\operatorname{End}_{\sfS_\sfQ}(\sfT_\sfQ).
$ Since \(\rho_\sfQ\) is injective, it follows that
$\rho_\sfQ$ is an algebra isomorphism.
\end{proof}

To relate the double centralizer over \(\sfQ\) to the original algebra
\(\sfS\), we need the following base-change result.
\begin{Lem}\label{lem:sc-base-change}
There is a natural isomorphism of $\sfQ$-algebras
$
\sfS\otimes_{\sfZ}\sfQ
\xrightarrow{\ \sim\ }
\operatorname{End}_{\sfH_\sfQ}(\sfT_\sfQ)=\sfS_\sfQ.$
\end{Lem}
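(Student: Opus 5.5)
The plan is to prove the standard fact that Hom commutes with flat base change for finitely presented modules, applied to the localization $\sfQ=S^{-1}\sfZ$ with $S=\sfZ\setminus\{0\}$. First observe that $\sfZ$ is central in $\sfH$ by Lemma~\ref{center}. Hence $\sfS=\operatorname{End}_\sfH(\sfT)$ is a $\sfZ$-algebra: any $\sfH$-endomorphism commutes with the right action of $\sfZ\subseteq\sfH$. So the natural map
\[
\Phi:\sfS\otimes_\sfZ\sfQ\longrightarrow\operatorname{End}_{\sfH_\sfQ}(\sfT_\sfQ),\qquad \phi\otimes q\longmapsto (m\otimes q'\mapsto \phi(m)\otimes qq'),
\]
is well defined, $\sfQ$-linear and multiplicative. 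The point is to show $\Phi$ is bijective.

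The key reduction is to express $\sfS$ as a kernel of a map between finite free $\sfZ$-modules. By Lemma~\ref{lem:sc-freeness}, $\sfT$ is free of finite rank over $\sfZ$. Also $\sfH$ is generated as a $\sfZ$-algebra by finitely many elements: for instance a finite $\sfZ$-basis of $\sfH$, or more economically $s_1,\dots,s_{r-1}$ together with $x_1^{\pm1},\dots,x_r^{\pm1}$. Call these generators $h_1,\dots,h_N$. Then
\[
\sfS=\ker\Bigl(\operatorname{End}_\sfZ(\sfT)\xrightarrow{\ \delta\ }\textstyle\bigoplus_{k=1}^N\operatorname{End}_\sfZ(\sfT)\Bigr),\qquad \delta(\phi)=\bigl(\phi\circ\rho(h_k)-\rho(h_k)\circ\phi\bigr)_k,
\]
and similarly $\sfS_\sfQ$ is the kernel of the corresponding map $\delta_\sfQ$ on $\operatorname{End}_\sfQ(\sfT_\sfQ)$. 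The elements $h_k\otimes1$ generate $\sfH_\sfQ$ as a $\sfQ$-algebra, which justifies this description of $\sfS_\sfQ$.

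Since $\sfT$ is finite free, $\operatorname{End}_\sfZ(\sfT)\otimes_\sfZ\sfQ\cong\operatorname{End}_\sfQ(\sfT_\sfQ)$ canonically, via matrices in a fixed basis, and under this identification $\delta\otimes1=\delta_\sfQ$. Localization $-\otimes_\sfZ\sfQ$ is exact, so it preserves kernels. This gives $\sfS\otimes_\sfZ\sfQ\cong\ker\delta_\sfQ=\sfS_\sfQ$. One then checks that the resulting isomorphism is $\Phi$, which is clear on pure tensors.

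The only mildly delicate point is the compatibility of the identification $\operatorname{End}_\sfZ(\sfT)\otimes\sfQ\cong\operatorname{End}_\sfQ(\sfT_\sfQ)$ with $\delta$, and this is immediate in coordinates. No UFD hypothesis is needed beyond $\sfZ$ being a domain, so that $\sfQ$ is a flat localization; finiteness comes from Lemma~\ref{lem:sc-freeness}.
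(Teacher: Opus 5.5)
Your proposal is correct and follows essentially the same route as the paper. Both arguments write $\sfS$ as the kernel of the commutator map $\operatorname{End}_\sfZ(\sfT)\to\operatorname{End}_\sfZ(\sfT)^{\oplus N}$ against finitely many generators of $\sfH$, which are $\sfZ$-linear because $\sfZ$ is central, and then use flatness of $\sfQ$ and finite freeness of $\sfT$ to identify $\operatorname{End}_\sfZ(\sfT)\otimes_\sfZ\sfQ$ with $\operatorname{End}_\sfQ(\sfT_\sfQ)$ compatibly with the commutator maps. The only cosmetic difference is that you allow $\sfZ$-algebra generators where the paper takes $\sfZ$-module generators, and this works equally well.
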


\begin{proof}
 Since \(\sfH\) is finitely generated as a
\(\sfZ\)-module, we may choose \(\sfZ\)-module generators
$h_1,\ldots,h_N$
of \(\sfH\). For \(1\leq i\leq N\), define
$R_{h_i}:\sfT\longrightarrow\sfT$ such that $R_{h_i}(t)=th_i$
for $t\in \sfT$.
By Lemma \ref{center} we have
\(\sfZ= Z(\sfH)\). Hence we have \(R_{h_i}\in\operatorname{End}_{\sfZ}(\sfT)\) for all $i$.
Define a \(\sfZ\)-linear map
$\alpha:
\operatorname{End}_{\sfZ}(\sfT)\rightarrow \operatorname{End}_{\sfZ}(\sfT)^{\oplus N}
$
by
$
\alpha(f)
=
\bigl(
fR_{h_1}-R_{h_1}f,\ldots,
fR_{h_N}-R_{h_N}f
\bigr).$
Then
$
\ker\alpha=\operatorname{End}_{\sfH}(\sfT)=\sfS.$
Hence we have an exact sequence
\[
0\longrightarrow
\sfS
\longrightarrow
\operatorname{End}_{\sfZ}(\sfT)
\xrightarrow{\alpha}
\operatorname{End}_{\sfZ}(\sfT)^{\oplus N}.
\]
By \cite[Prop. 3.3 and 3.5]{AM} we see that
$\sfQ=\operatorname{Frac}(\sfZ)$ is a flat
\(\sfZ\)-module. Hence we have the following exact sequence
\[
0\longrightarrow
\sfS\otimes_{\sfZ}\sfQ
\longrightarrow
\operatorname{End}_{\sfZ}(\sfT)\otimes_{\sfZ}\sfQ
\xrightarrow{\alpha\otimes1}
\operatorname{End}_{\sfZ}(\sfT)^{\oplus N}\otimes_{\sfZ}\sfQ.
\]
Consequently,
$\sfS\otimes_{\sfZ}\sfQ
=
\ker(\alpha\otimes1)$.

By Lemma \ref{lem:sc-freeness}  the natural
map
$
\Phi:
\operatorname{End}_{\sfZ}(\sfT)\otimes_{\sfZ}\sfQ
\rightarrow
\operatorname{End}_{\sfQ}(\sfT_\sfQ)$
defined by
$\Phi(f\otimes a)(t\otimes b)=f(t)\otimes ab$
is an isomorphism of \(\sfQ\)-algebras.
The map $\Phi$ induces a \(\sfQ\)-algebra isomorphism
$$\Phi_N:\operatorname{End}_{\sfZ}(\sfT)^{\oplus N}\otimes_{\sfZ}\sfQ\longrightarrow
\operatorname{End}_{\sfQ}(\sfT_\sfQ)^{\oplus N}$$
such that $\Phi_N((f_1,\ldots,f_N)\otimes a)=(\Phi(f_1\otimes a),\ldots,\Phi(f_N\otimes a))$
for $a\in\sfQ$ and $f_i\in \operatorname{End}_{\sfZ}(\sfT)$.

Define a \(\sfQ\)-linear map
$$
\alpha_\sfQ:
\operatorname{End}_{\sfQ}(\sfT_\sfQ)
\longrightarrow
\operatorname{End}_{\sfQ}(\sfT_\sfQ)^{\oplus N}
$$
such that
$
\alpha_\sfQ(g)
=\bigl(
gR_{h_1\otimes1}-R_{h_1\otimes1}g,\ldots,
gR_{h_N\otimes1}-R_{h_N\otimes1}g
\bigr)$.
Since \(h_1,\ldots,h_N\) generate \(\sfH\) as a \(\sfZ\)-module,
the elements
$h_1\otimes1,\ldots,h_N\otimes1$
generate
$\sfH_\sfQ=\sfH\otimes_{\sfZ}\sfQ$
as a \(\sfQ\)-vector space. Therefore
$\ker\alpha_\sfQ
=
\operatorname{End}_{\sfH_\sfQ}(\sfT_\sfQ).$
Since $\alpha_\sfQ\circ\Phi=\Phi_N\circ(\alpha\otimes 1)$, we have
$\operatorname{End}_{\sfH_\sfQ}(\sfT_\sfQ)=\ker\alpha_\sfQ=\Phi(\ker(\alpha\otimes 1))=\Phi(\sfS\otimes_{\sfZ}\sfQ)$. Thus we have
$\sfS\otimes_{\sfZ}\sfQ\cong\operatorname{End}_{\sfH_\sfQ}(\sfT_\sfQ)$.
\end{proof}

 By Lemma \ref{lem:sc-HQ-matrix}
every element $h$ of $\sfH_\sfQ$ has a unique expression
\begin{equation*}
h=\sum_{w\in {\frak S}_r}w f_w,
\qquad f_w\in \sfF.
\end{equation*}
Moreover, for $f\in \sfF$ and $\mathbf i\in I(n,r)$ we have
\begin{equation}\label{eq:sc-right-action-formula}
(\omega_{\mathbf i}f)w
=\omega_{\mathbf i}(fw)
=(\omega_{\mathbf i}w)w^{-1}(f)
=\omega_{\mathbf i w}w^{-1}(f).
\end{equation}
To deduce the original double centralizer property from the one over
\(\sfQ\), we need to show that an element of \(\sfH_\sfQ\) preserving
\(\sfT\) actually belongs to \(\sfH\).
\begin{Lem} \label{lem:sc-denominator-clearing}
Let
$h=\sum_{w\in {\frak S}_r}wf_w\in\sfH_\sfQ,
\,f_w\in \sfF,$
and assume that $\sfT h\subseteq \sfT$.  Then
$f_w\in \sfP$ for every $w\in {\frak S}_r$ and hence $h\in\sfH$.
\end{Lem}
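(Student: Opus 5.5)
The plan is to turn the hypothesis $\sfT h\subseteq\sfT$ into statements about $\sfP$-valued expressions in the $f_w$. First I show that denominators of the $f_w$ can only be powers of the factors $x_a-x_b$. Then I remove each such factor by a local argument that uses $n\geq2$ and the UFD hypothesis.

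\emph{Step 1 (translating the hypothesis).} For $\mathbf i\in I(n,r)$ and $g\in\sfP$, formula \eqref{eq:sc-right-action-formula} gives
\[
(\omega_{\mathbf i}g)h=\sum_{w\in{\frak S}_r}\omega_{\mathbf i w}\,w^{-1}(g)f_w .
\]
Grouping the terms according to the value of $\mathbf i w$ and using $\sfT=\bigoplus_{\mathbf j}\omega_{\mathbf j}\sfP$, the hypothesis says the following. For every $\mathbf i$, every right coset $C$ of the stabiliser ${\frak S}_{\mathbf i}$, and every $g\in\sfP$,
\[
\sum_{w\in C}w^{-1}(g)f_w\in\sfP .
\]

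\emph{Step 2 (bounding the denominators).} Take $\mathbf i=(1,\ldots,1)$, so that $C={\frak S}_r$. Choose a $\sfZ$-basis $b_1,\ldots,b_{r!}$ of $\sfP$, which exists by Lemma~\ref{lem:sc-freeness}; I would take $b_j$ to be the monomials $x^{a}$ with $0\le a_k<k$. The elements $\sum_w w^{-1}(b_j)f_w$ then lie in $\sfP$. The matrix $M=(w^{-1}(b_j))$ is invertible over $\sfF$ by Lemma~\ref{lem:sc-hom-independence}. Its determinant is, up to a unit and sign, a power of the Vandermonde $\Delta=\prod_{a<b}(x_a-x_b)$. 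Cramer's rule then gives $f_w\in\Delta^{-N}\sfP$ for some $N$.

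\emph{Step 3 (removing each prime $x_a-x_b$).} Since $\mathpzc K$ is a UFD, $\sfP$ is a UFD and $\pi=x_a-x_b$ is a prime element. It therefore suffices to show $f_w\in\sfP_{(\pi)}$ for every $w$ and every pair $a<b$. Here I use $n\geq2$. I take $\mathbf i$ with $i_a=2$ and all other entries equal to $1$. Then ${\frak S}_{\mathbf i}$ is the stabiliser of $a$, and its cosets are indexed by $c=w^{-1}(a)$; equivalently I may use the stabiliser of the position, by symmetry. Combining these relations over $g\in\sfP$ with the full relation from Step 2, I aim to show that $\pi$ cannot divide the denominators. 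Modulo $\pi$ the only coincidences among the $w^{-1}(g)$ come from pairing $w$ with $s_{ab}w$, where $s_{ab}$ is the transposition of $a$ and $b$. The coset relations separate exactly these pairs, because $w$ and $s_{ab}w$ send $a$ to different places. Concretely, I would argue by contradiction. Let $m>0$ be the maximal $\pi$-adic pole order among the $f_w$. Multiply by $\pi^m$ and reduce modulo $\pi$. This gives relations $\sum_{w\in C}\overline{w^{-1}(g)}\,\overline{\pi^mf_w}=0$ in $\mathrm{Frac}(\sfP/\pi)$. By the independence of the distinct homomorphisms $g\mapsto\overline{w^{-1}(g)}$, in the spirit of Lemma~\ref{lem:sc-hom-independence}, applied within each coset where they remain distinct, all $\overline{\pi^mf_w}$ vanish. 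This contradicts the choice of $m$.

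The main obstacle is Step 3. I have to check carefully that, for the chosen $\mathbf i$, the reductions modulo $\pi$ of the maps $g\mapsto w^{-1}(g)$ are pairwise distinct for $w$ within each coset $C$. Elements $w,w'$ induce the same map modulo $\pi$ exactly when $w'\in\{w,s_{ab}w\}$, or $\{w, ws_{ab}\}$ depending on conventions. So the right choice of $\mathbf i$ (distinguishing positions $a$ versus $b$, or $w^{-1}(a)$ versus $w^{-1}(b)$) has to be matched to the convention in \eqref{eq:sc-right-action-formula}. This is precisely where $n\ge2$ enters; for $n=1$ only $C={\frak S}_r$ is available and the argument fails. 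Once every $f_w\in\sfP$, we get $h\in\sfP\rtimes{\frak S}_r=\sfH$.
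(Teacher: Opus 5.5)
Your Step 1 and the mod-$\pi$ reduction in Step 3 are the paper's argument (its Case 2). But Step 3, as you set it up, does not close, and the convention you leave open is exactly where it fails. With $\phi_w(g)=\overline{w^{-1}(g)}$ and $w(x_k)=x_{w(k)}$, the maps $\phi_w$ and $\phi_{w'}$ agree modulo $\pi=x_a-x_b$ if and only if $w'\in\{w,ws_{ab}\}$, i.e.\ by right multiplication. For your fixed $\mathbf i$ (entry $2$ in position $a$, $1$ elsewhere), the coset of $w$ is determined by $c=w^{-1}(a)$. Moreover $(ws_{ab})^{-1}(a)=s_{ab}(c)=c$ whenever $c\notin\{a,b\}$. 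So in all those cosets, $w$ and $ws_{ab}$ occur together with the same reduction, and Lemma~\ref{lem:sc-hom-independence} only gives $\overline{\pi^mf_w}+\overline{\pi^mf_{ws_{ab}}}=0$. The full relation for $\mathbf i=(1,\ldots,1)$ gives relations of the same type and does not help. Hence ``all $\overline{\pi^mf_w}$ vanish'' is not established for $r\geq3$, and the element $w_0$ with $\overline{\pi^mf_{w_0}}\neq0$ may well lie in such a coset.

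The missing step is to let $\mathbf i$ depend on $w_0$; your remark about using the stabiliser of the position ``by symmetry'' points this way but is not made precise. Take $\mathbf b\in I(n,r)$ with $b_a\neq b_b$ (this is where $n\geq2$ enters) and set $\mathbf i=\mathbf b w_0^{-1}$, i.e.\ put the $2$ in position $w_0(a)$. Then $w_0$ lies in $C=\{w\mid \mathbf i w=\mathbf b\}$. Also $w$ and $ws_{ab}$ are never both in $C$, since $\mathbf b s_{ab}\neq\mathbf b$. So the $\phi_w$ with $w\in C$ are pairwise distinct, and you get $\overline{\pi^mf_{w_0}}=0$, the desired contradiction. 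Only this one residue has to be killed.

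Your Step 2 is a genuinely different and valid replacement for the paper's Case 1. The paper never bounds denominators globally. For a prime $d$ not associate to any $x_a-x_b$, it notes that all $\phi_w$ remain distinct modulo $d$, so one application of Lemma~\ref{lem:sc-hom-independence} to a single coset relation disposes of $d$, with no basis or determinant. Your route instead needs $\det\bigl(w(b_j)\bigr)=\pm\Delta^{r!/2}$ for the Artin monomials $b_j$, which you assert without proof. It is true, by the following argument:
\begin{itemize}
\item Pairing each row with the row obtained by applying $s_{ab}$ to its entries gives divisibility by $(x_a-x_b)^{r!/2}$.
\item A degree count then leaves an integer constant.
\item That constant is $\pm1$, because the determinant is nonzero over every $\mathbb F_p$ by the same independence argument.
\end{itemize}
The unit constant is essential when $\mathpzc K$ is not a field (e.g.\ $\mathpzc K=\mathbb Z$); otherwise primes of $\mathpzc K$ could survive in the denominators and Step 3 would not treat them. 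In return, your route confines the denominators to the elements $x_a-x_b$, which are prime over any integral domain. So once Step 3 is repaired, it proves the lemma without using the UFD hypothesis.
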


\begin{proof}
Assume that \(f_u\notin\sfP\) for some \(u\in{\frak S}_r\).  Since $\mathpzc K$ is a unique factorization domain,
\(
\sfP=\mathpzc K[x_1^{\pm1},\ldots,x_r^{\pm1}]
\)
is again a unique factorization domain.  Hence there is an irreducible element $d\in \sfP$ such that $v_d(f_u)<0$.  Here, for
$0\ne f=a/b\in \sfF$ with $a,b\in\sfP$, we set
$v_d(f)=v_d(a)-v_d(b),$
where \(v_d(c)\) is the multiplicity of \(d\) in the irreducible
factorization of \(0\neq c\in\sfP\), and we put $v_d(0)=+\infty$.

Since
\(v_d(f_u)<0\),  we have
$m=-\min_{w\in {\frak S}_r}v_d(f_w)>0.$
Let
$g_w=d^mf_w.$
Then $g_w\in \sfP_{(d)}$ for all $w$, where
$\sfP_{(d)}=(\sfP\setminus(d))^{-1}\sfP,$
and there is some $w_0\in {\frak S}_r$ with $v_d(g_{w_0})=0$.  Hence in the residue field
$L_d:=\sfP_{(d)}/\sfP_{(d)}d
\cong
\operatorname{Frac}(\sfP/(d))$
we have
\begin{equation}\label{gw0 not equal 0}
 \overline{g_{w_0}}\not=0.
\end{equation}
For $w\in {\frak S}_r$, define
\begin{equation}\label{phiw}
\phi_w:\sfP\longrightarrow L_d,
\qquad
\phi_w(f)=\overline{w^{-1}(f)}.
\end{equation}

Fix
$\mathbf i\in I(n,r)$.
Since $h=\sum_{w\in {\frak S}_r}wf_w$ and $\sfT h\subseteq \sfT$, by
\eqref{eq:sc-right-action-formula}, we have
\begin{equation}\label{eq:sc-localized-relation-before-mod-d}
\sum_{w\in {\frak S}_r}
\omega_{\mathbf i w}w^{-1}(f)g_w=((\omega_{\mathbf i}f)h)d^m \in \sfT d^m
\end{equation}
for $f\in \sfP$.
Let
$
\sfT_{(d)}
=\sfT\otimes_{\sfP}\sfP_{(d)}
=
\oplus_{\mathbf j\in I(n,r)}(\omega_{\mathbf j}\otimes 1)\sfP_{(d)}.
$
Since $\sfT$ is a free right $\sfP$-module, the natural map
$\sfT\to \sfT_{(d)}$ is injective, and we identify $\sfT$ with its image. Thus we regard
\(\omega_{\mathbf j}\) as the element
\(\omega_{\mathbf j}\otimes1\in\sfT_{(d)}\). We denote its image in
\(\sfT_{(d)}/\sfT_{(d)}d\) by
\(\bar{\omega}_{\mathbf j}\). Since
$m>0$, we have
$
\sfT d^m\subseteq\sfT_{(d)}d.$
Hence by \eqref{eq:sc-localized-relation-before-mod-d}, in the quotient
$\sfT_{(d)}/\sfT_{(d)}d
\cong
\oplus_{\mathbf j\in I(n,r)}\bar \omega_{\mathbf j}L_d$
we have
\begin{equation*}
\sum_{\mathbf j\in I(n,r)}
\bar \omega_{\mathbf j}\sum_{w\in {\frak S}_r,\,\mathbf i w=\mathbf j}
\phi_w(f)\,
\overline{g_w}
=\sum_{w\in {\frak S}_r}
\bar \omega_{\mathbf i w}\,
\overline{w^{-1}(f)}\,
\overline{g_w}
=0
\end{equation*}
for $f\in \sfP$, where $\phi_w$ is defined in \eqref{phiw}.
It follows that
\begin{equation}\label{eq:sc-mod-d-relation}
\sum_{w\in {\frak S}_r,\,\mathbf i w=\mathbf j}
\phi_w(f)\,
\overline{g_w}
=0
\end{equation}
for $f\in \sfP$ and $\mathbf j\in I(n,r)$.

We distinguish two cases.

\smallskip
\noindent\emph{Case 1: $d$ is not associated with any $x_i-x_j$ for
$i\ne j$.}
If
$\phi_{w_1}=\phi_{w_2}$, then for every $1\leq k\leq r$,
$x_{w_1^{-1}(k)}-x_{w_2^{-1}(k)}\in\sfP\cap\sfP_{(d)}d=(d),$
since $(d)$ is a prime ideal of $\sfP$.
If $w_1\ne w_2$, one can choose $k$ with
$w_1^{-1}(k)\ne w_2^{-1}(k)$; then  $d|x_{w_1^{-1}(k)}-x_{w_2^{-1}(k)}$ in $\sfP$.  It follows that $d$ is associated with $x_{w_1^{-1}(k)}-x_{w_2^{-1}(k)}$, contrary to the present assumption. Therefore we have $w_1=w_2$. Fix
$\mathbf i\in I(n,r)$.
By
\eqref{eq:sc-mod-d-relation} we have
\begin{equation}\label{eq:sc-case1-hom-relation}
\sum_{\substack{w\in {\frak S}_r\\
\mathbf i w=\mathbf iw_0}}
\phi_w(f)\,\overline{g_w}=0
\end{equation}
for $f\in \sfP$.
By Lemma~\ref{lem:sc-hom-independence}, $\overline{g_w}=0$ for all $w\in {\frak S}_r$ with $\mathbf i w=\mathbf iw_0$.  In particular we have
$\overline{g_{w_0}}=0$, contradicting \eqref{gw0 not equal 0}.

\smallskip
\noindent\emph{Case 2: $d$ is associated with a variable difference.}
Replacing $d$ by an associate, we may assume
$d=x_i-x_j
\ (i\ne j).$   Let
$\pi_d:\sfP\longrightarrow \sfP/(d)$
be the quotient map and let
$\iota:\sfP/(d)\hookrightarrow L_d$ be the natural embedding.  Then
$
\phi_w=\iota\circ\pi_d\circ w^{-1}$, where $\phi_w$ is defined in \eqref{phiw}.
If $\phi_{w_1}=\phi_{w_2}$, then we have
$\pi_d=\pi_d\circ {w_2}^{-1}w_1.$
Hence
$({w_2}^{-1}w_1)(f)-f\in (x_i-x_j)$ for all $f\in\sfP.$
It follows that
\[
({w_2}^{-1}w_1)(x_k)-x_k=x_{{w_2}^{-1}w_1(k)}-x_k\in (x_i-x_j)
\]
for $1\leq k\leq r$.
Therefore,
${w_2}^{-1}w_1(k)=k $ for \(k\notin\{i,j\}\).  Hence
${w_2}^{-1}w_1=1$ or ${w_2}^{-1}w_1=(i,j).$
Consequently,
\begin{equation}\label{w1=w2}
 w_2=w_1
 \text{ or } w_2=w_1(i,j).
 \end{equation}
Since $n\geq2$, we may choose
$
\mathbf b=(b_1,\ldots,b_r)\in I(n,r)$
with
$
b_i=1,
\
b_j=2.$
Then $\mathbf b(i,j)\ne\mathbf b$.  Define
$\mathbf i=\mathbf b w_0^{-1}$.
If $\mathbf i w=\mathbf b$, then
$
\mathbf i(w (i,j))
=(\mathbf i w)(i,j)
=\mathbf b(i,j)
\ne\mathbf b.$ Therefore by \eqref{w1=w2}, the homomorphisms
$
\{\phi_w\mid \mathbf i w=\mathbf b\}$
are pairwise distinct.  By \eqref{eq:sc-mod-d-relation} we have
\[
\sum_{\substack{w\in {\frak S}_r\\
\mathbf i w=\mathbf b}}
\phi_w(f)\,\overline{g_w}=0
\qquad (f\in \sfP).
\]
Hence, by Lemma~\ref{lem:sc-hom-independence} we have
$\overline{g_{w_0}}=0$, a contradiction.

Thus both cases lead to a contradiction, and hence
\(f_w\in\sfP\) for every \(w\in{\frak S}_r\).
\end{proof}

We are now ready to prove the double centralizer property for all
\(n\geq2\).
\begin{Thm}\label{thm:second-centralizer-nge2}
Let $\mathpzc K$ be a commutative unique factorization domain, $n\geq 2$, and $r\geq 1$.  Then the natural algebra homomorphism
$
\rho_{r,\mathpzc K}:
\mathpzc K\widehat{\frak S}_r^{\,\mathrm{op}}
\rightarrow
\operatorname{End}_{\widehat{\mathcal S}_{\mathpzc K}(n,r)}
(\Omega_{\mathpzc K}^{\otimes r})
$
is an isomorphism.
\end{Thm}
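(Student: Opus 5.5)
The plan is to assemble the lemmas of this subsection: first injectivity of $\rho_{r,\mathpzc K}$, then surjectivity by passing to $\sfQ$ and coming back to $\sfH$ via Lemma~\ref{lem:sc-denominator-clearing}.

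\emph{Injectivity.} By Lemma~\ref{lem:sc-injective-extension} we regard $\sfH\subseteq\sfH_\sfQ$ and $\sfT\subseteq\sfT_\sfQ$. The right action of $\sfH$ on $\sfT$ is the restriction of the action of $\sfH_\sfQ$ on $\sfT_\sfQ$. If $h\in\sfH$ acts as zero on $\sfT$, then it acts as zero on $\sfT_\sfQ=\sfT\otimes_\sfZ\sfQ$, since $h$ commutes with the central scalars $\sfZ$ (Lemma~\ref{center}). By Lemma~\ref{lem:sc-generic-double-centralizer}, $\rho_\sfQ$ is injective, so $h=0$.

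\emph{Surjectivity.} Let $\varphi\in\operatorname{End}_{\sfS}(\sfT)$.
\begin{itemize}
\item[(1)] Because $\sfZ=Z(\sfH)$ acts on $\sfT$ through elements of $\sfS$ (right multiplication by central elements commutes with $\sfH$), $\varphi$ is $\sfZ$-linear. Hence $\varphi\otimes1$ is a well-defined $\sfQ$-endomorphism of $\sfT_\sfQ$.
\item[(2)] By Lemma~\ref{lem:sc-base-change}, $\sfS_\sfQ=\operatorname{End}_{\sfH_\sfQ}(\sfT_\sfQ)$ is the image of $\sfS\otimes_\sfZ\sfQ$. So every element of $\sfS_\sfQ$ is a $\sfQ$-combination of elements $s\otimes1$ with $s\in\sfS$. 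Since $\varphi$ commutes with each such $s$, $\varphi\otimes1$ lies in $\operatorname{End}_{\sfS_\sfQ}(\sfT_\sfQ)$.
\item[(3)] By Lemma~\ref{lem:sc-generic-double-centralizer} there is $h\in\sfH_\sfQ$ with $\varphi\otimes1=\rho_\sfQ(h)$, i.e.\ $t\varphi=th$ for all $t\in\sfT_\sfQ$. Write $h=\sum_w wf_w$ with $f_w\in\sfF$, as permitted by Lemma~\ref{lem:sc-HQ-matrix}.
\item[(4)] Since $\varphi$ maps $\sfT$ into $\sfT$, we get $\sfT h\subseteq\sfT$. Lemma~\ref{lem:sc-denominator-clearing}, which uses $n\ge2$ and the UFD hypothesis, then gives $h\in\sfH$.
\item[(5)] Therefore $\varphi=\rho_{r,\mathpzc K}(h)$.
\end{itemize}

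Finally, identify $\mathpzc K\widehat{\frak S}_r$ with $\sfH=\sfP\rtimes\frak S_r$ via \eqref{sfH iso P rtimes Sr}, compatibly with the action on $\sfT$. Then $\rho_{r,\mathpzc K}$ is an isomorphism onto $\operatorname{End}_{\widehat{\mathcal S}_{\mathpzc K}(n,r)}(\Omega_{\mathpzc K}^{\otimes r})$, using $\sfS=\widehat{\mathcal S}_{\mathpzc K}(n,r)$.

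The main obstacle is step (2). One must check that the identification of Lemma~\ref{lem:sc-base-change} is compatible with the actions, that is, that $\Phi(s\otimes a)$ is indeed the operator $t\otimes b\mapsto s(t)\otimes ab$. This follows from how $\Phi$ is defined there. The substantive work, clearing denominators, has already been done in Lemma~\ref{lem:sc-denominator-clearing}.
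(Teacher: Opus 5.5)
Your proposal is correct and follows essentially the same route as the paper. You use $\sfZ$-linearity of $\varphi$ (since right multiplication by central elements lies in $\sfS$), then base change to $\sfQ$ via Lemma~\ref{lem:sc-base-change}, the generic double centralizer Lemma~\ref{lem:sc-generic-double-centralizer}, and Lemma~\ref{lem:sc-denominator-clearing} to return to $\sfH$; injectivity comes from injectivity of $\rho_\sfQ$ together with Lemma~\ref{lem:sc-injective-extension}.
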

\begin{proof}
Let $\theta\in\operatorname{End}_{\sfS}(\sfT)=\operatorname{End}_{\widehat{\mathcal S}_{\mathpzc K}(n,r)}
(\Omega_{\mathpzc K}^{\otimes r}).$
For \(z\in\sfZ\), define
$R_z:\sfT\longrightarrow\sfT$ by $R_z(m)=mz$ for $m\in\sfT$.
By Lemma~\ref{center}, we have
\(R_z\in\operatorname{End}_\sfH(\sfT)=\sfS\) for  \(z\in\sfZ\). Hence, since
\(\theta\in\operatorname{End}_{\sfS}(\sfT)\),
$\theta(mz)=\theta(m)z$
for $m\in\sfT$ and $z\in\sfZ$.
Therefore \(\theta\) induces a
\(\sfQ\)-linear map
$\theta_\sfQ:\sfT_\sfQ\rightarrow\sfT_\sfQ$
such that $\theta_\sfQ(m\otimes a)=\theta(m)\otimes a$ for $m\in\sfT$ and $a\in\sfQ$.
Since \(\theta\in\operatorname{End}_{\sfS}(\sfT)\), by Lemma~\ref{lem:sc-base-change} we have
$\theta_\sfQ
\in
\operatorname{End}_{\sfS_\sfQ}(\sfT_\sfQ).$
By Lemma~\ref{lem:sc-generic-double-centralizer}, there exists a unique
element \(h\in\sfH_\sfQ\) such that
\begin{equation}\label{eq:sc-theta-right-h}
\theta_\sfQ(m)=mh
\qquad
(m\in\sfT_\sfQ).
\end{equation}
By Lemma \ref{lem:sc-injective-extension} we may regard \(\sfT\) as a submodule of \(\sfT_\sfQ\). Then, for \(m\in\sfT\),
$mh=\theta_\sfQ(m)=\theta(m)\in\sfT.$
Hence
$\sfT h\subseteq\sfT.$ Therefore by
Lemma~\ref{lem:sc-denominator-clearing}, we have \(h\in\sfH\). So \(\rho_{r,\mathpzc K}\) is surjective.

 Let \(a\in\sfH\) and suppose that $\sfT a=0.$
Then
\(a\otimes1\in\sfH_\sfQ\) acts trivially on \(\sfT_\sfQ\). By
Lemma~\ref{lem:sc-generic-double-centralizer}, we have
$
a\otimes1=0$ in  $\sfH_\sfQ.$  It follows from Lemma \ref{lem:sc-injective-extension} that \(a=0\). Therefore
\(\rho_{r,\mathpzc K}\) is injective, and the proof is complete.
\end{proof}

Combining Theorems \ref{surjective1} and \ref{thm:second-centralizer-nge2} we obtain the following result.
\begin{Thm}\label{cor:second-centralizer-loop-semigroup-nge}
Let $\mathpzc K$ be a commutative unique factorization domain, $n\geq 2$, and $r\geq 1$.
Assume that there is a subset $\mpX$ of $\field$ such that $|\mpX|>r$ and $a-b$ is invertible in $\field$ for any $a\not=b\in\mpX$. Then the natural algebra homomorphism
$
\rho_{r,\mathpzc K}:
\mathpzc K\widehat{\frak S}_r^{\,\mathrm{op}}
\rightarrow
\operatorname{End}_{\widehat G_{\mathpzc K}(n)}
(\Omega_{\mathpzc K}^{\otimes r})$
is an isomorphism. In particular, the conclusion holds whenever $\mathpzc K$ is a
field with $|\mathpzc K|>r$.
\end{Thm}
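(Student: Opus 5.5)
The plan is to reduce the statement to Theorem~\ref{thm:second-centralizer-nge2} by comparing the two centralizers inside $\operatorname{End}_{\mathpzc K}(\Omega_{\mathpzc K}^{\otimes r})$. This mirrors part (2) of Theorem~\ref{thm:second-centralizer-stable}.

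First, under the hypothesis on $\mpX$, Theorem~\ref{surjective1} gives $\widehat\zeta_{r,\mathpzc K}(\mathpzc K\widehat G_{\mathpzc K}(n))=\widehat{\mathcal S}_{\mathpzc K}(n,r)$. We must be careful about how the two algebras act on $\Omega_{\mathpzc K}^{\otimes r}$. The semigroup $\widehat G_{\mathpzc K}(n)$ acts through $\widehat\zeta_{r,\mathpzc K}$, and $\widehat{\mathcal S}_{\mathpzc K}(n,r)=\operatorname{End}_{\widehat{\frak S}_r}(\Omega_{\mathpzc K}^{\otimes r})$ acts tautologically. Hence the two images in $\operatorname{End}_{\mathpzc K}(\Omega_{\mathpzc K}^{\otimes r})$ coincide. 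Because the subalgebras coincide, a $\mathpzc K$-linear endomorphism commutes with every element of $\widehat G_{\mathpzc K}(n)$ if and only if it commutes with every element of $\widehat{\mathcal S}_{\mathpzc K}(n,r)$. This gives
\[
\operatorname{End}_{\widehat G_{\mathpzc K}(n)}(\Omega_{\mathpzc K}^{\otimes r})=\operatorname{End}_{\widehat{\mathcal S}_{\mathpzc K}(n,r)}(\Omega_{\mathpzc K}^{\otimes r}).
\]
Both algebras receive the same map $\rho_{r,\mathpzc K}$ from the right $\widehat{\frak S}_r$-action, so the two versions of $\rho_{r,\mathpzc K}$ agree.

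Second, since $\mathpzc K$ is a UFD and $n\ge 2$, Theorem~\ref{thm:second-centralizer-nge2} shows that $\rho_{r,\mathpzc K}$ into the $\widehat{\mathcal S}_{\mathpzc K}(n,r)$-centralizer is an isomorphism. This proves the main claim.

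For the final sentence, let $\mathpzc K$ be a field with $|\mathpzc K|>r$. A field is a UFD. Taking $\mpX=\mathpzc K$ works, since any difference of distinct elements of a field is nonzero and hence invertible.

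The only point needing care is the first step, namely that ``images coincide'' refers to subalgebras of $\operatorname{End}_{\mathpzc K}(\Omega_{\mathpzc K}^{\otimes r})$. Otherwise the argument is a formal combination of the two cited theorems, and I expect no real obstacle.
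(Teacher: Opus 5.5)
Your proposal is correct and is essentially the paper's own argument. The paper obtains this theorem by combining Theorem~\ref{surjective1} (so the two centralizers coincide, exactly as in part (2) of Theorem~\ref{thm:second-centralizer-stable}) with Theorem~\ref{thm:second-centralizer-nge2}. Your handling of the field case, with $\mpX=\mathpzc K$, is also fine.
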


\subsection{A rank-one counterexample}
We now consider the case \(n=1\), which behaves differently.
We first prove directly, over an arbitrary non-zero commutative ring,
that \(\rho_{2,\mathpzc K}\) is not surjective, and then give a uniform
proof over fields for all \(r\geq2\).

Assume now that $n=1$ and $r\geq 2$.
Since
\(
\Omega_{\mathpzc K}
=
\mathpzc K[\ttt,\ttt^{-1}],
\) and
\(\omega_{1+l}=\ttt^{-l}\omega_1,
\)
put $x=\ttt^{-1}$ and identify
\[
\Omega_{\mathpzc K}^{\otimes r}
\cong
\sfP:=
\mathpzc K[x_1^{\pm1},\ldots,x_r^{\pm1}]
\]
via
\(
\omega_{1+l_1}\otimes\cdots\otimes\omega_{1+l_r}
\longmapsto
x_1^{l_1}\cdots x_r^{l_r}.
\)

For $w\in\widehat{\frak S}_r$, let
\[
R_w:\Omega_{\mathpzc K}^{\otimes r}\longrightarrow
\Omega_{\mathpzc K}^{\otimes r},
\qquad
R_w(u)=u\cdot w\quad\text{for $u\in\Omega_{\mathpzc K}^{\otimes r}$}.
\] Via the above identification, we regard $R_w$ as an endomorphism of $\sfP$.

For $a\in \sfP$, denote by $M_a$ the multiplication operator
\[
M_a:\sfP\longrightarrow \sfP,\qquad f\longmapsto af.
\]
Then the generator $\bse_k$ of $\affSr$ acts as
multiplication by $x_k$:
\[
R_{\bse_k}=M_{x_k},
\qquad
R_{\bse_k^{-1}}=M_{x_k^{-1}}.
\]
Moreover, for $\sigma\in\fS_r$, the right action is given by
\[
(f\cdot\sigma)(x_1,\ldots,x_r)
=
f(x_{\sigma^{-1}(1)},\ldots,x_{\sigma^{-1}(r)}).
\]
This right action is related to the left action used in
Section~5.2 by
\[
f\cdot\sigma=\sigma^{-1}(f).
\]
Let
$
\sfP^{\fS_r}
=
\{a\in \sfP\mid a\cdot\sigma=a
\text{ for every }\sigma\in\fS_r\}.$

\begin{Lem}\label{lem:rank-one-affine-schur}
We have
\(
\widehat{\mathcal S}_{\mathpzc K}(1,r)
=
\operatorname{End}_{\widehat{\frak S}_r}(\sfP)
=
\{M_a\mid a\in \sfP^{\frak S_r}\}.
\)
\end{Lem}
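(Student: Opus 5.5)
The plan is to prove the two inclusions separately. By definition $\widehat{\mathcal S}_{\mathpzc K}(1,r)=\operatorname{End}_{\widehat{\frak S}_r}(\Omega_{\mathpzc K}^{\otimes r})$, and under the identification $\Omega_{\mathpzc K}^{\otimes r}\cong\sfP$ the right action of $\widehat{\frak S}_r$ is generated by the operators $R_{\bse_k}=M_{x_k}$, $R_{\bse_k^{-1}}=M_{x_k^{-1}}$ and $R_\sigma$ for $\sigma\in\fS_r$. So the first equality is just a restatement. The task is to show that a $\mathpzc K$-linear map $\phi:\sfP\to\sfP$ commutes with all $R_w$ if and only if $\phi=M_a$ for some $a\in\sfP^{\fS_r}$.

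\emph{Inclusion $\supseteq$.} Take $a\in\sfP^{\fS_r}$. Since $\sfP$ is commutative, $M_a$ commutes with every $M_{x_k^{\pm1}}$. For $\sigma\in\fS_r$, the map $f\mapsto f\cdot\sigma=\sigma^{-1}(f)$ is a ring automorphism of $\sfP$. Hence
\[
(af)\cdot\sigma=(a\cdot\sigma)(f\cdot\sigma)=a\,(f\cdot\sigma),
\]
so $M_a$ commutes with $R_\sigma$. Because $\widehat{\frak S}_r$ is generated by the $s_i$ and the $\bse_j$, this gives $M_a\in\operatorname{End}_{\widehat{\frak S}_r}(\sfP)$.

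\emph{Inclusion $\subseteq$.} Let $\phi\in\operatorname{End}_{\widehat{\frak S}_r}(\sfP)$ and put $a=\phi(1)\in\sfP$.
\begin{itemize}
\item Since $\phi$ commutes with $M_{x_k}$ and $M_{x_k^{-1}}$ for all $k$, it commutes with multiplication by every Laurent monomial. Therefore $\phi(x^{\mathbf l})=x^{\mathbf l}\phi(1)=a\,x^{\mathbf l}$ for every monomial $x^{\mathbf l}$.
\item Monomials form a $\mathpzc K$-basis of $\sfP$, so $\mathpzc K$-linearity gives $\phi=M_a$.
\item Since $1\cdot\sigma=1$, commuting with $R_\sigma$ gives $a\cdot\sigma=\phi(1)\cdot\sigma=\phi(1\cdot\sigma)=\phi(1)=a$ for every $\sigma\in\fS_r$. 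Hence $a\in\sfP^{\fS_r}$.
\end{itemize}

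There is no real obstacle. The only point needing care is that commuting with $M_{x_k}$ and $M_{x_k^{-1}}$ forces $\mathpzc K[x^{\pm1}]$-linearity, and this holds because $\phi$ is $\mathpzc K$-linear and the monomials span $\sfP$ over $\mathpzc K$. No hypothesis on $\mathpzc K$ beyond being a commutative ring with unity is needed.
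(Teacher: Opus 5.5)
Your proposal is correct and follows essentially the same route as the paper. Commuting with $M_{x_k^{\pm1}}$ forces $\phi=M_{\phi(1)}$, and testing commutation with $R_\sigma$ on $1$ forces $\phi(1)\in\sfP^{\fS_r}$; the converse inclusion uses only that $f\mapsto f\cdot\sigma$ is multiplicative.
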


\begin{proof}
Let
\(
T\in\operatorname{End}_{\widehat{\frak S}_r}(\sfP).
\)
Since $T$ commutes with $R_{\bse_k}=M_{x_k}$ and
$R_{\bse_k^{-1}}=M_{x_k^{-1}}$, we have
\[
T(x_k^{\pm1}f)=x_k^{\pm1}T(f)
\]
for $f\in \sfP$.
It follows that
\(
T(gf)=gT(f)
\)
for \(g,f\in \sfP\).
Hence \[
T(g)=gT(1)=ga
\qquad(g\in \sfP)
\]
where
\(
a=T(1).
\)
Therefore we have
\(
T=M_a.
\)

We now determine when $M_a$ commutes with $R_\sigma$ for $\sg\in\fS_r$. For
$a,f\in \sfP$ and $\sigma\in\fS_r$, we have
\[
M_aR_\sigma(f)
=
a(f\cdot\sigma),\quad
R_\sigma M_a(f)
 =
(af)\cdot\sigma =
(a\cdot\sigma)(f\cdot\sigma).
\]
Therefore, if $a\cdot\sigma=a$, then
\(
R_\sigma M_a(f)
=
a(f\cdot\sigma)
=
M_aR_\sigma(f)
\)
for every $f\in \sfP$, and hence
\(
R_\sigma M_a=M_aR_\sigma.
\)
Conversely, suppose that
\(
R_\sigma M_a=M_aR_\sigma.
\) Then we have
\[
a\cdot\sigma
=
R_\sigma M_a(1)
=
M_aR_\sigma(1)
=
a.
\]
Thus
\[
M_aR_\sigma=R_\sigma M_a
\quad\Longleftrightarrow\quad
a\cdot\sigma=a.
\]
Consequently, $M_a$ commutes with $R_\sigma$ for every
$\sigma\in\fS_r$ if and only if
\(
a\in \sfP^{\fS_r}.
\)
This proves the assertion.
\end{proof}

Let $s_1=(1,\,2)\in\fS_r$. Define
\[
\partial_1:\sfP\longrightarrow \sfP
\]
by
\(
\partial_1(f)
=
\frac{f-f\cdot s_1}{x_1-x_2}.
\)
Since \(f-f\cdot s_1\) is divisible by \(x_1-x_2\), the map $\partial_1$ is a well-defined $\mathpzc K$-linear endomorphism of
$\sfP$.

\begin{Lem}\label{lem:divided-difference-centralizer}
We have
\(
\partial_1
\in
\operatorname{End}_{\widehat{\mathcal S}_{\mathpzc K}(1,r)}(\sfP).
\)
\end{Lem}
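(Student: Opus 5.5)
By Lemma~\ref{lem:rank-one-affine-schur}, every element of $\widehat{\mathcal S}_{\mathpzc K}(1,r)$ has the form $M_a$ with $a\in\sfP^{\fS_r}$. The plan is therefore to show that $\partial_1$ commutes with $M_a$ for every symmetric Laurent polynomial $a$. This is just the $\sfP^{\fS_r}$-linearity of the divided difference operator.

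First I check that $\partial_1$ is well defined on Laurent polynomials, which the paper asserts and we may take as given. One quick justification: write $f=(x_1x_2)^{-N}g$ with $g$ a polynomial and $N$ large. Since $(x_1x_2)^{-N}$ is invariant under $s_1$, we get $f-f\cdot s_1=(x_1x_2)^{-N}(g-g\cdot s_1)$. The polynomial $g-g\cdot s_1$ vanishes under the substitution $x_1=x_2$, so it is divisible by $x_1-x_2$ over any commutative ring, since $x_1-x_2$ is monic in $x_1$. The quotient is unique because $x_1-x_2$ is a non-zero-divisor in $\sfP$. In particular $\partial_1$ is $\mathpzc K$-linear.

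Next, let $a\in\sfP^{\fS_r}$. Then $a\cdot s_1=a$, and since the right action of $s_1$ is a ring automorphism of $\sfP$, we have $(af)\cdot s_1=(a\cdot s_1)(f\cdot s_1)=a(f\cdot s_1)$. Hence
\[
(x_1-x_2)\,\partial_1(af)=af-a(f\cdot s_1)=a\,(f-f\cdot s_1)=(x_1-x_2)\,a\,\partial_1(f).
\]
Cancelling the non-zero-divisor $x_1-x_2$ gives $\partial_1 M_a=M_a\partial_1$. Combined with Lemma~\ref{lem:rank-one-affine-schur}, this yields $\partial_1\in\operatorname{End}_{\widehat{\mathcal S}_{\mathpzc K}(1,r)}(\sfP)$.

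The only step requiring care is the cancellation over an arbitrary commutative ring $\mathpzc K$, which need not be a domain. It holds because $x_1-x_2$ is monic in $x_1$, hence a non-zero-divisor in $\mathpzc K[x_1^{\pm1},\ldots,x_r^{\pm1}]$. There is no genuine obstacle beyond this point.
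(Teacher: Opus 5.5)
Your proposal is correct and follows the paper's proof: reduce via Lemma~\ref{lem:rank-one-affine-schur} to showing $\partial_1(af)=a\,\partial_1(f)$ for $a\in\sfP^{\fS_r}$, which follows from $(af)\cdot s_1=a\,(f\cdot s_1)$. Your additional checks, that $\partial_1$ is well defined and that $x_1-x_2$ is a non-zero-divisor over an arbitrary commutative ring, justify steps the paper leaves implicit.
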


\begin{proof}
For $a\in \sfP^{\fS_r}$, we have
\[
\begin{aligned}
\partial_1(af)
&=
\frac{af-(af)\cdot s_1}{x_1-x_2} =
\frac{af-(a\cdot s_1)(f\cdot s_1)}{x_1-x_2}\ =
a\frac{f-f\cdot s_1}{x_1-x_2} =
a\partial_1(f)
\end{aligned}
\]
for $f\in \sfP$.
It follows that
\(
\partial_1M_a=M_a\partial_1.
\)
Therefore by Lemma~\ref{lem:rank-one-affine-schur} we conclude that \(
\partial_1
\in
\operatorname{End}_{\widehat{\mathcal S}_{\mathpzc K}(1,r)}(\sfP).
\)
\end{proof}

We first consider the smallest case, where the failure of surjectivity can be seen directly.

\begin{example}\label{ex:rank-one-r2}
Assume $n=1$ and $r=2$ and $\mpk$ is a non-zero  commutative ring
with unity. Then
\(
\sfP=\mathpzc K[x_1^{\pm1},x_2^{\pm1}]
\)
and
\(
\partial_1
\notin
\rho_{2,\mathpzc K}
\bigl(
\mathpzc K\widehat{\fS}_2^{\,\mathrm{op}}
\bigr).
\) Hence \(
\rho_{2,\mathpzc K}:
\mathpzc K\widehat{\frak S}_2^{\,\mathrm{op}}
\rightarrow
\operatorname{End}_{\widehat{\mathcal S}_{\mathpzc K}(1,2)}
(\Omega_{\mathpzc K}^{\otimes 2})
\)
is not surjective.

Indeed, every element in the image of
$\mathpzc K\widehat{\fS}_2^{\,\mathrm{op}}$
can be written in the form
\(
M_p+M_qR_{s_1},
\
p,q\in \sfP.
\)
Suppose that
\(
\partial_1=M_p+M_qR_{s_1}.
\)
Then we have
\[
\begin{split}
0=\partial_1(1)&=(M_p+M_qR_{s_1})(1)=p+q,\\
1=\partial_1(x_1)&=(M_p+M_qR_{s_1})(x_1)
=px_1+qx_2.
\end{split}
\]
Thus
\(
q=-p
\) and  \(1=p(x_1-x_2) \),
which is impossible since
\(
x_1-x_2
\) is not a unit in $\sfP$.
\end{example}

To treat arbitrary $r\geq2$, we need the following immediate
consequence of Lemma~\ref{lem:sc-hom-independence}.

\begin{Lem}\label{lem:artin-independence-permutations}
Let
\(
\sfF=\mathpzc K(x_1,\ldots,x_r)
\). Assume that $\mathpzc K$ is a field, that
\(
\sigma_1,\ldots,\sigma_m\in\fS_r
\)
are distinct, and that
\(
a_1,\ldots,a_m\in \sfF.
\)
If
\[
\sum_{j=1}^m
a_j(f\cdot\sigma_j)=0
\qquad
\text{for every }f\in \sfP,
\]
then
\(
a_1=\cdots=a_m=0.
\)
\end{Lem}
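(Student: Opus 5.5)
We will reduce the statement to Lemma~\ref{lem:sc-hom-independence}, applied with $A=\sfP$ and $L=\sfF$. For each $\sigma\in\fS_r$ define
\[
\psi_\sigma:\sfP\longrightarrow\sfF,\qquad \psi_\sigma(f)=f\cdot\sigma=f(x_{\sigma^{-1}(1)},\ldots,x_{\sigma^{-1}(r)}).
\]
Since the right action of $\fS_r$ on $\sfP$ is by permuting the variables, each $\psi_\sigma$ is a unital ring homomorphism $\sfP\to\sfP\subseteq\sfF$. Here $\sfP$ is an integral domain because $\mathpzc K$ is a field, so $\sfF=\operatorname{Frac}(\sfP)=\mathpzc K(x_1,\ldots,x_r)$ is a field.

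The first step is to check that $\psi_{\sigma_1},\ldots,\psi_{\sigma_m}$ are pairwise distinct when $\sigma_1,\ldots,\sigma_m$ are distinct. If $\sigma_j\neq\sigma_k$, pick $i$ with $\sigma_j^{-1}(i)\ne\sigma_k^{-1}(i)$. Evaluating at a variable gives $\psi_\sigma(x_i)=x_{\sigma^{-1}(i)}$, and hence
\[
\psi_{\sigma_j}(x_i)=x_{\sigma_j^{-1}(i)}\ne x_{\sigma_k^{-1}(i)}=\psi_{\sigma_k}(x_i).
\]
These are distinct monomials in $\sfP$, so $\psi_{\sigma_j}\ne\psi_{\sigma_k}$.

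With distinctness in hand, the hypothesis $\sum_j a_j(f\cdot\sigma_j)=0$ for all $f\in\sfP$ says precisely that $\sum_j a_j\psi_{\sigma_j}=0$ as a function $\sfP\to\sfF$, with coefficients $a_j\in\sfF=L$. Lemma~\ref{lem:sc-hom-independence} then gives $a_1=\cdots=a_m=0$.

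No real obstacle is expected. The only points to verify are that $\sfF$ is indeed a field, which uses the assumption that $\mathpzc K$ is a field, and that distinct permutations yield distinct homomorphisms, which the action on the variables settles as above.
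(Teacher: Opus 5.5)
Your proposal is correct and is the same argument as the paper's: the paper also deduces the statement directly from Lemma~\ref{lem:sc-hom-independence}, applied to the pairwise distinct unital ring homomorphisms $f\mapsto f\cdot\sigma_j$ from $\sfP$ to $\sfF$. Your explicit distinctness check via $x_i\cdot\sigma=x_{\sigma^{-1}(i)}$ supplies the detail the paper leaves implicit.
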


\begin{proof}
This follows immediately from Lemma~\ref{lem:sc-hom-independence},
since the maps
$\sfP\rightarrow\sfF,\
f\mapsto f\cdot\sigma_j$
are pairwise distinct unital ring homomorphisms.
\end{proof}

We can now state the rank-one counterexample.

\begin{Thm}\label{thm:rank-one-counterexample}
Let $\mathpzc K$ be a field and $r\geq2$.  If $n=1$, then
$
\rho_{r,\mathpzc K}:
\mathpzc K\widehat{\frak S}_r^{\,\mathrm{op}}
\rightarrow
\operatorname{End}_{\widehat{\mathcal S}_{\mathpzc K}(1,r)}
(\Omega_{\mathpzc K}^{\otimes r})
$
is not surjective. Hence the natural
homomorphism
$
\mathpzc K\widehat{\frak S}_r^{\,\mathrm{op}}
\rightarrow
\operatorname{End}_{\widehat G_{\mathpzc K}(1)}
(\Omega_{\mathpzc K}^{\otimes r})
$
is not surjective.
\end{Thm}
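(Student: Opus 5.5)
We show that $\partial_1$ is not in the image of $\rho_{r,\mathpzc K}$. Lemma~\ref{lem:divided-difference-centralizer} already gives $\partial_1\in\operatorname{End}_{\widehat{\mathcal S}_{\mathpzc K}(1,r)}(\sfP)$, so non-surjectivity of $\rho_{r,\mathpzc K}$ follows. The statement for $\widehat G_{\mathpzc K}(1)$ then comes from a containment of centralizers. The image of $\widehat G_{\mathpzc K}(1)$ in $\operatorname{End}_{\mathpzc K}(\sfP)$ lies in $\widehat{\mathcal S}_{\mathpzc K}(1,r)$ by definition of $\widehat\zeta_{r,\mathpzc K}$. Hence $\operatorname{End}_{\widehat{\mathcal S}_{\mathpzc K}(1,r)}(\sfP)\subseteq \operatorname{End}_{\widehat G_{\mathpzc K}(1)}(\sfP)$, so $\partial_1$ also centralizes $\widehat G_{\mathpzc K}(1)$ while not lying in the image. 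This part needs no field-size hypothesis.

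For the main claim, every element of $\mathpzc K\widehat{\frak S}_r=\sfP\rtimes\fS_r$ acts on $\sfP$ as an operator $\sum_{\sigma\in\fS_r}M_{p_\sigma}R_\sigma$ with $p_\sigma\in\sfP$. This uses $R_{\bse_k}=M_{x_k}$ and the fact that $R_\sigma M_a=M_{a\cdot\sigma}R_\sigma$, which lets us move all multiplication operators to the left. Suppose
\[
\partial_1=\sum_{\sigma\in\fS_r}M_{p_\sigma}R_\sigma .
\]
Multiplying by $x_1-x_2$, we get for every $f\in\sfP$
\[
f-f\cdot s_1=\sum_{\sigma\in\fS_r}(x_1-x_2)p_\sigma\,(f\cdot\sigma).
\]
We regard this as an identity in $\sfF=\mathpzc K(x_1,\ldots,x_r)$. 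Collecting terms, $\sum_\sigma a_\sigma (f\cdot\sigma)=0$ for all $f$, where
\[
a_1=(x_1-x_2)p_1-1,\qquad a_{s_1}=(x_1-x_2)p_{s_1}+1,\qquad a_\sigma=(x_1-x_2)p_\sigma \text{ for } \sigma\ne 1,s_1.
\]
Lemma~\ref{lem:artin-independence-permutations} then forces $a_1=0$, so $(x_1-x_2)p_1=1$ in $\sfP$.

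This is impossible, because $x_1-x_2$ is not a unit of the Laurent polynomial ring $\sfP$ over a field. The units of $\sfP$ are the monomials $c\,x_1^{l_1}\cdots x_r^{l_r}$ with $c\in\mathpzc K^\times$. Alternatively, apply the ring homomorphism $\sfP\to\mathpzc K[x_2^{\pm1},\ldots]$ sending $x_1\mapsto x_2$: it maps $x_1-x_2$ to $0$ and $1$ to $1$, a contradiction.

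The only point that needs care is the normal form. One must check that composing generators really yields operators of the form $\sum M_{p_\sigma}R_\sigma$ with Laurent polynomial coefficients, given the right action convention $f\cdot\sigma=\sigma^{-1}(f)$. This follows from the skew group algebra identification \eqref{sfH iso P rtimes Sr} together with the commutation rule above. Everything else is a direct application of Lemma~\ref{lem:artin-independence-permutations}; note that this lemma needs $\mathpzc K$ to be a field, which is why that hypothesis appears in the statement.
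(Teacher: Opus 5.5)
Your proposal is correct and follows the paper's proof essentially step for step: the same operator $\partial_1$, the same normal form $\sum_{\sigma}M_{p_\sigma}R_\sigma$, the same containment of centralizers for the $\widehat G_{\mathpzc K}(1)$ statement, and the same use of Lemma~\ref{lem:artin-independence-permutations} to isolate the coefficient of the identity permutation. The only cosmetic difference is that you clear the denominator first and conclude that $x_1-x_2$ would be a unit of $\sfP$, whereas the paper works in $\sfF$ and concludes $p_1=1/(x_1-x_2)\notin\sfP$.
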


\begin{proof}
By Lemma~\ref{lem:divided-difference-centralizer},
\(
\partial_1
\in
\operatorname{End}_{\widehat{\mathcal S}_{\mathpzc K}(1,r)}(\sfP)\han \operatorname{End}_{\widehat G_{\mathpzc K}(1)}
(\Omega_{\mathpzc K}^{\otimes r}).
\) It is enough to show that \(
\partial_1
\notin
\rho_{r,\mathpzc K}
\bigl(
\mathpzc K\widehat{\frak S}_r^{\,\mathrm{op}}
\bigr).
\)
Suppose, to the contrary, that $\partial_1$ belongs to the image of
$\rho_{r,\mathpzc K}$.  Then  there exist
$p_\sigma\in \sfP$ such that
\[
\partial_1
=
\sum_{\sigma\in\mathcal S_r}
M_{p_\sigma}R_\sigma.
\]
Hence, for every $f\in \sfP$,
\[
\frac{f-f\cdot s_1}{x_1-x_2}
=
\sum_{\sigma\in\fS_r}
p_\sigma(f\cdot\sigma).
\]
Regarding this equality as an equality in $\sfF$, we obtain
\[
0={}
\left(
p_1-\frac1{x_1-x_2}
\right)f +
\left(
p_{s_1}+\frac1{x_1-x_2}
\right)(f\cdot s_1) +
\sum_{ {\sigma\in\fS_r,\,
\sigma\neq1,s_1}}
p_\sigma(f\cdot\sigma).
\]
By
Lemma~\ref{lem:artin-independence-permutations} we have
\(
p_1=\frac1{x_1-x_2}.
\)
But
\(
p_1\in \sfP,
\
\frac1{x_1-x_2}\notin \sfP,
\)
a contradiction. Therefore $\partial_1$ is not in the image of
$\rho_{r,\mathpzc K}$.
\end{proof}

\subsection{Centers of affine Schur algebras}
We conclude this section with an application of the double centralizer
property to the center of the affine Schur algebra.
  Recall the notation
$
\sfP=\mathpzc K[x_1^{\pm1},\ldots,x_r^{\pm1}]$
from \S5.2.   For $h\in\mathpzc K\affSr$, let
\[
R_h:\Ogk^{\ot r}\longrightarrow\Ogk^{\ot r},
\qquad
w\longmapsto wh.
\]
Throughout this subsection, $Z(A)$ denotes the center of an algebra $A$.
\begin{Lem}\label{lem:center-affine-schur}
Let $\mathpzc K$ be a commutative unique factorization domain,
$n\geq2$, and $r\geq1$. Then the map
\[
Z(\mathpzc K\affSr)
\longrightarrow
Z(\afSrk),
\qquad
z\longmapsto R_z,
\]
is an algebra isomorphism. Consequently,
$
Z(\afSrk)
\cong
\sfP^{\fS_r}
=
\mathpzc K[\sigma_1,\ldots,\sigma_{r-1},\sigma_r^{\pm1}]$,
where $
\sigma_k=\sigma_k(x_1,\ldots,x_r)
=
\sum_{1\leq i_1<\cdots<i_k\leq r}
x_{i_1}\cdots x_{i_k}$
is the $k$-th elementary symmetric polynomial.
\end{Lem}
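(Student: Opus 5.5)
The plan is to use the double centralizer property of Theorem~\ref{thm:second-centralizer-nge2} in the standard way. Set $\sfT=\Ogk^{\ot r}$, $\sfH=\mathpzc K\affSr$ and $\sfS=\afSrk=\End_\sfH(\sfT)$. The right action gives an injective algebra map $\rho:\sfH^{\rm op}\to\End_\sfS(\sfT)$. Injectivity follows from Theorem~\ref{thm:second-centralizer-nge2}, or directly from Lemma~\ref{lem:sc-injective-extension} together with Lemma~\ref{lem:sc-generic-double-centralizer}. Surjectivity is exactly that theorem.

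First, the map $z\mapsto R_z$ is well defined. For $z\in Z(\sfH)$ the operator $R_z$ commutes with every $R_h$, so $R_z\in\End_\sfH(\sfT)=\sfS$. It also commutes with every element of $\sfS$, because elements of $\sfS$ are $\sfH$-linear. Hence $R_z\in Z(\sfS)$. The map is an algebra homomorphism, since $R_{zz'}=R_{z'}R_z=R_zR_{z'}$ and $Z(\sfH)$ is commutative, so the passage to the opposite algebra is harmless. It is injective because $\rho$ is injective.

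For surjectivity, let $c\in Z(\sfS)$. Since $c$ commutes with all of $\sfS$, we have $c\in\End_\sfS(\sfT)$. By Theorem~\ref{thm:second-centralizer-nge2}, $c=R_h$ for a unique $h\in\sfH$. Next, $c\in\sfS$ means that $R_h$ commutes with $R_{h'}$ for every $h'\in\sfH$, so $R_{h'h}=R_{hh'}$. Injectivity of $\rho$ then forces $hh'=h'h$. Thus $h\in Z(\sfH)$ and $c=R_h$, which proves the isomorphism $Z(\sfH)\cong Z(\sfS)$.

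For the explicit description, Lemma~\ref{center} gives $Z(\sfH)=\sfZ=\sfP^{\fS_r}$. It remains to identify $\mathpzc K[x_1^{\pm1},\ldots,x_r^{\pm1}]^{\fS_r}$ with $\mathpzc K[\sigma_1,\ldots,\sigma_{r-1},\sigma_r^{\pm1}]$.

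\begin{itemize}
\item The ring $\mathpzc K[\sigma_1,\ldots,\sigma_{r-1},\sigma_r^{\pm1}]$ is contained in the invariants, since $\sigma_r^{-1}=(x_1\cdots x_r)^{-1}$ is symmetric.
\item Conversely, let $f$ be symmetric. Then $\sigma_r^Nf$ is a symmetric honest polynomial for $N\gg0$. By the fundamental theorem on symmetric polynomials, which holds over any commutative ring, $\sigma_r^Nf\in\mathpzc K[\sigma_1,\ldots,\sigma_r]$, and hence $f\in\mathpzc K[\sigma_1,\ldots,\sigma_{r-1},\sigma_r^{\pm1}]$.
\item The $\sigma_1,\ldots,\sigma_r$ are algebraically independent, and localizing at $\sigma_r$ preserves this. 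So the resulting ring is a genuine Laurent-type polynomial ring in these generators.
\end{itemize}

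The main obstacle is only the bookkeeping of $\rho$ versus the opposite algebra in the surjectivity step. The substantive input is entirely Theorem~\ref{thm:second-centralizer-nge2}; the rest is routine.
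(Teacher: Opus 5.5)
Your proposal is correct and follows essentially the same route as the paper: write a central element as $R_h$ using Theorem~\ref{thm:second-centralizer-nge2}, use faithfulness of the right action to force $h\in Z(\mathpzc K\affSr)$, and finish with Lemma~\ref{center}. Your extra step for the symmetric Laurent polynomials (clearing denominators with $\sigma_r^N$ and applying the fundamental theorem over an arbitrary commutative ring) just supplies the detail the paper leaves to ``the fundamental theorem of symmetric polynomials.''
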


\begin{proof}
Clearly, for  $z\in Z(\mathpzc K\affSr)$ we have $R_z\in Z(\afSrk)$.
Conversely, let $c\in Z(\afSrk)$. Then by
Theorem~\ref{thm:second-centralizer-nge2}, there is a unique
$h\in\mathpzc K\affSr$ such that
$
c=R_h.$
Since $c\in\afSrk$, we have $R_hR_x=R_xR_h$ for $x\in\mathpzc K\affSr$.
By
Theorem~\ref{thm:second-centralizer-nge2} the right action of $\mathpzc K\affSr$ on $\Ogk^{\ot r}$ is faithful. Therefore
$xh=hx$ for every $x\in\mathpzc K\affSr$, and hence
$h\in Z(\mathpzc K\affSr)$. Thus
$Z(\mathpzc K\affSr)\cong Z(\afSrk).$ The final assertion follows from Lemma~\ref{center} and the
fundamental theorem of symmetric polynomials.
\end{proof}
We next relate this center to the center of the hyper loop algebra.
For $m\neq 0$, let
$
\sfz_{ m}
=
\sum_{i=1}^n g_{i, m}
\in\afuglq
$
where $g_{i, m}$ is defined in
 \eqref{gil}.
Define $C_k^\pm\in\afuglq$ by
\begin{equation}\label{Cpm generating function}
C^\pm(u)
:=
\prod_{i=1}^n\La_i^\pm(u)
=
\sum_{k\geq0}C_k^\pm u^k,
\end{equation}
where $\La_i^\pm(u)$ is defined in \eqref{Laik generating function}. By definition we have
\[
C_k^\pm
=
\sum_{\substack{k_1+\cdots+k_n=k\\
k_i\geq0}}
\La_{1,\pm k_1}\cdots\La_{n,\pm k_n} \in\afuglz.
\]
The image of $C_k^\pm$ in $\afuglk$ will also be denoted by $C_k^\pm$.

\begin{Lem}\label{lem:center-elements-hyper-loop}
For every $k\geq0$, the image of $C_k^\pm$ in $\afuglk$
belongs to $Z(\afuglk)$. Moreover,
\[
\afxrk(C_k^\pm)
=
\begin{cases}
(-1)^k
R_{\sigma_k(x_1^{\mp 1},\ldots,x_r^{\mp 1})},
&0\leq k\leq r,\\
0,&k>r,
\end{cases}
\]
where $\afxrk$ is defined in \eqref{afxrk}.
\end{Lem}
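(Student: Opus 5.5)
The plan is to compute $\afxrk(C_k^\pm)$ directly on the tensor space, and then to deduce centrality from the explicit form of the image.

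\emph{Step 1: the product over $i$.} By Lemma~\ref{xrz(Laiml)} with $l=1$, $\afxrz(\La_{i,\pm m})$ acts on $\Ogz^{\ot r}$ as $(-1)^m\sum_{\la_1<\cdots<\la_m}F_{i,i\pm n,\la}$. Over $\mbq$ there is a more transparent form of this. By \eqref{Laik generating function},
\[
\La_i^\pm(u)=\exp\Big(-\sum_{k\ge1}\frac{g_{i,\pm k}}{k}u^k\Big).
\]
On $\Og_\mbq$ the element $g_{i,\pm k}=\afE_{i,i\pm kn}$ acts as $P_i\,\vep^{\pm k}$, where $P_i$ is the projection onto the $\og_i$-coordinate (use $\og_{j\mp kn}=\vep^{\pm k}\og_j$). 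On $\Og_\mbq^{\ot r}$ the action of $g_{i,\pm k}$ is primitive, so it acts as $\sum_{s=1}^r P_i^{(s)}\vep_s^{\pm k}$. Exponentiating, and using that these operators commute, gives
\[
\La_i^\pm(u)\mapsto \prod_{s=1}^r\bigl(1-uP_i^{(s)}\vep_s^{\pm1}\bigr).
\]
Taking the product over $i$ and using $\sum_i P_i^{(s)}=1$ together with $P_i^{(s)}P_{i'}^{(s)}=\dt_{ii'}P_i^{(s)}$, we get
\[
\prod_i\bigl(1-uP_i^{(s)}\vep_s^{\pm1}\bigr)=1-u\,\vep_s^{\pm1}.
\]
Hence
\[
\afxr(C^\pm(u))=\prod_{s=1}^r(1-u\,\vep_s^{\pm1}),
\]
where $\vep_s$ denotes multiplication by $\vep$ in the $s$-th tensor factor.

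\emph{Step 2: identifying the image.} Multiplication by $\vep^{\pm1}$ in factor $s$ is $R_{\bse_s^{\mp1}}$, since $\og_\bfi\bse_s$ replaces $\og_{i_s}$ by $\og_{i_s+n}=\vep^{-1}\og_{i_s}$. Thus $\vep_s=R_{x_s^{-1}}$. Extracting the coefficient of $u^k$ gives $\afxr(C_k^\pm)=(-1)^kR_{\sigma_k(x_1^{\mp1},\ldots,x_r^{\mp1})}$ for $k\le r$, and $0$ for $k>r$.

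\emph{Step 3: base change.} This identity holds in $\afSrmbq$, and $C_k^\pm\in\afuglz$. Both sides lie in $\afSrmbz$, which embeds in $\afSrmbq$, so the identity holds over $\mbz$. It then passes to $\field$ by \eqref{afxrk}.

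\emph{Step 4: centrality.} Over $\mbq$ I would argue that $C^\pm(u)$ is central in $\afuglq$. By Theorem~\ref{realization}, an element of $\afuglq$ is determined by its images under all $\afxr$. Those images are $R_z$ with $z$ a symmetric Laurent polynomial, hence $z\in Z(\mbq\affSr)$ by Lemma~\ref{center}. Each $R_z$ commutes with every element of $\afSrmbq=\End_{\affSr}(\Og_\mbq^{\ot r})$. Therefore the commutator $[C_k^\pm,y]$ maps to $0$ under every $\afxr$, so it vanishes by injectivity of $\afxi$. Thus $C_k^\pm\in Z(\afuglq)\cap\afuglz$, and so $C_k^\pm$ commutes with all of $\afuglz$. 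This relation is preserved under $\ot_\mbz\field$, so the image of $C_k^\pm$ is central in $\afuglk$.

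The main obstacle is Step 1: justifying the exponential/product manipulation with the projections $P_i^{(s)}$. The operators involved commute, and the factorization only uses the idempotent relations $P_i^{(s)}P_{i'}^{(s)}=\dt_{ii'}P_i^{(s)}$ within each tensor factor, but the product over $i$ and $s$ still has to be checked carefully.
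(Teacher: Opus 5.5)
Your proof is correct, and Steps 1--3 are essentially the paper's computation with one reordering. The paper first sums over $i$. It observes that $\mathsf z_{\pm m}=\sum_i g_{i,\pm m}$ acts on $\Omega_{\mathbb Q}$ simply as multiplication by $\mathtt t^{\pm m}$, so $\widehat\xi_r(\mathsf z_{\pm m})=R_{p_m(x_1^{\mp1},\ldots,x_r^{\mp1})}$, and only then exponentiates. You instead exponentiate each $\Lambda_i^\pm(u)$ separately and multiply, using the orthogonal idempotents $P_i^{(s)}$.

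Your ordering gives the image of each individual $\Lambda_i^\pm(u)$ as a by-product. The paper's ordering avoids the projections entirely. The step you flag as the main obstacle is harmless in any case: all the operators $P_i^{(s)}\mathtt t_s^{\pm1}$ commute, and $(P_i^{(s)}\mathtt t_s^{\pm1})^k=P_i^{(s)}\mathtt t_s^{\pm k}$, so the formal identity $\exp(\log(1-uX))=1-uX$ applies factor by factor.

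The genuine difference is Step 4. The paper gets centrality directly at the Lie-algebra level. Under the identification of $\widehat{\mathfrak{gl}}_n$ with the loop algebra, $\mathsf z_m$ is $I\otimes\mathtt t^m$, which is central. Hence $C^\pm(u)=\exp\bigl(-\sum_m \mathsf z_{\pm m}u^m/m\bigr)$ is central in $\mathcal U_{\mathbb Q}(\widehat{\mathfrak{gl}}_n)$, with no computation needed.

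You instead deduce centrality from the computed images together with injectivity of $\widehat\xi=\prod_r\widehat\xi_r$ (Theorem~\ref{realization}). That argument is valid. Note that the appeal to Lemma~\ref{center} is unnecessary: any $R_z$ lying in $\widehat{\mathcal S}_{\mathbb Q}(n,r)$ automatically commutes with all of $\widehat{\mathcal S}_{\mathbb Q}(n,r)$, since the latter consists of $\widehat{\mathfrak S}_r$-equivariant maps. However, your route invokes the deep faithfulness part of the BLM realization to prove something elementary.

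The base change from $\mathbb Z$ to the ground ring is the same in both proofs.
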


\begin{proof}
By \eqref{Laik generating function},
$
C^\pm(u)
=
\exp (
-\sum_{m\geq1}\frac{\sfz_{\pm m}}{m}u^m
 ).
$
Since each $\sfz_{\pm m}$ is central in $\afuglq$, so is
$C_k^\pm$. As $C_k^\pm\in\afuglz$, we have
$
C_k^\pm\in Z(\afuglz).$
Hence its image in $\afuglk$ belongs to $Z(\afuglk)$.

Under the identification
$\mathpzc K\mathbb Z^r\cong\sfP$
used in \eqref{sfH iso P rtimes Sr}, the element $x_a$
corresponds to $\bse_a$.  It is easy to see that
\[
\afxr(\sfz_m)
=
R_{p_m(x_1^{-1},\ldots,x_r^{-1})},
\qquad
\afxr(\sfz_{-m})
=
R_{p_m(x_1,\ldots,x_r)},
\]
where $p_m(y_1,\ldots,y_r)
=
\sum_{a=1}^r y_a^m$.
Therefore, in
$\operatorname{End}_{\mbq}(\Omega_{\mbq}^{\ot r})[[u]]$,
\[
\begin{aligned}
\sum_{k\geq0}\afxr(C_k^\pm)u^k
=
R_{\exp\left(
-\sum_{m\geq1}
\frac{p_m(x_1^{\mp 1},\ldots,x_r^{\mp 1})}{m}u^m
\right)} =
R_{\prod_{a=1}^r(1-x_a^{\mp 1}u)}
 =
\sum_{k=0}^r
(-1)^k
R_{\sigma_k(x_1^{\mp 1},\ldots,x_r^{\mp 1})}u^k.
\end{aligned}
\]
Comparing coefficients proves the asserted formulas over $\mbq$.
Since both
sides are defined over $\mbz$, the formulas hold over $\mbz$, and
hence over $\mathpzc K$ by base change.
\end{proof}

We can now determine the image of the center of the hyper loop
algebra.

\begin{Thm}\label{thm:center-affine-schur}
Let $\mathpzc K$ be a commutative unique factorization domain,
$n\geq2$, and $r\geq1$. Then
$
\afxrk\bigl(Z(\afuglk)\bigr)
=
Z(\afSrk).$
\end{Thm}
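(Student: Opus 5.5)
Since $\afxrk:\afuglk\to\afSrk$ is surjective by \eqref{afxrk}, the image of any central element of $\afuglk$ commutes with all of $\afSrk$. This gives the inclusion $\afxrk(Z(\afuglk))\subseteq Z(\afSrk)$ immediately.

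For the reverse inclusion I will use Lemma~\ref{lem:center-affine-schur}. It identifies $Z(\afSrk)$ with $\{R_z\mid z\in\sfP^{\fS_r}\}$, and $\sfP^{\fS_r}=\mathpzc K[\sigma_1,\ldots,\sigma_{r-1},\sigma_r^{\pm1}]$. Since $z\mapsto R_z$ is an algebra map, it suffices to show that $R_{\sigma_1},\ldots,R_{\sigma_r}$ and $R_{\sigma_r^{-1}}$ lie in the subalgebra $\afxrk(Z(\afuglk))$. That set is a subalgebra because $Z(\afuglk)$ is a subalgebra and $\afxrk$ is an algebra homomorphism.

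By Lemma~\ref{lem:center-elements-hyper-loop}, the elements $C_k^\pm$ lie in $Z(\afuglk)$ and satisfy
$\afxrk(C_k^-)=(-1)^kR_{\sigma_k(x_1,\ldots,x_r)}$ for $0\le k\le r$. This places every $R_{\sigma_k}$, $1\le k\le r$, in the image. For the inverse, the same lemma gives $\afxrk(C_r^+)=(-1)^rR_{\sigma_r(x_1^{-1},\ldots,x_r^{-1})}=(-1)^rR_{\sigma_r^{-1}}$, because $\sigma_r(x^{-1})=(x_1\cdots x_r)^{-1}=\sigma_r^{-1}$. Hence every monomial in $\sigma_1,\ldots,\sigma_{r-1},\sigma_r^{\pm1}$ is hit, and $\afxrk(Z(\afuglk))\supseteq Z(\afSrk)$.

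I expect the main obstacle to be bookkeeping rather than anything conceptual. One point is to check that the identification of $x_a$ with $\bse_a$ in the lemmas is consistent, so that $R_{\sigma_k}$ really denotes right multiplication by the central element $\sigma_k(\bse_1,\ldots,\bse_r)$ of $\mathpzc K\affSr$. The other is to check that Lemma~\ref{lem:center-affine-schur} gives the isomorphism $z\mapsto R_z$ on the nose, which needs $n\ge2$ and the UFD hypothesis. With both checks done, the argument is just the two inclusions above.
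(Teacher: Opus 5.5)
Your proposal is correct and is essentially the paper's proof. Surjectivity of $\afxrk$ gives $\afxrk(Z(\afuglk))\subseteq Z(\afSrk)$. For the reverse inclusion, Lemma~\ref{lem:center-affine-schur} reduces everything to the generators $R_{\sigma_1},\ldots,R_{\sigma_r},R_{\sigma_r^{-1}}$, and Lemma~\ref{lem:center-elements-hyper-loop} realizes these, up to sign, as the images of $C_k^-$ ($1\le k\le r$) and $C_r^+$; your reading of the $\mp$ exponents in that lemma is also correct.
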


\begin{proof}
Since
$\afxrk:\afuglk\rightarrow\afSrk$
is surjective, we have
$
\afxrk\bigl(Z(\afuglk)\bigr)
\subseteq
Z(\afSrk).$
On the other hand, by Lemma~\ref{lem:center-affine-schur}, the algebra
$Z(\afSrk)$ is generated by
$R_{\sigma_1},\ldots,R_{\sigma_r},
R_{\sigma_r^{-1}}.$
Furthermore, by Lemma~\ref{lem:center-elements-hyper-loop},
$R_{\sigma_k}\in \afxrk\bigl(Z(\afuglk)\bigr)$
for $1\leq k\leq r$, and  $
R_{\sigma_r^{-1}}=R_{\sigma_r(x_1^{-1},\ldots,x_r^{-1})}
\in \afxrk\bigl(Z(\afuglk)\bigr).
$
Therefore
$
Z(\afSrk)
\subseteq
\afxrk\bigl(Z(\afuglk)\bigr).$ Consequently, $
Z(\afSrk)
=
\afxrk\bigl(Z(\afuglk)\bigr).$
\end{proof}
We next obtain the corresponding result for the loop semigroup algebra.
\begin{Thm}\label{thm:center-loop-semigroup}
Let $\mathpzc K$ be a commutative unique factorization domain,
$n\geq2$, and $r\geq1$. Suppose that there is a subset
$\mpX$ of $\mathpzc K$ such that $|\mpX|>r$ and $a-b$ is
invertible in $\mathpzc K$ for any distinct $a,b\in\mpX$.
Then
$\widehat\zeta_{r,\mathpzc K}
\Bigl(
Z\bigl(\mathpzc K\widehat G_{\mathpzc K}(n)\bigr)
\Bigr)
=
Z\bigl(\widehat S_{\mathpzc K}(n,r)\bigr)$, where $\afzrk$ is defined in \eqref{afzrk}.
In particular, the conclusion holds whenever $\mathpzc K$ is
a field with $|\mathpzc K|>r$.
\end{Thm}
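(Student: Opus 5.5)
The inclusion $\widehat\zeta_{r,\mathpzc K}(Z(\mathpzc K\widehat G_{\mathpzc K}(n)))\subseteq Z(\afSrk)$ is immediate from surjectivity. Under the hypothesis on $\mpX$, Theorem~\ref{surjective1} gives $\afzrk(\field\Gk)=\afSrk$, and a surjective algebra homomorphism maps central elements to central elements. So the real content is the reverse inclusion. For this I will produce explicit central elements of $\field\Gk$ whose images generate $Z(\afSrk)$. By Lemma~\ref{lem:center-affine-schur}, the center $Z(\afSrk)$ is generated as a $\field$-algebra by $R_{\sigma_1},\ldots,R_{\sigma_r}$ and $R_{\sigma_r^{-1}}$.

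The natural central elements of $\Gk$ are the scalar loops $g_{c,m}=(1+c\vep^{m})I$ and $\vep^{\pm1}I$, for $c\in\field$ and $m\in\mbz\setminus\{0\}$. When $(1+c\vep^m)I$ is invertible over $\field((\vep))$ it lies in $\Gk$; it commutes with every element of $\Gk$, so it lies in $Z(\field\Gk)$. Since $\og_{i+ln}=\vep^{-l}\og_i$, multiplication by $\vep$ on $\Ogk$ is the shift $\og_j\mapsto\og_{j-n}$. Hence on $\Ogk^{\ot r}$ the element $\vep^{-1}I$ acts in the $k$-th factor as the right action of $\bse_k$. Under the identification $x_k\leftrightarrow\bse_k$ this gives
\[
\afzrk\bigl((1+c\vep^{\mp1})I\bigr)=R_{\prod_{a=1}^r(1+cx_a^{\pm1})}=\sum_{k=0}^r c^kR_{\sigma_k(x_1^{\pm1},\ldots,x_r^{\pm1})},\qquad \afzrk(\vep^{-1}I)=R_{\sigma_r},\quad \afzrk(\vep I)=R_{\sigma_r^{-1}}.
\]
These can also be checked against Lemma~\ref{lem:center-elements-hyper-loop} and Proposition~\ref{zrk(Eij(x))}, taking products over $i$ of $\afE_{i,i\pm n}(x)$; that cross-check is optional.

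Next I apply the Vandermonde trick from the proof of Theorem~\ref{surjective1}. Choose $k_0,\ldots,k_r\in\mpX$ and set $a_t=k_t-k_0$, so that $a_0=0$ and the $a_t$ with $t\geq1$ are units with pairwise differences units. The hardest step is to check that $(1+a_t\vep^{-1})I$ lies in $\Gk$, i.e.\ is invertible in $GL_n(\field((\vep)))$. Its determinant $(1+a_t\vep^{-1})^n=\vep^{-n}(\vep+a_t)^n$ is a unit in $\field((\vep))$ precisely because $a_t$ is invertible. Indeed, $\vep+a_t=a_t(1+a_t^{-1}\vep)$ is invertible in $\field[[\vep]]$, so the element lies in $\Gk$ and is central. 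Then
\[
\bigl(\afzrk((1+a_t\vep^{-1})I)\bigr)_{0\le t\le r}=\bigl(R_{\sigma_0},R_{\sigma_1},\ldots,R_{\sigma_r}\bigr)B
\]
with $B$ the Vandermonde matrix \eqref{Vandermonde matrix}. Since $\det B$ is a unit, each $R_{\sigma_k}$ is a $\field$-linear combination of the images of these central elements. Alternatively, $R_{\sigma_r}$ and $R_{\sigma_r^{-1}}$ come directly from $\vep^{\mp1}I$, and $R_{\sigma_k}$ for $1\le k\le r-1$ come from the Vandermonde inversion.

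Since $Z(\field\Gk)$ is a subalgebra and $\afzrk$ is an algebra map, its image contains the subalgebra generated by $R_{\sigma_1},\ldots,R_{\sigma_r},R_{\sigma_r^{-1}}$. By Lemma~\ref{lem:center-affine-schur} this subalgebra is $Z(\afSrk)$, which gives equality. If $\field$ is a field with $|\field|>r$, take $\mpX=\field$; this yields the final sentence of the statement. The only delicate points are the membership of the chosen loops in $\Gk$, handled above by the unit condition, and the correct sign and inverse bookkeeping in $x_a\leftrightarrow\vep^{\mp1}$; the latter does not affect the conclusion, since both $\sigma_r^{\pm1}$ are obtained.
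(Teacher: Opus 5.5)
Your proposal is correct and follows essentially the same route as the paper. Both use surjectivity (Theorem~\ref{surjective1}) for one inclusion, then scalar central loops with a Vandermonde inversion to get $R_{\sigma_1},\ldots,R_{\sigma_r}$, the element $\vep I$ for $R_{\sigma_r^{-1}}$, and Lemma~\ref{lem:center-affine-schur} to conclude.

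The only difference is cosmetic. The paper uses $g_a=(\vep^{-1}+a)I$, whose image is $\sum_{k}a^{r-k}R_{\sigma_k}$. This element lies in $\widehat G_{\mathpzc K}(n)$ for every $a\in\mathpzc K$, since $\vep^{-1}+a=\vep^{-1}(1+a\vep)$ is always a unit in $\mathpzc K((\vep))$. So the paper can take $a_0,\ldots,a_r\in\mpX$ directly, without your shift $a_t=k_t-k_0$.
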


\begin{proof}

By Theorem~\ref{surjective1}, we have
$
\widehat\zeta_{r,\mathpzc K}
\Bigl(
Z\bigl(\mathpzc K\widehat G_{\mathpzc K}(n)\bigr)
\Bigr)
\subseteq
Z\bigl(\widehat{\mathcal S}_{\mathpzc K}(n,r)\bigr).
$
On the other hand, for $a\in\mathpzc K$, set
$
g_a=(\ttt^{-1}+a)I,$
where $I\in M_n(\mathpzc K[\vep,\vep^{-1}])$ is the identity matrix. Then we have
$g_a\in
Z\bigl(\mathpzc K\widehat G_{\mathpzc K}(n)\bigr).$
Furthermore we have
$$
\widehat\zeta_{r,\mathpzc K}(g_a)
=
R_{\prod_{s=1}^r(x_s+a)}=R_{\sum_{k=0}^r a^{r-k}\sigma_k}=
\sum_{k=0}^r
a^{r-k}R_{\sigma_k},
$$
where $\sigma_0=1$.
By hypothesis, we can choose distinct elements  $a_0,a_1,\ldots,a_r\in\mpX\han\field$ such that
$a_s-a_t$ is invertible in $\field$ for $0\leq s\not=t\leq r$.
Then
\[
(
\widehat\zeta_{r,\mathpzc K}(g_{a_0}),
\widehat\zeta_{r,\mathpzc K}(g_{a_1}),
\cdots,
\widehat\zeta_{r,\mathpzc K}(g_{a_r})
)=(R_{\sigma_r},
R_{\sigma_{r-1}},
\cdots,
R_{\sigma_0})B,
\]
where $B$ is the Vandermonde matrix defined in \eqref{Vandermonde matrix}. Since
$\det(B)=\prod_{0\leq i<j\leq r}(a_j-a_i)$
is invertible, we have
$
R_{\sigma_k}\in
\widehat\zeta_{r,\mathpzc K}
(
Z(\mathpzc K\widehat G_{\mathpzc K}(n))
)$
for $0\leq k\leq r.$
Furthermore, we have
$R_{\sigma_r^{-1}}=R_{x_1^{-1}\cdots x_r^{-1}}=
\widehat\zeta_{r,\mathpzc K}(\ttt I)\in
\widehat\zeta_{r,\mathpzc K}
(Z(\mathpzc K\widehat G_{\mathpzc K}(n))).
$
Therefore
by Lemma~\ref{lem:center-affine-schur},
$
Z\bigl(\widehat{\mathcal S}_{\mathpzc K}(n,r)\bigr)
\subseteq
\widehat\zeta_{r,\mathpzc K}
\Bigl(
Z\bigl(\mathpzc K\widehat G_{\mathpzc K}(n)\bigr)
\Bigr),$
and the assertion follows.
\end{proof}

\section{ Evaluation homomorphisms and their compatibility}
In this section, we investigate the evaluation maps $\Ev_a$ and $\ti\ev_a$, which will be used in the representation theory of $\afuglk$ and the affine Schur algebra $\afSrk$ in subsequent sections. These maps allow us to specialize the affine algebras to their finite counterparts by evaluating the loop parameter at specific values, and thereby relate the affine and finite representation theories.

\subsection{The evaluation maps $\Ev_a:\afuglk\ra\uglk$}
Let $\uglq$ be the enveloping algebra of $\frak{gl}_n(\mbq)$.
There is a natural algebra homomorphism
$$\Ev:\afuglq\ra\uglq\ot\mbq[\ttt,\ttt^{-1}]$$
defined by $\Ev(\afE_{i,j+ln})= E_{i,j}\ot\ttt^l$ for $1\leq i,j\leq n$ and $l\in\mbz$,
where $E_{i,j}\in{\frak{gl}_n}(\mbq)$ denotes the elementary matrix  $(\dt_{i,k}\dt_{j,l})_{1\leq k,l\leq n}$.
Let $\uglz$ be the $\mbz$-subalgebra of $\uglq$ generated by the elements $(E_{i,j})^{(k)}$ and $\big({H_i\atop t}\big)$ for $1\leq i\not=j\leq n$, $k,t\in\mbn$, where $H_i=E_{i,i}$.
We begin with some lemmas.
\begin{Lem}\label{Ev(E i,j+ln)}
For $1\leq i\not=j\leq n$ and $l\in\mbz$ we have $\Ev((\afE_{i,j+ln})^{(k)})=E_{i,j}^{(k)}\ot\ttt^{lk}$.
\end{Lem}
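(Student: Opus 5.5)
The identity is a direct consequence of $\Ev$ being an algebra homomorphism, together with a tensor-product computation in $\uglq\ot\mbq[\ttt,\ttt^{-1}]$.

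First I check that the statement makes sense. Here $(\afE_{i,j+ln})^{(k)}=(\afE_{i,j+ln})^k/k!$ is taken in $\afuglq$. This divided power is legitimate, and lies in $\afuglz$, because $1\le i\ne j\le n$ forces $\bar i\ne\overline{j+ln}$.

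Since $\Ev$ is a $\mbq$-algebra homomorphism, it commutes with $k$-th powers and with division by $k!$. Hence
\[
\Ev\bigl((\afE_{i,j+ln})^{(k)}\bigr)=\frac{1}{k!}\,\Ev(\afE_{i,j+ln})^k=\frac{1}{k!}\,(E_{i,j}\ot\ttt^l)^k .
\]
Multiplication in the tensor product algebra is componentwise, and $\mbq[\ttt,\ttt^{-1}]$ is commutative. Therefore $(E_{i,j}\ot\ttt^l)^k=E_{i,j}^k\ot\ttt^{lk}$, which gives
\[
\Ev\bigl((\afE_{i,j+ln})^{(k)}\bigr)=\frac{E_{i,j}^k}{k!}\ot\ttt^{lk}=E_{i,j}^{(k)}\ot\ttt^{lk}.
\]
The case $k=0$ is trivial, since both sides equal $1\ot 1$.

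There is no real obstacle. The only point needing care is that $\Ev$ is well defined as an algebra homomorphism, and this is the statement preceding the lemma. Explicitly, $\Ev$ is induced from the Lie algebra map $\afgl\to\frak{gl}_n(\mbq)\ot\mbq[\ttt,\ttt^{-1}]$ given by the identity on $\frak{gl}_n(\mbq)\ot\mbq[\ttt,\ttt^{-1}]$. This map takes values in $\uglq\ot\mbq[\ttt,\ttt^{-1}]$, which is the enveloping algebra of the loop algebra viewed over $\mbq[\ttt,\ttt^{-1}]$, so the universal property of $\afuglq$ applies.
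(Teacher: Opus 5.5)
Your proof is correct and is the intended argument: the paper records this lemma without proof, since it follows at once from $\Ev$ being a $\mbq$-algebra homomorphism with $\Ev(\afE_{i,j+ln})=E_{i,j}\ot\ttt^l$, together with the commutativity of $\mbq[\ttt,\ttt^{-1}]$ in the componentwise product. For the well-definedness of $\Ev$ you only need that $x\ot f\mapsto x\ot f$ is a Lie algebra map into $\uglq\ot\mbq[\ttt,\ttt^{-1}]$ with its commutator bracket, which again uses only that commutativity; the identification of the target as an enveloping algebra is not needed.
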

\begin{Lem}\label{lem for {Ev(La{i,k,l})}}
For $1\leq i\leq n$ and $k\geq 1$ the following formula holds in $\uglq:$
 $$k\bigg({H_i\atop k}\bigg)
=\sum_{1\leq s\leq k}(-1)^{s+1}H_i\bigg({H_i\atop k-s}\bigg).$$
\end{Lem}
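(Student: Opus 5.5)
The identity takes place in the polynomial ring $\mbq[H_i]$, since everything involves only the single commuting element $H_i$. So it suffices to prove the corresponding identity of polynomials in one indeterminate $x$:
\[
k\binom{x}{k}=\sum_{1\leq s\leq k}(-1)^{s+1}x\binom{x}{k-s}\qquad(k\geq1).
\]
Then we substitute $x=H_i$.

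I would argue by induction on $k$ using the elementary relation
\[
(x-k+1)\binom{x}{k-1}=k\binom{x}{k},
\]
which is immediate from the definition $\binom{x}{t}=x(x-1)\cdots(x-t+1)/t!$. For $k=1$ both sides equal $x$. For $k\geq2$, we split off the $s=1$ term of the right-hand side and reindex the rest by $s'=s-1$:
\[
\sum_{1\leq s\leq k}(-1)^{s+1}x\binom{x}{k-s}=x\binom{x}{k-1}-\sum_{1\leq s'\leq k-1}(-1)^{s'+1}x\binom{x}{k-1-s'}.
\]
By the induction hypothesis, the remaining sum equals $(k-1)\binom{x}{k-1}$. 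Hence the right-hand side becomes
\[
\bigl(x-(k-1)\bigr)\binom{x}{k-1}=k\binom{x}{k},
\]
which is the claim.

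Alternatively, one can use generating functions. The sum is $x$ times the coefficient of $u^k$ in
\[
\Bigl(\sum_{t\geq0}\binom{x}{t}u^t\Bigr)\cdot\frac{u}{1+u}=u(1+u)^{x-1},
\]
while $k\binom{x}{k}$ is the coefficient of $u^k$ in
\[
u\frac{d}{du}(1+u)^x=xu(1+u)^{x-1}.
\]
Both routes are routine; the only care needed is tracking the sign after reindexing, which I expect to be the sole (minor) obstacle.
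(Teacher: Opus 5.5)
Your proof is correct and is essentially the paper's argument: induction on $k$, splitting off the $s=1$ term, applying the induction hypothesis to obtain $-(k-1)\binom{H_i}{k-1}$, and finishing with $(H_i-k+1)\binom{H_i}{k-1}=k\binom{H_i}{k}$. Your generating-function alternative, via $u(1+u)^{x-1}$, is also valid but not needed.
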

\begin{proof}
We proceed  by induction on $k$. The case where $k=1$ is trivial.
For $k\geq 2$, by the induction hypothesis we have
\begin{equation*}
\begin{split}
\sum_{1\leq s\leq k}(-1)^{s+1}H_i\bigg({H_i\atop k-s}\bigg)
&=H_i\bigg({H_i\atop k-1}\bigg)+\sum_{1\leq t\leq k-1}(-1)^tH_i\bigg({H_i\atop k-1-t}\bigg)\\
&=
H_i\bigg({H_i\atop k-1}\bigg)-(k-1)\bigg({H_i\atop k-1}\bigg)\\
&=k\bigg({H_i\atop k}\bigg).
\end{split}
\end{equation*}
This completes the inductive step and the proof.
\end{proof}

\begin{Lem}\label{Ev(La{i,k,l})}
For $1\leq i\leq n$, $k\in\mbn$ and $l>0$ we have $\Ev(\La_{i,\pm k,l})=(-1)^k\big({H_i\atop k}\big)\ot\ttt^{\pm kl}$.
\end{Lem}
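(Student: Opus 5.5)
We argue by induction on $k$, using the recursion of Lemma~\ref{Laikl} together with the identity of Lemma~\ref{lem for {Ev(La{i,k,l})}}. The cases $k=0$ and $k=1$ are immediate. For $k=0$ we have $\La_{i,0,l}=1$. For $k=1$, Lemma~\ref{Laikl} gives $\La_{i,\pm1,l}=-g_{i,\pm l}=-\afE_{i,i\pm ln}$, and by the definition of $\Ev$ this maps to $-E_{i,i}\ot\ttt^{\pm l}=-H_i\ot\ttt^{\pm l}$, which is the claim.

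For the inductive step, take $k\geq2$ and apply $\Ev$ to the recursion
\[
k\La_{i,\pm k,l}=-\sum_{0\leq s\leq k-1}g_{i,\pm(k-s)l}\La_{i,\pm s,l}.
\]
Since $g_{i,\pm(k-s)l}=\afE_{i,i\pm(k-s)ln}$, we get $\Ev(g_{i,\pm(k-s)l})=H_i\ot\ttt^{\pm(k-s)l}$. By the induction hypothesis, $\Ev(\La_{i,\pm s,l})=(-1)^s\big({H_i\atop s}\big)\ot\ttt^{\pm sl}$. The $\ttt$-powers multiply to $\ttt^{\pm kl}$ in every term, so
\[
k\,\Ev(\La_{i,\pm k,l})=\Big(\sum_{0\leq s\leq k-1}(-1)^{s+1}H_i\big({H_i\atop s}\big)\Big)\ot\ttt^{\pm kl}.
\]

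Substituting $s\mapsto k-s'$ turns the sum into $\sum_{1\leq s'\leq k}(-1)^{k-s'+1}H_i\big({H_i\atop k-s'}\big)$. Pulling out the factor $(-1)^k$ leaves $\sum_{1\le s'\le k}(-1)^{s'+1}H_i\big({H_i\atop k-s'}\big)$, since $(-1)^{-s'}=(-1)^{s'}$. By Lemma~\ref{lem for {Ev(La{i,k,l})}} this last sum equals $k\big({H_i\atop k}\big)$. We are working in the $\mbq$-algebra $\uglq\ot\mbq[\ttt,\ttt^{-1}]$, so we may divide by $k$, which gives $\Ev(\La_{i,\pm k,l})=(-1)^k\big({H_i\atop k}\big)\ot\ttt^{\pm kl}$.

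The only delicate point is the sign and index bookkeeping in the reindexing step, which has to match Lemma~\ref{lem for {Ev(La{i,k,l})}} exactly; everything else is direct substitution. An alternative check is to apply $\Ev$ to the generating function \eqref{Laikl generating function}. This yields $\exp\bigl(-H_i\log(1-u)^{-1}\bigr)=(1-u)^{H_i}$ with $u$ replaced by $u\ttt^{\pm l}$, whose coefficients are exactly $(-1)^k\big({H_i\atop k}\big)\ttt^{\pm kl}$, consistent with the inductive argument.
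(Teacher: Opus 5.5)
Your proof is correct and follows the paper's argument: induction on $k$, applying $\Ev$ to the recursion $k\La_{i,\pm k,l}=-\sum_{0\leq s\leq k-1}g_{i,\pm(k-s)l}\La_{i,\pm s,l}$ and finishing with the identity $k\big({H_i\atop k}\big)=\sum_{1\leq s\leq k}(-1)^{s+1}H_i\big({H_i\atop k-s}\big)$. Your explicit reindexing $s\mapsto k-s$, which the paper leaves implicit, and your generating-function check via $(1-u\ttt^{\pm l})^{H_i}$ are both correct.
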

\begin{proof}
We argue by induction on $k$. The cases $k=0,1$ are straightforward.
Assume now that $k\geq 2$. By Lemma \ref{Laikl} and the induction hypothesis we have
\begin{equation*}
\begin{split}
\Ev(\La_{i,\pm k,l})&=-\frac{1}{k}\sum_{0\leq s\leq k-1}\Ev(g_{i,\pm(k-s)l}\La_{i,\pm s,l})\\
&=-\frac{1}{k}\sum_{0\leq s\leq k-1}(H_i\ot\ttt^{\pm(k-s)l})
\cdot ((-1)^s\bigg({H_i\atop s}\bigg)\ot\ttt^{\pm sl})\\
&=-\frac{1}{k}\sum_{0\leq s\leq k-1} (-1)^s H_i\bigg({H_i\atop s}\bigg)\ot\ttt^{\pm kl}.
\end{split}
\end{equation*}
Applying  Lemma \ref{lem for {Ev(La{i,k,l})}} yields the desired result:  $\Ev(\La_{i,\pm k,l})=(-1)^k\big({H_i\atop k}\big)\ot\ttt^{\pm kl}$.
\end{proof}

Combining Lemmas \ref{Ev(E i,j+ln)} and \ref{Ev(La{i,k,l})} we obtain the following result.
\begin{Lem}\label{Ev(afuglz)}
We have $\Ev(\afuglz)\subseteq\uglz\ot\mbz[\ttt,\ttt^{-1}]$.
\end{Lem}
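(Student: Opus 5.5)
Since $\Ev$ is an algebra homomorphism and $\uglz\ot\mbz[\ttt,\ttt^{-1}]$ is a subring of $\uglq\ot\mbq[\ttt,\ttt^{-1}]$, it suffices to show that $\Ev$ sends a generating set of $\afuglz$ into $\uglz\ot\mbz[\ttt,\ttt^{-1}]$. I will use the generating set from the definition of $\afuglz$ in \S3.1: the divided powers $(\afE_{i,j})^{(k)}$ with $1\leq i\leq n$, $j\in\mbz$, $\bar i\neq\bar j$; the elements $\La_{i,\pm k}$; and the binomials $\big({H_i\atop t}\big)$. Equivalently, by Theorem~\ref{generators2}(1) and its proof, I may include the $\La_{i,\pm k,l}$ for all $l>0$, which is harmless.

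The steps, in order, are as follows.

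\emph{Divided powers.} Write $j=j'+ln$ with $1\leq j'\leq n$. Then $j'\neq i$ because $\bar i\neq\bar j$. Lemma~\ref{Ev(E i,j+ln)} gives
\[
\Ev((\afE_{i,j})^{(k)})=E_{i,j'}^{(k)}\ot\ttt^{lk}.
\]
This lies in $\uglz\ot\mbz[\ttt,\ttt^{-1}]$, since $E_{i,j'}^{(k)}$ is among the generators of $\uglz$.

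\emph{The $\La$-elements.} Lemma~\ref{Ev(La{i,k,l})} with $l=1$ gives
\[
\Ev(\La_{i,\pm k})=(-1)^k\Big({H_i\atop k}\Big)\ot\ttt^{\pm k},
\]
and $\big({H_i\atop k}\big)\in\uglz$. The same lemma handles general $l>0$ if that version of the generating set is used.

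\emph{Binomials.} Since $\Ev(H_i)=\Ev(\afE_{i,i})=H_i\ot 1$ and $\Ev$ is a $\mbq$-algebra homomorphism, we get $\Ev\big(\big({H_i\atop t}\big)\big)=\big({H_i\atop t}\big)\ot1$, which again lies in $\uglz\ot\mbz[\ttt,\ttt^{-1}]$.

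Because $\Ev$ is multiplicative and $\mbz$-linear, the image of the $\mbz$-subalgebra generated by these elements lies in the $\mbz$-subalgebra $\uglz\ot\mbz[\ttt,\ttt^{-1}]$, which proves the claim. This identifies the subring as $\uglz\ot_\mbz\mbz[\ttt,\ttt^{-1}]\subseteq\uglq\ot_\mbq\mbq[\ttt,\ttt^{-1}]$; the inclusion is injective because $\uglz$ is a free $\mbz$-module with a $\mbz$-basis that is also a $\mbq$-basis of $\uglq$ (the Kostant basis).

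There is no real obstacle here. The only points needing care are the bookkeeping of the residue $j'$ when $j$ lies outside $[1,n]$, and the fact that $\bar i\neq\bar j$ guarantees $i\neq j'$, so that Lemma~\ref{Ev(E i,j+ln)} applies.
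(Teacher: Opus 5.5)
Your proposal is correct and follows the paper's approach: the paper proves this by combining Lemmas~\ref{Ev(E i,j+ln)} and~\ref{Ev(La{i,k,l})} on the generators of $\afuglz$. Your separate check of the $\big({H_i\atop t}\big)$ and your justification that $\uglz\ot\mbz[\ttt,\ttt^{-1}]$ embeds in $\uglq\ot\mbq[\ttt,\ttt^{-1}]$ spell out details the paper leaves implicit.
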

Recall from \eqref{Unk} that $\afuglk=\afuglz\ot_\mbz\field.$ Let $\uglk=\uglz\ot\field$.
By restricting the map $\Ev:\afuglq\ra\uglq\ot\mbq[\ttt,\ttt^{-1}]$ to $\afuglz$ and applying  Lemma \ref{Ev(afuglz)} we obtain a map $\Ev:\afuglz\ra\uglz\ot\mbz[\ttt,\ttt^{-1}]$.
Base change then induces an algebra homomorphism:
$$\Ev_\field:=\Ev\ot 1:\afuglk\ra\uglk\ot\field[\ttt,\ttt^{-1}].$$
Let $\field^*$ be the set of invertible elements in $\field$. For $a\in\field^*$, define the algebra homomorphism $\pi_a:\uglk\ot\field[\ttt,\ttt^{-1}]\ra\uglk$ by
$\pi_a(u\ot f(\ttt))=f(a)u$ for $u\in\uglk$ and $f(\ttt)\in\field[\ttt,\ttt^{-1}]$.
The evaluation map at $a$ for $\afuglk$ is defined as the composition:
$$\Ev_a:\afuglk\stackrel{\Ev_\field}{\lra}\uglk\ot\field[\ttt,\ttt^{-1}]\stackrel{\pi_a}{\lra}
\uglk.$$

\subsection{Evaluation maps $\ti\ev_a:\afSrk\ra\Srk$ for affine Schur algebras}\label{Ognk definiton}
Let $\Ognk$ be the $\field$-submodule of $\Ogk$ spanned by $\{\og_s\mid 1\leq s\leq n\}$, and define the Schur algebra over $\field$ as:
$$\Srk:=\End_{\field\fSr}
(\Ognk^{\ot r}).$$

Recall the map $\afxrk$ defined in \eqref{afxrk}. Its restriction to $\uglk$  gives
a surjective algebra homomorphism
$$\xrk:=\afxrk|_{\uglk}:\uglk\ra\Srk$$
(cf. \cite{CL}).

For $a\in\field^*$ there exists an evaluation map $$\ev_a:\field\affSr\ra\field\fSr$$ satisfying $\ev_a(s_i)=s_i$ and $\ev_a(\bse_j)=a^{-1}$  for $1\leq i\leq r-1$ and $1\leq j\leq r$. This induces a  $\field\fSr$-module homomorphism
$$\varepsilon_a:\Ogk^{\ot r}\ra
\Ognk^{\ot r}$$
defined by $\varepsilon_a(xh)=x\ev_a(h)$
for $x\in\Ognk^{\ot r}$ and $h\in\field\affSr$.
We now use the evaluation map $\ev_a$
to construct an evaluation map $\ti\ev_a$ from the affine Schur algebra $\afSrk$ to the Schur algebra $\Srk$.
\begin{Lem}\label{tieva}
For $a\in\field^*$, the linear map $\ti\ev_a:\afSrk\ra\Srk$ defined by $((\ti\ev_a)(\vi))(x)=
\varepsilon_a(\vi(x))$, for $\vi\in\afSrk$ and $x\in\Ognk^{\ot r}$, is a surjective algebra homomorphism.
\end{Lem}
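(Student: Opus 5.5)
The plan is to realize $\varepsilon_a$ as a quotient map and to show that every $\vi\in\afSrk$ preserves its kernel.

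\emph{Step 1: $\varepsilon_a$ is a quotient by a right ideal.} We have $\Ogk^{\ot r}=\Ognk^{\ot r}\ot_{\field\fSr}\field\affSr$, since $\Ogk^{\ot r}$ is free over $\field\fA$ with basis $\{\og_\bfi\mid \bfi\in I(n,r)\}$ and $\field\affSr=\field\fA\ot\field\fSr$. This shows that $\varepsilon_a$ is well defined and surjective, and that it is a homomorphism of $\field\affSr$-modules once $\Ognk^{\ot r}$ is made into a right $\field\affSr$-module through $\ev_a$. That $\ev_a$ is an algebra map follows from the relation $w\bse_j w^{-1}=\bse_{w(j)}$, which is compatible with sending every $\bse_j$ to the same central scalar $a^{-1}$. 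Hence
\[
\ker\varepsilon_a=\Ogk^{\ot r}J_a,\qquad J_a=\ker(\ev_a),
\]
where $J_a$ is the two-sided ideal generated by the elements $\bse_j-a^{-1}$.

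\emph{Step 2: well-definedness and the homomorphism property.} Each $\vi\in\afSrk$ commutes with the right $\affSr$-action, so
\[
\vi(\Ogk^{\ot r}J_a)=\vi(\Ogk^{\ot r})J_a\subseteq \ker\varepsilon_a .
\]
Thus $\vi$ descends to an endomorphism $\bar\vi$ of $\Ognk^{\ot r}\cong \Ogk^{\ot r}/\Ogk^{\ot r}J_a$. It commutes with $\fSr$ because $\varepsilon_a$ is $\fSr$-linear, so $\bar\vi\in\Srk$. By construction $\bar\vi(x)=\varepsilon_a(\vi(x))$ for $x\in\Ognk^{\ot r}$, which is exactly $\ti\ev_a(\vi)$. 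Multiplicativity follows from $\overline{\vi\psi}=\bar\vi\,\bar\psi$, which holds since $\varepsilon_a\circ\vi=\bar\vi\circ\varepsilon_a$. Unitality is clear.

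\emph{Step 3: surjectivity.} This is the main point, and there are two routes. The direct route uses $\Srk=\End_{\field\fSr}(\Ognk^{\ot r})$ together with the adjunction
\[
\End_{\affSr}(\Ognk^{\ot r}\ot_{\field\fSr}\field\affSr)\cong\Hom_{\fSr}\bigl(\Ognk^{\ot r},\Ogk^{\ot r}\bigr).
\]
Given $\psi\in\Srk$, take $\vi$ to be the induced endomorphism $\psi\ot 1$. Then $\varepsilon_a(\vi(x))=\psi(x)$ for $x\in\Ognk^{\ot r}$, so $\ti\ev_a(\vi)=\psi$. The alternative route goes through the hyper loop algebra and avoids the induction formalism. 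One shows $\ti\ev_a\circ\afxrk=\xrk\circ\Ev_a$ on the generators $\afE_{i,j}^{(k)}$, $\La_{i,\pm k}$ and $\big({H_i\atop t}\big)$, which can be checked via Lemmas~\ref{Ev(E i,j+ln)} and~\ref{Ev(La{i,k,l})} and the convention $\og_{i+ln}=\vep^{-l}\og_i\mapsto a^{l}\og_i$. Surjectivity of $\xrk$ and of $\Ev_a$ (the latter since $\Ev_a(\uglk)=\uglk$) then gives the result.

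I would present the direct route, keeping only the conventions check from the alternative route to confirm that $\varepsilon_a$ matches the stated $\ev_a(\bse_j)=a^{-1}$. The only delicate point is the freeness identification in Step 1.
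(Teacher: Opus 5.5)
Your direct-route argument is correct. The paper states Lemma~\ref{tieva} without proof. Its only implicit ingredient is the identification $\Ogk^{\ot r}\cong\Ognk^{\ot r}\ot_{\field\fSr}\field\affSr$, which it cites from \cite[Lem.~4.1]{DDF} in the proof of Lemma~\ref{Ogkr tensor Ma}; this is needed even to make sense of $\varepsilon_a$. Your Steps 1--3 supply the standard argument.
\begin{itemize}
\item Step 1 shows that $\varepsilon_a=1\ot_{\field\fSr}\ev_a$, with kernel $\Ogk^{\ot r}J_a$.
\item Step 2 shows that every $\vi\in\afSrk$ preserves this kernel, so $\ti\ev_a$ is multiplicative.
\item Step 3 shows that $\psi\mapsto\psi\ot 1$ is an algebra section $\Srk\to\afSrk$ of $\ti\ev_a$. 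This gives surjectivity directly, and in fact exhibits $\ti\ev_a$ as a retraction onto the natural copy of $\Srk$ inside $\afSrk$.
\end{itemize}

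There is a slip in your alternative route, which you say you would keep as a conventions check. The element $\bse_k$ raises the $k$-th index by $n$, i.e.\ multiplies the $k$-th factor by $\vep^{-1}$, and $\ev_a(\bse_k)=a^{-1}$. Hence $\varepsilon_a(\og_{i+ln})=a^{-l}\og_i$ for $1\le i\le n$, which is evaluation at $\vep=a$, not $a^{l}\og_i$. With your sign, the generator check against $\Ev_a$ fails, since $\Ev_a$ sends $\ttt\mapsto a$; compare the computation $\og_{m-ln}\mapsto a^{l}\og_m$ in Lemma~\ref{lem2 for commutative diagram}.

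That route also presupposes your Step 2. Checking $\ti\ev_a\circ\afxrk=\xrk\circ\Ev_a$ on generators only suffices once $\ti\ev_a$ is known to be an algebra homomorphism, which is exactly how the paper uses this lemma in Proposition~\ref{commute diagram}.
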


\subsection{Compatibility between  evaluation maps}
We now establish the compatibility between the evaluation maps $\Ev_a$ and $\ti\ev_a$. This compatibility ensures that the evaluation processes for both the universal enveloping algebra and the Schur algebra are coherently related.

To establish this compatibility, we first need to understand how the comultiplication structure interacts with our generators.
Let $\Delta^{(1)}=\Delta:\afuglq\ra\afuglq^{\ot 2}$ be the comultiplication  for the Hopf algebra $\afuglq$ and define  $\Delta^{(m-1)}:\afuglq\ra\afuglq^{\ot m}$ inductively by $\Delta^{(m-1)}=(\Delta\ot 1^{\ot(m-2)})\circ\Delta^{(m-2)}$
for $m\geq 2$.

We begin with the behavior of divided powers under iterated comultiplication:
\begin{Lem}\label{Deltar(Hi k)}
For $k\in\mbn$, $1\leq i\not=j\leq n$, $l\in\mbz$ and $r\geq 2$ we have
\begin{equation*}
\begin{split}
\Delta^{(r-1)}((\afE_{i,j+ln})^{(k)})
&=\sum_{k_1+k_2+\cdots+k_r=k\atop
k_s\geq 0,\,\forall s} (\afE_{i,j+ln})^{(k_1)} \ot
(\afE_{i,j+ln})^{(k_2)} \ot\cdots\ot (\afE_{i,j+ln})^{(k_r)},\\
 \Delta^{(r-1)}\bigg(\bigg({H_i\atop k}\bigg)\bigg)
&=\sum_{k_1+k_2+\cdots+k_r=k\atop
k_s\geq 0,\,\forall s} \bigg({H_i\atop k_1}\bigg) \ot
 \bigg({H_i\atop k_2}\bigg) \ot\cdots\ot \bigg({H_i\atop k_r}\bigg).
\end{split}
\end{equation*}
\end{Lem}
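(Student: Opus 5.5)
The plan is to reduce both formulas to the single-coproduct case $r=2$ and then induct on $r$ via the recursive definition $\Delta^{(r-1)}=(\Delta\ot 1^{\ot(r-2)})\circ\Delta^{(r-2)}$. Recall that $\afuglq$ is the enveloping algebra of the Lie algebra $\afgl$, so every element $X\in\afgl$ is primitive: $\Delta(X)=X\ot 1+1\ot X$. In particular this holds for $X=\afE_{i,j+ln}$ and for $X=H_i$. Also, $\Delta$ is an algebra homomorphism.

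\textbf{The case $r=2$.} For the divided powers, note that $X\ot 1$ and $1\ot X$ commute in $\afuglq^{\ot 2}$. The binomial theorem then gives
\[
\Delta(X^k)=(X\ot1+1\ot X)^k=\sum_{k_1+k_2=k}\frac{k!}{k_1!k_2!}X^{k_1}\ot X^{k_2}.
\]
Dividing by $k!$ yields $\Delta(X^{(k)})=\sum_{k_1+k_2=k}X^{(k_1)}\ot X^{(k_2)}$ for $X=\afE_{i,j+ln}$.

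For $\big({H_i\atop k}\big)$ we need the Vandermonde-type identity $\big({a+b\atop k}\big)=\sum_{k_1+k_2=k}\big({a\atop k_1}\big)\big({b\atop k_2}\big)$ as a polynomial identity in commuting variables $a,b$. It holds for all nonnegative integers $a,b$ by counting, and hence holds identically in $\mbq[a,b]$. Substituting $a=H_i\ot 1$ and $b=1\ot H_i$, which commute, and using $\Delta(H_i)=a+b$ together with multiplicativity of $\Delta$, we get $\Delta\big(\big({H_i\atop k}\big)\big)=\big({a+b\atop k}\big)=\sum_{k_1+k_2=k}\big({H_i\atop k_1}\big)\ot\big({H_i\atop k_2}\big)$.

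\textbf{Induction on $r$.} Assume the formula for $\Delta^{(r-2)}$. Apply $\Delta\ot 1^{\ot(r-2)}$ to each term $Y^{[k_1]}\ot Y^{[k_2]}\ot\cdots\ot Y^{[k_{r-1}]}$, where $Y^{[m]}$ denotes either $(\afE_{i,j+ln})^{(m)}$ or $\big({H_i\atop m}\big)$. By the $r=2$ case, this replaces $Y^{[k_1]}$ by $\sum_{k_1'+k_1''=k_1}Y^{[k_1']}\ot Y^{[k_1'']}$. Reindexing the resulting compositions of $k$ into $r$ parts gives the stated formula.

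There is no real obstacle. The only point requiring care is justifying the polynomial binomial identity for $\big({H\atop k}\big)$ and the commutativity of the two tensor factors, both of which are routine.
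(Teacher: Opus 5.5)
Your proof is correct. It is the standard argument: primitivity of $\afE_{i,j+ln}$ and $H_i$, the binomial theorem and the Vandermonde identity for $r=2$, then induction via $\Delta^{(r-1)}=(\Delta\ot 1^{\ot(r-2)})\circ\Delta^{(r-2)}$. The paper states this lemma without proof, treating it as routine, and uses the same base-case-plus-induction pattern for the analogous Corollary~\ref{Deltar(Laikl)}.
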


We next examine the comultiplication of the elements
\(\Lambda_{i,\pm k,l}\), which is needed for the compatibility
of the evaluation maps.
\begin{Lem}\label{Delta(Laikl)}
For $1\leq i\leq n$, $k\in\mbn$ and $l>0$ we have $$\Delta(\La_{i,\pm k,l})=\sum_{a+b=k\atop a,b\geq 0}
\La_{i,\pm a,l}\ot\La_{i,\pm b,l}.$$
\end{Lem}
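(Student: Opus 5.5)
The plan is to work at the level of generating functions in $\afuglq\ot\afuglq[[u]]$. Set $\La_{i,l}^\pm(u)=\sum_{k\geq0}\La_{i,\pm k,l}u^k$. The claimed identity for all $k$ is then equivalent to the single identity
\[
(\Delta\ot 1)\bigl(\La_{i,l}^\pm(u)\bigr)=\La_{i,l}^\pm(u)\ot\La_{i,l}^\pm(u),
\]
where $\Delta$ is extended coefficientwise to power series. The right-hand side is to be read as the product $(\La_{i,l}^\pm(u)\ot1)(1\ot\La_{i,l}^\pm(u))$ in $(\afuglq\ot\afuglq)[[u]]$. Comparing coefficients of $u^k$ recovers exactly $\sum_{a+b=k}\La_{i,\pm a,l}\ot\La_{i,\pm b,l}$.

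Recall that $\afuglq$ is the enveloping algebra of the Lie algebra $\afgl$. Hence every element of $\afgl$ is primitive, and in particular $\Delta(g_{i,\pm m})=g_{i,\pm m}\ot1+1\ot g_{i,\pm m}$. The elements $g_{i,\pm kl}$ for $k\geq1$ commute pairwise, as noted after \eqref{gil}. Put $X=-\sum_{k\geq1}\frac{g_{i,\pm kl}}{k}u^k$. Then $X$ has zero constant term, so $\exp(X)$ is a well-defined power series. Since $\Delta$ is an algebra homomorphism and continuous in the $u$-adic topology, it commutes with $\exp$, giving
\[
\Delta(\exp X)=\exp(X\ot1+1\ot X).
\]

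The one point that needs care is splitting this exponential. Because $X\ot1$ and $1\ot X$ commute in $(\afuglq\ot\afuglq)[[u]]$, we have $\exp(X\ot1+1\ot X)=\exp(X\ot1)\exp(1\ot X)=(\exp X\ot1)(1\ot\exp X)$. By \eqref{Laikl generating function} this is $\La_{i,l}^\pm(u)\ot\La_{i,l}^\pm(u)$, which completes the argument. Only commutation of elements in different tensor factors is used here, which holds automatically.

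As an alternative that avoids power series, one can argue by induction on $k$ using the recursion of Lemma \ref{Laikl}. Apply $\Delta$ to $k\La_{i,\pm k,l}=-\sum_{s<k}g_{i,\pm(k-s)l}\La_{i,\pm s,l}$, use primitivity of $g$ together with the induction hypothesis, and regroup the terms. One then checks that the coefficient of $\La_{i,\pm a,l}\ot\La_{i,\pm b,l}$ equals $(a+b)/k=1$, by applying Lemma \ref{Laikl} separately in each tensor factor. The generating-function argument above is cleaner, so I would present that one.
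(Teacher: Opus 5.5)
Your proof is correct. Your main argument takes a different route from the paper, while your ``alternative'' sketch is exactly the paper's proof.

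The paper applies $\Delta$ to the recursion of Lemma~\ref{Laikl} and uses primitivity of $g_{i,\pm(k-s)l}$ together with the induction hypothesis. This splits the result into a sum $f_1$, with $g$ acting in the first tensor factor, and a sum $f_2$, with $g$ acting in the second. After reindexing, it applies Lemma~\ref{Laikl} again in each factor. This gives $f_1=-\sum_{a+b=k}a\,\La_{i,\pm a,l}\ot\La_{i,\pm b,l}$ and $f_2=-\sum_{a+b=k}b\,\La_{i,\pm a,l}\ot\La_{i,\pm b,l}$, hence the coefficient $(a+b)/k=1$.

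Your generating-function argument instead uses the general principle that the exponential of a primitive power series with zero constant term is grouplike. Coefficientwise $\Delta$ is an algebra map into $(\afuglq\ot\afuglq)[[u]]$, so it commutes with $\exp$, since each $u$-coefficient is a finite sum. Then $\exp(X\ot1+1\ot X)$ factors because the two summands commute.

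This is shorter and more conceptual, and it bypasses Lemma~\ref{Laikl} entirely. Deriving that recursion from \eqref{Laikl generating function} requires the pairwise commutation of the $g_{i,\pm kl}$, in order to differentiate the exponential. Your argument, as you note, only needs commutation across tensor factors.

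The paper's induction, by contrast, works only with finite identities. It matches the style of the surrounding computations, such as Proposition~\ref{Laikl formula} and Lemma~\ref{xrz(Laiml)}, which are driven by the same recursion. Both arguments live over $\mbq$, since they divide by $k$ or by $m!$.
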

\begin{proof}
We argue by induction on $k$. The base cases $k=0$ and $k=1$ are straightforward to verify. Now assume $k\geq 2$ and that the result holds for all smaller indices.
By Lemma \ref{Laikl} and the induction hypothesis we have
\begin{equation*}
\Delta(\La_{i,\pm k,l})=-\frac{1}{k}\sum_{0\leq s\leq k-1}\Delta(g_{i,\pm(k-s)l})\Delta(\La_{i,\pm s,l})=-\frac{1}{k}(f_1+f_2)
\end{equation*}
where $$f_1=\sum_{s=0}^{k-1} (g_{i,\pm(k-s)l}\ot 1)\big(\sum_{a+b=s\atop a,b\geq 0}
\La_{i,\pm a,l}\ot\La_{i,\pm b,l}\big)$$
and
$$f_2=\sum_{s=0}^{k-1} (1\ot g_{i,\pm(k-s)l})\big(\sum_{a+b=s\atop a,b\geq 0}
\La_{i,\pm a,l}\ot\La_{i,\pm b,l}\big).$$

We now simplify \( f_1 \). Changing the order of summation and reindexing:
\[
\begin{aligned}
f_1 &=\sum_{b = 0}^{k - 1} \big(\sum_{a+k-s=k-b\atop a\geq 0,\,0\leq s\leq k-1}
g_{i,\pm(k-s)l}\La_{i,\pm a,l}\big)\ot\La_{i,\pm b,l}
\\
&= \sum_{b = 0}^{k - 1} \left( \sum_{a = 0}^{k - b - 1} g_{i,\pm(k - a - b)l} \Lambda_{i,\pm a,l} \right) \otimes \Lambda_{i,\pm b,l}.
\end{aligned}
\]
Using Lemma \ref{Laikl} again, we obtain
\begin{equation*}
\begin{split}
f_1
&=-\sum_{b=0}^{k-1}(k-b) \La_{i,\pm(k-b),l}\ot\La_{i,\pm b,l}=-\sum_{a+b=k\atop a,b\geq 0 }a\La_{i,\pm a,l}\ot\La_{i,\pm b,l}.
\end{split}
\end{equation*}
Similarly,
\begin{equation*}
\begin{split}
f_2&=\sum_{a=0}^{k-1}
\La_{i,\pm a,l}\ot \big(\sum_{b+k-s=k-a\atop b\geq 0,\,0\leq s\leq k-1} g_{i,\pm(k-s)l}\La_{i,\pm b,l}\big)\\
&=-\sum_{a=0}^{k-1}(k-a)
\La_{i,\pm a,l}\ot\La_{i,\pm(k-a)l}\\
&=-\sum_{a+b=k\atop a,b\geq 0 }b\La_{i,\pm a,l}\ot\La_{i,\pm b,l}.
\end{split}
\end{equation*}
Therefore,
\[
\begin{aligned}
\Delta(\Lambda_{i,\pm k,l})
&= -\frac{1}{k} \left( -\sum_{a + b = k} a \Lambda_{i,\pm a,l} \otimes \Lambda_{i,\pm b,l} - \sum_{a + b = k} b \Lambda_{i,\pm a,l} \otimes \Lambda_{i,\pm b,l} \right) \\
&= \frac{1}{k} \sum_{a + b = k} (a + b) \Lambda_{i,\pm a,l} \otimes \Lambda_{i,\pm b,l} \\
&= \sum_{\substack{a + b = k \\ a,b \geq 0}} \Lambda_{i,\pm a,l} \otimes \Lambda_{i,\pm b,l},
\end{aligned}
\]
as required. This completes the induction and the proof.
\end{proof}
\begin{Coro}\label{Deltar(Laikl)}
For $r\geq 2$, $1\leq i\leq n$, $k\in\mbn$ and $l>0$, we have
$$\Delta^{(r-1)}(\La_{i,\pm k,l})=\sum_{k_1+k_2+\cdots+k_r=k\atop k_s\geq 0,\,\forall s}
\La_{i,\pm k_1,l}\ot\La_{i,\pm k_2,l}\ot\cdots\ot\La_{i,\pm k_r,l}.$$
\end{Coro}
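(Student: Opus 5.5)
The corollary follows from Lemma~\ref{Delta(Laikl)} by induction on $r$, using the recursive definition $\Delta^{(r-1)}=(\Delta\ot 1^{\ot(r-2)})\circ\Delta^{(r-2)}$.

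For $r=2$ we have $\Delta^{(1)}=\Delta$, so the formula is exactly Lemma~\ref{Delta(Laikl)} with $k_1=a$ and $k_2=b$.

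Assume $r\geq 3$ and that the formula holds for $r-1$. Applying the induction hypothesis gives
\[
\Delta^{(r-1)}(\La_{i,\pm k,l})=\sum_{c+k_3+\cdots+k_r=k\atop c,k_s\geq 0}\Delta(\La_{i,\pm c,l})\ot\La_{i,\pm k_3,l}\ot\cdots\ot\La_{i,\pm k_r,l}.
\]
Next apply Lemma~\ref{Delta(Laikl)} to the first tensor factor, expanding $\Delta(\La_{i,\pm c,l})=\sum_{k_1+k_2=c}\La_{i,\pm k_1,l}\ot\La_{i,\pm k_2,l}$. This uses that $\Delta\ot 1^{\ot(r-2)}$ is linear.

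The pairs $(c,(k_1,k_2))$ with $k_1+k_2=c$ are in bijection with pairs $(k_1,k_2)$ of nonnegative integers. Merging the two sums therefore gives a single sum over all $(k_1,\ldots,k_r)$ with $k_1+\cdots+k_r=k$, which is the stated formula.

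There is no real obstacle. The only point requiring care is the reindexing of the nested sum, and this is a direct bijection.
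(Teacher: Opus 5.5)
Your proof is correct and matches the paper's argument: induction on $r$, with Lemma~\ref{Delta(Laikl)} serving both as the base case and to expand the new tensor factor. Because the paper defines $\Delta^{(r-1)}=(\Delta\ot 1^{\ot(r-2)})\circ\Delta^{(r-2)}$, expanding the first factor follows the definition directly, so you do not need the coassociativity that the paper's proof invokes.
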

\begin{proof}
This follows by induction on
$r$, using Lemma \ref{Delta(Laikl)} as the base case and the coassociativity of the coproduct.
\end{proof}

Let $$I(n,r)=\{(i_1,\ldots,i_r)\in\mbz^r\mid 1\leq i_k\leq
n,\,\forall k\}.$$ Then the set $\{\og_\bfi\mid\bfi\in I(n,r)\}$ forms a $\field$-basis for $\Ognk^{\ot r}$. This basis will be essential in establishing the compatibility of various maps through explicit computations.
\begin{Lem}\label{lem1 for commutative diagram}
For $1\leq i\leq n$, $k\in\mbn$ and $l>0$ we have $\xrk\circ\Ev_a(\La_{i,\pm k,l})=\ti\ev_a\circ\afxrk(\La_{i,\pm k,l})$.
\end{Lem}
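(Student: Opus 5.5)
Both sides are $\field$-linear maps $\Ognk^{\ot r}\to\Ognk^{\ot r}$ once evaluated. So I will check the identity by applying both sides to a basis vector $\og_\bfi$ with $\bfi\in I(n,r)$ and comparing the results.

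For the left-hand side, Lemma \ref{Ev(La{i,k,l})} gives
$$\Ev_a(\La_{i,\pm k,l})=(-1)^k a^{\pm kl}\bigg({H_i\atop k}\bigg).$$
By the second formula of Lemma \ref{Deltar(Hi k)} (the coproduct on $\uglk$ is compatible), $\big({H_i\atop k}\big)$ acts on $\og_\bfi$ as a sum over $k_1+\cdots+k_r=k$ of $\bigotimes_s \big({H_i\atop k_s}\big)\og_{i_s}$. Each factor $\big({H_i\atop k_s}\big)\og_{i_s}$ vanishes unless $k_s\in\{0,1\}$, and for $k_s=1$ it is $\dt_{i,i_s}\og_{i_s}$. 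Hence
$$\xrk\Ev_a(\La_{i,\pm k,l})\og_\bfi=(-1)^k a^{\pm kl}\binom{m_i(\bfi)}{k}\og_\bfi,$$
where $m_i(\bfi)=|\{s\mid i_s=i\}|$.

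For the right-hand side, I will use Lemma \ref{xrz(Laiml)}, which is the same computation carried out via Corollary \ref{Deltar(Laikl)}. After base change it gives
$$\afxrk(\La_{i,\pm k,l})\og_\bfi=(-1)^k\sum_{\la}F_{i,i\pm ln,\la}(\og_\bfi),$$
the sum running over $1\le\la_1<\cdots<\la_k\le r$. Now $\afE_{i,i\pm ln}\og_{j}$ is zero unless $j=i$ (recall $1\le j\le n$), and $\afE_{i,i\pm ln}\og_i=\og_{i\mp ln}=\ttt^{\pm l}\og_i$. So $F_{i,i\pm ln,\la}(\og_\bfi)$ is nonzero only when $i_{\la_s}=i$ for all $s$. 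In that case it equals $\og_\bfi$ with each of the $k$ chosen positions shifted by $\mp ln$. In the right $\affSr$-module this is $\og_\bfi\prod_s \bse_{\la_s}^{\mp l}$.

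Next I apply $\varepsilon_a$. Since $\varepsilon_a(xh)=x\,\ev_a(h)$ and $\ev_a(\bse_j)=a^{-1}$, each such term maps to $a^{\pm kl}\og_\bfi$. The number of admissible $\la$ is $\binom{m_i(\bfi)}{k}$. Hence
$$\ti\ev_a\afxrk(\La_{i,\pm k,l})\og_\bfi=(-1)^k a^{\pm kl}\binom{m_i(\bfi)}{k}\og_\bfi,$$
which matches the left-hand side.

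The main obstacle is sign and exponent bookkeeping. One has to confirm that $\og_{j+n}=\ttt^{-1}\og_j$ corresponds to the right action of $\bse$, so that $\og_{i\mp ln}$ is indeed $\og_i\bse^{\mp l}$. One also has to check that $\ev_a(\bse_j)=a^{-1}$ then produces $a^{\pm kl}$, matching $\pi_a(\ttt^{\pm kl})=a^{\pm kl}$; if the conventions were reversed, the two sides would differ by $a\mapsto a^{-1}$. A secondary point is the case $k>r$ or $k>m_i(\bfi)$: there both sides vanish, since the binomial coefficient is zero and the sum over $\la$ is empty.
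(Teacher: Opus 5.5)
Your proof is correct and follows essentially the same route as the paper. You test both sides on the basis $\og_\bfi$, $\bfi\in I(n,r)$; you compute the left side via Lemma \ref{Ev(La{i,k,l})} and the coproduct formula for $\big({H_i\atop k}\big)$, and the right side via the tensor action of $\La_{i,\pm k,l}$ followed by $\varepsilon_a$. The only cosmetic differences are that you read off the right side from Lemma \ref{xrz(Laiml)} instead of Corollary \ref{Deltar(Laikl)} plus the action formula for $\La_{i,\pm k,l}$ on $\og_j$, and that you write the common value in closed form as $(-1)^ka^{\pm kl}\binom{m_i(\bfi)}{k}\og_\bfi$.
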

\begin{proof}
Let $\bfi\in I(n,r)$.
Note that $\big({H_i\atop s}\big)\og_j=0$ for $s\geq 2$.
By Lemma \ref{Ev(La{i,k,l})} and  \ref{Deltar(Hi k)}
we have
\begin{equation*}
\begin{split}
(\xrk\circ\Ev_a(\La_{i,\pm k,l}))(\og_\bfi)&=(-1)^ka^{\pm kl}\xrk\bigg(\bigg({H_i\atop k}\bigg)\bigg)(\og_\bfi)\\
&=(-1)^ka^{\pm kl}\sum_{k_1+k_2+\cdots+k_r=k\atop
0\leq k_s\leq 1,\,\forall s} \bigg({H_i\atop k_1}\bigg)\og_{i_1}
  \ot\cdots\ot \bigg({H_i\atop k_r}\bigg)\og_{i_r}\\
  &=(-1)^ka^{\pm kl}\sum_{k_1+k_2+\cdots+k_r=k\atop 0\leq k_s\leq 1,\,\forall s}
\dt_{\bar i,\bar{i}_1}^{k_1}\og_{i_1}\ot\cdots\ot\dt_{\bar i,\bar{i}_r}^{k_r}\og_{i_r}.
\end{split}
\end{equation*}
On the other hand, by Proposition \ref{Laikl formula},  the action on basis elements is given by:
\begin{equation}\label{Laikl(ogj)}
\La_{i,\pm k,l}\og_j =
(-1)^k(k\afE_{i,i\pm ln})[\bfl,1]\og_j=\begin{cases}
(-1)^k\dt_{\bar i,\bar j}^k\og_{j\mp kln}&\text{if $0\leq k\leq 1$,}\\
0&\text{otherwise,}
\end{cases}
\end{equation}
for $j\in\mbz$. Using Corollary \ref{Deltar(Laikl)} we obtain
\begin{equation*}
\begin{split}
\ti\ev_a\circ\afxrk(\La_{i,\pm k,l})(\og_\bfi)
&=\varepsilon_a\bigg(\sum_{k_1+k_2+\cdots+k_r=k\atop 0\leq k_s\leq 1,\,\forall s}
\La_{i,\pm k_1,l}\og_{i_1}\ot\cdots\ot\La_{i,\pm k_r,l}\og_{i_r}\bigg)\\
&=\varepsilon_a\bigg(\sum_{k_1+k_2+\cdots+k_r=k\atop 0\leq k_s\leq 1,\,\forall s}
(-1)^{k_1}\dt_{\bar i,\bar{i}_1}^{k_1}\og_{i_1\mp k_1ln}\ot\cdots\ot
(-1)^{k_r}\dt_{\bar i,\bar{i}_r}^{k_r} \og_{i_r\mp k_rln}\bigg)\\
&=(-1)^ka^{\pm kl}\sum_{k_1+k_2+\cdots+k_r=k\atop 0\leq k_s\leq 1,\,\forall s}
\dt_{\bar i,\bar{i}_1}^{k_1}\og_{i_1}\ot\cdots\ot\dt_{\bar i,\bar{i}_r}^{k_r}\og_{i_r}.
\end{split}
\end{equation*}
Hence we have $\xrk\circ\Ev_a(\La_{i,\pm k,l})(\og_\bfi)=\ti\ev_a\circ\afxrk(\La_{i,\pm k,l})(\og_\bfi)$.
Since the set $\{\og_\bfi\mid\bfi\in I(n,r)\}$ forms a $\field$-basis for $\Ognk^{\ot r}$, this establishes the equality of linear maps, and the lemma follows.
\end{proof}

\begin{Lem}\label{lem2 for commutative diagram}
For $1\leq i\not=j\leq n$, $l\in\mbz$ and $k\in\mbn$
we have $\xrk\circ\Ev_a((\afE_{i,j+ln})^{(k)})=\ti\ev_a\circ\afxrk(\afE_{i,j+ln})^{(k)})$.
\end{Lem}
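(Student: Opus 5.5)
We evaluate both sides on a basis vector $\og_\bfi$, $\bfi\in I(n,r)$, exactly as in the proof of Lemma~\ref{lem1 for commutative diagram}. Since $\{\og_\bfi\mid \bfi\in I(n,r)\}$ is a $\field$-basis of $\Ognk^{\ot r}$, agreement on these vectors proves the equality of the two linear maps. Throughout, $1\le i\ne j\le n$, $l\in\mbz$ and $k\in\mbn$ are fixed.

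\emph{The left-hand side.} By Lemma~\ref{Ev(E i,j+ln)} we have $\Ev_\field((\afE_{i,j+ln})^{(k)})=E_{i,j}^{(k)}\ot\ttt^{lk}$. Applying $\pi_a$ gives $\Ev_a((\afE_{i,j+ln})^{(k)})=a^{lk}E_{i,j}^{(k)}$. The action of $\uglk$ on $\Ognk^{\ot r}$ is the restriction of the $\afuglk$-action, so we may use the comultiplication formula of Lemma~\ref{Deltar(Hi k)} (restricted to $l=0$). On $\Ognk$ we have $E_{i,j}^{(s)}\og_m=0$ for $s\ge2$ and $E_{i,j}\og_m=\dt_{j,m}\og_i$. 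Hence
\[
\xrk(\Ev_a((\afE_{i,j+ln})^{(k)}))(\og_\bfi)=a^{lk}\sum_{\substack{k_1+\cdots+k_r=k\\ 0\le k_s\le1}}\og_{i_1'}\ot\cdots\ot\og_{i_r'}\prod_{s:\,k_s=1}\dt_{j,i_s},
\]
where $i_s'=i$ if $k_s=1$ and $i_s'=i_s$ otherwise.

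\emph{The right-hand side.} Lemma~\ref{Deltar(Hi k)} gives $\Delta^{(r-1)}((\afE_{i,j+ln})^{(k)})$ as the analogous sum of tensor products of divided powers. Equivalently, we may use Lemma~\ref{xrz(Eij)}. In $\Ogk$ we have $(\afE_{i,j+ln})^{(s)}\og_m=0$ for $s\ge2$. For $1\le m\le n$, the definition of the natural module gives $\afE_{i,j+ln}\og_m=\dt_{j,m}\og_{i-ln}$, since $m=j+ln+tn$ forces $t=-l$ and $m=j$. Thus each term with $k_s=1$ carries the factor $\dt_{j,i_s}$ and replaces $\og_{i_s}$ by $\og_{i-ln}=\og_i\bse_{s}^{-l}$. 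Here we use the right action \eqref{action of affine symmetric group}: $\og_\bfi\bse_k$ shifts the $k$-th index by $+n$, so $\og_{i-ln}$ in position $s$ equals $\og_{\dots,i,\dots}\bse_s^{-l}$.

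\emph{Applying $\varepsilon_a$.} We have $\varepsilon_a(x\bse_s^{-l})=x\,\ev_a(\bse_s)^{-l}=a^{l}x$ for $x\in\Ognk^{\ot r}$. Each term has exactly $k$ positions shifted, so it contributes the factor $a^{lk}$. This reproduces the left-hand side term by term.

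The only real obstacle is keeping the sign conventions straight. One must check that $\og_{i-ln}=\vep^{l}\og_i$ corresponds to $\bse^{-l}$ and hence to $a^{l}$ under $\ev_a(\bse_j)=a^{-1}$, so that the exponent is $a^{lk}$ and not $a^{-lk}$. This is the same bookkeeping as in Lemma~\ref{lem1 for commutative diagram}, where $\og_{j\mp kln}$ produced $a^{\pm kl}$, so the conventions there can be copied directly.
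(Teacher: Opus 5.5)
Your proposal is correct and follows essentially the same route as the paper. Both arguments evaluate each side on the basis $\og_\bfi$, $\bfi\in I(n,r)$, expand via the iterated coproduct of Lemma~\ref{Deltar(Hi k)} using that divided powers of order $\geq 2$ act by zero on $\Ogk$, and use that $\varepsilon_a$ turns each shift $\og_i\mapsto\og_{i-ln}$ (that is, right multiplication by $\bse_s^{-l}$, with $\ev_a(\bse_s)^{-l}=a^{l}$) into the scalar $a^{l}$, producing the common factor $a^{lk}$.
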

\begin{proof}
Let $\bfi\in I(n,r)$. First observe that
\begin{equation}\label{E{i,j+ln}^{(m)} acts on OG}
(\afE_{i,j+ln})^{(m)}\og_s=0
\end{equation}
for $s\in\mbz$ and $m>1$.
Applying Lemma \ref{Deltar(Hi k)} we have
\begin{equation*}
\begin{split}
\ti\ev_a\circ\afxrk((\afE_{i,j+ln})^{(k)})(\og_\bfi)
&=\varepsilon_a\bigg(\sum_{k_1+k_2+\cdots+k_r=k\atop
0\leq k_s\leq 1,\,\forall s} (\afE_{i,j+ln})^{(k_1)}\og_{i_1}\ot\cdots\ot (\afE_{i,j+ln})^{(k_r)}\og_{i_r}\bigg)\\
&=\varepsilon_a\bigg(\sum_{k_1+k_2+\cdots+k_r=k\atop
0\leq k_s\leq 1,\,\forall s}\dt_{i_1,j}^{k_1}\og_{i_1+k_1(i-j)-k_1ln}\ot\cdots\ot \dt_{i_r,j}^{k_r}\og_{i_r+k_r(i-j)-k_rln}\bigg)\\
&=a^{kl}\sum_{k_1+k_2+\cdots+k_r=k\atop
0\leq k_s\leq 1,\,\forall s}\dt_{i_1,j}^{k_1}\og_{i_1+k_1(i-j)}\ot\cdots\ot \dt_{i_r,j}^{k_r}\og_{i_r+k_r(i-j)}.
\end{split}
\end{equation*}
Conversely, using Lemma \ref{Ev(E i,j+ln)} we have
\begin{equation*}
\begin{split}\xrk\circ\Ev_a((\afE_{i,j+ln})^{(k)})(\og_\bfi)&=
a^{kl}\xrk(E_{i,j}^{(k)})\og_\bfi\\
&=a^{kl}\sum_{k_1+k_2+\cdots+k_r=k\atop
0\leq k_s\leq 1,\,\forall s}\dt_{i_1,j}^{k_1}\og_{i_1+k_1(i-j)}\ot\cdots\ot \dt_{i_r,j}^{k_r}\og_{i_r+k_r(i-j)}.
\end{split}
\end{equation*}
Consequently, we have
$\xrk\circ\Ev_a((\afE_{i,j+ln})^{(k)})(\og_\bfi)=\ti\ev_a\circ\afxrk(\afE_{i,j+ln})^{(k)})(\og_\bfi)$. Since the set $\{\og_\bfi\mid\bfi\in I(n,r)\}$ forms a $\field$-basis for $\Ognk^{\ot r}$, this establishes the equality of linear maps, and the lemma follows.
\end{proof}
Combining Theorem \ref{generators2} (1), Lemma \ref{lem1 for commutative diagram} and \ref{lem2 for commutative diagram}, we obtain the following result.
\begin{Prop}\label{commute diagram}
We have the following
commutative diagram:
$$\xymatrix{
\afuglk \ar[rr]^{\Ev_a}\ar[d]_{\afxrk}  &  &\uglk \ar[d]_{\xrk}  \\
\afSrk \ar[rr]^{\ti\ev_a}&  & \Srk.
}$$
\end{Prop}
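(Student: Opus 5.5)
Both composites $\xrk\circ\Ev_a$ and $\ti\ev_a\circ\afxrk$ are $\field$-algebra homomorphisms $\afuglk\to\Srk$. Here $\Ev_a=\pi_a\circ\Ev_\field$ is a composite of algebra maps. The map $\xrk$ is an algebra map by definition, and $\ti\ev_a$ is one by Lemma \ref{tieva}. So it is enough to check that the two composites agree on a set of $\field$-algebra generators of $\afuglk$.

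For the generators we use Theorem \ref{generators2}(1): $\afuglz=\tiafuglz$. By the proof of that theorem (Propositions \ref{generators} and \ref{Laikl formula}), $\afuglz$ is generated over $\mbz$ by three families:
\begin{itemize}
\item[(a)] the divided powers $(\afE_{i,j})^{(k)}$ with $\bar i\ne\bar j$;
\item[(b)] the elements $\La_{i,\pm k,l}$ with $l>0$;
\item[(c)] the binomials $\big({H_i\atop t}\big)$.
\end{itemize}
After base change, their images generate $\afuglk=\afuglz\ot_\mbz\field$.

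Each $j\in\mbz$ with $\bar i\ne\bar j$ can be written as $j=j'+ln$ with $1\le j'\le n$ and $j'\ne i$, so family (a) is exactly the set of elements $(\afE_{i,j'+ln})^{(k)}$ treated in Lemma \ref{lem2 for commutative diagram}. In the same way, family (b) is handled by Lemma \ref{lem1 for commutative diagram}. Thus the two composites already agree on (a) and (b).

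Family (c) needs a separate check, which I will do by the same basis computation as in Lemmas \ref{lem1 for commutative diagram} and \ref{lem2 for commutative diagram}.
\begin{itemize}
\item Since $H_i=\afE_{i,i}$ and $\Ev(\afE_{i,i})=E_{i,i}\ot 1$, we get $\Ev_a\big(\big({H_i\atop t}\big)\big)=\big({H_i\atop t}\big)\in\uglk$.
\item By Lemma \ref{Deltar(Hi k)}, $\big({H_i\atop t}\big)$ acts on $\og_\bfi$, for $\bfi\in I(n,r)$, as the sum over $0/1$ choices $k_s$ with $\sum_s k_s=t$ of the tensor products of $\dt_{\bar i,\bar i_s}^{k_s}\og_{i_s}$. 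This uses that $\big({H_i\atop s}\big)\og_j=0$ for $s\ge2$.
\item The operator $H_i$ does not shift indices, so this vector lies in $\Ognk^{\ot r}$. The map $\varepsilon_a$ is the identity on $\Ognk^{\ot r}$, since $\ev_a(1)=1$.
\end{itemize}
Hence both composites send $\og_\bfi$ to the same vector. As $\{\og_\bfi\mid\bfi\in I(n,r)\}$ is a $\field$-basis of $\Ognk^{\ot r}$, the composites agree on (c) as well.

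The main point needing care is the reduction to generators: we must confirm that the generating set of $\afuglz$ from Theorem \ref{generators2} consists exactly of (a)–(c), including the $\La_{i,\pm k,l}$ with $l>1$ coming from Proposition \ref{Laikl formula}. Once this is granted, agreement on all three families shows that the two homomorphisms coincide on $\afuglk$, so the diagram commutes.
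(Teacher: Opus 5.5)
Your proposal is correct and follows the same route as the paper. Both composites are algebra homomorphisms, and they agree on the generators of $\afuglz$ supplied by Theorem \ref{generators2}(1) by Lemmas \ref{lem1 for commutative diagram} and \ref{lem2 for commutative diagram}; your explicit check on the generators $\big({H_i\atop t}\big)$ fills in a case the paper's citation leaves implicit, and your worry about $l>1$ is harmless, since Lemma \ref{lem1 for commutative diagram} covers every $l>0$ and the defining generators of $\afuglz$ only involve $\La_{i,\pm k}=\La_{i,\pm k,1}$.
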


\section{Polynomial representations of the hyper loop algebra  $\afuglk$}
Throughout this section, $\field$ denotes an algebraically closed field of characteristic $p \geq 0$. This section is dedicated to the systematic study of polynomial representations of the hyper loop algebra $\afuglk$ over  $\field$.
\subsection{Finite-dimensional $\afuslk$-modules}
We begin by studying finite-dimensional representations of the hyper loop algebra $\afuslk$ (cf. \cite{JM07}).
Let $\afuslq$ be the enveloping algebra of $\afsl$. Following Garland \cite{Ga} let $\afuslz$ be the $\mbz$-subalgebra of $\afuslq$
generated by the elements $(\afE_{i,j+ln})^{(k)}$
for $1\leq i\not=j\leq n$, $l\in\mbz$ and $k\in\mbn$.
This integral form $\afuslz$ naturally embeds into the integral form $\afuglz$ of the enveloping algebra $\afuglq$.

For $1\leq i\leq n-1$ and $k\geq 0$ we define the elements $\Ga_{i,\pm k}$ in $\afuslz$ through the
generating functions in indeterminate $u$
\begin{equation}\label{Gaik}
\Ga_i^\pm(u):=\sum_{k\geq 0}\Ga_{i,\pm k}u^k=\exp\big(-\sum_{k\geq 1}\frac{h_{i,\pm k}}{k}u^k\big),
\end{equation}
where $h_{i,\pm k}=g_{i,\pm k}-g_{i+1,\pm k}$, and the elements $g_{i,\pm k}$ are defined in \eqref{gil}.

Let $\afuslk=\afuslz\ot_\mbz\field$  be the specialization to our ground field $\field$. The images of $\Ga_{i,\pm k}$  in $\afuslk$ will be denoted by the same letters. This specialization process allows us to study representations in positive characteristic as well as characteristic zero.

For $g(u)=\prod_{1\leq i\leq m}(1-a_iu)\in\field[u]$
with constant term $1$ and $a_i\in\field^*$, define
\begin{equation}\label{f^pm(u)}
g^\pm(u)=\prod_{1\leq i\leq m}(1-a_i^{\pm1}u).
\end{equation}

Let $\Pn$ be the set of $(n-1)$-tuple polynomials $\bfP=(P_1(u),\ldots,P_{n-1}(u))$ such that $P_i(u)\in\field[u]$ and the constant term of $P_i(u)$ is $1$ for $1\leq i\leq n-1$.
For $\bfP\in\Pn$, define coefficients $P_{j,s}\in\field$,
for $1\leq j\leq n-1$ and $s\in\mbz$  via the expansions $$P_j^\pm(u)=\sum_{s\geq
0}P_{j,\pm s} u^{s},$$ where $P_j^\pm(u)$ is defined using
\eqref{f^pm(u)}. These coefficients will serve as eigenvalues for the action of the $\Ga_{i,\pm k}$ operators.

For $\bfP\in\Pn$, let $\bar I(\bfP)$
be the left ideal of $\afuslk$ generated by $(\afE_{i,j+sn})^{(m)},\quad
\Ga_{i,s}-P_{i,s},$ and $\big({H_i-H_{i+1}\atop m}\big)- \big({\mu_i\atop m}\big)$, for
$1\leq i<j\leq n$, $m>0$ and $s\in\mbz$, where $\mu_i=\deg P_i(u)$,
and define
$$\bar M(\bfP)=\afuslk/\bar I(\bfP).$$
The module $\bar M(\bfP)$ has a unique irreducible quotient, denoted by $\bar L(\bfP)$.

\subsection{The polynomial representation category}
We begin by defining weight spaces for $\uglk$-modules.
Given a $\uglk$-module $V$ and $\mu\in\mbz^n$ we set $$V_\mu=\bigg\{w\in V\mid\bigg({H_i\atop k}\bigg)w
=\bigg({\mu_i\atop k}\bigg)w,\,\forall k\in\mbn,\,1\leq i\leq n\bigg\}.$$
Clearly if $p>0$ then
we have $V_\mu=\big\{w\in V\mid\big({H_i\atop p^k}\big)w
=\big({\mu_i\atop p^k}\big)w,\,\forall k\in\mbn,\,1\leq i\leq n\big\}.$

Let $\ms C_n$ be the category of finite-dimensional $\uglk$-modules $V$ such that
$V=\oplus_{\la\in\mbnn}V_\la$. We further introduce  $\h{\ms C}_n$ as the category of all finite-dimensional $\afuglk$-modules $V$ such that
the restriction of $V$ to $\uglk$ (denoted $V|_{\uglk}$) lies in $\ms C_n$
and each generalized common eigenvalue of $\La_i^\pm(u)$ in $V_\la$ is a polynomial $Q_i^\pm(u)$ in $u$ of degree $\la_i$, where $$Q_i^\pm(u)=\prod_{1\leq k\leq\la_i}(1-a_{i,k}^{\pm 1}u).$$ We will call the objects of the category $\h{\ms C}_n$ polynomial $\afuglk$-modules.

For $1\leq i\leq n-1$ let
\begin{equation}\label{ali}
 \al_i=(\dt_{i,j}-\dt_{i+1,j})_{1\leq j\leq n}\in\mbz^n.
\end{equation}
These  are the standard simple roots of the $A_{n-1}$ root system.
For $\la,\mu\in\mbz^n$ we write $\la\trianglelefteq\mu$ if $\mu-\la\in\sum_{1\leq i\leq n-1}\mbn\al_i$.  This partial order is known as the dominance order relative to the simple roots $\al_i$.
For $1\leq i\not=j\leq n$ let $\al_{i,j}
=(\dt_{i,k}-\dt_{j,k})_{1\leq k\leq n}\in\mbz^n$.
Then the set $\{\al_{i,j}\mid 1\leq i\not=j\leq n\}$
is the set of roots of type $A_{n-1}$. Clearly we have the following result.
\begin{Lem}\label{Ejk Vmu}
Let $V$ be a $\afuglk$-module.
For $1\leq j\not=k\leq n$, $m\in\mbn$, $s\in\mbz$ and $\mu\in\mbz^n$ we have
$(\afE_{j,k+sn})^{(m)}V_\mu\han V_{\mu+m\al_{j,k}}$.
\end{Lem}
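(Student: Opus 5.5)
The statement is a weight computation, so I will reduce it to the defining relations of the hyper loop algebra. Fix $w\in V_\mu$, and put $x=(\afE_{j,k+sn})^{(m)}$ and $\nu=\mu+m\al_{j,k}$. I must show that $\big({H_i\atop t}\big)xw=\big({\nu_i\atop t}\big)xw$ for all $t\in\mbn$ and $1\leq i\leq n$. The plan is to prove an integral commutation relation in $\afuglz$, namely
\[
\bigg({H_i\atop t}\bigg)(\afE_{j,k+sn})^{(m)}=(\afE_{j,k+sn})^{(m)}\bigg({H_i+m(\dt_{i,j}-\dt_{i,k})\atop t}\bigg),
\]
and then specialize it to $\field$ and apply it to $w$. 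Here $\big({H_i+c\atop t}\big)$ means the polynomial $\frac{(H_i+c)(H_i+c-1)\cdots(H_i+c-t+1)}{t!}$ in $H_i$.

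The first step is to verify this relation over $\mbq$, using the Lie bracket in $\afgl\cong\afMnq$. Since $H_i=\afE_{i,i}$ and $1\leq i,j,k\leq n$ with $j\neq k$, one has $[\afE_{i,i},\afE_{j,k+sn}]=(\dt_{i,j}-\dt_{i,k})\afE_{j,k+sn}$. Hence $H_i\afE_{j,k+sn}=\afE_{j,k+sn}(H_i+\dt_{i,j}-\dt_{i,k})$. By induction, $f(H_i)(\afE_{j,k+sn})^m=(\afE_{j,k+sn})^m f(H_i+m(\dt_{i,j}-\dt_{i,k}))$ for every polynomial $f$. Dividing by $m!$ and taking $f(X)=\big({X\atop t}\big)$ gives the displayed identity in $\afuglq$.

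The second step, which I expect to be the only real subtlety, is integrality. The right-hand side must lie in $\afuglz$, so that the identity survives base change to $\field$. Write $c=m(\dt_{i,j}-\dt_{i,k})\in\mbz$. The standard Vandermonde-type identity $\big({X+c\atop t}\big)=\sum_{a+b=t}\big({c\atop a}\big)\big({X\atop b}\big)$ holds for $c\in\mbz$, with the usual convention for negative $c$. It expresses $\big({H_i+c\atop t}\big)$ as a $\mbz$-combination of the generators $\big({H_i\atop b}\big)$ of $\afuglz$. Therefore both sides lie in $\afuglz$, and the identity passes to $\afuglk=\afuglz\ot_\mbz\field$.

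Finally, I apply the relation to $w\in V_\mu$. Since $\big({H_i\atop b}\big)w=\big({\mu_i\atop b}\big)w$ for every $b$, the same expansion gives
\[
\bigg({H_i+c\atop t}\bigg)w=\sum_{a+b=t}\bigg({c\atop a}\bigg)\bigg({\mu_i\atop b}\bigg)w=\bigg({\mu_i+c\atop t}\bigg)w.
\]
Because $\mu_i+c=\nu_i$, this yields $\big({H_i\atop t}\big)xw=\big({\nu_i\atop t}\big)xw$, so $xw\in V_\nu$, as required.
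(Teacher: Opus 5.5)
Your proof is correct. The paper gives no proof here (it just says ``Clearly''), and your argument is exactly the verification it leaves implicit: the relation $H_i(\afE_{j,k+sn})^{(m)}=(\afE_{j,k+sn})^{(m)}\bigl(H_i+m(\dt_{i,j}-\dt_{i,k})\bigr)$ in $\afuglq$, made integral by the expansion $\binom{H_i+c}{t}=\sum_{a+b=t}\binom{c}{a}\binom{H_i}{b}$ in $\afuglz$, then specialized to $\field$ and applied to $w\in V_\mu$ (this remains valid for negative $c$ and $\mu_i$).
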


An $n$-tuple of polynomials
$\bfQ=(Q_1(u),\ldots,Q_n(u))\in\field[u]^n$ with constant terms $1$ is called
dominant if, for each $1\leq i\leq n-1$, the ratio
$Q_i(u)/Q_{i+1}(u)\in\field[u]$. Let
$\Qn$ be the set of dominant $n$-tuples of polynomials. For $\bfQ\in\Qn$ let $\deg\bfQ=(\deg Q_1(u),\deg Q_2(u),\ldots,\deg Q_n(u))$.

Recall the elements $\La_{i,\pm k}$ defined in \eqref{Laik generating function},
which are the coefficient components of the generating functions $\La_i^\pm(u)$.
Note that we have
$$\Ga_{i}^\pm(u)=\frac{\La_i^\pm(u)}{\La_{i+1}^\pm(u)}$$
for $1\leq i\leq n-1$. These $\Ga_{i}^\pm(u)$ are auxiliary generating functions that inherit the dominance property from $\bfQ$, simplifying the analysis of the ratio of eigenvalues corresponding to consecutive indices $i$ and $i+1$.
As before, we shall denote the images of $\La_{i,\pm k}$  in $\afuglk$ by the same letters.

For $\bfQ=(Q_1(u),\ldots,Q_{n}(u))\in\Qn$, define
$Q_{i,s}\in\field$, for $1\leq i\leq n$ and $s\in\mbz$, by the
following formula
$$Q_i^\pm(u)=\sum_{s\geq 0}Q_{i,\pm s}u^{s},$$
where $Q_i^\pm(u)$ is defined using \eqref{f^pm(u)}.
Let $I(\bfQ)$
be the left ideal of $\afuglk$ generated by $(\afE_{j,k+sn})^{(m)},\quad
\La_{i,s}-Q_{i,s},$ and $\big({H_i\atop m}\big)- \big({\la_i\atop m}\big)$, for $1\leq i\leq n$,
$1\leq j<k\leq n$, $m>0$ and $s\in\mbz$, where $\la=\deg\bfQ$,
and define
$$M(\bfQ)=\afuglk/I(\bfQ).$$

To further analyze the structure of $M(\bfQ)$, we decompose $\afuglk$ into a triangular product of subalgebras.
Let $\afuglkp$ (respectively, $\afuglkm$) be the $\field$-subalgebra of $\afuglk$ generated by
the elements $(\afE_{i,j+ln})^{(s)}$ (respectively, $(\afE_{j,i+ln})^{(s)}$) for $1\leq i<j\leq n$, $l\in\mbz$ and $s\in\mbn$;
these are the  positive  and  negative  subalgebras of $\afuglk$, analogous to the upper and lower triangular subalgebras in classical semisimple Lie algebras.
Furthermore let $\afuglkz$ be the $\field$-subalgebra of $\afuglk$ generated by
the elements $\big({H_i\atop k}\big)$ and $\La_{i,\pm k,l}$ for $1\leq i\leq n$, $k\in\mbn$ and $l>0$. This subalgebra serves as the Cartan subalgebra of $\afuglk$, capturing the weight space structure and eigenvalue information of representations.

By \cite[Lem. 5.11]{Ga} and \cite[Lem. 1.6]{JM07}, the algebra $\afuglkz$ is generated by
the elements $\big({H_i\atop k}\big)$ and $\La_{i,\pm k}$ for $1\leq i\leq n$, $k\in\mbn$.
Furthermore we have the  triangular decomposition $\afuglk=\afuglkm\afuglkz\afuglkp$ (cf. \cite{Fu19} and \cite{Fu23}), which is fundamental to analyzing the structure of $\afuglk$-modules.

For $\bfQ\in\Qn$, the triangular decomposition of $\afuglk$ implies $M(\bfQ)=\afuglkm\bar 1$ for $\bfQ\in\Qn$, where $\bar 1=1+I(\bfQ)\in M(\bfQ)$.
By Lemma \ref{Ejk Vmu} we obtain the weight space decomposition: $$M(\bfQ)=\bop_{\mu\in\mbz^n\atop\mu\trianglelefteq\la}M(\bfQ)_\mu$$
for $\bfQ\in\Qn$ and $\dim M(\bfQ)_\la=1$, where $\la=\deg(\bfQ)$. For $\bfQ\in\Qn$, the module $M(\bfQ)$ has a unique irreducible quotient, denoted by $L(\bfQ)$.  The polynomials $Q_i(u)$ are called  Drinfeld
polynomials associated with $L(\bfQ)$. These polynomials completely determine the isomorphism type of the irreducible representation and play a crucial role in the classification of finite-dimensional polynomial representations of $\afuglk$.

\subsection{The Drinfeld polynomials of the evaluation module $L(\la)_a$}
Let $\La^+(n)=\{\la\in\mbn^n\mid\la_i\geq\la_{i+1},\forall 1\leq i\leq n-1\}$ be the set of dominant integral weights.
For $r\in\mbn$ let $$\La^+(n,r)=\big\{\la\in\La^+(n)\mid\sum_{1\leq i\leq n}\la_i=r\big\}.$$ For $\la\in\La^+(n,r)$ let $L(\la)$ be the irreducible $\Srk$-module with highest weight $\la$ (cf. Green \cite{Gr80}). Given any $\Srk$-module $V$ and $a\in\field^*$, let $V_a$ be the pullback of $V$ by $\ti\ev_a$, where $\ti\ev_a$ is the evaluation map for affine Schur algebras defined in Lemma \ref{tieva}.

For $\la\in\La^+(n)$ and $a\in\field^*$, define the $n$-tuple of polynomials $$\bfQ_{\la,a}=((1-au)^{\la_1},(1-au)^{\la_2},\ldots,(1-au)^{\la_n})\in\Qn.$$
This tuple encodes the Drinfeld polynomials associated to the evaluation module $L(\la)_a$.

\begin{Lem}\label{evaluation representatin L(la)a}
For $\la\in\La^+(n,r)$ and $a\in\field^*$, the evaluation module $L(\la)_a$ is isomorphic to $L(\bfQ_{\la,a})$ as a $\afuglk$-module. In particular, $L(\bfQ_{\la,a})$ can be regarded as a $\afSrk$-module via  the homomorphism  $\afxrk$ defined in \eqref{afxrk}.
\end{Lem}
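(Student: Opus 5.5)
The plan is to regard $L(\la)_a$ as an $\afuglk$-module through the composite $\ti\ev_a\circ\afxrk$. By Proposition~\ref{commute diagram} this composite equals $\xrk\circ\Ev_a$. Hence $L(\la)_a$ is just the pullback of the irreducible $\uglk$-module $L(\la)$ (a $\Srk$-module via $\xrk$) along the evaluation map $\Ev_a:\afuglk\ra\uglk$. Irreducibility of $L(\la)_a$ then comes for free: $\ti\ev_a$ is surjective by Lemma~\ref{tieva}, and $L(\la)$ is irreducible over $\Srk$. It therefore suffices to exhibit a highest weight vector in $L(\la)_a$ whose annihilator contains $I(\bfQ_{\la,a})$. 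This gives a nonzero map $M(\bfQ_{\la,a})\ra L(\la)_a$, which is surjective because the target is irreducible. Since $L(\bfQ_{\la,a})$ is the unique irreducible quotient of $M(\bfQ_{\la,a})$, we get $L(\la)_a\cong L(\bfQ_{\la,a})$. The final sentence of the lemma then follows because $L(\la)_a$ is by construction an $\afSrk$-module.

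The steps, in order, are as follows. Take $v^+\in L(\la)_\la$, the highest weight vector of $L(\la)$ for $\uglk$. First, by Lemma~\ref{Ev(E i,j+ln)},
\[
\Ev_a\bigl((\afE_{j,k+sn})^{(m)}\bigr)=a^{sm}E_{j,k}^{(m)}\quad(j<k),
\]
and this kills $v^+$ for $m>0$. Second, $\Ev_a$ restricts to the identity on the elements $\big({H_i\atop m}\big)$, since $\Ev(H_i)=H_i\ot 1$ and $\Ev$ is an algebra map. So $\big({H_i\atop m}\big)$ acts on $v^+$ by $\big({\la_i\atop m}\big)$, and $\la=\deg\bfQ_{\la,a}$. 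Third, by Lemma~\ref{Ev(La{i,k,l})} with $l=1$,
\[
\Ev_a(\La_{i,\pm k})=(-1)^k a^{\pm k}\bigg({H_i\atop k}\bigg),
\]
so $\La_{i,\pm k}$ acts on $v^+$ by $(-1)^ka^{\pm k}\big({\la_i\atop k}\big)$. These scalars are exactly the coefficients of $u^k$ in $(1-a^{\pm1}u)^{\la_i}=Q_i^\pm(u)$, that is, they equal $Q_{i,\pm k}$. Thus every generator of $I(\bfQ_{\la,a})$ annihilates $v^+$.

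One must also check that $\bfQ_{\la,a}\in\Qn$. This holds because $\la$ is dominant, so $(1-au)^{\la_i}/(1-au)^{\la_{i+1}}$ is a polynomial. The point requiring the most care is the $\La_{i,s}$ relations, where both signs $\pm$ and the convention $g^\pm$ in \eqref{f^pm(u)} must be matched with $a^{\pm k}$. I expect the binomial identity $\sum_k(-1)^k\big({\la_i\atop k}\big)a^{\pm k}u^k=(1-a^{\pm1}u)^{\la_i}$ to settle this. The verification takes place in $\field$ after base change, and this is legitimate because Lemma~\ref{Ev(La{i,k,l})} holds integrally: $\Ev(\afuglz)\subseteq\uglz\ot\mbz[\ttt,\ttt^{-1}]$ by Lemma~\ref{Ev(afuglz)}.
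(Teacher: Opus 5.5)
Your proposal is correct and follows the paper's proof essentially step for step. Both take a highest weight vector of $L(\la)$, use the commutative diagram $\ti\ev_a\circ\afxrk=\xrk\circ\Ev_a$ together with the formulas for $\Ev_a$ to check that every generator of $I(\bfQ_{\la,a})$ annihilates it, and conclude via the unique irreducible quotient of $M(\bfQ_{\la,a})$. The only difference is that you say explicitly why $L(\la)_a$ is irreducible, which is needed both for surjectivity of $M(\bfQ_{\la,a})\ra L(\la)_a$ and for the final identification and which the paper leaves implicit; for irreducibility over $\afuglk$ itself, note that this also uses surjectivity of $\afxrk$ (or of $\Ev_a$), not only of $\ti\ev_a$.
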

\begin{proof}
Since $\dim L(\la)_\la=1$, there is a nonzero highest weight vector $w_0\in L(\la)_\la$.  By Lemma \ref{Ev(E i,j+ln)} and Proposition \ref{commute diagram} we compute the action of the generators of $\afuglk$ on $w_0$:
\begin{equation*}
\begin{split}
\ti\ev_a\circ\afxrk((\afE_{i,j+ln})^{(k)})&w_0=\xrk\circ\Ev_a((\afE_{i,j+ln})^{(k)})w_0
=a^{lk}\xrk((\afE_{i,j})^{(k)})w_0=0 \\
\ti\ev_a\circ\afxrk\bigg(\bigg({H_i\atop k}\bigg)\bigg)w_0
&=\xrk\circ\Ev_a\bigg(\bigg({H_i\atop k}\bigg)\bigg)w_0=\xrk\bigg(\bigg({H_i\atop k}\bigg)\bigg)w_0=\bigg({\la_i\atop k}\bigg)w_0
\end{split}
\end{equation*}
for $1\leq i<j\leq n$, $l\in\mbz$ and $k\in\mbn$. Furthermore by Lemma \ref{Ev(La{i,k,l})} we have
$$\ti\ev_a\circ\afxrk(\La_{i,\pm k})w_0=\xrk\circ\Ev_a(\La_{i,\pm k})w_0=
(-a)^{\pm k}\xrk\bigg(\bigg({H_i\atop k}\bigg)\bigg)w_0=(-a)^{\pm k}\bigg({\la_i\atop k}\bigg)w_0.$$
Hence there is a surjective $\afuglk$-module homomorphism $\vi:M(\bfQ)\ra L(\la)_a$ such that
$\vi(\bar 1)=w_0$, where $\bar 1=1+I(\bfQ)$. Since $L(\bfQ)$ is the unique
irreducible quotient of $M(\bfQ)$, we conclude that $L(\la)_a\cong L(\bfQ)$.
\end{proof}

\subsection{Classification of simple polynomial modules}
Let $\bfQ\in\Qn$ be an $n$-tuple of polynomials. Let
$\{a_s^{-1} \mid 1\leq s\leq m\}$ be the set of distinct roots of the polynomial $\prod_{1\leq i\leq n}Q_i(u)$. For $1\leq i\leq n$, we  can factor $Q_i(u)$ as
$$Q_i(u)=\prod_{1\leq s\leq m}(1-a_su)^{b_{i,s}}$$
where $b_{i,s}\in\mbn$, $a_s\in\field^*$ and $a_s\not=a_t$ for $s\not=t$.

For $1\leq s\leq m$, let $\la^{(s)}=(b_{1,s},b_{2,s},\ldots,b_{n,s})\in\La^+(n)$
be the dominant weight determined by the exponents in the factorization. Then we have the factorization
$$\bfQ=\prod_{1\leq s\leq m}\bfQ_{\la^{(s)},a_s}.$$
Let $\Qnr=\{\bfQ\in\Qn\mid\sum_{1\leq i\leq n}\deg Q_i(u)=r\}$ be the set of $n$-tuples of polynomials with total degree $r$.
\begin{Prop}\label{decomposition of L(Q)}
For $\bfQ\in\Qn$, we write $\bfQ=\prod_{1\leq s\leq m}\bfQ_{\la^{(s)},a_s}$, where
$\{a_s^{-1} \mid 1\leq s\leq m\}$ is the set of distinct roots of $\prod_{1\leq i\leq n}Q_i(u)$ and
$\la^{(s)}\in\La^+(n)$. Then we
have $$L(\bfQ)\cong L(\la^{(1)})_{a_1}\ot L(\la^{(2)})_{a_2}\ot\cdots\ot L(\la^{(m)})_{a_m},$$
and $L(\bfQ)|_{\afuslk}\cong\bar L(\bfP)$, where $\bfP=(P_1(u),\ldots,P_{n-1}(u))$ with $P_i(u)=Q_i(u)/Q_{i+1}(u)$.
In particular, we have $L(\bfQ)\in\h{\ms C}_n$.
\end{Prop}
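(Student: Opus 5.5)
The plan is to reduce the statement to known results about the hyper loop algebra $\afuslk$ (Jakeli\'c--Moura \cite{JM07}). These give the classification of its finite-dimensional irreducible modules by Drinfeld polynomials. They also give the tensor product factorization $\bar L(\bfP\bfP')\cong\bar L(\bfP)\ot\bar L(\bfP')$ whenever the roots of $\bfP$ and $\bfP'$ are disjoint.

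First, set $V=L(\la^{(1)})_{a_1}\ot\cdots\ot L(\la^{(m)})_{a_m}$. By Lemma \ref{evaluation representatin L(la)a} each factor is $L(\bfQ_{\la^{(s)},a_s})$ and is finite-dimensional. Let $w_s$ be a highest weight vector of the $s$-th factor and put $w=w_1\ot\cdots\ot w_m$. Since $\afuglz$ is a Hopf subalgebra (Theorem \ref{generators2}), $V$ is an $\afuglk$-module. I would compute the action on $w$ with the coproduct formulas of Lemma \ref{Deltar(Hi k)} and Lemma \ref{Delta(Laikl)}; the group-like nature of $\La_i^\pm(u)$ is the generating-function form of Lemma \ref{Delta(Laikl)} with $l=1$.
\begin{itemize}
\item The divided powers $(\afE_{j,k+sn})^{(t)}$ with $j<k$ kill $w$, since each tensor factor is killed.
\item $\big({H_i\atop t}\big)$ acts on $w$ by $\big({\la_i\atop t}\big)$ with $\la=\sum_s\la^{(s)}=\deg\bfQ$.
\item $\La_i^\pm(u)$ acts on $w$ by $\prod_s(1-a_s^{\pm1}u)^{b_{i,s}}=Q_i^\pm(u)$.
\end{itemize}
Hence there is a homomorphism $M(\bfQ)\to V$ sending $\bar1\mapsto w$, and its image $\afuglk w$ is a highest weight module with top quotient $L(\bfQ)$.

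The main step is to prove that $V$ is irreducible. I would pass to $\afuslk$. The restriction of each factor $L(\bfQ_{\la^{(s)},a_s})$ to $\afuslk$ is a highest weight module generated by $w_s$. Its $\Ga_i^\pm(u)$-eigenvalue is $(1-a_s^{\pm1}u)^{\la^{(s)}_i-\la^{(s)}_{i+1}}$, because $\Ga_i^\pm=\La_i^\pm/\La_{i+1}^\pm$. Since the factor is irreducible over $\afuglk$ and the extra generators $\La_{i,\pm k}$ and $H_i$ are controlled on this evaluation module, I expect it to be irreducible over $\afuslk$. To see this, note that the central elements $C_k^\pm$ and $\sum_i H_i$ act by scalars on an evaluation module; for example, $\sum_iH_i$ acts by $r$ and $C^\pm(u)$ acts by $(1-a^{\pm1}u)^r$, by Lemma \ref{Ev(La{i,k,l})}. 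Together with $\afuslk$ these generate $\afuglk$ up to the Cartan part. One more check is needed here: $\afuglkz$ should be generated by $\afuslk\cap\afuglkz$ and the $C_k^\pm$ together with the $\big({H_i\atop t}\big)$. This holds at least on polynomial modules, by the generating-function identities.

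So each factor is $\bar L(\bfP^{(s)})$ with $\bfP^{(s)}_i=(1-a_su)^{b_{i,s}-b_{i+1,s}}$. Because the $a_s$ are distinct, the Jakeli\'c--Moura tensor product theorem gives $V|_{\afuslk}\cong\bar L(\bfP)$ with $\bfP=\prod_s\bfP^{(s)}$, so $P_i=Q_i/Q_{i+1}$. In particular $V$ is irreducible over $\afuslk$, hence over $\afuglk$, and therefore $V\cong L(\bfQ)$.

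Finally, membership in $\h{\ms C}_n$ is checked on $V$ directly. The weights of $V$ are sums of weights of the $L(\la^{(s)})$ and so lie in $\mbnn$. On a weight space $V_\mu$, the generalized eigenvalues of $\La_i^\pm(u)$ are products over $s$ of those on the weight spaces of the evaluation factors. On $L(\la)_a$, $\La_i^\pm(u)$ acts as $\sum_k(-a^{\pm1}u)^k\big({H_i\atop k}\big)$, and on weight $\nu$ this equals $(1-a^{\pm1}u)^{\nu_i}$. The product over $s$ therefore has degree $\mu_i$ and the required form. The expected obstacle is the irreducibility of the evaluation modules upon restriction to $\afuslk$, together with the use of the tensor product theorem of \cite{JM07} in arbitrary characteristic.
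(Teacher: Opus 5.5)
Your argument follows the paper's proof. You form the tensor product $V$ of evaluation modules and compute the action on $w=w_1\ot\cdots\ot w_m$ with the coproduct formulas to get a map $M(\bfQ)\to V$. You then obtain irreducibility from \cite[Cor.~3.5]{JM07} applied to $V|_{\afuslk}$ and conclude by uniqueness of the irreducible quotient of $M(\bfQ)$. The paper simply quotes \cite{JM07} for $V|_{\afuslk}\cong\bar L(\bfP)$. You make explicit what that citation needs, namely that each factor restricts to the $\afuslk$-evaluation module $\bar L(\bfP^{(s)})$. You also verify $L(\bfQ)\in\h{\ms C}_n$ directly, which the paper leaves implicit, and that verification is correct.

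The one step that does not work as written is your justification that each evaluation factor stays irreducible over $\afuslk$.
\begin{itemize}
\item Knowing that $C_k^\pm$ and $\sum_iH_i$ act by scalars does not put the operators $\binom{H_i}{t}$ into the image of $\afuslk$. The naive expression of $H_i$ through $\sum_jH_j$ and the $H_j-H_{j+1}$ requires dividing by $n$. In characteristic $p$, the $\binom{H_i}{p^k}$ are not polynomials in $H_i$ at all.
\item Your proposed generation statement keeps the $\binom{H_i}{t}$ among the generators, so it is circular for this purpose.
\end{itemize}

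A correct argument is a weight argument.
\begin{itemize}
\item On $L(\la)_a$, each $\La_{i,\pm k}$ acts as $(-a^{\pm1})^k\binom{H_i}{k}$, by Lemma~\ref{Ev(La{i,k,l})} and Proposition~\ref{commute diagram}. So it suffices to show that every $\afuslk$-submodule $U$ is stable under the $\binom{H_i}{t}$.
\item $\afuslk$ contains the operators $\binom{H_i-H_{i+1}}{t}$, which act on the weight space of weight $\mu$ by $\binom{\mu_i-\mu_{i+1}}{t}$.
\item Distinct integers are separated in $\field$ by some $\binom{x}{t}$: use $p$-adic digits via Lucas' theorem, as in the proof of Lemma~\ref{n>r case}.
\item For $\mu\in\Lambda(n,r)$, the differences $\mu_i-\mu_{i+1}$ determine $\mu$.
\item By interpolation in these commuting diagonalizable operators, $U$ is a sum of $\mathfrak{gl}_n$-weight spaces. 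Hence $U$ is an $\afuglk$-submodule, so it is $0$ or the whole module.
\end{itemize}
With this in place, the rest of your proof goes through as planned.
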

\begin{proof}
Let $V=L(\la^{(1)})_{a_1}\ot L(\la^{(2)})_{a_2}\ot\cdots\ot L(\la^{(m)})_{a_m}$.
By Lemma \ref{evaluation representatin L(la)a}, we have
$$V\cong L(\bfQ_{\la^{(1)},a_1})\ot L(\bfQ_{\la^{(2)},a_2})\ot\cdots\ot
L(\bfQ_{\la^{(m)},a_m}).$$
For $1\leq s\leq m$ we choose a nonzero  highest weight vector
$w_s\in L(\bfQ_{\la^{(s)},a_s})_{\la^{(s)}}$. Let $w=w_1\ot w_2\ot\cdots\ot w_m$. By Lemma \ref{Deltar(Hi k)} and Corollary \ref{Deltar(Laikl)}
we compute the action of the generators of $\afuglk$ on $w$:
\begin{equation*}
(\afE_{j,k+sn})^{(m)}w=0,\quad
\bigg({H_i\atop m}\bigg)w= \bigg({\la_i\atop m}\bigg)w,\quad
\La_i^\pm(u)w=Q_i^\pm(u)w
\end{equation*}
for $1\leq i\leq n$, $1\leq j<k\leq n$, $s\in\mbz$ and $m>0$, where $\la=\deg\bfQ$. Hence $V$ is a quotient module of $M(\bfQ)$.
Furthermore, by \cite[Cor. 3.5]{JM07} the restriction $V|_{\afuslk}$ is irreducible and  isomorphic to $\Lcp(\bfP)$. It follows that $V$ is an irreducible $\afuglk$-module. Since $L(\bfQ)$ is the unique
irreducible quotient of $M(\bfQ)$, we conclude that $V\cong L(\bfQ)$.
\end{proof}
\begin{Thm}\label{polynomial irreducible afuglk-modules}
The set $\{L(\bfQ)\mid\bfQ\in\Qn\}$ forms a complete set of non-isomorphic finite-dimensional irreducible polynomial $\afuglk$-modules.
\end{Thm}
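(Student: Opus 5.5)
Three things have to be shown: each $L(\bfQ)$ with $\bfQ\in\Qn$ is a polynomial module, the $L(\bfQ)$ are pairwise non-isomorphic, and every finite-dimensional irreducible polynomial $\afuglk$-module is one of them. The first is already Proposition \ref{decomposition of L(Q)}, which gives $L(\bfQ)\in\h{\ms C}_n$ for every $\bfQ\in\Qn$. For distinctness, I would recover $\bfQ$ from $L(\bfQ)$. The weight $\la=\deg\bfQ$ is the unique maximal weight of $L(\bfQ)$ for $\trianglelefteq$, since $M(\bfQ)=\bop_{\mu\trianglelefteq\la}M(\bfQ)_\mu$. The space $L(\bfQ)_\la$ is one-dimensional, and on it $\La_{i,\pm s}$ acts by the scalar $Q_{i,\pm s}$. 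So the generating functions $\La_i^+(u)$ acting on $L(\bfQ)_\la$ recover each $Q_i(u)$, and $L(\bfQ)\cong L(\bfQ')$ forces $\bfQ=\bfQ'$.

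For exhaustion, let $V\in\h{\ms C}_n$ be irreducible. Since $V|_{\uglk}\in\ms C_n$ and $V$ is finite-dimensional, the set of weights of $V$ is finite and nonempty, so I pick a weight $\la$ that is maximal for $\trianglelefteq$. By Lemma \ref{Ejk Vmu}, every $(\afE_{j,k+sn})^{(m)}$ with $j<k$ and $m>0$ kills $V_\la$. The subalgebra $\afuglkz$ is commutative, being generated by $\big({H_i\atop k}\big)$ and the $\La_{i,\pm k}$, and it preserves $V_\la$. Hence it has a common eigenvector $w\in V_\la$. By the definition of $\h{\ms C}_n$, the eigenvalues of $\La_i^\pm(u)$ on $w$ are $Q_i^\pm(u)$ for some polynomials $Q_i(u)=\prod_{k=1}^{\la_i}(1-a_{i,k}u)$ of degree $\la_i$ with $a_{i,k}\in\field^*$. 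Consistency between the $+$ and $-$ versions is built into the definition of the category. Let $\bfQ=(Q_1,\dots,Q_n)$. Then $w$ satisfies all the defining relations of $I(\bfQ)$, which gives a nonzero map $M(\bfQ)\to V$ sending $\bar1\mapsto w$. This map is onto because $V$ is irreducible, so $V\cong L(\bfQ)$.

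It remains to show that $\bfQ$ is dominant, that is, $Q_i/Q_{i+1}\in\field[u]$; this is the main obstacle. My plan is to restrict to $\afuslk$. The vector $w$ is a highest-weight vector there as well, and $\Ga_i^\pm(u)=\La_i^\pm(u)/\La_{i+1}^\pm(u)$ acts on it by $Q_i^\pm(u)/Q_{i+1}^\pm(u)$. Moreover $V|_{\afuslk}$ is finite-dimensional and is generated by $w$, because $\afuglk=\afuglkm\afuglkz\afuglkp$ and $\afuglkm$ lies in $\afuslk$. I would then invoke the classification of finite-dimensional highest-weight modules of the hyper loop algebra $\afuslk$ from \cite{JM07}. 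In the $\mathfrak{sl}_2$ case this comes down to the fact that for each $i$ the eigenvalue $Q_i(u)/Q_{i+1}(u)$ must be a polynomial of degree $\la_i-\la_{i+1}$. That result gives $Q_i(u)/Q_{i+1}(u)=P_i(u)\in\field[u]$, so $\bfQ\in\Qn$.

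If citing \cite{JM07} in that form turns out to be too coarse, the fallback is to check the $\mathfrak{sl}_2$ statement directly. One considers the subalgebra generated by $(\afE_{i+1,i+sn})^{(m)}$ and $(\afE_{i,i+1+sn})^{(m)}$ acting on $w$. One then uses the relation, from the Garland identities, that expresses $(\afE_{i,i+1})^{(k)}(\afE_{i+1,i+n})^{(k)}w$ in terms of the $\Ga_{i,k}$ acting on $w$. Finite-dimensionality forces $\Ga_{i,k}w=0$ for $k>\la_i-\la_{i+1}$, which says exactly that $Q_i^+(u)/Q_{i+1}^+(u)$ is a polynomial of that degree.
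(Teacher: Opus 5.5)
Your proposal is correct and follows the paper's proof. Both arguments take a highest-weight common eigenvector $w$ in a maximal weight space and read off the eigenvalues $Q_i^\pm(u)$ from the definition of $\h{\ms C}_n$. Dominance of $\bfQ$ then comes from \cite[Th.~2.4 and Prop.~3.1]{JM07}, applied via the $\Ga_i^\pm(u)$-eigenvalues $Q_i^\pm(u)/Q_{i+1}^\pm(u)$, which gives $V\cong L(\bfQ)$, and Proposition~\ref{decomposition of L(Q)} gives $L(\bfQ)\in\h{\ms C}_n$. Your explicit construction of $w$ and your proof of pairwise non-isomorphism, via the one-dimensional top weight space, spell out steps the paper leaves implicit.
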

\begin{proof}
Let $V$ be a finite-dimensional irreducible polynomial $\afuglk$-module. Then there exist $w_0\in V$ such that
$$(\afE_{j,k+sn})^{(m)}w_0=0 ,\quad
 \La_{i}^\pm(u)w_0=Q_i^\pm(u)w_0,\quad  \bigg({H_i\atop m}\bigg)w_0= \bigg({\la_i\atop m}\bigg)w_0$$
for $1\leq i\leq n$,
$1\leq j<k\leq n$, $m>0$ and $s\in\mbz$, where $\la_i=\deg Q_i(u)$. By \cite[Th. 2.4 and Prop. 3.1]{JM07}, we have $Q_i(u)/Q_{i+1}(u)\in\field[u]$ for $1\leq i\leq n-1$,
which implies that $\bfQ\in\Qn$ and $V\cong L(\bfQ)$. The assertion now follows from Proposition \ref{decomposition of L(Q)}.
\end{proof}

\section{Finite-dimensional representations of affine Schur algebras }
We maintain the assumptions from the previous section. In particular, $\field$ denotes an algebraically closed field.
\subsection{The irreducible $\afSrk$-module $L(\bfQ)$}
For $r',r''\in\mbn$, there is a natural algebra homomorphism
$$\vi_{r',r''}:\End(\Ogk^{\ot r'})\ot\End(\Ogk^{\ot r''})\ra\End(\Ogk^{\ot r'+r''})$$
defined by $\vi_{r',r''}(f\ot g)(w_1\ot w_2)
=f(w_1)\ot g(w_2)$ for $f\in\End(\Ogk^{\ot r'})$,
$g\in\End(\Ogk^{\ot r''})$, $w_1\in\Ogk^{\ot r'}$,
$w_2\in\Ogk^{\ot r''}$.

\begin{Lem}
For $r',r''\in\mbn$, the map $\vi_{r',r''}$ is injective.
\end{Lem}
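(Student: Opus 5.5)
The plan is to exploit the fact that $\Ogk$ is a free $\field$-module with basis $\{\og_s\mid s\in\mbz\}$, so that $\Ogk^{\ot r'}$, $\Ogk^{\ot r''}$ and $\Ogk^{\ot r'+r''}=\Ogk^{\ot r'}\ot\Ogk^{\ot r''}$ are free with bases $\{\og_\bfi\}$, $\{\og_\bfj\}$ and $\{\og_\bfi\ot\og_\bfj\}$ respectively. Here $\field$ is a field (as in this section), so we can argue by linear algebra with matrix coefficients. The only subtlety is that these bases are infinite, so elements of $\End(\Ogk^{\ot r'})$ are not finite combinations of elementary matrices. The proof must therefore use coordinate functionals rather than a basis of the endomorphism ring.

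Take a nonzero element $X=\sum_{t=1}^N f_t\ot g_t$ of the tensor product. Rewrite it so that $f_1,\dots,f_N$ are $\field$-linearly independent in $\End(\Ogk^{\ot r'})$ and at least one $g_t\neq0$; this is possible over a field by choosing a basis of the span of the $f_t$.

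Suppose $\vi_{r',r''}(X)=0$. Fix a basis vector $\og_\bfj\in\Ogk^{\ot r''}$ and a coordinate functional $\og_\bfl^*$ on $\Ogk^{\ot r''}$. Apply $\vi_{r',r''}(X)$ to $w_1\ot\og_\bfj$ for arbitrary $w_1$, then apply $1\ot\og_\bfl^*$. This gives
\[
\sum_t c_t\, f_t(w_1)=0 \quad\text{for all } w_1,
\]
where $c_t$ is the $\og_\bfl$-coefficient of $g_t(\og_\bfj)$. The map $1\ot\og_\bfl^*:\Ogk^{\ot r'}\ot\Ogk^{\ot r''}\to\Ogk^{\ot r'}$ is well defined since the second factor is free. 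It follows that $\sum_t c_t f_t=0$, so by linear independence all $c_t=0$. Since $\bfj$ and $\bfl$ are arbitrary, every $g_t=0$, which is a contradiction. Hence $\vi_{r',r''}$ is injective.

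The main (mild) obstacle is precisely that the infinite rank prevents identifying $\End\ot\End$ with $\End$ of the tensor product via matrix units. The coordinate-functional argument above is what circumvents this, and it is the step I would check most carefully, namely that $1\ot\og_\bfl^*$ is well defined and that extracting coefficients commutes with the finite sum over $t$. If a general commutative ring $\field$ were intended, I would instead use the freeness of $\Ogk$ to reduce to showing that each coefficient map detects zero, but with $\field$ a field the argument above suffices.
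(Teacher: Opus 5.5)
Your proof is correct and is essentially the paper's argument with the roles of the two tensor factors swapped. The paper takes the $g_i$ linearly independent and kills the first factor with an arbitrary functional $F\in(\Ogk^{\ot r'})^*$, while you take the $f_t$ linearly independent and kill the second factor with coordinate functionals $\og_{\mathbf l}^{*}$; as in the paper, this relies on $\field$ being a field, which is the standing assumption of that section.
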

\begin{proof}
Let $f\in\ker\vi_{r',r''}$. Write $f=\sum_{1\leq i\leq m}f_i\ot g_i$, where $f_i\in\End(\Ogk^{\ot r'})$,
$g_i\in\End(\Ogk^{\ot r''})$ and $\{g_i\mid 1\leq i\leq m\}$ is linearly independent.
Then we have $\sum_{1\leq i\leq m}f_i(w_1)\ot g_i(w_2)=0$ for $w_1\in\Ogk^{\ot r'}$ and
$w_2\in\Ogk^{\ot r''}$. It follows that
$$(F\ot id)(\sum_{1\leq i\leq m}f_i(w_1)\ot g_i(w_2))
=\sum_{1\leq i\leq m}F(f_i(w_1))g_i(w_2)=0$$
for $w_1\in\Ogk^{\ot r'}$,
$w_2\in\Ogk^{\ot r''}$ and $F\in(\Ogk^{\ot r'})^*$. Since $\{g_i\mid 1\leq i\leq m\}$ is linearly independent, it follows that $F(f_i(w_1))=0$ for $1\leq i\leq m$, $w_1\in\Ogk^{\ot r'}$ and $F\in(\Ogk^{\ot r'})^*$. This implies that $f_i(w_1)=0$ for $1\leq i\leq m$ and $w_1\in\Ogk^{\ot r'}$, hence $f=0$.
\end{proof}
The injectivity of $\vi_{r',r''}$ allows us to restrict to $\afSk{r'}\ot\afSk{r''}$, yielding an algebra isomorphism $$\vi_{r',r''}:\afSk{r'}\ot\afSk{r''}\ra\vi_{r',r''}(\afSk{r'}\ot\afSk{r''}).$$
Moreover, we have the compatibility relation $$\vi_{r',r''}\circ(\afxk{r'}\ot\afxk{r''})\circ\Delta=\afxk{r'+r''},$$
where $\afxrk$ is defined in \eqref{afxrk} and $\Delta:\afuglk\ra\afuglk\ot\afuglk$ is the comultiplication
of the Hopf algebra $\afuglk$.
Consequently, $$\afSk{r'+r''}=\afxk{r'+r''}(\afuglk)\han\vi_{r',r''}(\afSk{r'}\ot\afSk{r''}.$$
By restricting $\vi_{r'+r''}^{-1}$ to $\afSk{r'+r''}$, we obtain an
algebra homomorphism
$$\Delta_{r',r''}:=\vi_{r'+r''}^{-1}:
\afSk{r'+r''}\ra\afSk{r'}\ot\afSk{r''}.$$
This comultiplication-like structure on affine Schur algebras is fundamental for constructing new representations from existing ones via tensor products.

\begin{Lem}\label{V otimes W}
Let $V$ be a $\afSk{r'}$-module and $W$ be a $\afSk{r''}$-module. Then
$V\ot W$  carries a natural $\afSk{r'+r''}$-module structure.
\end{Lem}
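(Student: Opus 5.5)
The plan is to pull back along the homomorphism $\Delta_{r',r''}$ constructed just above. $V\ot W$ is naturally a module over $\afSk{r'}\ot\afSk{r''}$, with $(f\ot g)(v\ot w)=f(v)\ot g(w)$. Any algebra homomorphism into this tensor product therefore makes $V\ot W$ a module over its source.

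The steps, in order, are as follows.

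First, I will verify that $V\ot W$ is a module over $\afSk{r'}\ot\afSk{r''}$. The multiplication on a tensor product of algebras is componentwise, and the map $(f,g)\mapsto f\ot g$ acting on $V\ot W$ is $\field$-bilinear. Hence it factors through $\afSk{r'}\ot_\field\afSk{r''}$. The unit and associativity axioms are checked on pure tensors.

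Second, I will recall that $\Delta_{r',r''}=\vi_{r',r''}^{-1}|_{\afSk{r'+r''}}$ is a well-defined unital algebra homomorphism $\afSk{r'+r''}\ra\afSk{r'}\ot\afSk{r''}$. This has two ingredients, both shown before the lemma:
\begin{itemize}
\item $\vi_{r',r''}$ is injective on $\afSk{r'}\ot\afSk{r''}$, so its inverse on the image is an algebra isomorphism;
\item $\afSk{r'+r''}$ lies in that image, via the compatibility $\vi_{r',r''}\circ(\afxk{r'}\ot\afxk{r''})\circ\Delta=\afxk{r'+r''}$ together with surjectivity of $\afxk{r'+r''}$ from \eqref{afxrk}.
\end{itemize}
A minor point needs checking here. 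Injectivity of $\vi_{r',r''}$ on the whole of $\End\ot\End$ implies injectivity on $\afSk{r'}\ot\afSk{r''}$ only after identifying $\afSk{r'}\ot\afSk{r''}$ with a subspace of $\End(\Ogk^{\ot r'})\ot\End(\Ogk^{\ot r''})$. Over a field this holds because tensor products of injective linear maps are injective. Unitality follows because $\vi_{r',r''}(1\ot1)=1$.

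Third, I will define $x\cdot(v\ot w)=\Delta_{r',r''}(x)(v\ot w)$ for $x\in\afSk{r'+r''}$. Since $\Delta_{r',r''}$ is a unital algebra homomorphism, this is a module structure.

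The only potential obstacle is the compatibility and containment step. It is already established in the text, so I would simply cite it. If a more intrinsic description is wanted, one can add a remark: through $\afxk{r'+r''}$, the resulting action of $\afuglk$ on $V\ot W$ is the Hopf tensor product action given by the comultiplication $\Delta$. This is immediate from the compatibility identity.
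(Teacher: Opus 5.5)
Your proposal is correct and is essentially the paper's argument. The paper gives $V\ot W$ the Hopf tensor-product $\afuglk$-action via $\Delta$ and uses the commutative square $\Delta_{r',r''}\circ\afxk{r'+r''}=(\afxk{r'}\ot\afxk{r''})\circ\Delta$ to see that this action factors through $\afxk{r'+r''}$; that is the same as pulling back along $\Delta_{r',r''}$ directly, as you do. Your check that $\afSk{r'}\ot\afSk{r''}\to\End(\Ogk^{\ot r'})\ot\End(\Ogk^{\ot r''})$ is injective, which holds since $\field$ is a field here, settles a small point the paper leaves implicit.
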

\begin{proof}
Since $\afuglk$ is a Hopf algebra, $V\ot W$ becomes  a $\afuglk$-module via the comultiplication $\Delta$. The commutative diagram
$$\begin{CD}
\afuglk @>\Delta>>\afuglk\ot\afuglk\\
@V\afxk{r'+r''} VV  @VV\afxk{r'}\ot\afxk{r''}V\\
\afSk{r'+r''} @>>\Delta_{r',r''}>
\afSk{r'}\ot\afSk{r''}.
\end{CD}$$
ensures that the $\afuglk$-action on $V\ot W$ factors through $\afxk{r'+r''}$, making $V\ot W$ a $\afSk{r'+r''}$-module.
\end{proof}
With this tensor product construction in hand, we can now establish that the modules $L(\bfQ)$ (for $\bfQ\in\Qnr$) indeed carry an $\afSrk$-module structure.
\begin{Prop}\label{L(bfQ) is a afSrk-module}
For $\bfQ\in\Qnr$, $L(\bfQ)$ is a finite-dimensional irreducible $\afSrk$-module via $\afxrk$.
\end{Prop}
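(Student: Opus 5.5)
We will use the factorization from Proposition \ref{decomposition of L(Q)} and build the $\afSrk$-structure one factor at a time. Write $\bfQ=\prod_{1\leq s\leq m}\bfQ_{\la^{(s)},a_s}$ with $a_s\in\field^*$ distinct and $\la^{(s)}\in\La^+(n)$. Set $r_s=|\la^{(s)}|=\sum_i b_{i,s}$. Since $\bfQ\in\Qnr$, the total degree is $\sum_i\deg Q_i(u)=\sum_s r_s=r$.

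\emph{First}, for each $s$ the weight $\la^{(s)}$ lies in $\La^+(n,r_s)$. So $L(\la^{(s)})$ is an irreducible $\mathcal S_\field(n,r_s)$-module, and its pullback $L(\la^{(s)})_{a_s}$ along the surjective homomorphism $\ti\ev_{a_s}:\afSk{r_s}\ra\mathcal S_\field(n,r_s)$ of Lemma \ref{tieva} is an $\afSk{r_s}$-module. By Proposition \ref{commute diagram}, its $\afuglk$-module structure obtained through $\afxk{r_s}$ coincides with the pullback along $\Ev_{a_s}$. This is the structure used in Lemma \ref{evaluation representatin L(la)a}.

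\emph{Next}, apply Lemma \ref{V otimes W} repeatedly, by induction on $m$. This makes $V=L(\la^{(1)})_{a_1}\ot\cdots\ot L(\la^{(m)})_{a_m}$ an $\afSk{r_1+\cdots+r_m}=\afSrk$-module. The point to check is that the resulting $\afuglk$-action, obtained by composing with $\afxrk$, is the Hopf tensor product action. This holds by the compatibility relation $\vi_{r',r''}\circ(\afxk{r'}\ot\afxk{r''})\circ\Delta=\afxk{r'+r''}$ together with coassociativity of $\Delta$. Proposition \ref{decomposition of L(Q)} then gives $V\cong L(\bfQ)$ as $\afuglk$-modules. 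Transporting the structure, $L(\bfQ)$ becomes an $\afSrk$-module whose pullback via $\afxrk$ is the given $\afuglk$-module.

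\emph{Finally}, we check finite-dimensionality and irreducibility. $V$ is finite-dimensional because each $L(\la^{(s)})$ is. For irreducibility, $\afxrk:\afuglk\ra\afSrk$ is surjective by \eqref{afxrk}, so the $\afSrk$-submodules of $L(\bfQ)$ are exactly its $\afuglk$-submodules. Since $L(\bfQ)$ is irreducible over $\afuglk$, it is irreducible over $\afSrk$. The main point needing care is the second step: verifying that iterated use of $\Delta_{r',r''}$ is compatible with iterated $\Delta$. This reduces to associativity of the maps $\vi_{r',r''}$ on tensor spaces, which is clear from their definition.
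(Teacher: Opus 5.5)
Your proposal is correct and follows the paper's own proof. You decompose $L(\bfQ)$ into evaluation modules via Proposition~\ref{decomposition of L(Q)}, note that each $L(\la^{(s)})_{a_s}$ is an $\afSk{r_s}$-module, and apply Lemma~\ref{V otimes W} to make the tensor product an $\afSrk$-module. Your added remarks on the compatibility of iterated $\Delta_{r',r''}$ with $\Delta$, and on irreducibility via the surjectivity of $\afxrk$, fill in details the paper leaves implicit.
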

\begin{proof}
By Proposition \ref{decomposition of L(Q)}, as a $\afuglk$-module, $L(\bfQ)$ decomposes as
\[
L(\bfQ) \cong L(\la^{(1)})_{a_1}\ot L(\la^{(2)})_{a_2}\ot\cdots\ot L(\la^{(m)})_{a_m}
\]
for some $\la^{(i)}\in\La^+(n,r_i)$ and $a_{i}\in\mpk^*$, where $\sum_{1\leq i\leq m}r_i=r$. Furthermore from Lemma \ref{V otimes W}, we see that $L(\la^{(1)})_{a_1}\ot L(\la^{(2)})_{a_2}\ot\cdots\ot L(\la^{(m)})_{a_m}$ is a $\afSrk$-module,
since the evaluation module $L(\la^{(i)})_{a_i}$ is a $\afSk{r_i}$-module.
Therefore $L(\bfQ)$ can be regarded as a $\afSrk$-module via $\afxrk$.
\end{proof}
This proposition connects the representation theory of the hyper loop algebra $\afuglk$ with that of affine Schur algebras, showing that the irreducible modules $L(\bfQ)$ naturally descend to representations of the affine Schur algebra.
\subsection{The functor $\sG:\afS{N,r}\hmod\lra\afSrk\hmod$}
To further understand the relationship between affine Schur algebras of different sizes, we now introduce a Schur functor that relates representations of larger affine Schur algebras to smaller ones.

Assume $N\geq n$. There is an injective map
$$\ti{\,\,}:\afThn\lra\afTh(N),\quad A=(a_{i,j})\longmapsto\ti
A=(\ti a_{i,j}),$$ where $\ti A=(\ti a_{i,j})$
is defined by
\begin{equation*}\label{AtoAtilde}
\ti a_{k,l+mN}=\begin{cases} a_{k,l+mn}, &\text{ if }1\leq k,l\leq n;\\
0, &\text{ if either }n< k\leq N\text{ or }n< l\leq N\end{cases}
\end{equation*}
for  $m\in\mbz$.
This induces an injective
algebra homomorphism (not sending 1 to 1)
$$\iota_{n,N}:\afSrk\lra\afS{N,r},\;\;e_A\longmapsto e_{\ti A}\;\;\text{for $A\in\afThnr$}$$
as in \cite[\S 4.1]{DDF}. Similarly, there is an injective map $$\ti{}:\afLanr\ra\afLa(N,r),\quad\la\mapsto\ti\la,$$
where $\ti\la_i=\la_i$ for $1\leq i\leq n$ and $\ti\la_i=0$ for $n+1\leq i\leq N$.
Let $\ti\afLa(n,r)=\{\ti\la\mid\la\in\afLanr\}$.
Define the idempotent $$e=\sum_{\la\in\afLanr}e_{\diag(\ti\la)}\in\afS{N,r}.$$ Then we have
$$e\afS{N,r} e=\iota_{n,N}(\afSrk)\cong\afSrk.$$
We identify $e\afS{N,r} e$ with $\afSrk$.
Let $\afSrk\hmod$ denote  the category of finite-dimensional $\afSrk$-modules.
Define the functor
\begin{equation}\label{the functor G}
\sG:\afS{N,r}\hmod\lra\afSrk\hmod,\qquad
V\longmapsto eV.
\end{equation}
This functor projects representations of the larger algebra $\afS{N,r}$ onto representations of the smaller algebra $\afSrk$, allowing us to relate their representation theories.

For $\bfQ=(Q_1(u),\ldots,Q_n(u))\in\Qnr$ define
\begin{equation}\label{tibfQ}
\ti\bfQ=
(Q_1(u),\ldots,Q_n(u),1,\ldots,1)\in\QNr.
\end{equation}
Let
$\tiQnr=\{\ti\bfQ\mid\bfQ\in\Qnr\}.$
We now determine when the Schur functor $\sG$ yields nonzero results on irreducible modules.
\begin{Prop}\label{the condition of eL(bfQ)=0}
Assume $N\geq n$. For $\bfQ\in\QNr$ we have $\sG(L(\bfQ))\not=0$ if and only if $\bfQ\in\tiQnr$.
\end{Prop}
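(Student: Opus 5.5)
The plan is to identify the idempotent $e$ with a projection onto certain weight spaces, and then read off the answer from the weight structure of $L(\bfQ)$ described in \S7.

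\emph{Step 1: $e$ is a weight projection.} For $\mu\in\afLa(N,r)$, write $\big({H\atop\mu}\big)=\prod_{1\leq i\leq N}\big({H_i\atop\mu_i}\big)\in\afuglkN$. I will check that $\afxrk\big(\big({H\atop\mu}\big)\big)=e_{\diag(\mu)}$ in $\afS{N,r}$.
\begin{itemize}
\item On a basis vector $\og_\bfi$ of weight $\nu\in\afLa(N,r)$, this element acts by the scalar $\prod_i\big({\nu_i\atop\mu_i}\big)$.
\item Since $\sum_i\nu_i=\sum_i\mu_i=r$, this scalar is $\dt_{\mu\nu}$: if $\nu\neq\mu$ then $\nu_i<\mu_i$ for some $i$, which kills the product.
\item The operator acting by $\dt_{\mu\nu}$ on weight-$\nu$ vectors is exactly $e_{\diag(\mu)}$, via the identification of $\Ogk^{\ot r}$ with $\bop_\la\field\ul{\fS_\la}\affSr$.
\end{itemize}
Consequently, for any finite-dimensional $\afS{N,r}$-module $V$ (regarded as an $\afuglkN$-module via $\afxrk$), we get $V=\bop_{\mu\in\afLa(N,r)}V_\mu$, with $e_{\diag(\mu)}$ acting as the projection onto $V_\mu$. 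Hence
\[
\sG(V)=eV=\bop_{\mu\in\ti\afLa(n,r)}V_\mu ,
\]
where $\ti\afLa(n,r)$ consists of the $\mu\in\afLa(N,r)$ with $\mu_{n+1}=\cdots=\mu_N=0$.

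\emph{Step 2: weights of $L(\bfQ)$.} Put $\la=\deg\bfQ$. By \S7.2, $L(\bfQ)$ is a quotient of $M(\bfQ)=\bop_{\mu\trianglelefteq\la}M(\bfQ)_\mu$, with $\dim M(\bfQ)_\la=1$. The image of $\bar1$ is nonzero, so $L(\bfQ)_\la\neq0$, and every weight $\mu$ of $L(\bfQ)$ satisfies $\mu=\la-\sum_{1\leq i\leq N-1}c_i\al_i$ with $c_i\in\mbn$.

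\emph{Step 3: both directions.}
\begin{itemize}
\item If $\bfQ=\ti{\bfQ'}$ with $\bfQ'\in\Qnr$, then $\la_i=0$ for $i>n$, so $\la\in\ti\afLa(n,r)$. Step 1 then gives $\sG(L(\bfQ))\supseteq L(\bfQ)_\la\neq0$.
\item Conversely, suppose $\bfQ\notin\tiQnr$. Then some $Q_i(u)\neq1$ with $i>n$, so $\la_{n+1}+\cdots+\la_N\geq1$. Consider the tail sum $\mu_{n+1}+\cdots+\mu_N$ for a weight $\mu=\la-\sum c_i\al_i$.
 \begin{itemize}
 \item Subtracting $\al_i$ with $i<n$ does not change the tail sum.
 \item Subtracting $\al_n=\bse_n-\bse_{n+1}$ raises it by one.
 \item Subtracting $\al_i$ with $i>n$ leaves it unchanged.
 \end{itemize}
 Hence $\mu_{n+1}+\cdots+\mu_N=\la_{n+1}+\cdots+\la_N+c_n\geq1$ for every weight $\mu$ of $L(\bfQ)$. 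So no weight of $L(\bfQ)$ lies in $\ti\afLa(n,r)$, and $eL(\bfQ)=0$.
\end{itemize}

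\emph{Main obstacle.} The only delicate point is Step 1: verifying that $e_{\diag(\mu)}$ really equals the image of $\big({H\atop\mu}\big)$, so that $e$ is the weight projection. I would do this directly on the basis $\og_\bfi$ as above, since $\afxrk$ is defined through the tensor-space action. I also need $L(\bfQ)$ to be an $\afS{N,r}$-module, which is Proposition~\ref{L(bfQ) is a afSrk-module} applied with $N$ in place of $n$.
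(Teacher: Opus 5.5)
Your proposal is correct and follows essentially the same argument as the paper. Both directions rest on $L(\bfQ)_\la\neq0$ together with the fact that every weight $\mu$ of $L(\bfQ)$ satisfies $\mu\trianglelefteq\la$; your tail-sum computation is the contrapositive of the paper's inequality $r=\sum_{i\le n}\al_i\le\sum_{i\le n}\la_i\le r$ for a weight $\ti\al\trianglelefteq\la$, and the only addition is that you explicitly verify $e_{\diag(\mu)}V=V_\mu$ via $\afxrk\big(\big({H\atop\mu}\big)\big)=e_{\diag(\mu)}$, which the paper uses without comment.
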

\begin{proof}
Let $\la=\deg(\bfQ)$. If $\bfQ\in\tiQnr$ then we have  $$e(L(\bfQ)_\la)=e(e_{\diag(\la)}L(\bfQ))=e_{\diag(\la)}L(\bfQ)=L(\bfQ)_\la\not=0,$$ so $\sG(L(\bfQ))\not=0$. Conversely, assume  $\sG(L(\bfQ))\not=0$. Since $1=\sum_{\al\in\afLa(N,r)}e_{\diag(\al)}$, we have
$$L(\bfQ)=\bop_{\al\in\afLa(N,r)}e_{\diag(\al)}L(\bfQ)$$
and $L(\bfQ)_\al=e_{\diag(\al)}L(\bfQ)$ for $\al\in\afLa(N,r)$.
Hence by Lemma \ref{Ejk Vmu}
we have $$L(\bfQ)=\bop_{\al\in\afLa(N,r)\atop\al\unlhd\la}e_{\diag(\al)}L(\bfQ).$$
It follows that
$$eL(\bfQ)=\bop_{\al\in\afLanr\atop\ti\al\leq\la}e_{\diag(\ti\al)}L(\bfQ).$$
Hence $e_{\diag(\ti\al)}L(\bfQ)\not=0$ for some $\al\in\afLanr$ with $\ti\al\unlhd\la$. It follows that $r=\sum_{1\leq i\leq n}\al_i\leq\sum_{1\leq i\leq n}\la_i\leq r$. Hence $\la\in\ti\afLa(n,r)$. Consequently, we have $\bfQ\in\tiQnr$.
\end{proof}
This result characterizes precisely which irreducible representations of the larger algebra survive under the Schur functor. Next, we show that when they do survive, they correspond exactly to the irreducible representations of the smaller algebra.

\begin{Prop}\label{properties of the functor G}
Assume $N\geq n$. For $\bfQ\in\Qnr$ we have $\sG(L(\ti\bfQ))\cong L(\bfQ)$ as a $\afSrk$-module.
\end{Prop}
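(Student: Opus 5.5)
The plan is the standard idempotent-truncation argument. Since $e\afS{N,r}e\cong\afSrk$ and $L(\ti\bfQ)$ is irreducible, the first step is to invoke the general fact (Green's idempotent theory) that for an irreducible module $V$ of a finite-dimensional quotient through which it factors, $eV$ is either $0$ or irreducible over $eAe$. There is a subtlety: $\afS{N,r}$ is infinite-dimensional. However, the action on the finite-dimensional module $L(\ti\bfQ)$ factors through the finite-dimensional image algebra $\bar A$ of $\afS{N,r}$ in $\End_\field(L(\ti\bfQ))$, and $e\bar Ae$ is the image of $e\afS{N,r}e$. So the usual argument applies: any nonzero $v\in eV$ gives $e\afS{N,r}e\,v=e\afS{N,r}v=eV$. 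By Proposition \ref{the condition of eL(bfQ)=0}, $\sG(L(\ti\bfQ))\neq0$, so $\sG(L(\ti\bfQ))$ is a finite-dimensional irreducible $\afSrk$-module.

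Next, I would identify it via Drinfeld polynomials, pulling it back to $\afuglk$ through $\afxrk$. Let $\la=\deg\bfQ$. By the proof of Proposition \ref{the condition of eL(bfQ)=0}, the highest weight vector $w_0$ of $L(\ti\bfQ)$ spans $L(\ti\bfQ)_{\ti\la}=e_{\diag(\ti\la)}L(\ti\bfQ)\subseteq eL(\ti\bfQ)$. The key step is to check that, under $\iota_{n,N}$, the generators of $\afSrk$ coming from $\afuglk(n)$ act on $eL(\ti\bfQ)$ as the corresponding generators of $\afuglk(N)$ truncated by $e$. Concretely, I want
\[
\iota_{n,N}\bigl(\afxrk(x)\bigr)=e\,\h\xi_{r,\field}^{(N)}(x)\,e
\]
for $x=(\afE_{j,k+sn})^{(m)}$, $\big({H_i\atop m}\big)$ and $\La_{i,\pm k}$ with indices in $1,\dots,n$, where $x$ on the right is read in $\afuglk(N)$ via $\afE_{j,k+sn}\mapsto\afE_{j,k+sN}$. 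For the divided powers and $\big({H_i\atop m}\big)$, I would use the BLM descriptions $(m\afE_{i,j})[\bfl,r]$ and $0[\bfj,r]$ from \cite{DDF}, together with the compatibility $e_A\mapsto e_{\ti A}$ from \cite[\S4.1]{DDF}. For $\La_{i,\pm k}$, I would use Proposition \ref{Laikl formula}, which gives $\La_{i,\pm k}=(-1)^k(k\afE_{i,i\pm n})[\bfl]$, and transport it the same way. Since $e$ kills the summands $e_{A+\diag(\mu)}$ with $\mu\notin\ti\afLa(n,r)$, these identities should follow term by term.

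Granting this, $w_0\in eL(\ti\bfQ)$ is annihilated by the $(\afE_{j,k+sn})^{(m)}$ with $j<k\le n$. It has weight $\la$, and $\La_i^\pm(u)$ acts on it by $Q_i^\pm(u)$ for $i\le n$. Hence $eL(\ti\bfQ)$ is a quotient of $M(\bfQ)$, and being irreducible it is $L(\bfQ)$. The main obstacle is the compatibility identity above, especially for the $\La_{i,\pm k}$ and for general $[\bfj]$-type elements. If a direct BLM check is messy, I would instead verify it on the tensor space: realize $\iota_{n,N}$ as restriction to the summand of $\Og_{N,\field}^{\ot r}$ spanned by $\og_\bfi$ with residues in $\{1,\dots,n\}$, on which the operators $F_{i,j,\la}$ of \S4 visibly agree.
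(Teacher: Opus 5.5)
Your proposal is correct and follows essentially the paper's route. You get nonvanishing from Proposition \ref{the condition of eL(bfQ)=0} and irreducibility of $eV$ over $eAe$. You then do a highest-weight check on $w_0=ew_0\in L(\ti\bfQ)_{\ti\la}$ using $\iota_{n,N}(e_A)=e_{\ti A}$ together with Proposition \ref{generators}(2) and Proposition \ref{Laikl formula}, which gives a nonzero map $M(\bfQ)\to\sG(L(\ti\bfQ))$ that must be onto $L(\bfQ)$.

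Your operator identity $\iota_{n,N}(\afxrk(x))=e\,\h\xi^{(N)}_{r,\field}(x)\,e$ is just the paper's term-by-term computation, stated before applying it to $w_0$; the tensor-space fallback and general $[\bfj]$-type elements are not needed. Also, the truncation argument ($e\h{\mathcal S}_\field(N,r)e\,v=eV$ for $0\neq v\in eV$) works over any ring, so the infinite-dimensionality caveat is harmless but unnecessary.
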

\begin{proof}
Let $\la=\deg(\bfQ)$ and choose a nonzero element $w_0\in L(\ti\bfQ)_{\ti\la}$. As a
$\afuglkN$-module, $L(\ti\bfQ)$ is the unique irreducible quotient of $M(\ti\bfQ)$. It follows that
\begin{equation}\label{action}
(\afE_{j,k+lN})^{(m)}w_0=0\text{ and }\La_{i}^\pm(u)w_0=  Q_{i}^\pm(u)w_0
\end{equation}
for $1\leq i\leq n$, $1\leq j<k\leq n$ and $m>0$.
The $\field$-vector space $\sG(L(\ti\bfQ))$ is a $\afuglk$-module via $\iota_{n,N}\circ\afxrk$.
Since $w_0=e_{\diag(\ti\la)}w_0$, we have $ew_0=e_{\diag(\ti\la)}w_0=w_0.$
Hence we have
\[\iota_{n,N}\circ\afxrk\bigg(\bigg({H_i\atop t}\bigg)\bigg)(ew_0)=
\sum_{\mu\in\afLanr}\bigg({\mu_i\atop t}\bigg)e_{\diag(\ti\mu)}(e_{\diag(\ti\la)}w_0)=
\bigg({\la_i\atop t}\bigg)(ew_0)
\]
for $1\leq i\leq n$. Furthermore by Proposition \ref{generators}(2)  we have
\[
\begin{split}
\iota_{n,N}\circ\afxrk((\afE_{j,k+ln})^{(m)})(ew_0)&=
\iota_{n,N}(m\afE_{j,k+ln})[\bfl,r](e_{\diag(\ti\la)}w_0)\\
&=\sum_{\mu\in\afLa(n,r-m)}e_{m\afE_{j,k+lN}+\diag(\ti\mu)}(e_{\diag(\ti\la)}w_0)\\
&=e_{m\afE_{j,k+lN}+\diag(\ti\la-m\afbse_k)}w_0\\
&=(\afE_{j,k+lN})^{(m)}w_0\\
\end{split}
\]
for $1\leq j<k\leq n$, $l\in\mbz$ and $m>0$.
It follows from \eqref{action} that $$\iota_{n,N}\circ\afxrk((\afE_{j,k+ln})^{(m)})(ew_0)=0$$
for $1\leq j<k\leq n$, $l\in\mbz$ and $m>0$. In addition, by Proposition \ref{Laikl formula} we have
\[
\begin{split}
\iota_{n,N}\circ\afxrk(\La_{i,\pm k})(ew_0)&=
\iota_{n,N}((-1)^k(k\afE_{i,i\pm n})[\bfl,r])(e_{\diag(\ti\la)}w_0)\\
&=(-1)^k\sum_{\mu\in\afLa(n,r-k)}e_{k\afE_{i,i\pm N}+\diag(\ti\mu)}(e_{\diag(\ti\la)}w_0)\\
&=(-1)^ke_{k\afE_{i,i\pm N}+\diag(\ti\la-k\afbse_i)}w_0\\
&=\La_{i,\pm k} w_0
\end{split}
\]
for $1\leq i\leq n$ and $k\in\mbn$.
Hence by \eqref{action} we have
$$\iota_{n,N}\circ\afxrk(\La_{i,\pm k})(ew_0)=Q_{i,\pm k}(w_0)=Q_{i,\pm k}(ew_0)$$ for $1\leq i\leq n$ and $k\in\mbn$. Consequently, there is a surjective $\afuglk$-module homomorphism
$\vi:M(\bfQ)\ra \sG(L(\ti\bfQ))$. It follows that $\sG(L(\ti\bfQ))\cong M(\bfQ)/\ker\vi$ as a $\afuglk$-module. By Proposition \ref{the condition of eL(bfQ)=0} we have $\sG(L(\ti\bfQ))\not=0$. Hence by \cite{Gr80}, $\sG(L(\ti\bfQ))$ is an irreducible $\afuglk$-module. Therefore, since
$L(\bfQ)$ is the unique irreducible quotient of $M(\bfQ)$, we have $\sG(L(\ti\bfQ))$ is isomorphic to $L(\bfQ)$ as a $\afuglk$-module.
\end{proof}

\subsection{Classification of finite-dimensional irreducible $\afSrk$-modules}
We first treat the case \(n>r\).
\begin{Lem}\label{n>r case}
Assume that $n>r$. Then the set $\{L(\bfQ)\mid\bfQ\in\Qnr\}$ forms a complete set of pairwise non-isomorphic finite-dimensional irreducible $\afSrk$-modules.
\end{Lem}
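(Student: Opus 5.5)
The plan is to reduce to the classification of irreducible polynomial $\afuglk$-modules (Theorem~\ref{polynomial irreducible afuglk-modules}) through the surjection $\afxrk:\afuglk\tra\afSrk$ of \eqref{afxrk}. Since $\afxrk$ is surjective, finite-dimensional irreducible $\afSrk$-modules are exactly the finite-dimensional irreducible $\afuglk$-modules annihilated by $\ker\afxrk$. Non-isomorphic $\afSrk$-modules therefore stay non-isomorphic as $\afuglk$-modules. By Proposition~\ref{L(bfQ) is a afSrk-module}, each $L(\bfQ)$ with $\bfQ\in\Qnr$ is an irreducible $\afSrk$-module. These modules are pairwise non-isomorphic, because the Drinfeld polynomials $\bfQ$ are recovered from the eigenvalues of $\La_i^\pm(u)$ on the highest weight vector. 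It remains to prove completeness.

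Let $V$ be a finite-dimensional irreducible $\afSrk$-module. First, $V$ is a weight module with weights in $\afLanr$. The elements $e_{\diag(\la)}$, $\la\in\afLanr$, are orthogonal idempotents summing to $1$, and $\afxrk(\binom{H_i}{k})=\sum_\mu\binom{\mu_i}{k}e_{\diag(\mu)}$. Hence $V=\bigoplus_\la e_{\diag(\la)}V$ with $e_{\diag(\la)}V=V_\la$, so $V|_{\uglk}\in\ms C_n$ and every weight has total $r$.

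Second, I check the polynomial condition on the eigenvalues of $\La_i^\pm(u)$; this is where $n>r$ is used. On a weight space $V_\la$ we have $\la_i\le r<n$, so some coordinate $\la_j$ vanishes. The plan is to use the idempotent trick of Theorem~\ref{thm:second-centralizer-stable}: with $\varpi$ as there, $\Ogk^{\ot r}\cong\afSrk e_{\diag(\varpi)}$ and $e_{\diag(\varpi)}\afSrk e_{\diag(\varpi)}\cong\field\affSr$. Every simple $\afSrk$-module $V$ should then satisfy $e_{\diag(\varpi)}V\neq0$ and be a quotient of $\afSrk e_{\diag(\varpi)}\otimes_{\field\affSr}e_{\diag(\varpi)}V$. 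Here $e_{\diag(\varpi)}V$ is a finite-dimensional simple $\field\affSr\cong\sfP\rtimes\fS_r$-module. On it $\sfP$ acts through finitely many characters $x_k\mapsto c_k\in\field^*$, since the field is algebraically closed. Then $V$ is a quotient of $\Ogk^{\ot r}\otimes_{\sfP}(\text{finite-dimensional module})$, which is built from copies of the evaluation-type tensor space. On such a module the generating function $\La_i^\pm(u)$ acts on each tensor factor through Lemma~\ref{lem:center-elements-hyper-loop}-type formulas, namely \eqref{Laikl(ogj)}. Each factor contributes $(1-c^{\pm1}u)$ or $1$, so on $V_\la$ the generalized eigenvalues are products of exactly $\la_i$ such linear factors. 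Thus $V\in\h{\ms C}_n$, and Theorem~\ref{polynomial irreducible afuglk-modules} gives $V\cong L(\bfQ)$ for some $\bfQ\in\Qn$. Because $\deg\bfQ$ is the highest weight and lies in $\afLanr$, we get $\bfQ\in\Qnr$.

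The main obstacle is the claim $e_{\diag(\varpi)}V\neq0$ for every simple $V$. The approach I would try first is to show that $\afSrk e_{\diag(\varpi)}\afSrk=\afSrk$ when $n>r$ (in fact $n\ge r$ suffices). This would follow if each $e_{\diag(\la)}$ factors through $e_{\diag(\varpi)}$, via $e_{\diag(\la)}=\frac{1}{\prod\la_i!}e_{A}e_{B}$ with suitable $A,B$ (the classical argument with $\ul{\fS_\la}$). The factorial is not invertible in positive characteristic, so instead I would argue that $\field\ul{\fS_\la}\affSr$ is a direct summand of a sum of copies of $\field\affSr=e_{\diag(\varpi)}$-component. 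Failing that, I would note that $V_\la\neq0$ forces a nonzero map from $\Ogk^{\ot r}$, using that $\field\ul{\fS_\la}\affSr$ embeds in $\field\affSr$, to obtain $\Hom_{\afSrk}(\afSrk e_{\diag(\varpi)},V)\neq0$ by a highest-weight/lowering argument with Lemma~\ref{Ejk Vmu}.
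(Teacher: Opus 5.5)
The reduction to Theorem~\ref{polynomial irreducible afuglk-modules} and the weight-space step are fine. The problem is the step you single out as the main obstacle. The claim that $e_{\diag(\varpi)}V\neq0$ for every simple $V$ is false when $\operatorname{char}\field=p\le r$, and all three of your strategies aim at it.

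\textbf{A counterexample.} Take $r=p<n$, and let $V=L((p,0,\ldots,0))_a$ be the pullback along the surjection $\ti\ev_a$ of Lemma~\ref{tieva}. It is a simple $\afSrk$-module. Classically, $L((p,0,\ldots,0))$ is the Frobenius twist of the natural module, so its weights are $p\epsilon_1,\ldots,p\epsilon_n$. Since $\ti\ev_a\circ\afxrk$ and $\xrk\circ\Ev_a$ agree on the elements $\binom{H_i}{k}$ (Proposition~\ref{commute diagram}), $V$ has the same weight spaces. Hence $e_{\diag(\varpi)}V=V_\varpi=0$.

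\textbf{Consequences for your three strategies.} First, the two-sided ideal $\afSrk e_{\diag(\varpi)}\afSrk$ annihilates $V$, so it is a proper ideal. Second, $\field\ul{\fS_\la}\affSr$ cannot in general be a direct summand of a free $\field\affSr$-module, since that would put $e_{\diag(\la)}$ in this ideal. Third, $\Hom_{\afSrk}(\afSrk e_{\diag(\varpi)},V)\cong e_{\diag(\varpi)}V=0$. This is the affine analogue of the classical fact that the Schur functor kills $L(\la)$ unless $\la$ is $p$-restricted \cite{Gr80}. It is also exactly the question left open in Remark~\ref{Remark}. So your argument proves the lemma only in characteristic $0$, where it needs only $n\ge r$. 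The lemma, however, is claimed in arbitrary characteristic.

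\textbf{How the paper proceeds.} It never checks the polynomial condition on $\La_i^\pm(u)$, and this is where $n>r$ is genuinely used.
By \cite[Th.~8.2]{Lu99}, $\afxrk(\afuslk)=\afSrk$ when $n>r$. So $V$ is a simple $\afuslk$-module, and $V|_{\afuslk}\cong\bar L(\bfP)$ by \cite[Cor.~3.5]{JM07}. Some component $e_{\diag(\la)}w_0$ of a highest weight vector $w_0$ is nonzero, and this forces $\la_i-\la_{i+1}=\deg P_i(u)$. The hypothesis $n>r$ gives $\la_n=0$. Hence $\bfQ$ with $Q_i=\prod_{i\le s\le n-1}P_s$ and $Q_n=1$ lies in $\Qnr$. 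Then $L(\bfQ)|_{\afuslk}\cong\bar L(\bfP)\cong V|_{\afuslk}$, and surjectivity of $\afuslk\to\afSrk$ upgrades this to an $\afSrk$-isomorphism.

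\textbf{Salvaging your route.} Replace $\varpi$ by a weight $\la$ with $e_{\diag(\la)}V\neq0$. Then $V$ is a quotient of $\afSrk e_{\diag(\la)}\cong\Hom_{\affSr}(\field\ul{\fS_\la}\affSr,\Ogk^{\ot r})$, which is a submodule of $\Ogk^{\ot r}$. You would still need an argument controlling the generalized eigenvalues of $\La_i^\pm(u)$ on finite-dimensional subquotients of the infinite-dimensional tensor space. One option is integrality of $\sfP$ over the coefficients of $\prod_{k}(1-ux_k^{\mp1})$ combined with lying-over. Your proposal does not contain such an argument.
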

\begin{proof}
Let $V$ be a finite-dimensional irreducible $\afSrk$-module.
By \cite[Th. 8.2]{Lu99} we have
\begin{equation}\label{image of afuslk}
\afxrk(\afuslk)=\afSrk.
\end{equation}
 Hence $V$ is also a finite-dimensional irreducible
$\afuslk$-module. By \cite[Cor. 3.5]{JM07}, $V$ is isomorphic to $\bar L(\bfP)$ as $\afuslk$-modules for some $\bfP\in\Pn$. Hence there exists some $w_0\not=0\in V$ such that
$$\bigg({H_{i}-H_{i+1}\atop k}\bigg)w_0
=\bigg({\mu_i\atop k}\bigg)w_0$$
for $1\leq i\leq n-1$ and $k>0$, where $\mu_i=\deg P_i(u)$.
Writing
$w_0=\sum_{\bt\in\afLanr}e_{\diag(\bt)}w_0,$
we find $e_{\diag(\la)}w_0\not=0$ for some $\la\in\afLanr$.
Furthermore, for $1\leq i\leq n-1$ and $k>0$
we have
$$\bigg({\mu_i\atop k}\bigg)w_0=\bigg({H_{i}-H_{i+1}\atop k}\bigg)w_0=\sum_{\bt\in\afLanr}
\bigg({\bt_{i}-\bt_{i+1}\atop k}\bigg)e_{\diag(\bt)}w_0$$
since $\afxrk\big(\big({H_{i}-H_{i+1}\atop k}\big)\big)=\sum_{\bt\in\afLanr}
\big({\bt_{i}-\bt_{i+1}\atop k}\big)e_{\diag(\bt)}$.
This implies that
$$\bigg({\mu_i\atop k}\bigg)e_{\diag(\la)}w_0=e_{\diag(\la)}\sum_{\bt\in\afLanr}
\bigg({\bt_{i}-\bt_{i+1}\atop k}\bigg)e_{\diag(\bt)}w_0=
\bigg({\la_{i}-\la_{i+1}\atop k}\bigg)e_{\diag(\la)}w_0$$
for $1\leq i\leq n-1$, $k>0$.
Since $e_{\diag(\la)}w_0\not=0$ we have
$$\bigg({\la_{i}-\la_{i+1}\atop k}\bigg)1_\field=\bigg({\mu_i\atop k}\bigg)1_\field
$$
for $1\leq i\leq n-1$ and $k>0$. It follows that $\la_i-\la_{i+1}=\mu_i$ for $1\leq i\leq n-1$.
Since $n>r$ and $\la\in\afLanr$ we have $\la_n=0$. Hence we have
$$\la_i=\sum_{i\leq s\leq n-1}\mu_s$$ for $1\leq i\leq n-1$.
Define
$$Q_i(u)=\prod_{i\leq s\leq n-1}P_s(u)\text{ and }Q_n(u)=1$$
for $1\leq i\leq n-1$.
Then we have $\deg Q_n(u)=0=\la_n$ and $$\deg(Q_i(u))=\sum_{i\leq s\leq n-1}\deg(P_s(u))=\sum_{i\leq s\leq n-1}\mu_s=\la_i$$ for $1\leq i\leq n-1$.
It follows that $\bfQ=(Q_1(u),Q_2(u),\ldots,Q_n(u))\in\Qnr$. By Proposition \ref{decomposition of L(Q)} and \ref{L(bfQ) is a afSrk-module}, $L(\bfQ)$ is a $\afSrk$-module and
$L(\bfQ)|_{\afuslk}\cong\bar L(\bfP)\cong V|_{\afuslk}$.
Hence there is a $\afuslk$-module isomorphism $\vi: L(\bfQ)\ra V$.
Therefore, by \eqref{image of afuslk} we conclude that $\vi$ must be a $\afSrk$-module isomorphism. On the other hand by Proposition \ref{L(bfQ) is a afSrk-module}, we know that
$L(\bfQ')$ is an irreducible $\afSrk$-module for $\bfQ'\in\Qnr$. The proof is completed.
\end{proof}

We now extend the classification in Lemma~\ref{n>r case} to the general case
using the Schur functor.

\begin{Thm}\label{classification}
The set $\{L(\bfQ)\mid\bfQ\in\Qnr\}$ forms a complete set of non-isomorphic finite-dimensional irreducible $\afSrk$-modules.
\end{Thm}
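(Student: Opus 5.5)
The plan is to reduce to the case $n>r$, already settled in Lemma~\ref{n>r case}, by means of the Schur functor $\sG$ of \eqref{the functor G}. If $n>r$ there is nothing left to prove, so assume $n\leq r$ and fix an integer $N>r$; then $N\geq n$. We identify $\afSrk$ with $e\afS{N,r}e$ via $\iota_{n,N}$. By Proposition~\ref{L(bfQ) is a afSrk-module}, each $L(\bfQ)$ with $\bfQ\in\Qnr$ is a finite-dimensional irreducible $\afSrk$-module. Distinct $\bfQ$ give non-isomorphic modules, because the Drinfeld polynomials $\bfQ$ are recovered from the highest weight vector data: the weight $\deg\bfQ$ and the eigenvalues of $\La_i^\pm(u)$. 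This is the uniqueness statement of Theorem~\ref{polynomial irreducible afuglk-modules}, combined with the fact that the $\afSrk$-action is pulled back through $\afxrk$. What remains is completeness.

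For completeness, let $V$ be a finite-dimensional irreducible $\afSrk$-module. We use the standard idempotent-truncation fact: for a finite-dimensional algebra $S$ and an idempotent $e$, the functor $W\mapsto eW$ induces a bijection between the simple $S$-modules $W$ with $eW\neq0$ and the simple $eSe$-modules. Here the inverse is given by taking the head of $Se\otimes_{eSe}V$.

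A difficulty is that $\afSrk$ is infinite-dimensional. We would handle this by passing to a finite-dimensional quotient. The image $\bar S$ of $\afS{N,r}$ in $\End_\field(\widetilde V)$ is finite-dimensional, where $\widetilde V$ is the head of $\afS{N,r}e\otimes_{\afSrk}V$. Alternatively, one can argue directly that $\afS{N,r}e\otimes_{\afSrk}V$ is finite-dimensional, because $\afS{N,r}e$ is finitely generated as a right $\afSrk$-module up to the relevant weight spaces. If that fails, we would instead use the purely algebraic construction. Let $U=\afS{N,r}e\otimes_{\afSrk}V$. Then $eU\cong V$, and $U$ has a unique maximal submodule among those killed by $e$, namely the sum of all submodules $U'$ with $eU'=0$. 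The quotient $\widetilde V$ is then irreducible with $e\widetilde V\cong V$.

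Finite-dimensionality of $\widetilde V$ is the main obstacle. Our first attempt would be to observe that $\widetilde V$ is a simple module over $\afS{N,r}$, whose center $\sfP^{\fS_r}$ acts by scalars by Lemma~\ref{lem:center-affine-schur}. We would then combine this with finite generation of $\afS{N,r}$ over its center, deducing finiteness over $\field$ from a Schur-lemma and Nullstellensatz type argument.

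Granting that $\widetilde V$ is a finite-dimensional irreducible $\afS{N,r}$-module, Lemma~\ref{n>r case} applies with $N>r$ and gives $\widetilde V\cong L(\bfQ')$ for some $\bfQ'\in\QNr$. Since $\sG(L(\bfQ'))=e\widetilde V\cong V\neq0$, Proposition~\ref{the condition of eL(bfQ)=0} shows that $\bfQ'=\ti\bfQ$ for some $\bfQ\in\Qnr$. Proposition~\ref{properties of the functor G} then gives $V\cong\sG(L(\ti\bfQ))\cong L(\bfQ)$, completing the classification.
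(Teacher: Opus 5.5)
Your argument follows the same route as the paper. Both truncate by the idempotent $e$ in $\afS{N,r}$ with $N>\max\{n,r\}$, invoke Lemma~\ref{n>r case}, and identify the surviving modules by Propositions~\ref{the condition of eL(bfQ)=0} and~\ref{properties of the functor G}. The paper gets completeness and non-isomorphism together from Green's (6.2g). You take non-isomorphism from Theorem~\ref{polynomial irreducible afuglk-modules} through pullback along $\afxrk$, which is equally valid.

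You are right that the lifted simple module $\widetilde V$ must be shown finite-dimensional before Lemma~\ref{n>r case} applies. Green's (6.2g) is set up for finite-dimensional algebras, and the paper's proof passes over this point. Your write-up does not yet settle it, however. Your first option is circular, since it presupposes that $\widetilde V$ is finite-dimensional. The ``weight spaces'' heuristic is not the real reason. Lemma~\ref{lem:center-affine-schur} identifies the center but does not by itself make it act by scalars on a possibly infinite-dimensional simple module.

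A clean argument runs as follows.
\begin{itemize}
\item For $z\in\sfZ=\sfP^{\fS_r}$, the operator $R_z$ lies in $\afS{N,r}$ and is central there, by Lemma~\ref{center}.
\item By Lemma~\ref{lem:sc-freeness}, applied with $N\geq2$ in place of $n$ and an algebraically closed field as UFD, $\End_\sfZ(\sfT)$ is a finitely generated free $\sfZ$-module. Since $\sfZ$ is Noetherian, the $\sfZ$-submodule $\afS{N,r}e$ is generated by finitely many elements $x_1,\ldots,x_k$.
\item Because $R_z$ is central and $x_i=x_ie$, we have $x_iR_z=x_i(eR_ze)$ with $eR_ze\in e\afS{N,r}e$. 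Hence $U=\afS{N,r}e\ot_{e\afS{N,r}e}V$ is spanned by the elements $x_i\ot v$, with $v$ running over a basis of $V$.
\item So $U$, and therefore its quotient $\widetilde V$, is finite-dimensional.
\end{itemize}
Equivalently, $\sfZ$ acts on $V$ and hence on $U$ through a character with kernel $\mathfrak m$, and $\afS{N,r}/\mathfrak m\afS{N,r}$ is finite-dimensional. With this step in place, the rest of your argument goes through as written.
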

\begin{proof}
Choose $N>\max\{n,r\}$. By Lemma \ref{n>r case}
the set $\{L(\bfQ)\mid\bfQ\in\QNr\}$ is a complete set of non-isomorphic finite-dimensional irreducible $\afS{N,r}$-modules. This together with \cite[(6.2g)]{Gr80} implies that the set
$$\sX_r:=\{\sG(L(\bfQ))\not=0\mid\bfQ\in\QNr\}$$
forms a complete set of non-isomorphic finite-dimensional irreducible $\afSrk$-modules, where the functor $\sG$ is defined in \eqref{the functor G}. By Proposition \ref{the condition of eL(bfQ)=0} and \ref{properties of the functor G}
we have $$\sX_r=\{\sG(L(\ti\bfQ))\mid\bfQ\in\Qnr\}=\{L(\bfQ)\mid\bfQ\in\Qnr\}.$$ The proof is completed.
\end{proof}

\subsection{Character of irreducible modules }
With the classification in hand, we now investigate the character theory of these irreducible modules.

Combining Proposition \ref{decomposition of L(Q)} and Theorem \ref{classification}, we obtain the following result.
\begin{Thm}\label{decomposition}
Each finite-dimensional irreducible $\afSrk$-module is isomorphic to a tensor product of evaluation modules.
\end{Thm}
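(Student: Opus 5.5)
The plan is to combine Theorem~\ref{classification} with Proposition~\ref{decomposition of L(Q)}, taking care that the tensor product decomposition holds as $\afSrk$-modules and not merely as $\afuglk$-modules.

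\emph{Step 1 (reduction to $L(\bfQ)$).} Let $V$ be a finite-dimensional irreducible $\afSrk$-module. By Theorem~\ref{classification}, $V\cong L(\bfQ)$ for some $\bfQ\in\Qnr$, where $L(\bfQ)$ carries the $\afSrk$-structure of Proposition~\ref{L(bfQ) is a afSrk-module}.

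\emph{Step 2 (factorization of $\bfQ$).} Factor $\bfQ=\prod_{1\leq s\leq m}\bfQ_{\la^{(s)},a_s}$ as in \S7.4, with the $a_s\in\field^*$ distinct and $\la^{(s)}\in\La^+(n)$. Put $r_s=\sum_i\la^{(s)}_i$. Since $\sum_i\deg Q_i=r$, we get $\sum_s r_s=r$ and $\la^{(s)}\in\La^+(n,r_s)$. Proposition~\ref{decomposition of L(Q)} then gives an $\afuglk$-module isomorphism
\[
L(\bfQ)\cong L(\la^{(1)})_{a_1}\ot\cdots\ot L(\la^{(m)})_{a_m}.
\]
Here each $L(\la^{(s)})_{a_s}$ is the pullback along $\ti\ev_{a_s}:\afSk{r_s}\ra\mathcal S_\field(n,r_s)$ of the irreducible Schur module $L(\la^{(s)})$, so it is an evaluation module.

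\emph{Step 3 (upgrade to $\afSrk$-modules).} By Lemma~\ref{V otimes W}, iterated, the right-hand side is an $\afSrk$-module whose action factors through $\afxrk$, using the compatibility $\vi_{r',r''}\circ(\afxk{r'}\ot\afxk{r''})\circ\Delta=\afxk{r'+r''}$. The structure on $L(\bfQ)$ is also induced via $\afxrk$. Because $\afxrk:\afuglk\ra\afSrk$ is surjective by \eqref{afxrk}, any $\afuglk$-isomorphism between two $\afSrk$-modules whose $\afuglk$-actions are pulled back along $\afxrk$ is automatically $\afSrk$-linear. Hence the isomorphism of Step~2 is an isomorphism of $\afSrk$-modules, and $V$ is a tensor product of evaluation modules.

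The only potential subtlety, and the step I expect to need care, is Step~3. One must check that the $\afSrk$-structure on the tensor product is the one obtained through $\Delta_{r',r''}$, and that this agrees with the structure obtained through $\afxrk$ and the coproduct. This is exactly the content of the commutative square in Lemma~\ref{V otimes W}, so it should not present a real obstacle.
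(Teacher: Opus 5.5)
Your proposal is correct and follows the paper's route: the paper's proof simply combines Theorem~\ref{classification} with Proposition~\ref{decomposition of L(Q)}. Your Step~3 is also correct; it uses the surjectivity of $\afxrk$ to upgrade the $\afuglk$-isomorphism to an $\afSrk$-isomorphism, making explicit a point the paper leaves implicit.
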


For $V\in\ms C_n$ the character of $V$ is an element of the polynomial ring $\mbz[X_1,X_2,\cdots,X_n]$ given by $$\chi(V)=\sum_{\la\in\mbnn}\dim V_\la X^\la,$$
where $X^\la=X_1^{\la_1}\cdots X_n^{\la_n}$.

Let $V\in\h{\ms C}_n$. Let $v_1,\cdots,v_m$ be a basis of common generalized eigenvectors
of $\{\big({H_i\atop k}\big),\,\La_i^\pm(u)\}$ in $V$. Then the generalized eigenvalues of $\La^\pm_i(u)$ on each vector $v_s$ are polynomials $Q_{i,s}^\pm(u)=\prod_{1\leq k\leq\la_{i,s}}(1-a_{i,s,k}^{\pm 1}u)$ and we have $\big({H_i\atop k}\big)v_s=
\big({\la_{i,s}\atop k}\big)v_s$.  The character of $V$, denoted by $\chi(V)$, is an element of the polynomial ring $\mbz[X_{i,a}\mid 1\leq i\leq n,\,a\in\mpk^*]$, given by
$$\chi(V)=\sum_{1\leq s\leq m}\prod_{1\leq i\leq n}\prod_{1\leq k\leq\la_{i,s}}X_{i,a_{i,s,k}}.$$

For $a\in\mpk^*$, we define an injective ring homomorphism
$$\iota_a:\mbz[X_1,X_2,\cdots,X_n]\ra\mbz[X_{i,a}\mid 1\leq i\leq n,\,a\in\mpk^*]$$ such that
$\iota_a(X_i)=X_{i,a}$ for $1\leq i\leq n$.
\begin{Thm}\label{character}
Let $\bfQ\in\Qnr$ and write $\bfQ=\prod_{1\leq s\leq m}\bfQ_{\la^{(s)},a_s}$, where
$\{a_s^{-1} \mid 1\leq s\leq m\}$ is the set of distinct roots of $\prod_{1\leq i\leq n}Q_i(u)$ and
$\la^{(s)}\in\La^+(n,r_s)$ with $r=\sum_{1\leq s\leq m}r_s$. Then we have
$$\chi(L(\bfQ))=\prod_{1\leq s\leq m}\iota_{a_s}\chi(L(\la^{(s)})).$$
In particular we have $\dim(L(\bfQ))=\prod_{1\leq s\leq m}\dim(L(\la^{(s)}))$.
\end{Thm}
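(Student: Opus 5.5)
By Proposition~\ref{decomposition of L(Q)}, we have $L(\bfQ)\cong L(\la^{(1)})_{a_1}\ot\cdots\ot L(\la^{(m)})_{a_m}$ as $\afuglk$-modules. So the plan has three steps. First, compute the character of a single evaluation module $L(\la)_a$. Second, show that the character of $\h{\ms C}_n$ is multiplicative on tensor products. Third, multiply the results. The dimension formula then follows by specializing all $X_{i,a}\mapsto 1$, since this specialization sends $\chi(V)$ to $\dim V$ and sends $\iota_{a}\chi(L(\la))$ to $\dim L(\la)$.

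For the first step, fix $\la\in\La^+(n,r')$ and $a\in\mpk^*$, and let $\mu$ be a weight of $L(\la)$. By Proposition~\ref{commute diagram} and Lemma~\ref{Ev(La{i,k,l})}, the operator $\La_{i,\pm k}$ acts on $L(\la)_a$ as $\xrk\bigl((-1)^k a^{\pm k}\big({H_i\atop k}\big)\bigr)$. On the weight space $L(\la)_\mu$ this operator is the scalar $(-a^{\pm1})^k\big({\mu_i\atop k}\big)$. Hence $\La_i^\pm(u)$ acts on $L(\la)_\mu$ by the genuine (not merely generalized) eigenvalue
\[
\sum_k\binom{\mu_i}{k}(-a^{\pm1}u)^k=(1-a^{\pm1}u)^{\mu_i}.
\]
Also, $\big({H_i\atop k}\big)$ acts on $L(\la)_\mu$ by $\big({\mu_i\atop k}\big)$. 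Choosing a weight basis of $L(\la)$, every basis vector lying in $L(\la)_\mu$ contributes the monomial $\prod_i X_{i,a}^{\mu_i}$. Therefore
\[
\chi(L(\la)_a)=\sum_\mu \dim L(\la)_\mu\, X_{1,a}^{\mu_1}\cdots X_{n,a}^{\mu_n}=\iota_a\chi(L(\la)).
\]

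For the second step, let $V,W\in\h{\ms C}_n$ and take bases of common generalized eigenvectors $\{v_s\}$ of $V$ and $\{w_t\}$ of $W$. It suffices to treat the $\La_i^\pm(u)$ and the $\big({H_i\atop k}\big)$ together, using that the Cartan subalgebra $\afuglkz$ is commutative. Order the basis vectors compatibly with flags of generalized eigenspaces, so that the Cartan elements are upper triangular on $V$ and on $W$. By Lemma~\ref{Deltar(Hi k)} and Lemma~\ref{Delta(Laikl)}, the coproduct is grouplike in generating-function form: $\Delta(\La_i^\pm(u))=\La_i^\pm(u)\ot\La_i^\pm(u)$, and $\Delta\big({H_i\atop k}\big)=\sum_{a+b=k}\big({H_i\atop a}\big)\ot\big({H_i\atop b}\big)$. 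In the lexicographically ordered basis $\{v_s\ot w_t\}$, these operators are therefore again upper triangular. Their diagonal entries are $Q_{i,s}^\pm(u)Q_{i,t}'^\pm(u)$ and $\sum_{a+b=k}\big({\la_{i,s}\atop a}\big)\big({\la'_{i,t}\atop b}\big)=\big({\la_{i,s}+\la'_{i,t}\atop k}\big)$ (Vandermonde's identity). These are exactly the generalized eigenvalues attached to $v_s\ot w_t$. The monomial of $v_s\ot w_t$ is the product of the monomials of $v_s$ and $w_t$, because the roots of the product polynomial are the union, with multiplicity, of the roots of the two factors. Hence $\chi(V\ot W)=\chi(V)\chi(W)$, and by induction the same holds for $m$ tensor factors.

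The main obstacle is the bookkeeping in the second step. One has to check that the character, defined via a basis of generalized eigenvectors, is independent of the chosen basis. It amounts to a multiset of generalized eigenvalues, which is determined by the characteristic polynomials of the commuting Cartan operators restricted to the generalized weight spaces. This independence is what justifies computing the character in the triangular basis rather than in an actual eigenbasis. I would phrase the argument through joint generalized eigenspace decompositions for the commuting family of Cartan operators. Then, since triangular operators have their diagonal entries as eigenvalues, the multiset of diagonal entries determines the character. Combining the two steps gives
\[
\chi(L(\bfQ))=\prod_{s=1}^m\chi\bigl(L(\la^{(s)})_{a_s}\bigr)=\prod_{s=1}^m\iota_{a_s}\chi(L(\la^{(s)})),
\]
which is the asserted formula.
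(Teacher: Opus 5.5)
Your proposal is correct and follows the paper's proof: compute $\chi(L(\lambda)_a)=\iota_a\chi(L(\lambda))$ from the scalar action of $\Lambda_{i,\pm k}$ on weight spaces through the evaluation map, then apply the decomposition of $L(\mathbf Q)$ as a tensor product of evaluation modules. The one difference is that you explicitly verify that $\chi$ is multiplicative on tensor products, using the grouplike coproduct of $\Lambda_i^\pm(u)$ and simultaneous triangularization of the commutative Cartan part; the paper uses this without comment, and for it you should take triangularizing bases inside each joint generalized eigenspace rather than merely reorder an arbitrary basis of generalized eigenvectors.
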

\begin{proof}
If $w\in (L(\la)_a)_\mu$ then we have $\La_{i,\pm k}w=(-a)^{\pm k}\big({\mu_i\atop k}\big)w$ for $1\leq i\leq n$, $k\in\mbn$, and hence $\La_i^\pm(u)w=(1-a^{\pm 1}u)^{\mu_i}w$. It follows that $$\chi(L(\la)_a)=\sum_{\mu\in\mbnn}\dim L(\la)_\mu\prod_{1\leq i\leq n}X_{i,a}^{\mu_i}=\iota_a(\chi(L(\la))).$$ Now the assertion follows from Proposition \ref{decomposition of L(Q)}.
\end{proof}

\subsection{Tensor realization of irreducible modules}

Recall from \S \ref{Ognk definiton} $\Ognk$ is a $\field$-submodule of $\Ogk$ spanned by the set $\{\og_i\mid 1\leq i\leq n\}$. Recall the evaluation map $\ti\ev_a:\afSrk\ra\Srk$ defined in Lemma \ref{tieva}.
The pullback of $\Ognk^{\ot r}$ by $\ti\ev_a$ is an $\afSrk$-module which we denote by $(\Ognk^{\ot r})_a$.
For $a\in\field^*$, let $$M_a=\field\affSr/J_a,$$ where $J_a$ is the left ideal of
$\field\affSr$ generated by the elements $\bse_i-a^{-1}$ for $1\leq i\leq r$.

\begin{Lem}\label{Ogkr tensor Ma}
For $a\in\field^*$, there is an $\afSrk$-module isomorphism
$$\varrho_a:(\Ognk^{\ot r})_a\ra\Ogk^{\ot r}\ot_{\field\affSr} M_a
$$ defined by sending $\og_\bfi$ to $\og_\bfi\ot\bar 1$
for $\bfi\in I(n,r)$,
where $\bar 1=1+J_a\in M_a$.
\end{Lem}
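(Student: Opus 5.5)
We will build $\varrho_a$ from the free structure of $\Ogk^{\ot r}$ over $\field\affSr$ on the right, and then check that it intertwines the two $\afSrk$-actions.

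\emph{Step 1: $\varrho_a$ is a $\field$-linear isomorphism.} Every $\og_\bfj$ with $\bfj\in\mbz^r$ can be written as $\og_\bfi h$ with $\bfi\in I(n,r)$ and $h$ a product of the $\bse_k^{\pm1}$. Combined with the permutation action, this gives
\[
\Ogk^{\ot r}\cong \Ognk^{\ot r}\ot_{\field\fSr}\field\affSr ,
\]
since $\afsygr\cong\mbz^r\rtimes\fS_r$ and $\Ognk^{\ot r}$ is a $\field\fSr$-module. Hence
\[
\Ogk^{\ot r}\ot_{\field\affSr}M_a\cong \Ognk^{\ot r}\ot_{\field\fSr}(\field\affSr/J_a).
\]
The next point is that $\field\affSr/J_a\cong\field\fSr$ as a left $\field\fSr$-module, via $\sigma\mapsto \sigma+J_a$. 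This holds because, by the relation $w\bse_k=\bse_{w(k)}w$, every $y\sigma$ with $y\in\mbz^r$ reduces modulo $J_a$ to a scalar multiple of $\sigma$. For this one needs $J_a$ to be stable enough; the cleanest route is to note that $M_a$ is the induced module $\field\affSr\ot_{\field\mbz^r}\field_{a^{-1}}$, and the character $\bse_i\mapsto a^{-1}$ is $\fS_r$-invariant. Consequently $\{\og_\bfi\ot\bar1\mid\bfi\in I(n,r)\}$ is a basis, and $\varrho_a$ is bijective. The same identification shows that $\og_\bfj\ot\bar1=\og_\bfi\ot\bar1\cdot a^{-l}$ whenever $\og_\bfj=\og_\bfi\prod\bse_k^{l_k}$ with $l=\sum l_k$, which is exactly $\varepsilon_a$. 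In other words, under $\varrho_a$ the map $x\mapsto x\ot\bar1$ on $\Ogk^{\ot r}$ corresponds to $\varepsilon_a$.

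\emph{Step 2: compatibility with $\afSrk$.} The action on $\Ogk^{\ot r}\ot_{\field\affSr}M_a$ is $\vi(x\ot m)=\vi(x)\ot m$, which is well defined because $\vi$ commutes with $\affSr$. For $x\in\Ognk^{\ot r}$ we then compute
\[
\varrho_a(\ti\ev_a(\vi)x)=\varepsilon_a(\vi(x))\ot\bar1 .
\]
By Step 1, $\varepsilon_a(\vi(x))\ot\bar1=\vi(x)\ot\bar1$. Writing $\vi(x)=\sum \og_\bfi h_\bfi$ with $\bfi\in I(n,r)$, we have $\varepsilon_a(\vi(x))=\sum\og_\bfi\ev_a(h_\bfi)$, and $\ev_a(h)\bar1=h\bar1$ in $M_a$ because $\ev_a(h)-h\in J_a$. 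This last fact is checked on generators using that the character is $\fS_r$-invariant. Hence
\[
\varrho_a(\ti\ev_a(\vi)x)=\vi(x)\ot\bar1=\vi\,\varrho_a(x),
\]
so $\varrho_a$ is an $\afSrk$-module map.

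The main obstacle is Step 1: establishing the structure of $M_a$, namely that $\ev_a$ descends to $M_a\cong\field\fSr$. One has to be careful about the definition of $J_a$ as a left ideal and the direction of the tensor product. I would handle this by exhibiting $\ev_a$ as the map $h\mapsto h\bar1$ under the identification $M_a\cong\field\fSr$, which also shows that $\ev_a$ is a homomorphism.
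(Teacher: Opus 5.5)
Your proof is correct. Step~1 is the paper's own first step: the decomposition $\Ogk^{\ot r}\cong\Ognk^{\ot r}\ot_{\field\fSr}\field\affSr$ together with $M_a\cong\field\fSr$. Step~2, however, takes a genuinely different route.

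The paper never works with an arbitrary $\vi\in\afSrk$. It regards both sides as $\afuglk$-modules via $\afxrk$ and checks that $\varrho_a$ commutes with the generators $\big({H_i\atop t}\big)$, $\La_{i,\pm k,l}$ and $(\afE_{i,j+ln})^{(k)}$. For this it uses the iterated coproduct formulas for these elements, their explicit action on $\Ogk$ (all terms with some $k_s\geq 2$ vanish), and the compatibility $\ti\ev_a\circ\afxrk=\xrk\circ\Ev_a$ of the two evaluation maps. It then concludes from the generation result for $\afuglz$ and the surjectivity of $\afxrk$.

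You instead use only the definition $\ti\ev_a(\vi)=\varepsilon_a\circ\vi$ and the single identity $y\ot\bar 1=\varepsilon_a(y)\ot\bar 1$ for $y\in\Ogk^{\ot r}$. Equivalently, $h\bar 1=\ev_a(h)\bar 1$ in $M_a$, which follows from $M_a=\field\affSr\ot_{\field\mbz^r}\field_{a^{-1}}$ and the $\fS_r$-invariance of the character $\bse_i\mapsto a^{-1}$. A small correction: invariance is not needed for $M_a\cong\field\fSr$ itself, since the induced module has basis $\{\sigma\ot 1\}$ for any character. It is needed only for this identity and for $\ev_a$ to be multiplicative.

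What your route buys: it is shorter and bypasses the hyper loop algebra, the generation theorem, the coproduct formulas and the evaluation compatibility entirely. As a by-product it shows that $\varepsilon_a$ is well defined and that $\ti\ev_a$ is multiplicative, because $\ti\ev_a(\vi)$ is exactly the transport under $\varrho_a$ of the action of $\vi$ on $\Ogk^{\ot r}\ot_{\field\affSr}M_a$. The paper's generator-by-generator check costs more, but it keeps the lemma inside the $\afuglk$-framework through which the evaluation modules are handled in the surrounding sections.
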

\begin{proof}
By \cite[Lem. 4.1]{DDF} we have $\Ogk^{\ot r}\cong\Ognk^{\ot r}\ot_{\field\fSr}\field\affSr$.
Furthermore, $M_a$ is isomorphic to $\field\fSr$ as a left $\fSr$-module. Therefore
$\varrho_a$ is a linear isomorphism. By the algebra homomorphism $\afxrk$ defined by
\eqref{afxrk}, every $\afSrk$-module can be regarded as a $\afuglk$-module.
It remains to prove that $\varrho_a$ is a $\afuglk$-module homomorphism.
Clearly we have
$$\varrho_a\bigg(\bigg({H_i\atop t}\bigg)\og_\bfi\bigg)=\bigg({H_i\atop t}\bigg)\varrho_a(\og_\bfi)$$
for $1\leq i\leq n$, $t\in\mbn$ and $\bfi\in I(n,r)$. Furthermore by \eqref{Laikl(ogj)} we have
$$\La_{i,\pm k,l}(\og_j)=
\begin{cases}
(-1)^k\big({H_i\atop k}\big)\og_{j\mp kln}&\text{if $0\leq k\leq 1$,}\\
0&\text{if $k\geq 2$}
\end{cases}
$$
for $1\leq i,j\leq n$ and $l>0$. Hence by Corollary \ref{Deltar(Laikl)} and Lemma \ref{Deltar(Hi k)}
we have
\[
\begin{split}
\La_{i,\pm k,l}(\og_\bfi\ot\bar 1)&=
\sum_{k_1+k_2+\cdots+k_r=k\atop
0\leq k_s\leq 1,\,\forall s}(-1)^k
\bigg(\bigg({H_i\atop k_1}\bigg)\ot\cdots\ot
\bigg({H_i\atop k_r}\bigg)\bigg)(\og_{i_1\mp k_1ln}\ot\cdots\ot\og_{i_r\mp k_rln}\ot\bar 1)
\\
&=(-1)^ka^{\pm kl}\bigg({H_i\atop k}\bigg)\og_\bfi\ot\bar 1
\end{split}
\]
for $1\leq i\leq n$, $k\in\mbn$, $l>0$ and $\bfi\in I(n,r)$.
By Lemma \ref{Ev(La{i,k,l})} and Proposition \ref{commute diagram}
we have
$$\ti\ev_a\circ\afxrk(\La_{i,\pm k,l})=(-1)^ka^{\pm kl}\xrk\bigg(
\bigg({H_i\atop k}\bigg)\bigg)$$
for $1\leq i\leq n$, $k\in\mbn$ and $l>0$.
Therefore we have
$$\varrho_a (\La_{i,\pm k,l}\og_\bfi)= \La_{i,\pm k,l}(\varrho_a (\og_\bfi))$$
for $1\leq i\leq n$, $k\in\mbn$ and $l>0$.
By \eqref{E{i,j+ln}^{(m)} acts on OG} we have
\[
\begin{split}
(\afE_{i,j+ln})^{(k)}(\og_{\bfi}\ot\bar 1)
&=\sum_{k_1+k_2+\cdots+k_r=k\atop
0\leq k_s\leq 1,\,\forall s}((\afE_{i,j})^{(k_1)}\ot\cdots\ot
(\afE_{i,j})^{(k_r)})(\og_{i_1-lk_1n}\ot\cdots\og_{i_r-lk_rn}\ot\bar 1)\\
&=a^{kl}\sum_{k_1+k_2+\cdots+k_r=k\atop
0\leq k_s\leq 1,\,\forall s}((\afE_{i,j})^{(k_1)}\ot\cdots\ot
(\afE_{i,j})^{(k_r)})(\og_{\bfi}\ot\bar 1)\\
&=a^{kl}(\afE_{i,j})^{(k)}(\og_\bfi\ot\bar 1)
\end{split}
\]
for $1\leq i\not=j\leq n$, $l\in\mbz$, $k\in\mbn$ and $\bfi\in I(n,r)$.
So by Lemma \ref{Ev(E i,j+ln)} and Proposition \ref{commute diagram} we have
$$\varrho_a ((\afE_{i,j+ln})^{(k)}\og_\bfi)=(\afE_{i,j+ln})^{(k)}(\varrho_a (\og_\bfi))$$
for $1\leq i\not=j\leq n$, $l\in\mbz$ and $k\in\mbn$. Consequently,
by Theorem \ref{generators2} (1), we conclude that $\varrho_a$ is a $\afuglk$-module homomorphism. The proof is completed.
\end{proof}

\begin{Prop}\label{subquotien module of tensor space}
Let $V$ be a finite-dimensional irreducible $\afSrk$-module. Then $V$ is a subquotient module of
$\Ogk^{\ot r}$.
\end{Prop}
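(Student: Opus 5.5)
By Theorem~\ref{decomposition} and Proposition~\ref{decomposition of L(Q)}, we may write $V\cong L(\la^{(1)})_{a_1}\ot\cdots\ot L(\la^{(m)})_{a_m}$ with $\la^{(s)}\in\La^+(n,r_s)$, $a_s\in\field^*$ pairwise distinct and $\sum_s r_s=r$. The plan is to realize each factor $L(\la^{(s)})_{a_s}$ as a subquotient of $\Og_\field^{\ot r_s}$, and then tensor these realizations together.

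\textbf{Step 1: a single evaluation factor.} Fix $s$ and write $\la=\la^{(s)}$, $a=a_s$, $r'=r_s$. By classical Schur algebra theory (Green \cite{Gr80}), every irreducible $\mathcal S_\field(n,r')$-module $L(\la)$ is a composition factor of $\Ognk^{\ot r'}$. Indeed, $\Ognk^{\ot r'}\cong\bigoplus_\mu \mathcal S_\field(n,r')\xi_\mu$ is a faithful module containing the projective modules of the Schur algebra, so every simple module occurs in it. Pulling back along the surjection $\ti\ev_a$ of Lemma~\ref{tieva} preserves submodules, so $L(\la)_a$ is a subquotient of $(\Ognk^{\ot r'})_a$.

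\textbf{Step 2: from the evaluation tensor space to $\Og_\field^{\ot r'}$.} By Lemma~\ref{Ogkr tensor Ma}, $(\Ognk^{\ot r'})_a\cong\Og_\field^{\ot r'}\ot_{\field\h\fS_{r'}}M_a$, which is a quotient of $\Og_\field^{\ot r'}$. Hence $L(\la)_a$ is a subquotient of a quotient of $\Og_\field^{\ot r'}$, and therefore a subquotient of $\Og_\field^{\ot r'}$ as an $\afSk{r'}$-module.

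\textbf{Step 3: tensoring.} By construction of $\Delta_{r',r''}$ (Lemma~\ref{V otimes W}), the natural isomorphism $\Og_\field^{\ot r'}\ot\Og_\field^{\ot r''}\cong\Og_\field^{\ot(r'+r'')}$ is an isomorphism of $\afSk{r'+r''}$-modules. Both module structures come from $\afuglk$ through the coproduct, and this is exactly the compatibility $\vi_{r',r''}\circ(\afxk{r'}\ot\afxk{r''})\circ\Delta=\afxk{r'+r''}$.

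If $V_s$ is a subquotient $U_s/U'_s$ of $\Og_\field^{\ot r_s}$, then $V_1\ot\cdots\ot V_m$ is a subquotient of $U_1\ot\cdots\ot U_m\subseteq\Og_\field^{\ot r}$. Tensor products over the field $\field$ are exact, so the submodule filtration passes to the tensor product, and the modules are $\afuglk$-submodules and hence $\afSrk$-submodules. Since $\afxrk$ is surjective, $\afuglk$-submodules and $\afSrk$-submodules coincide throughout. This gives $V$ as a subquotient of $\Og_\field^{\ot r}$.

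The main obstacle is Step 1. One must justify that each $L(\la)$ is a composition factor of $\Ognk^{\ot r'}$ even in the degenerate situation $n<r'$. This still holds because $\Ognk^{\ot r'}$ is a faithful $\mathcal S_\field(n,r')$-module, and every simple module of a finite-dimensional algebra is a composition factor of any faithful module. The check to make here is that faithfulness follows from $\mathcal S_\field(n,r')=\End_{\field\fS_{r'}}(\Ognk^{\ot r'})$ by definition. A secondary point is that subquotient is a transitive relation, so the chain in Step 2 and the product in Step 3 remain subquotients.
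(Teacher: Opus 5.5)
Your argument is correct and follows the same route as the paper: you write $V$ as a tensor product of evaluation modules $L(\la^{(s)})_{a_s}$, realize each one as a subquotient of $(\Ognk^{\ot r_s})_{a_s}\cong\Ogk^{\ot r_s}\ot_{\field\h\fS_{r_s}}M_{a_s}$ via Lemma~\ref{Ogkr tensor Ma}, and tensor these realizations together, and your Steps~1 and~3 supply details the paper leaves as ``well known'' or implicit. One small correction: the isomorphism $\Ognk^{\ot r'}\cong\bigoplus_\mu\mathcal S_\field(n,r')\xi_\mu$ in Step~1 is false when $n<r'$ (compare dimensions: $n=r'=2$ gives $4$ versus $10$), but you do not need it, since your faithfulness argument alone shows that every simple $\mathcal S_\field(n,r')$-module is a composition factor of $\Ognk^{\ot r'}$.
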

\begin{proof}
By Proposition \ref{decomposition of L(Q)} and Theorem \ref{classification}, there exists
$\la^{(i)}\in\La^+(n)$ and $a_i\in\field^*$ $(1\leq i\leq m)$ such that
$$V\cong L(\la^{(1)})_{a_1}\ot L(\la^{(2)})_{a_2}\ot\cdots\ot L(\la^{(m)})_{a_m}$$
and $r=\sum_{1\leq i\leq m}r_i$, where $r_i=\sum_{1\leq s\leq n}\la_s^{(i)}$.
It is well known that $L(\la^{(i)})$ is a subquotient module of $\Ognk^{\ot r_i}$ for $1\leq i\leq m$.
Hence  $L(\la^{(i)})_{a_i}$ is a subquotient module of $(\Ognk^{\ot r_i})_{a_i}$ for $1\leq i\leq m$. Therefore by Lemma \ref{Ogkr tensor Ma} we conclude that $V$ is a subquotient module of
$\Ogk^{\ot r}$.
\end{proof}

\subsection{Category equivalence}
We end this section by relating representations of $\afSrk$ and $\field\affSr$.
Let $\field\affSr\hmod$ be the category of all finite-dimensional $\field\affSr$-modules.
We define a functor
\begin{equation*}
\sfF_1:\field\affSr\hmod\lra\afSrk\hmod,\;L\longmapsto\Ogk^{\ot
r}\ot_{\field\affSr}L.
\end{equation*}
Assume that $n\geq r$. Let $\varpi=(\varpi_1,\cdots,\varpi_n)\in\afLanr$, where $\varpi_i=1$ if $1\leq i\leq r$, and $\varpi_{i}=0$ if $r<i\leq n$.
Then we have $e_{\diag(\varpi)}\afSrk e_{\diag(\varpi)}\cong\field\affSr$. We will identify
$e_{\diag(\varpi)}\afSrk e_{\diag(\varpi)}$ with $\field\affSr$.
We define the functor $\sF_2$ as follows:
\begin{equation*}
\sfF_2:\afSrk\hmod\lra\field\affSr\hmod,\;M\longmapsto e_{\diag(\varpi)}M.
\end{equation*}
The following result can be proved as in \cite[Th. 4.1.3]{DDF}.
\begin{Thm}\label{equivalence}
Assume that $n\geq r$ and $\text{char}\field=0$. Then we have $\sfF_1\circ\sfF_2\cong id_{\afSrk\hmod}$ and  $\sfF_2\circ\sfF_1\cong id_{\field\affSr\hmod}$. Hence the categories $\field\affSr{\text-}{\mathsf{mod}}$  and $\afSrk{\text-}{\mathsf{mod}}$ are equivalent.
\end{Thm}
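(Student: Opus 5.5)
The plan is the standard idempotent-truncation (Schur functor) argument, as in \cite[Th.~4.1.3]{DDF}. With $e=e_{\diag(\varpi)}$, we have $e\afSrk e\cong\field\affSr$ and $\Ogk^{\ot r}\cong\afSrk e$, as in the proof of Theorem~\ref{thm:second-centralizer-stable}. Since $e$ corresponds to the weight $\varpi$, the right $\affSr$-action on $\Ogk^{\ot r}$ is right multiplication by $e\afSrk e$. So $\sfF_1(L)=\afSrk e\ot_{e\afSrk e}L$ and $\sfF_2(M)=eM\cong\Hom_{\afSrk}(\afSrk e,M)$. Thus $\sfF_2\circ\sfF_1(L)=e\afSrk e\ot_{e\afSrk e}L\cong L$, naturally in $L$. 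This gives $\sfF_2\circ\sfF_1\cong id$ with no assumption on the characteristic. Finite-dimensionality is preserved by both functors: for $\sfF_2$ trivially, and for $\sfF_1$ because $\Ogk^{\ot r}$ is finitely generated over $\field\affSr$, being a finite direct sum of the modules $\field\ul{\fS_\la}\affSr$.

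For $\sfF_1\circ\sfF_2\cong id$, consider the natural multiplication map
\[
\mu_M:\afSrk e\ot_{e\afSrk e}eM\lra M .
\]
It suffices to show that $\mu_M$ is an isomorphism for every finite-dimensional $M$. The key point is that $e$ is a full idempotent on finite-dimensional modules, i.e.\ $eM\neq0$ for every nonzero finite-dimensional $M$, or equivalently $eV\neq0$ for every finite-dimensional irreducible $V$. Here I use characteristic $0$. By Theorem~\ref{classification} and Proposition~\ref{decomposition of L(Q)}, every irreducible $V$ is $L(\bfQ)\cong\bigotimes_s L(\la^{(s)})_{a_s}$. By Theorem~\ref{character}, $\dim V_\varpi$ is the coefficient of $X_1\cdots X_r$ in $\prod_s\chi(L(\la^{(s)}))$ after identifying $X_{i,a}\mapsto X_i$. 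In characteristic $0$ each $L(\la)$ is the Weyl module, whose character is the Schur polynomial $s_\la$, and the coefficient of $X_1\cdots X_{r_s}$ in $s_\la$ is the number of standard tableaux, which is positive. Since the characters are symmetric, one can distribute the distinct indices $1,\dots,r$ among the factors. So the coefficient of $X_1\cdots X_r$ in the product is positive, giving $V_\varpi=eV\neq0$. (In positive characteristic this fails, e.g.\ for the sign-type $L(\la)$, which is why the hypothesis is needed.)

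Given fullness, I show $\mu_M$ is an isomorphism by the usual argument. The cokernel $C$ of $\mu_M$ satisfies $eC=0$, since $e\mu_M$ is an isomorphism; hence $C=0$ by fullness, applied to a composition factor of the finite-dimensional module $C$. Thus $\mu_M$ is surjective. For injectivity, the kernel $K$ is finite-dimensional and satisfies $eK=0$, using exactness of $e(-)$ and $e\,\sfF_1\sfF_2\cong\sfF_2$; hence $K=0$. Naturality is clear, and the equivalence follows.

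The main obstacle is verifying the fullness claim carefully, namely that the weight space $\varpi$ of every irreducible is nonzero. This requires exactly the characteristic-$0$ description of the characters of the $L(\la)$ combined with Theorem~\ref{character}. I would also check that the weight spaces $V_\mu$ coincide with $e_{\diag(\mu)}V$, as used in Proposition~\ref{the condition of eL(bfQ)=0}.
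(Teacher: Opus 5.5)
Your argument is correct, but it reaches the key fact (that $\mu_M$ is an isomorphism) by a different route from the one the paper invokes.

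\textbf{The paper's route.} The paper only says the theorem can be proved as in \cite[Th.~4.1.3]{DDF}, and the intended argument there is structural. In characteristic $0$ each $\ul{\fS_\la}/|\fS_\la|$ is an idempotent, so every summand $\field\ul{\fS_\la}\affSr$ of $\Ogk^{\ot r}$ is a direct summand of $\field\affSr$. Since the $\varpi$-summand is $\field\affSr$ itself, $\Ogk^{\ot r}$ is a projective generator; equivalently $\afSrk e\afSrk=\afSrk$ for $e=e_{\diag(\varpi)}$. Morita theory then gives the equivalence. That proof needs only $r!\in\field^\times$, works for all modules, and uses neither algebraic closedness nor anything from Sections 7--8.

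\textbf{Your route.} You prove only the finite-dimensional fullness statement $eV\neq0$. You do this by combining Theorem~\ref{classification}, Proposition~\ref{decomposition of L(Q)} and Theorem~\ref{character} with the positivity of the number of standard tableaux. This is much heavier, and it uses that $\field$ is algebraically closed (which is in force there).

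\textbf{What your route buys.} It is more informative. The same computation gives, in every characteristic,
\[
\dim e_{\diag(\varpi)}L(\bfQ)=\frac{r!}{r_1!\cdots r_m!}\prod_{s=1}^m\dim L(\la^{(s)})_{(1^{r_s},0^{n-r_s})}.
\]
Classically, $L(\la)_{(1^{|\la|},0,\ldots,0)}\neq0$ exactly when $\la$ is $p$-restricted, i.e.\ $\la_i-\la_{i+1}<p$ for all $i$ (see \cite{Gr80}). Hence $e_{\diag(\varpi)}L(\bfQ)\neq0$ if and only if every $\la^{(s)}$ is $p$-restricted. This answers the first question posed in Remark~\ref{Remark}.

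\textbf{A small correction.} Your parenthetical example is off. The failure in characteristic $p$ does not occur for sign-type modules: $L((1^r))=\wedge^r$ always has a nonzero $\varpi$-weight space. It occurs for non-restricted $\la$, e.g.\ $L((2))$ when $p=2$, whose character $X_1^2+\cdots+X_n^2$ has no $X_1X_2$ term.
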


Combining Theorems \ref{classification} and \ref{equivalence}, we obtain the following result.
\begin{Coro}
Assume that $n\geq r$ and $\text{char}\field=0$. Then the set $\{e_{\diag(\varpi)} L(\bfQ)\mid\bfQ\in\Qnr\}$
forms a complete set of  non-isomorphic finite-dimensional irreducible
$\field\affSr$-modules.
\end{Coro}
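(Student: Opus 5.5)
The corollary should follow by transporting the classification of Theorem~\ref{classification} across the equivalence of Theorem~\ref{equivalence}. Assume $n\geq r$ and $\text{char}\,\field=0$. Theorem~\ref{equivalence} says that $\sfF_2:M\mapsto e_{\diag(\varpi)}M$ is an equivalence from $\afSrk\hmod$ to $\field\affSr\hmod$, with quasi-inverse $\sfF_1$. I would first record three standard consequences of this. An equivalence of abelian categories sends simple objects to simple objects. It reflects isomorphism classes. Every simple object of the target is isomorphic to the image of a simple object of the source, namely $\sfF_2(\sfF_1(L))\cong L$, and $\sfF_1(L)$ is simple whenever $L$ is.

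Since both categories consist of finite-dimensional modules, simplicity in the category is the same as irreducibility of the module. So the irreducible finite-dimensional $\field\affSr$-modules are, up to isomorphism, exactly the modules $e_{\diag(\varpi)}V$ with $V$ a finite-dimensional irreducible $\afSrk$-module. Moreover, $e_{\diag(\varpi)}V\cong e_{\diag(\varpi)}V'$ holds if and only if $V\cong V'$.

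Next I apply Theorem~\ref{classification}, which says that $\{L(\bfQ)\mid\bfQ\in\Qnr\}$ is a complete irredundant list of the finite-dimensional irreducible $\afSrk$-modules. Substituting it into the description above gives exactly the claimed set $\{e_{\diag(\varpi)}L(\bfQ)\mid \bfQ\in\Qnr\}$. This set is complete and pairwise non-isomorphic.

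Nonvanishing needs no separate check, since an equivalence never sends a nonzero object to zero. One point deserves a remark: the identification $e_{\diag(\varpi)}\afSrk e_{\diag(\varpi)}\cong\field\affSr$ used to define $\sfF_2$ should be the one fixed before Theorem~\ref{equivalence}, so that the $\field\affSr$-action on $e_{\diag(\varpi)}L(\bfQ)$ is the intended one. There is no real obstacle here; all the substance lies in Theorem~\ref{equivalence}, which we may assume.
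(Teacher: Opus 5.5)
Your proposal is correct and follows the paper's own argument. The paper also obtains the corollary by combining Theorem~\ref{classification} with the equivalence of Theorem~\ref{equivalence}, transporting the complete irredundant list $\{L(\bfQ)\mid\bfQ\in\Qnr\}$ of irreducible $\afSrk$-modules across the functor $\sfF_2$.
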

\begin{Rem}\label{Remark}
Assume that $n\geq r$ and $\text{char}\field>0$. By Theorem \ref{classification} and \cite[6.2(g)]{Gr80},  the set $\{e_{\diag(\varpi)} L(\bfQ)\not=0\mid\bfQ\in\Qnr\}$
forms a complete set of  non-isomorphic finite-dimensional irreducible
$\field\affSr$-modules.
It would be interesting to determine the necessary and sufficient conditions for $e_{\diag(\varpi)} L(\bfQ)$ to be nonzero.
Finite-dimensional irreducible modules for affine Hecke algebras of type $A$ in arbitrary
characteristic were classified in terms of  aperiodic multisegments in \cite{AM}. It would be interesting to investigate the relation between aperiodic multisegments and Drinfeld polynomials.
\end{Rem}

\section{Finite-dimensional representations of Lusztig algebras}

We maintain the assumptions from the previous section. In particular, $\field$ denotes an algebraically closed field.

\subsection{The Lusztig algebra $\afunrk$}
Following Lusztig \cite{Lu99}, let $\afunz$ be the $\mbz$-subalgebra of $\afuglz$  generated by the elements $(\afE_{i,j+ln})^{(k)}$ and $\big({H_i\atop k}\big)$ for $1\leq i\not=j\leq n$, $l\in\mbz$ and $k\in\mbn$. Let
$$\afunrz=\afxr(\afuslz)=\afxr(\afunz).$$ In \cite{Lu99}, Lusztig proved that
$\afunrz$ is a proper subalgebra of the affine Schur algebra $\afSrmbz$ in the case $n\leq r$ and constructed a canonical basis for $\afunrz$. The algebra $\afunrz$ is called Lusztig algebra. Let
\begin{equation}\label{afunk}
\afunk=\afunz\ot\field,\quad\afunrk=\afunrz\ot\field.
\end{equation}
By restriction, the surjective algebra homomorphism $\afxrk:\afuglk\ra\afSrk$ induces a surjective algebra homomorphism $\afxrk:\afunk\ra\afunrk$.

In this section we classify finite-dimensional irreducible $\afunrk$-modules, building upon our previous results about affine Schur algebras and establishing the precise relationship between these two algebraic structures.

Given a $\afunk$-module $V$ and $\mu\in\mbz^n$ we define the weight space $$V_\mu=\bigg\{w\in V\mid\bigg({H_i\atop k}\bigg)w
=\bigg({\mu_i\atop k}\bigg)w,\,\forall k\in\mbn,\,1\leq i\leq n\bigg\}.$$
Let $\bfP\in\Pn$ and $\la\in\La^+(n,r)$ be such that $\la_i-\la_{i+1}=\deg(P_i(u))$ for $1\leq i\leq n-1$. Let $\bar I(\bfP,\la)$
be the left ideal of $\afunk$ generated by $(\afE_{j,k+sn})^{(m)},\quad
\Ga_{j,s}-P_{j,s},$ and $\big({H_i\atop m}\big)- \big({\la_i\atop m}\big)$, for
$1\leq i\leq n$, $1\leq j<k\leq n$, $m>0$ and $s\in\mbz$, and define
$$\bar M(\bfP,\la)=\afunk/\bar I(\bfP,\la).$$
Then by Lemma \ref{Ejk Vmu} we have the weight space decomposition
$$\bar M(\bfP,\la)=\bop_{\mu\in\mbz^n\atop\mu\unlhd\la}\bar M(\bfP,\la)_\mu.$$
The $\afunk$-module $\bar M(\bfP,\la)$ has a unique irreducible quotient $\afunk$-module, denoted by $\bar L(\bfP,\la)$.

\subsection{Classification of finite-dimensional irreducible $\afunrk$-modules}
To classify finite-dimensional irreducible $\afunrk$-modules, we need to establish several key relationships between representations of different algebras. The following lemmas will help us connect the representation theories of $\afuglk$, $\afunk$, and $\afunrk$.

\begin{Lem}\label{restriction afuglk to afuk}
For $\bfQ\in\Qn$, let $\la=\deg\bfQ$ and
$\bfP\in\Pn$ be such that $P_j(u)= Q_j(u)/Q_{j+1}(u),$
for $1\leq j\leq n-1$. Then
we have $L(\bfQ)|_{\afunk}\cong\bar L(\bfP,\la)$ and $\bar L(\bfP,\la)|_{\afuslk}\cong\bar L(\bfP)$.
\end{Lem}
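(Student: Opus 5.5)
The plan is to derive both isomorphisms from Proposition \ref{decomposition of L(Q)}, which already gives $L(\bfQ)|_{\afuslk}\cong\bar L(\bfP)$ with $P_j=Q_j/Q_{j+1}$. The chain of subalgebras is $\afuslk\subseteq\afunk\subseteq\afuglk$. The key point is that $\afuslk$-irreducibility forces irreducibility over the intermediate algebra $\afunk$.

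\emph{Step 1: $L(\bfQ)|_{\afunk}$ is irreducible.} Any $\afunk$-submodule of $L(\bfQ)$ is in particular an $\afuslk$-submodule. By Proposition \ref{decomposition of L(Q)}, $L(\bfQ)|_{\afuslk}\cong\bar L(\bfP)$ is irreducible, so the only $\afunk$-submodules are $0$ and $L(\bfQ)$.

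\emph{Step 2: identify the highest weight data.} Let $w_0$ be the image of $\bar 1\in M(\bfQ)$ in $L(\bfQ)$, a nonzero vector of weight $\la=\deg\bfQ$. It satisfies
\[
(\afE_{j,k+sn})^{(m)}w_0=0\quad\text{for } j<k,\ m>0,\qquad \bigg({H_i\atop m}\bigg)w_0=\bigg({\la_i\atop m}\bigg)w_0,\qquad \La_i^\pm(u)w_0=Q_i^\pm(u)w_0.
\]
Since $\Ga_i^\pm(u)=\La_i^\pm(u)/\La_{i+1}^\pm(u)$ as formal power series in commuting elements of $\afuglkz$, we get $\Ga_i^\pm(u)w_0=Q_i^\pm(u)/Q_{i+1}^\pm(u)\,w_0$.

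The $\pm$-twist \eqref{f^pm(u)} is multiplicative on products of linear factors $1-a u$. Dominance of $\bfQ$ gives $P_i=Q_i/Q_{i+1}$ as a polynomial, so $Q_i^\pm/Q_{i+1}^\pm=P_i^\pm$. Hence $\Ga_{i,s}w_0=P_{i,s}w_0$ for all $s\in\mbz$. This is the step needing the most care: it is a division of power series with the twist applied factorwise, and the rest is formal.

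Moreover $\la_i-\la_{i+1}=\deg P_i$, and $\la\in\La^+(n)$. Writing $r=\sum_i\la_i$, we get $\la\in\La^+(n,r)$, so $\bar M(\bfP,\la)$ is defined.

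\emph{Step 3: the surjection and first isomorphism.} By Step 2, all generators of $\bar I(\bfP,\la)$ annihilate $w_0$. So $1\mapsto w_0$ defines an $\afunk$-homomorphism $\bar M(\bfP,\la)\to L(\bfQ)|_{\afunk}$. It is nonzero and its target is irreducible by Step 1, so it is surjective. Since $\bar L(\bfP,\la)$ is the unique irreducible quotient of $\bar M(\bfP,\la)$, we obtain $L(\bfQ)|_{\afunk}\cong\bar L(\bfP,\la)$.

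\emph{Step 4: the second isomorphism.} Restrict the isomorphism of Step 3 further to $\afuslk\subseteq\afunk$. Then $\bar L(\bfP,\la)|_{\afuslk}\cong L(\bfQ)|_{\afuslk}\cong\bar L(\bfP)$, again by Proposition \ref{decomposition of L(Q)}.
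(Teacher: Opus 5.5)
Your proposal is correct and follows the paper's proof essentially step for step. Both arguments build the map $\bar M(\bfP,\la)\to L(\bfQ)$ from the highest-weight data via $\Ga_j^\pm(u)=\La_j^\pm(u)/\La_{j+1}^\pm(u)$, get irreducibility of $L(\bfQ)|_{\afunk}$ from $L(\bfQ)|_{\afuslk}\cong\bar L(\bfP)$ (Proposition \ref{decomposition of L(Q)}), and conclude by uniqueness of the irreducible quotient; your additional checks that the $\pm$-twist commutes with the division $Q_i/Q_{i+1}$ and that $\la\in\La^+(n,r)$, so that $\bar M(\bfP,\la)$ is defined, supply details the paper leaves implicit.
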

\begin{proof}
Let $w_0$ be a nonzero element in $L(\bfQ)_\la$.
Then we have
$$(\afE_{j,k+sn})^{(m)}w_0=0,\quad
\La_{i}^\pm(u)w_0=Q_{i}^\pm(u)w_0,\quad \bigg({H_i\atop m}\bigg)w_0=\bigg({\la_i\atop m}\bigg)w_0$$  for $1\leq i\leq n$, $1\leq j<k\leq n$, $m>0$ and $s\in\mbz$.
Since $\Ga_j^\pm(u)=\La_j^\pm(u)/\La_{j+1}^\pm(u)$ we have
$\Ga_j^\pm(u)w_0=P_j^\pm(u)w_0$ for $1\leq j\leq n$.
Hence, there is a $\afunk$-module homomorphism
$\vi:\Mcp(\bfP,\la)\ra L(\bfQ)$ such that $\vi(\bar 1)=w_0$, where
$\bar 1=1+\bar I(\bfP,\la)\in\Mcp(\bfP,\la)$. Furthermore,  by Proposition \ref{decomposition of L(Q)} we have
\begin{equation}\label{restriction of L(bfQ)}
L(\bfQ)|_{\afuslk}\cong\bar L(\bfP).
\end{equation}
It follows that $L(\bfQ)|_{\afunk}$ is irreducible. Therefore
$\vi$ is surjective and  $L(\bfQ)|_{\afunk}\cong \Mcp(\bfP,\la)/{\rm
Ker}\,\vi$. This together with the fact that
$\Lcp(\bfP,\la)$ is the unique irreducible quotient of $\Mcp(\bfP,\la)$
implies that $L(\bfQ)|_{\afunk}\cong\Lcp(\bfP,\la)$. Hence by \eqref{restriction of L(bfQ)}
we have $\Lcp(\bfP,\la)|_{\afuslk}\cong\Lcp(\bfP)$.
\end{proof}

Next, we show that the modules $\bar L(\bfP,\la)$ naturally carry an $\afunrk$-module structure.

\begin{Coro}\label{lem1 for rep of afurk}
Let $\bfP\in\Pn$ and $\la\in\La^+(n,r)$ with $\la_i-\la_{i+1}
=\mathrm{deg}P_i(u)$ for $1\leq i\leq n-1$. Then $\Lcp(\bfP,\la)$ is
a $\afunrk$-module via $\afxrk$.  Furthermore we have
\begin{equation}\label{weight space for Lcp(bfp,la)}
\Lcp(\bfP,\la)=\bop_{\mu\unlhd\la\atop\mu\in\La(n,r)}\Lcp(\bfP,\la)_\mu,
\end{equation}
where $\Lanr=\{\la\in\mbnn\mid\sum_{1\leq i\leq n}\la_i=r\}$.
\end{Coro}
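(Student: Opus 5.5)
The plan is to reduce everything to Lemma \ref{restriction afuglk to afuk} by producing a dominant tuple $\bfQ\in\Qnr$ with $\deg\bfQ=\la$ and $Q_j(u)/Q_{j+1}(u)=P_j(u)$. I will set $Q_n(u)=(1-u)^{\la_n}$ and
\[
Q_i(u)=(1-u)^{\la_n}\prod_{i\leq s\leq n-1}P_s(u)\qquad(1\leq i\leq n-1).
\]
Then $Q_i/Q_{i+1}=P_i\in\field[u]$, so $\bfQ$ is dominant with constant terms $1$. Using $\la_i-\la_{i+1}=\deg P_i(u)$, the degrees are $\deg Q_i=\la_n+\sum_{s\geq i}(\la_s-\la_{s+1})=\la_i$. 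Hence $\deg\bfQ=\la$ and $\sum_i\deg Q_i=r$, so $\bfQ\in\Qnr$. The only care needed is that all roots of the $P_s$ are nonzero. This holds because $P_s(0)=1$, and it makes the factorization into $\bfQ_{\la^{(s)},a_s}$ legitimate.

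By Proposition \ref{L(bfQ) is a afSrk-module}, $L(\bfQ)$ is an $\afSrk$-module via $\afxrk$, so the $\afuglk$-action on it factors through $\afSrk$. Restricting to $\afunk\subseteq\afuglk$, the action factors through $\afxrk(\afunk)=\afunrk\subseteq\afSrk$. By Lemma \ref{restriction afuglk to afuk}, $L(\bfQ)|_{\afunk}\cong\Lcp(\bfP,\la)$. Transporting the structure along this isomorphism gives the $\afunrk$-module structure on $\Lcp(\bfP,\la)$ compatible with $\afxrk$. This means the kernel of $\afxrk|_{\afunk}$ acts trivially, which is exactly what "via $\afxrk$" requires.

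For the weight decomposition, I will use that $\afunrk\subseteq\afSrk$ contains the idempotents $e_{\diag(\mu)}$ for $\mu\in\afLanr$. These satisfy $\sum_\mu e_{\diag(\mu)}=1$ in $\afSrk$, and they are images of elements of $\afunk$. Indeed, $\afxrk\bigl(\prod_i\binom{H_i}{\mu_i}\bigr)$ is a combination of the $e_{\diag(\beta)}$ that is unitriangular in a suitable order, so each $e_{\diag(\mu)}$ lies in $\afxrk(\afunk)$. Alternatively, since the module is already an $\afSrk$-module, it suffices to decompose $L(\bfQ)$ itself. On $e_{\diag(\mu)}V$ the element $\binom{H_i}{k}$ acts by $\binom{\mu_i}{k}$, because $\afxrk\binom{H_i}{k}=\sum_\beta\binom{\beta_i}{k}e_{\diag(\beta)}$. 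Hence $V=\bigoplus_{\mu\in\La(n,r)}V_\mu$ for $V=L(\bfQ)$.

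Combining this with $\bar M(\bfP,\la)=\bigoplus_{\mu\unlhd\la}\bar M(\bfP,\la)_\mu$ and surjectivity onto $\Lcp(\bfP,\la)$ restricts the weights to $\mu\unlhd\la$ with $\mu\in\La(n,r)$. One point remains: the weight spaces defined via $\binom{H_i}{k}$ for distinct $\mu\in\mbnn$ must be distinct, so that the two decompositions are compatible. This holds because weights in $\mbnn$ are separated by the values of all $\binom{\mu_i}{k}$ modulo $p$, by Lucas' theorem. I expect this compatibility of weight spaces to be the only point needing care; the rest is bookkeeping.
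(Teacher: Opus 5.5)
Your proposal is correct and takes essentially the same route as the paper. The paper likewise builds an auxiliary tuple $\bfQ\in\Qnr$; you take $Q_n(u)=(1-u)^{\la_n}$ where the paper takes $(1+u)^{\la_n}$, which is immaterial. It then invokes Proposition~\ref{L(bfQ) is a afSrk-module} and Lemma~\ref{restriction afuglk to afuk}, and decomposes by the idempotents $e_{\diag(\mu)}$ together with Lemma~\ref{Ejk Vmu}, exactly as you do.

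Your remark that distinct weights are separated by the values $\binom{\mu_i}{k}$ only makes explicit a step the paper leaves implicit. Also, for $\mu\in\afLanr$ one has $\afxrk\bigl(\prod_i\binom{H_i}{\mu_i}\bigr)=e_{\diag(\mu)}$ exactly, since $\binom{\beta_i}{\mu_i}=0$ unless $\beta_i\geq\mu_i$ and both weights sum to $r$, so no triangularity argument is needed.
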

\begin{proof}
Let $\bfQ=(Q_1(u),Q_2(u),\ldots,Q_n(u))\in\sQ(n)$, where $Q_n(u)=(1+u)^{\la_n}$ and
$$Q_i(u)=Q_n(u)\prod_{i\leq s\leq n-1}P_s(u)$$
for $1\leq i\leq n-1$.
By Proposition \ref{L(bfQ) is a afSrk-module}, $L(\bfQ)$ is an $\afSrk$-module. So, by Lemma
\ref{restriction afuglk to afuk}, $\Lcp(\bfP,\la)$ is a $\afunrk$-module.
Hence we have
$$\Lcp(\bfP,\la)=\bop_{\mu\in\La(n,r)}e_{\diag(\mu)}\Lcp(\bfP,\la),$$
and $\Lcp(\bfP,\la)_\mu=e_{\diag(\mu)}\Lcp(\bfP,\la)$
for $\mu\in\Lanr$.
The decomposition \eqref{weight space for Lcp(bfp,la)} now follows from Lemma \ref{Ejk Vmu}.
\end{proof}
This corollary shows that the irreducible $\afunk$-modules $\bar L(\bfP,\la)$ naturally descend to $\afunrk$-modules, and they inherit a nice weight space decomposition.
We now establish that these modules are non-isomorphic for different parameters.

\begin{Lem}\label{lem3 for rep of afurk}
For $\la,\la'\in\La^+(n)$ and $\bfP,\bfP'\in\Pn$,
if there is a $\afunk$-module isomorphism $\vi:\Lcp(\bfP,\la)\cong\Lcp(\bfP',\la')$, then $\bfP=\bfP'$
and $\la=\la'$.
\end{Lem}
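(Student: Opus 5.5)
The idea is to recover $\la$ and $\bfP$ from intrinsic data of $\Lcp(\bfP,\la)$ that any $\afunk$-module isomorphism must preserve. Everything rests on a highest weight vector.

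\emph{Step 1: the highest weight vector.} Let $w_0$ be the image of $\bar 1$ in $\Lcp(\bfP,\la)$. By construction it is annihilated by all $(\afE_{j,k+sn})^{(m)}$ with $j<k$ and $m>0$, and it satisfies $\big({H_i\atop m}\big)w_0=\big({\la_i\atop m}\bigr)w_0$ and $\Ga_j^\pm(u)w_0=P_j^\pm(u)w_0$. The weight space decomposition $\Lcp(\bfP,\la)=\bop_{\mu\unlhd\la}\Lcp(\bfP,\la)_\mu$ comes from Lemma~\ref{Ejk Vmu} applied to $\bar M(\bfP,\la)$. Combined with $\dim\bar M(\bfP,\la)_\la\le 1$ (triangular decomposition), it shows that $\la$ is the unique maximal weight of $\Lcp(\bfP,\la)$ for $\unlhd$. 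It also shows $\Lcp(\bfP,\la)_\la=\field w_0$.

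\emph{Step 2: the weight $\la$ is preserved.} The map $\vi$ commutes with all $\big({H_i\atop m}\big)$, so it maps $\Lcp(\bfP,\la)_\mu$ isomorphically onto $\Lcp(\bfP',\la')_\mu$ for every $\mu$. The two modules therefore have the same set of weights and hence the same maximal weight, giving $\la=\la'$. To justify that $\mu\in\mbz^n$ is determined by the eigenvalues $\big({\mu_i\atop m}\big)1_\field$ for all $m$ in positive characteristic, I would argue as in the proof of Lemma~\ref{n>r case}. Since $\mu_i\ge0$, the integer is recovered from its $p$-adic digits $\big({\mu_i\atop p^k}\big)\bmod p$ (Lucas). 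I expect this to be the main technical point to state carefully. For weights that might a priori be negative, I would instead use the bound $\mu\unlhd\la$ together with the fact that all weights lie in $\La(n,r)$. The latter holds after passing through Corollary~\ref{lem1 for rep of afurk}, or simply by comparing the maximal weight $\la\in\La^+(n)$ directly.

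\emph{Step 3: recovering $\bfP$.} From $\la=\la'$ and $\dim\Lcp(\bfP',\la)_\la=1$, we get $\vi(w_0)=c\,w_0'$ with $c\neq 0$. Since $\Ga_{j,s}\in\afuslz\subseteq\afunz$, the map $\vi$ commutes with the $\Ga_{j,s}$. Applying $\Ga_{j,s}$ then gives $P_{j,s}w_0'=P'_{j,s}w_0'$, so $P_{j,s}=P'_{j,s}$ for all $j$ and $s\ge0$. Hence $P_j^+(u)=P_j'^+(u)$, and since $P_j^+=P_j$ we get $\bfP=\bfP'$.
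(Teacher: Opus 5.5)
Your proposal is correct and follows essentially the same route as the paper. The paper uses the weight decomposition of Corollary~\ref{lem1 for rep of afurk} and the fact that $\varphi$ preserves weight spaces to get $\la=\la'$; it then compares the eigenvalues of $\Ga_i^+(u)$ on the highest weight vector to get $\bfP=\bfP'$. You additionally make explicit two facts the paper uses tacitly: the highest weight space is one-dimensional, and a weight is determined by its binomial eigenvalues in positive characteristic.
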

\begin{proof}
By \eqref{weight space for Lcp(bfp,la)} we have $\la=\la'$. Let $w_0$ be a nonzero element in $\Lcp(\bfP,\la)_\la$. Then we have $\vi(w_0)\in\Lcp(\bfP',\la')_\la$. Hence
we have $$P_i(u)\vi(w_0)=\vi(\Ga_i^+(u)w_0)=\Ga_i^+(u)\vi(w_0)= P_i'(u)\vi(w_0)$$
for $1\leq i\leq n-1$. Therefore $\bfP=\bfP'$.
\end{proof}

Combining Lemma \ref{restriction afuglk to afuk} with \ref{lem3 for rep of afurk}, we obtain the following characterization of when restrictions of irreducible $\afuglk$-modules become isomorphic as $\afunk$-modules.
\begin{Coro}
For $\bfQ,\bfQ'\in\Qn$ we have $L(\bfQ)|_{\afunk}\cong L(\bfQ')|_{\afunk}$ if and only if
$\deg Q_i(u)=\deg Q_i'(u)$ and $Q_j(u)/Q_{j+1}(u)=Q_j'(u)/Q_{j+1}'(u)$ for $1\leq i\leq n$ and $1\leq j\leq n-1$.
\end{Coro}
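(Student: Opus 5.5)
The plan is to reduce the statement to Lemma~\ref{restriction afuglk to afuk} and Lemma~\ref{lem3 for rep of afurk}. For $\bfQ\in\Qn$ put $\la=\deg\bfQ$ and $P_j(u)=Q_j(u)/Q_{j+1}(u)$; define $\la'$ and $\bfP'$ in the same way from $\bfQ'$. Since $\bfQ,\bfQ'$ are dominant, $\bfP,\bfP'\in\Pn$. By Lemma~\ref{restriction afuglk to afuk} we have
\[
L(\bfQ)|_{\afunk}\cong\bar L(\bfP,\la),\qquad L(\bfQ')|_{\afunk}\cong\bar L(\bfP',\la').
\]
Hence the question becomes whether $\bar L(\bfP,\la)\cong\bar L(\bfP',\la')$. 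The stated condition is literally the pair of equalities $\la=\la'$ and $\bfP=\bfP'$.

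\emph{Only if.} Suppose $L(\bfQ)|_{\afunk}\cong L(\bfQ')|_{\afunk}$. Then $\bar L(\bfP,\la)\cong\bar L(\bfP',\la')$ as $\afunk$-modules, and Lemma~\ref{lem3 for rep of afurk} gives $\bfP=\bfP'$ and $\la=\la'$. This says exactly that $\deg Q_i=\deg Q_i'$ for all $i$ and that the consecutive ratios agree.

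\emph{If.} If the degrees and ratios agree, then $(\bfP,\la)=(\bfP',\la')$. Both restrictions are isomorphic to the same module $\bar L(\bfP,\la)$, so they are isomorphic to each other.

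The only point needing care is whether Lemma~\ref{lem3 for rep of afurk} applies when $\la$ is merely in $\mbn^n$ rather than in $\La^+(n,r)$. The weight-space argument in that lemma uses only the decomposition $\bar L(\bfP,\la)=\bigoplus_{\mu\unlhd\la}\bar L(\bfP,\la)_\mu$ with $\dim\bar L(\bfP,\la)_\la\neq0$. This already holds for $\bar M(\bfP,\la)$ and its quotient by Lemma~\ref{Ejk Vmu}. Moreover $\la$ is dominant, since $\la_j-\la_{j+1}=\deg P_j\ge0$. So I would first check that observation, possibly taking $r=\sum_i\la_i$, and then the proof is immediate.
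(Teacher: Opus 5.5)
Your proposal is correct and is exactly the paper's argument: the corollary is obtained by combining Lemma~\ref{restriction afuglk to afuk}, which identifies $L(\bfQ)|_{\afunk}$ with $\Lcp(\bfP,\la)$ for $\la=\deg\bfQ$ and $P_j=Q_j/Q_{j+1}$, with Lemma~\ref{lem3 for rep of afurk}. Your worry about the range of $\la$ is harmless, since that lemma is already stated for $\la,\la'\in\La^+(n)$, and $\deg\bfQ$ is dominant and lies in $\La^+(n,r)$ with $r=\sum_i\la_i$.
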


This result clarifies the relationship between the parameterizations for $\afuglk$ and $\afunk$, showing exactly when different Drinfeld polynomials give rise to isomorphic restrictions. In particular, it demonstrates that the  irreducible $\afunk$-module structure only depends on the highest weight $\la = \deg\bfQ$ and the ratios $Q_j(u)/Q_{j+1}(u)$, rather than the individual Drinfeld polynomials $Q_i(u)$.

The next lemma completes the classification picture by showing that all finite-dimensional irreducible $\afunrk$-modules arise from the construction of $\bar L(\bfP,\la)$ described above.

\begin{Lem}\label{lem2 for rep of afurk}
Let $V$ be a finite-dimensional irreducible $\afunrk$-module. Then there
exist $\bfP\in\Pn$ and $\la\in\La^+(n,r)$ with $\la_i-\la_{i+1}
=\mathrm{deg}P_i(u)$, for all $1\leq i\leq n-1$, such that $V\cong
\Lcp(\bfP,\la)$.
\end{Lem}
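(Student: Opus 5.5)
Pull $V$ back along the surjection $\afxrk:\afunk\ra\afunrk$, so that $V$ becomes a finite-dimensional irreducible $\afunk$-module. Because $\afunz$ contains $\afuslz$ and is generated by $\afuslz$ together with the $\big({H_i\atop k}\big)$, I can use the finite-dimensional theory of $\afuslk$ from \cite{JM07} and add weight bookkeeping from the idempotents $e_{\diag(\mu)}$. The key input is that $\afunrk$ contains the images $\afxrk\big(\big({H_i\atop k}\big)\big)=\sum_{\mu\in\afLanr}\big({\mu_i\atop k}\big)e_{\diag(\mu)}$. Over $\mbz$ these interpolate the idempotents, so $e_{\diag(\mu)}\in\afunrk$; this is the standard fact for the classical and affine Schur setting, cf. \cite{DFW12}. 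Hence $V=\bop_{\mu\in\La(n,r)}V_\mu$ with $V_\mu=e_{\diag(\mu)}V$.

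\textbf{Step 1 (highest weight vector).} Among the finitely many weights of $V$, choose $\la$ maximal in $\unlhd$. By Lemma \ref{Ejk Vmu}, every $(\afE_{j,k+sn})^{(m)}$ with $j<k$ kills $V_\la$. The operators $\Ga_{j,s}$ lie in the image of $\afuslz$ and preserve weight spaces, since they have weight zero; they also commute with one another, so they have a common eigenvector $w_0\in V_\la$. Put $\Ga_j^\pm(u)w_0=P_j^{(\pm)}(u)w_0$.

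\textbf{Step 2 (form of the eigenvalues).} I must show that these eigenvalues come from a tuple $\bfP\in\Pn$ with $\deg P_j=\la_j-\la_{j+1}$. The submodule $\afuslk w_0$ is a finite-dimensional highest weight $\afuslk$-module. By \cite[Th.~2.4, Prop.~3.1]{JM07}, the $\Ga_j^+$-eigenvalue is a polynomial $P_j(u)$ with constant term $1$, the $\Ga_j^-$-eigenvalue is $P_j^-(u)$, and $\deg P_j$ equals the $H_j-H_{j+1}$ eigenvalue, which is $\la_j-\la_{j+1}$. This also gives $\la\in\La^+(n,r)$. I expect this step to be the main obstacle, because the $\afuslk$-results are stated for $\afuslk$-modules. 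The way through is that $\Ga_{j,\pm s}\in\afuslk$, and the weight $\mu_j=\la_j-\la_{j+1}$ is read off from $\big({H_j-H_{j+1}\atop m}\big)$, which also lies in $\afuslk$. So the results of \cite{JM07} apply verbatim to $\afuslk w_0$.

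\textbf{Step 3 (identification).} The vector $w_0$ satisfies all the defining relations of $\bar I(\bfP,\la)$. This gives a nonzero $\afunk$-homomorphism $\bar M(\bfP,\la)\ra V$ with $\bar 1\mapsto w_0$. It is surjective because $V$ is irreducible. Since $\bar L(\bfP,\la)$ is the unique irreducible quotient of $\bar M(\bfP,\la)$, it follows that $V\cong\bar L(\bfP,\la)$ as $\afunk$-modules. Both actions factor through $\afxrk$: for $V$ by construction, and for $\bar L(\bfP,\la)$ by Corollary \ref{lem1 for rep of afurk}. Hence the isomorphism is an isomorphism of $\afunrk$-modules.
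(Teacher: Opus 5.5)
Your argument is correct. It ends the same way as the paper's proof (a surjection $\bar M(\bfP,\la)\ra V$ plus uniqueness of the irreducible quotient), but you obtain the highest weight vector differently.

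The paper restricts $V$ to $\afuslk$ and uses $\afxrk(\afuslk)=\afunrk$ to know that $V|_{\afuslk}$ is still irreducible. It then invokes the classification \cite[Cor.~3.5]{JM07} to get $V|_{\afuslk}\cong\bar L(\bfP)$, which supplies $w_0$ together with $\bfP$ and the $\mathfrak{sl}_n$-weight $\mu_i=\deg P_i$. Only afterwards does it use the idempotents $e_{\diag(\beta)}$ to show that $w_0$ lies in a single weight space $V_\la$: the point is that $\la\in\afLanr$ is determined by the differences $\la_i-\la_{i+1}$ together with $\sum_i\la_i=r$.

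You instead build $w_0$ inside $V$. A $\unlhd$-maximal weight $\la$ forces the raising operators to vanish on $V_\la$. The commuting family $\Ga_{j,s}$ preserves $V_\la$, since these elements commute with the $\big({H_i\atop k}\big)$, so it has a common eigenvector because $\field$ is algebraically closed. From \cite{JM07} you then need only the weaker fact that the highest $\ell$-weight of a finite-dimensional highest-$\ell$-weight $\afuslk$-module is given by Drinfeld polynomials of the correct degrees.

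The trade-off:
\begin{itemize}
\item Your route avoids the full classification of irreducible $\afuslk$-modules and the fact that the Garland form already surjects onto $\afunrk$. You only need $e_{\diag(\mu)}\in\afunrk$, which follows from $\big({H_i\atop k}\big)\in\afunz$. It also places $w_0$ in $V_\la$ by construction, so no uniqueness-of-weight argument is needed.
\item The paper's route is shorter on the page, because the highest weight vector and its $\ell$-weight come packaged in the cited classification.
\end{itemize}
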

\begin{proof}
Let $V$ be a finite-dimensional irreducible $\afunrk$-module.
Since
\begin{equation}\label{image of afuslk is equal to image of afunk}
\afxrk(\afuslk)=\afxrk(\afunk)=\afunrk,
\end{equation}
we see that $V$ is a finite-dimensional irreducible $\afuslk$-module. By \cite[Cor. 3.5]{JM07} we have $V|_{\afuslk}$ is isomorphic to $\bar L(\bfP)$ for some $\bfP\in\Pn$. Hence there exists some $w_0\not=0\in V$ such that
$$(\afE_{j,k+sn})^{(m)}w_0=0,\,\bigg({H_{i}-H_{i+1}\atop m}\bigg)w_0
=\bigg({\mu_i\atop m}\bigg)w_0,\,\Ga_{j}^\pm(u)w_0=P_j^\pm(u)w_0$$
for $1\leq j<k\leq n$, $s\in\mbz$ and $m>0$, where $\mu_i=\deg P_i(u)$.
Since $\afxrk\big(\big({H_{i}-H_{i+1}\atop m}\big)\big)=\sum_{\bt\in\afLanr}
\big({\bt_{i}-\bt_{i+1}\atop m}\big)e_{\diag(\bt)}$,
we have
$$\bigg({\mu_i\atop m}\bigg)w_0 =\sum_{\bt\in\afLanr}
\bigg({\bt_{i}-\bt_{i+1}\atop m}\bigg)e_{\diag(\bt)}w_0$$
for $1\leq i\leq n-1$ and $m>0$.
This implies that
$$\bigg({\mu_i\atop m}\bigg)e_{\diag(\bt)}w_0 =
\bigg({\bt_{i}-\bt_{i+1}\atop m}\bigg)e_{\diag(\bt)}w_0$$
for $\bt\in\afLanr$, $1\leq i\leq n-1$ and $m>0$.
Consequently, if $\nu\in\afLanr$ is such that $e_{\diag(\nu)}w_0\not=0$ then
$$\bigg({\nu_{i}-\nu_{i+1}\atop m}\bigg)1_\field=\bigg({\mu_i\atop m}\bigg)1_\field
$$
for $1\leq i\leq n-1$ and $m>0$, and hence $\nu_i-\nu_{i+1}=\mu_i$ for $1\leq i\leq n-1$.
Therefore, since
$w_0=\sum_{\bt\in\afLanr}e_{\diag(\bt)}w_0$ and $w_0\not=0$, there is a unique $\la\in\afLanr$ such that
$e_{\diag(\la)}w_0\not=0.$ It follows that $w_0=e_{\diag(\la)}w_0$. This implies that
$$\bigg({H_{i}\atop m}\bigg)w_0
=\bigg({\la_i\atop m}\bigg)w_0$$
for $1\leq i\leq n$. Hence there is a surjective $\afunk$-module homomorphism
$\vi:\bar M(\bfP,\la)\ra V$ such that $\vi(\bar 1)=w_0$, where $\bar 1=1+\bar I(\bfP,\la)\in\bar M(\bfP,\la)$. Therefore, since $\bar L(\bfP,\la)$ is the unique irreducible quotient of $\bar M(\bfP,\la)$, we have $V\cong\bar L(\bfP,\la)$.
\end{proof}

Combining Corollary \ref{lem1 for rep of afurk}, Lemma  \ref{lem3 for rep of afurk} and \ref{lem2 for rep of afurk}, we obtain the main classification theorem for finite-dimensional irreducible $\afunrk$-modules.
\begin{Thm}\label{simple U(n,r)-modules}
The set
$$
\{\Lcp(\bfP,\la)\mid\bfP\in\Pn,
\,\la\in\La^+(n,r),\,\la_i-\la_{i+1} =\mathrm{deg}P_i(u),\;\text{ for
$1\leq i\leq n-1$}\}
$$
is a complete set of non-isomorphic finite-dimensional irreducible
$\afunrk$-modules.
\end{Thm}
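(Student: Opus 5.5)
The statement has three ingredients: each listed module is an irreducible $\afunrk$-module, distinct parameters give non-isomorphic modules, and every finite-dimensional irreducible $\afunrk$-module occurs. These are, respectively, Corollary \ref{lem1 for rep of afurk}, Lemma \ref{lem3 for rep of afurk} and Lemma \ref{lem2 for rep of afurk}. The plan is to assemble them, checking only the one point not literally stated: irreducibility.

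First, fix $\bfP\in\Pn$ and $\la\in\La^+(n,r)$ with $\la_i-\la_{i+1}=\deg P_i(u)$ for $1\leq i\leq n-1$. Corollary \ref{lem1 for rep of afurk} shows that $\Lcp(\bfP,\la)$ is an $\afunrk$-module via $\afxrk$. It is finite-dimensional, because in that proof it arises as the restriction of $L(\bfQ)$ for a suitable $\bfQ\in\Qnr$, and $L(\bfQ)$ is finite-dimensional by Proposition \ref{decomposition of L(Q)}. For irreducibility, note that $\Lcp(\bfP,\la)$ is by definition an irreducible $\afunk$-module. By \eqref{image of afuslk is equal to image of afunk}, the restriction $\afxrk:\afunk\ra\afunrk$ is surjective. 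Hence the $\afunrk$-submodules and the $\afunk$-submodules coincide, and $\Lcp(\bfP,\la)$ is irreducible over $\afunrk$.

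Second, suppose $\Lcp(\bfP,\la)\cong\Lcp(\bfP',\la')$ as $\afunrk$-modules. Pulling back along $\afxrk$ gives an isomorphism of $\afunk$-modules. Lemma \ref{lem3 for rep of afurk} then yields $\bfP=\bfP'$ and $\la=\la'$, so the listed modules are pairwise non-isomorphic.

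Third, completeness is exactly Lemma \ref{lem2 for rep of afurk}. Every finite-dimensional irreducible $\afunrk$-module $V$ is isomorphic to some $\Lcp(\bfP,\la)$ with $\la_i-\la_{i+1}=\deg P_i(u)$. The one subtlety is whether $\la$ is dominant, i.e.\ lies in $\La^+(n,r)$ rather than merely in $\afLanr$. This holds because $\la_i-\la_{i+1}=\mu_i=\deg P_i(u)\geq 0$ and $\sum_i\la_i=r$.

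I expect no genuine obstacle. The only points needing care are the finite-dimensionality and irreducibility of $\Lcp(\bfP,\la)$ over $\afunrk$, both handled through the surjection $\afunk\to\afunrk$, and the dominance check in the last step.
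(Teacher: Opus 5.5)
Your proposal is correct and follows the paper's own argument: the paper proves the theorem simply by combining Corollary~\ref{lem1 for rep of afurk}, Lemma~\ref{lem3 for rep of afurk} and Lemma~\ref{lem2 for rep of afurk}. The points you check explicitly are left implicit in the paper, namely irreducibility over $\afunrk$ via the surjection $\afxrk:\afunk\ra\afunrk$, finite-dimensionality via $L(\bfQ)$, and dominance of $\la$.
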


This theorem provides a complete classification of finite-dimensional irreducible representations of the Lusztig algebra $\afunrk$ in terms of Drinfeld polynomials and highest weights, establishing a parallel with the representation theory of the hyper loop algebra $\afuglk$ and affine Schur algebras.

\section{Polynomial representations of $\Gk$}
The aim of this section is to relate finite-dimensional
representations of the semigroup $\widehat G_{\mathpzc K}(n)$
to affine Schur algebras. In the affine setting, one has to
distinguish the polynomial coefficient space
$\widehat A_{\mathpzc K}(n,r)$ from its formal completion.
This distinction is already visible for evaluation modules:
their coefficient functions are, in general, infinite formal
linear combinations of the coordinate monomials, although their
values at each element of $\widehat G_{\mathpzc K}(n)$ are
given by finite sums.

A formal-coalgebra approach to affine Schur algebras was
developed by Yang \cite{Yang1} for a different family of
semigroups $\widetilde{GL}_{n,a}$ depending on an evaluation
parameter $a\in\mathpzc K^\times$. We shall compare his
semigroups with $\widehat G_{\mathpzc K}(n)$ below.
\subsection{The restricted dual and its formal completion}
Recall that $\field$ is a commutative ring with unity.
Let $\sAn$ be the associative $\field$-algebra generated by $X_{i,j}$
($i,j\in\mbz$) with relations $X_{i,j}X_{k,l}=X_{k,l}X_{i,j}$ and $X_{i,j}=X_{i+n,j+n}$
for $i,j,k,l\in\mbz$.
This is naturally graded by regarding each generator as having
degree $1$; denote by $\sAnr$ the $r$th graded component.

We first introduce some notation that will be used throughout this subsection.
Let
\begin{equation*}\label{afInr}
\afInr=\{\bfi=(i_k)_{k\in\mbz}\mid i_k\in\mbz,i_{k+r}=i_k+n\text{
for all $k\in\mbz$}\}.
\end{equation*}
We shall
identify $\afInr$ with $\mbz^r$ by the following bijection
\begin{equation*}\label{afInr4}
I_\vtg(n,r)\lra \mbz^r,\;\; \bfi\longmapsto (i_1,\ldots,i_r).
\end{equation*}
The group $\affSr$ acts
on the set $\afInr$ by place permutation:
\begin{equation*}\label{place permutation}
\bfi w=(i_{w(k)})_{k\in\mbz},\quad\text{
for $\bfi\in \afInr$ and $w\in\affSr$.}
\end{equation*}
The group $\affSr$ also acts on the set $\afInr\times\afInr$ by
$(\bfi,\bfj)w=(\bfi w,\bfj w)$ for $\bfi,\bfj\in\afInr$ and $w\in\affSr$.
We write $(\bfi,\bfj)\sim(\bfk,\bl)$ to indicate that the elements $(\bfi,\bfj)$ and $(\bfk,\bl)$ are in the same $\affSr$-orbit.
These group actions allow us to define monomials in $\sAnr$ in an orbit-invariant way.
For $\bfi,\bfj\in\afInr$ let
$$X_{\bfi,\bfj}=X_{i_1,j_1}X_{i_2,j_2}\cdots X_{i_r,j_r}\in\sAnr.$$
For $\bfi,\bfj,\bfk,\bl\in\afInr$ we have $X_{\bfi,\bfj}=X_{\bfk,\bl}$ if and only if
$(\bfi,\bfj)\sim(\bfk,\bl)$.

To study the dual picture, we define linear functionals on $\sAnr$.
For $\bfi,\bfj\in\afInr$ let $X_{\bfi,\bfj}^*$ be the element of $\sAnr^*=\Hom_\mpk(\sAnr,\mpk)$ such that
$$X_{\bfi,\bfj}^*(X_{\bfk,\bl})=
\begin{cases}
1&\text{if $(\bfi,\bfj)\sim(\bfk,\bl)$}\\
0&\text{if $(\bfi,\bfj)\not\sim(\bfk,\bl)$}
\end{cases},\text{ all $\bfk,\bl\in\afInr$}.
$$

Let $\sAnr^\#$ be the $\mpk$-submodule of $\sAnr^*$ spanned by $X_{\bfi,\bfj}^*$ for $\bfi,\bfj\in\afInr$. For $f,g\in\sAnr^\#$ and $\bfi,\bfj\in\afInr$,
there are only finitely many $\bfk\in\afInr$ such that $f(X_{\bfi,\bfk})g(X_{\bfk,\bfj})\not=0$. Hence we can define $fg\in\sAnr^*$
such that
$$(fg)(X_{\bfi,\bfj})=\sum_{\bfk\in\afInr}f(X_{\bfi,\bfk})g(X_{\bfk,\bfj})$$
for $\bfi,\bfj\in\afInr$. It is easy to see that $fg\in\sAnr^\#$. This defines an associative algebra structure on  $\sAnr^\#$.

We next choose canonical representatives for the
\(\widehat{\mathfrak S}_r\)-orbits.
For $\la\in\afLanr$ let
$$\bfi_\la^\vtg=(\ldots,\underbrace{0,\ldots,0}_{\la_n},
\underbrace{1,\ldots,1}_{\la_1},\underbrace{2,\ldots,2}_{\la_2},
\ldots,\underbrace{n,\ldots,n}_{\la_n},\underbrace{1+n,\ldots,1+n}_{\la_1},
\underbrace{2+n,\ldots,2+n}_{\la_2},\ldots).$$
\begin{Lem}\label{representative}
For $\bfi,\bfj\in\afInr$ there exist unique $\la,\mu\in\afLanr$ and $d\in\afmsD_{\la,\mu}$ such that $(\bfi,\bfj) \sim(\bfi^\vtg_\la d,\bfi^\vtg_\mu)$.
\end{Lem}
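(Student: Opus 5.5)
The plan is to reduce everything to two facts: each $\affSr$-orbit on $\afInr$ contains exactly one of the elements $\bfi^\vtg_\nu$, and the stabilizer of $\bfi^\vtg_\nu$ under place permutation is exactly the Young subgroup $\fS_\nu$. The statement then becomes the usual parametrization of $\fS_\la\backslash\affSr/\fS_\mu$ by distinguished double coset representatives.

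\emph{Step 1 (orbits and stabilizers on $\afInr$).} Let $\bfj\in\afInr$ and, for $1\le a\le n$, let $\mu_a$ be the number of $k\in\{1,\ldots,r\}$ with $j_k\equiv a \pmod n$. Under the identification of $\afInr$ with $\mbz^r$, the generators $\bse_k$ shift $j_k$ by $\pm n$, and $\fS_r$ permutes the entries. So one can move every entry into $\{1,\ldots,n\}$ and then sort the entries weakly increasingly. This shows $\bfj\sim\bfi^\vtg_\mu$ with $\mu\in\afLanr$. The residue counts $\mu_a$ are invariant under $\affSr$, since the action only permutes entries and shifts them by multiples of $n$. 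Hence $\mu$ is unique. Next, if $\bfi^\vtg_\mu w=\bfi^\vtg_\mu$, then $w$ must preserve each block $R^\mu_{a+kn}$, because the entries of $\bfi^\vtg_\mu$ are constant exactly on these blocks and distinct across them. Therefore the stabilizer is $\fS_\mu$.

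\emph{Step 2 (existence).} Given $(\bfi,\bfj)$, choose $w$ with $\bfj w=\bfi^\vtg_\mu$, so $(\bfi,\bfj)\sim(\bfi w,\bfi^\vtg_\mu)$. By Step 1, $\bfi w=\bfi^\vtg_\la y$ for a unique $\la$ and some $y\in\affSr$. For $x\in\fS_\la$ and $v\in\fS_\mu$ we have
\[
(\bfi^\vtg_\la xyv,\ \bfi^\vtg_\mu)=(\bfi^\vtg_\la y,\ \bfi^\vtg_\mu v^{-1})\,v\sim(\bfi^\vtg_\la y,\ \bfi^\vtg_\mu),
\]
so $y$ may be replaced by any element of $\fS_\la y\fS_\mu$. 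Take $d$ to be the unique element of $\afmsD_{\la,\mu}$ in this double coset. Here I use the standard fact that $\afmsD_{\la,\mu}$ is a set of distinguished (minimal length) representatives of $\fS_\la\backslash\affSr/\fS_\mu$. This holds for parabolic subgroups of the affine Weyl group, and it extends to the extended group $\affSr$, which is a semidirect product of the affine Weyl group with the length-zero subgroup. This fact also underlies the bijection $\afjmath$ of Varagnolo--Vasserot recalled in \S2.3.

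\emph{Step 3 (uniqueness).} Suppose $(\bfi^\vtg_\la d,\bfi^\vtg_\mu)\sim(\bfi^\vtg_{\la'}d',\bfi^\vtg_{\mu'})$, so that $(\bfi^\vtg_\la d,\bfi^\vtg_\mu)w=(\bfi^\vtg_{\la'}d',\bfi^\vtg_{\mu'})$ for some $w$.
\begin{itemize}
\item Comparing second components and applying Step 1 gives $\mu=\mu'$ and $w\in\fS_\mu$.
\item Comparing first components, $\bfi^\vtg_\la$ and $\bfi^\vtg_{\la'}$ lie in the same orbit, so $\la=\la'$.
\item Then $\bfi^\vtg_\la dw d'^{-1}=\bfi^\vtg_\la$, so $dwd'^{-1}\in\fS_\la$. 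Thus $d'\in\fS_\la d\fS_\mu$, and uniqueness of distinguished representatives forces $d=d'$.
\end{itemize}

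The only non-formal input, and the expected obstacle, is the distinguished double coset representative property of $\afmsD_{\la,\mu}$ in the extended affine symmetric group. I would cite it from \cite{VV99} or \cite{Gr99}. Alternatively, I would reduce it to the affine Weyl group case by writing $\affSr$ as the affine Weyl group extended by the length-zero subgroup $\langle\rho\rangle$, and noting that $\rho$ permutes blocks compatibly.
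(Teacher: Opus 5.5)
Your proposal is correct and follows the paper's proof essentially step for step. For existence, you move $(\bfi,\bfj)$ to a pair $(\bfi^\vtg_\la w,\bfi^\vtg_\mu)$ and then replace $w$ by the minimal-length element of $\fS_\la w\fS_\mu$. For uniqueness, you use that the stabilizer of $\bfi^\vtg_\mu$ is $\fS_\mu$ and that distinguished double coset representatives are unique.

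The only difference is one of self-containment. You verify the orbit-representative and stabilizer facts directly, where the paper cites \cite[3.2.1.3]{DDF}. You also justify the distinguished double coset property for the extended group $\affSr$, which the paper uses without comment.
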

\begin{proof}
By \cite[3.2.1.3]{DDF}, for $\bfi,\bfj\in\afInr$ there exist $\la,\mu\in\afLanr$ and $w\in\affSr$ such that $(\bfi,\bfj)\sim(\bfi^\vtg_\la w,\bfi^\vtg_\mu)$.
Let $d$ be a representative of $\fS_\la w\fS_\mu$ with minimal length. Then we have $d\in\msD_{\la,\mu}$ and
$(\bfi,\bfj)\sim(\bfi^\vtg_\la w,\bfi^\vtg_\mu)\sim(\bfi^\vtg_\la d,\bfi^\vtg_\mu)$. Assume now that
$\al,\bt,\ga,\dt\in\afLanr$, $d_1\in\msD_{\al,\bt}$, $d_2\in\msD_{\ga,\dt}$ and
$(\bfi^\vtg_\al d_1,\bfi^\vtg_\bt )\sim(\bfi^\vtg_\ga d_2,\bfi^\vtg_\dt)$.
By  \cite[3.2.1.3]{DDF}
we have $\al=\ga$ and $\bt=\dt$. Hence there exists $w\in\affSr$ such that $\bfi^\vtg_\al d_1 =\bfi^\vtg_\al d_2w$ and $\bfi_\bt^\vtg=\bfi_\bt^\vtg w$. It follows that $\fS_\al d_1=\fS_\al d_2w$ and $w\in\fS_\bt$. This implies that $\fS_\al d_1\fS_\bt=\fS_\al d_2\fS_\bt$. Therefore, since $d_1,d_2\in\fS_{\al,\bt}$, we have $d_1=d_2$.
\end{proof}

By Lemma \ref{representative} we obtain the following result.
\begin{Coro}\label{basis of sAnr}
The set $\{X_{\bfi_\la^\vtg d,\bfi_\mu^\vtg} \mid\la,\mu\in\afLanr,\,d\in\afmsD_{\la,\mu}\}$ forms a $\mpk$-basis for $\sAnr$. In particular the set $\{X_{\bfi_\la^\vtg d,\bfi^\vtg_\mu}^*\mid\la,\mu\in\afLanr,\,d\in\afmsD_{\la,\mu}\}$ forms a $\mpk$-basis for $\sAnr^\#$.
\end{Coro}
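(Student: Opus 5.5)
The plan is to deduce both assertions from Lemma~\ref{representative}, using that monomials in $\sAnr$ are determined by orbits. First, $\sAnr$ is the degree-$r$ component of a commutative polynomial algebra in the variables $X_{i,j}$, where $X_{i,j}$ and $X_{i+n,j+n}$ are identified. Choose the variables $X_{i,j}$ with $1\le i\le n$ and $j\in\mbz$ as a free generating set. The degree-$r$ monomials in these variables then form a $\mpk$-basis of $\sAnr$. Each such monomial has the form $X_{\bfi,\bfj}$ for some $\bfi,\bfj\in\afInr$, so the set $\{X_{\bfi,\bfj}\}$ spans $\sAnr$.

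Next we invoke the criterion recorded earlier: $X_{\bfi,\bfj}=X_{\bfk,\bl}$ if and only if $(\bfi,\bfj)\sim(\bfk,\bl)$. Hence distinct monomials correspond exactly to distinct $\affSr$-orbits on $\afInr\times\afInr$. We check the ``only if'' direction briefly. If $X_{\bfi,\bfj}=X_{\bfk,\bl}$, the multisets of pairs $(i_s,j_s)$, each taken modulo simultaneous shift by $n$, agree. Using the elements $\bse_s$ to shift individual positions and $\fS_r$ to reorder them, one obtains $w$ with $(\bfi,\bfj)w=(\bfk,\bl)$. So the basis of $\sAnr$ is indexed by the orbits, and Lemma~\ref{representative} supplies, for each orbit, a unique representative $(\bfi_\la^\vtg d,\bfi_\mu^\vtg)$ with $\la,\mu\in\afLanr$ and $d\in\afmsD_{\la,\mu}$. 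This proves the first assertion.

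For the dual statement, the functional $X^*_{\bfi,\bfj}$ is by definition the coordinate functional dual to the basis element $X_{\bfi,\bfj}$ with respect to this monomial basis. Hence the elements $X^*_{\bfi_\la^\vtg d,\bfi_\mu^\vtg}$ are linearly independent: evaluating a finite linear combination on the basis monomials recovers each coefficient. They also span $\sAnr^\#$, since every generator $X^*_{\bfi,\bfj}$ depends only on the orbit of $(\bfi,\bfj)$ and therefore equals one of them.

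The only step needing care is the identification of orbits with monomials, in particular making sure that place permutations by $\affSr$ realise arbitrary reorderings together with simultaneous shifts by $n$. This follows directly from the generators $s_i$ and $\bse_j$ of $\affSr$, so I do not expect a real obstacle. The substantive input, uniqueness of the representative, is already contained in Lemma~\ref{representative}.
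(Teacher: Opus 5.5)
Your proposal is correct and follows essentially the paper's route. The paper deduces the corollary directly from Lemma~\ref{representative} together with the criterion $X_{\bfi,\bfj}=X_{\bfk,\bl}\iff(\bfi,\bfj)\sim(\bfk,\bl)$; you have simply written out the monomial-basis and dual-coordinate-functional details that the paper leaves implicit.
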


Using the preceding basis of  $\sAnr^\#$, we obtain the following isomorphism.

\begin{Thm}\label{res dual}
There is a $\mpk$-algebra isomorphism $\phi_r:\sAnr^\#\ra\afSrk$ such that
$\phi_r(X_{\bfi_\la^\vtg d,\bfi_\mu^\vtg}^*)=e_A$
for $\la,\mu\in\afLanr$ and $d\in\afmsD_{\la,\mu}$, where $A=\afjmath(\la,d,\mu)$.
\end{Thm}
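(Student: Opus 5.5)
The plan is to realize both algebras inside a common endomorphism ring and match their structure constants, in the spirit of Green's classical identification of $S(n,r)$ with the dual of the coalgebra $A(n,r)$. By Corollary~\ref{basis of sAnr}, $\phi_r$ is a well-defined $\mpk$-linear bijection, because it sends a basis to the basis $\{e_A\mid A\in\afThnr\}$ of $\afSrk$. Here we use \cite[Th.~2.2.4]{Gr99}, base change \eqref{afSrk}, and the bijectivity of $\afjmath$. So the only thing left to prove is that $\phi_r$ is multiplicative.

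To check multiplicativity, I will realize $\sAnr^\#$ as operators on $\Ogk^{\ot r}$. Define $\Psi:\sAnr^\#\to\End_\mpk(\Ogk^{\ot r})$ by
\[\Psi(f)(\og_\bfj)=\sum_{\bfi\in\afInr} f(X_{\bfi,\bfj})\og_\bfi .\]
The sum is finite because $f$ is a finite combination of the $X^*_{\bfk,\bl}$. Each $\affSr$-orbit on $\afInr\times\afInr$ meets $\{(\bfi,\bfj)\}$, for fixed $\bfj$, in only finitely many $\bfi$: periodicity forces $w$ to lie in the finite stabilizer coset of $\bfj$. The steps are then as follows.

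First, $\Psi(fg)=\Psi(f)\Psi(g)$. This is immediate from the definition $(fg)(X_{\bfi,\bfj})=\sum_\bfk f(X_{\bfi,\bfk})g(X_{\bfk,\bfj})$.

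Second, $\Psi(f)$ commutes with the right $\affSr$-action. The place-permutation action satisfies $\og_\bfj w=\og_{\bfj w}$; for the generators $\bse_k$ this uses the convention $\og_{i+n}$ and the identification $\afInr\cong\mbz^r$. Combined with $f(X_{\bfi w,\bfj w})=f(X_{\bfi,\bfj})$, reindexing gives $\Psi(f)(\og_\bfj w)=\Psi(f)(\og_\bfj)w$. Hence $\Psi$ lands in $\afSrk$.

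Third, $\Psi$ sends each basis element $X^*_{\bfi^\vtg_\la d,\bfi^\vtg_\mu}$ to $e_A$ with $A=\afjmath(\la,d,\mu)$. Under the identification $\Ogk^{\ot r}\cong\bop_\nu \mpk\ul{\fS_\nu}\affSr$, the vector $\og_{\bfi^\vtg_\nu}$ corresponds to $\ul{\fS_\nu}$. The operator $\Psi(X^*_{\bfi^\vtg_\la d,\bfi^\vtg_\mu})$ kills $\og_{\bfi^\vtg_\nu}$ for $\nu\neq\mu$, by the uniqueness in Lemma~\ref{representative}. On $\og_{\bfi^\vtg_\mu}$ it gives the sum of the distinct $\og_{\bfi^\vtg_\la d w}$ with $w\in\fS_\mu$, which corresponds to $\sum_{x\in\fS_\la d\fS_\mu}x$. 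This is exactly $e_A(\ul{\fS_\mu})$, and $\affSr$-equivariance then propagates the equality to all of $\Ogk^{\ot r}$. Consequently $\Psi=\phi_r$, and $\phi_r$ is an algebra homomorphism, hence an isomorphism.

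The main obstacle is the third step. It requires the careful combinatorial identification: the orbit of $(\bfi^\vtg_\la d,\bfi^\vtg_\mu)$ restricted to second coordinate $\bfi^\vtg_\mu$ must be exactly $\{\bfi^\vtg_\la d w\mid w\in\fS_\mu\}$, and it must match the double coset sum with multiplicity one on each distinct vector. It also requires care with the sign and direction conventions ($\og_{i+ln}=\vep^{-l}\og_i$ versus the shift by $\bse_k$). I would first verify these conventions against \cite[\S3.2]{DDF}, where the same representatives $\bfi^\vtg_\la$ appear. As a sanity check on the conventions, I would confirm that $\Psi$ preserves the identity: $\sum_\la X^*_{\bfi^\vtg_\la,\bfi^\vtg_\la}\mapsto\sum_\la e_{\diag(\la)}=1$.
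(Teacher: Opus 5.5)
Your proposal is correct and takes the same route as the paper. The paper also defines $\phi_r$ as the map induced by the commuting $\sAnr^\#$-action $f(\og_\bfi)=\sum_{\bfj}f(X_{\bfj,\bfi})\og_\bfj$. It identifies $\phi_r(X^*_{\bfi^\vtg_\la d,\bfi^\vtg_\mu})$ with $e_A$ via the computation $X^*_{\bfi^\vtg_\la d,\bfi^\vtg_\mu}(\og_{\bfi^\vtg_\al})=\dt_{\mu,\al}\sum_{w\in\fS_\mu\cap\afmsD_\nu}\og_{\bfi^\vtg_\la dw}$, where $\fS_\nu=d^{-1}\fS_\la d\cap\fS_\mu$; this coset-representative set is exactly what makes your ``distinct $\og_{\bfi^\vtg_\la dw}$'' precise. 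The paper then gets bijectivity from Corollary~\ref{basis of sAnr}, so the only difference from your argument is the order in which you fix the linear bijection and verify multiplicativity.
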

\begin{proof}
We have an $\sAnr^\#$-module structure on $\Ogk^{\ot r}$ given by
$$f(\og_\bfi)=\sum_{\bfj\in\afInr}f(X_{\bfj,\bfi})\og_\bfj$$
for $f\in\sAnr^\#$ and $\bfi\in\afInr$.
Clearly the action of $\sAnr^\#$ and $\affSr$ on $\Ogk^{\ot r}$ commute.
The $\sAnr^\#$-$\affSr$-module structure on $\Ogk^{\ot r}$ induces a $\mpk$-algebra homomorphism
$$\phi_r:\sAnr^\#\lra\afSrk.$$
For $\la,\mu,\al\in\afLanr$ and $d\in\afmsD_{\la,\mu}$
we have
\[
 X_{\bfi_\la^\vtg d,\bfi_\mu^\vtg}^*(\og_{\bfi_\al^\vtg})
 =\sum_{\bfj\in\afInr}X_{\bfi_\la^\vtg d,\bfi_\mu^\vtg}^*(X_{\bfj,\bfi_\al^\vtg})\og_\bfj =\dt_{\mu,\al}\sum_{\bfj\in\afInr
\atop(\bfi_\la^\vtg d,\bfi_\mu^\vtg)\sim(\bfj,\bfi_\mu^\vtg)} \og_\bfj
 =\dt_{\mu,\al}\sum_{w\in\fS_\mu\cap\afmsD_\nu}\og_{\bfi_\la^\vtg dw}.
\]
where $\fS_\nu=d^{-1}\fS_\la d\cap\fS_\mu$. It follows that
$\phi_r(X_{\bfi_\la^\vtg d,\bfi_\mu^\vtg}^*)=e_A$
for $\la,\mu\in\afLanr$ and $d\in\afmsD_{\la,\mu}$, where $A=\afjmath(\la,d,\mu)$.
Therefore by Corollary \ref{basis of sAnr} we conclude that $\phi_r$ is a $\mpk$-algebra isomorphism.
\end{proof}

We define the formal completion  of
$\widehat \sA_{\mathpzc K}(n,r)$ to be the $\mathpzc K$-module
\[
\overline{\widehat\sA_{\mathpzc K}(n,r)}
=
\bigg\{
\sum_{\substack{\lambda,\mu\in\Lambda_\vtg(n,r)\\
d\in\mathscr D_{\lambda,\mu}^\vtg}}
a_{\lambda,d,\mu}
X_{\mathbf{i}_\lambda^\vtg d,\mathbf{i}_\mu^\vtg}
\ \big  |\
a_{\lambda,d,\mu}\in\mathpzc K
\bigg\},
\]
where the sums are formal and no finiteness condition is imposed
on the coefficients $a_{\lambda,d,\mu}$.
By Theorem~\ref{res dual}
and the definition above, there is a
$\mathpzc K$-module isomorphism
\begin{equation}\label{S*}
\overline{\widehat \sA_{\mathpzc K}(n,r)}
\cong
\widehat{\mathcal S}_{\mathpzc K}(n,r)^*,
\end{equation}
under which
\(
\sum_{\lambda,\mu,d}
a_{\lambda,d,\mu}
X_{\mathbf{i}_\lambda^\vtg d,\mathbf{i}_\mu^\vtg}
\)
corresponds to the linear functional sending
$e_{\jmath_\vtg(\lambda,d,\mu)}$ to
$a_{\lambda,d,\mu}$, where
\(
\widehat{\mathcal S}_{\mathpzc K}(n,r)^*
=
\operatorname{Hom}_{\mathpzc K}
\bigl(\widehat{\mathcal S}_{\mathpzc K}(n,r),\mathpzc K\bigr).
\)

\subsection{Formal polynomial representations of
$\widehat G_{\mathpzc K}(n)$}
Throughout this subsection, we assume that $\mathpzc K$ is an
infinite field.
Recall from  \eqref{the definition of G} that $\Gk=\{A\in \afGL\mid a_{i,j}\in\field[\vep,\vep^{-1}],\,\forall i,j\}$.
The set $\field^{\Gk}$  of all functions $f:\Gk\ra\field$ is a commutative $\field$-algebra, with algebra operations defined ``pointwise'', e.g. the product $f_1f_2$ of two functions $f_1,f_2:\Gk\ra\field$ is defined to be the function which takes $g\ra f_1(g)f_2(g)$, for all $g\in \Gk$.

For $i,j\in\mbz$, let $\ttx_{i,j}\in\field^{\Gk}$ be the function which assigns to each $g\in \Gk$ the $(i,j)$-coefficient of $\vartheta(g)$, where $\vartheta$ is given in \eqref{vartheta}. Then we have
$\ttx_{i,j}=\ttx_{i+n,j+n}$ for $i,j\in\mbz$.
Let $\An$ be the $\field$-subalgebra of $\field^{\Gk}$ generated by the functions $\ttx_{i,j}$ ($i,j\in\mbz$). The elements of $\An$ are called the polynomial functions on $\Gk$. For $r\in\mbn$ let $\Anr$ be the subspace of $\An$ consisting of all polynomial functions on $\Gk$
which are expressible as homogeneous  polynomials of degree $r$ in the $\ttx_{i,j}$.

Since $\field$ is an infinite field,
the functions $\ttx_{i,j}$ ($1\leq i\leq n$, $j\in\mbz$) are algebraically independent over $\field$.
Thus $\An$ is isomorphic to the algebra $\sAn$. We henceforth identify
$\widehat{\mathcal A}_{\mathpzc K}(n)$ with
$\widehat A_{\mathpzc K}(n)$, and likewise identify
$\widehat{\mathcal A}_{\mathpzc K}(n,r)$ with
$\widehat A_{\mathpzc K}(n,r)$.

We next pass from polynomial functions to formal polynomial
functions. Let
\[
F=
\sum_{\substack{\lambda,\mu\in\Lambda_\vtg(n,r)\\
d\in\mathscr D_{\lambda,\mu}^\vtg}}
a_{\lambda,d,\mu}
X_{\mathbf{i}_\lambda^\vtg d,\mathbf{i}_\mu^\vtg}
\in
\overline{\widehat{\mathcal A}_{\mathpzc K}(n,r)}.
\]
For $g\in\widehat G_{\mathpzc K}(n)$, define
\[
F(g)=
\sum_{\substack{\lambda,\mu\in\afLanr\\
d\in\mathscr D_{\lambda,\mu}^\vtg}}
a_{\lambda,d,\mu}
X_{\mathbf{i}_\lambda^\vtg d,\mathbf{i}_\mu^\vtg}(g),
\]
where each $X_{i,j}$ is evaluated as the corresponding
coordinate function $\ttx_{i,j}$.
This is well defined. Indeed, for a fixed
$g\in\widehat G_{\mathpzc K}(n)$, only finitely many of the
coordinate functions $\ttx_{i,j}$ with $1\leq i\leq n$ and
$j\in\mathbb Z$ have a nonzero value at $g$, since every entry
of $g$ is a Laurent polynomial. Since the basis elements of
$\widehat{\mathcal A}_{\mathpzc K}(n,r)$ are monomials of
degree $r$, only finitely many terms of the above formal sum
are nonzero at $g$.

Thus evaluation defines a linear map
\[
\overline{\Psi}_r:
\overline{\widehat{\mathcal A}_{\mathpzc K}(n,r)}
\longrightarrow
\mathpzc K^{\widehat G_{\mathpzc K}(n)}.
\]

\begin{Lem}
The map $\overline{\Psi}_r$ is injective.
\end{Lem}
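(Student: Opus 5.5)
The plan is to reduce injectivity to a finite-dimensional statement by testing against loop-semigroup elements of bounded $\vep$-degree.

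Fix $F=\sum a_{\la,d,\mu}X_{\bfi_\la^\vtg d,\bfi_\mu^\vtg}$ with $\overline{\Psi}_r(F)=0$. By Corollary \ref{basis of sAnr}, the monomials $X_{\bfi_\la^\vtg d,\bfi_\mu^\vtg}$ are exactly the distinct degree-$r$ monomials in the variables $X_{i,j}$ with $1\le i\le n$, $j\in\mbz$, after using $X_{i,j}=X_{i+n,j+n}$. So $F$ is a formal, possibly infinite, linear combination of distinct monomials in these variables. We must show that every coefficient vanishes.

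For $N\ge 1$, let $V_N$ be the finite set of variables $X_{i,j}$ with $1\le i\le n$ and $|j|\le Nn$. These correspond to the coefficients of $\vep^l$, $|l|\le N$, in the matrix entries. Let $F_N$ be the finite sum of the terms of $F$ whose monomial involves only variables in $V_N$. Every monomial lies in some $V_N$, so it suffices to prove $F_N=0$ as a polynomial for each $N$.

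Consider $g\in\Gk$ whose entries are Laurent polynomials supported in degrees $[-N,N]$. At such $g$ every coordinate outside $V_N$ vanishes. Hence every term of $F$ containing a variable outside $V_N$ vanishes at $g$, and therefore $F(g)=F_N(g)$. The problem is thus reduced to a genuine polynomial function on the affine space $\field^{V_N}$.

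The main step is to show that $F_N$ vanishes on a Zariski-dense subset of $\field^{V_N}$. A point $c\in\field^{V_N}$ determines a matrix $g_c\in M_n(\field[\vep,\vep^{-1}])$. This matrix lies in $\Gk$ exactly when $\det g_c\ne 0$ in $\field[\vep,\vep^{-1}]$. Indeed, a nonzero Laurent polynomial is invertible in $\field((\vep))$, and the inverse matrix is then given by the adjugate formula. The condition $\det g_c\neq 0$ means that some coefficient of $\det g_c$ is nonzero. Each such coefficient is a polynomial $D$ in the coordinates of $c$, and one can be chosen that is not identically zero: for instance the constant term of $\det$, which equals $1$ at the identity matrix.

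Since $\field$ is infinite, the standard argument shows that a polynomial vanishing wherever $D\ne0$ is zero. Concretely, $F_N\cdot D$ vanishes on all of $\field^{V_N}$, hence $F_N D=0$ in the polynomial ring, and so $F_N=0$ because the polynomial ring is a domain. Therefore all coefficients $a_{\la,d,\mu}$ vanish, which proves injectivity.

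The only delicate point is the bookkeeping. We must check that distinct basis elements $X_{\bfi_\la^\vtg d,\bfi_\mu^\vtg}$ give distinct monomials in the independent variables $X_{i,j}$ ($1\le i\le n$). This is exactly the content of Lemma \ref{representative} together with the orbit description of when $X_{\bfi,\bfj}=X_{\bfk,\bl}$.
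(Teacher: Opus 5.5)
Your proof is correct, but it takes a different route from the paper.

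The paper never truncates or uses the determinant. It uses the identification $\overline{\sAnr}\cong\afSrk^*$ of \eqref{S*}. It notes that $\operatorname{ev}_g\in\sAnr^\#$ with $\phi_r(\operatorname{ev}_g)=\afzrk(g)$. Hence $\afzrk(g)=\sum X_{\bfi_\la^\vtg d,\bfi_\mu^\vtg}(g)\,e_{\afjmath(\la,d,\mu)}$, and so $F(g)=\widetilde F(\afzrk(g))$ as in \eqref{ti F}. If $F$ vanishes on $\Gk$, then $\widetilde F$ kills the image of $\afzrk$, which is all of $\afSrk$ by Theorem~\ref{surjective2}.

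You instead prove a Zariski-density statement directly. The intersection of $\Gk$ with each finite coordinate slice $\field^{V_N}$ contains the nonempty open set where the constant term $D$ of the determinant is nonzero, and the infinitude of $\field$ does the rest.

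Your route is elementary and independent of Section~4. Combined with \eqref{ti F}, whose derivation does not use surjectivity, it even reproves Theorem~\ref{surjective2} for infinite $\field$. Indeed, if $\afzrk(\field\Gk)$ were a proper subspace of $\afSrk$, a nonzero functional $\widetilde F$ vanishing on it would correspond to a nonzero $F\in\ker\overline{\Psi}_r$. This is the affine counterpart of Green's classical argument for $GL_n$.

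The paper's route is shorter once Theorem~\ref{surjective2} is available. It also produces the identity \eqref{ti F}, which is reused in Theorems~\ref{equivalence1} and~\ref{equivalence2}. Finally, it would work verbatim for any field with $|\field|>r$, whereas your argument as written uses that $\field$ is infinite (the standing hypothesis of this subsection).

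One harmless bookkeeping slip: take $V_N=\{X_{i,j}\mid 1\le i\le n,\ |j|\le Nn\}$ as you define it. A matrix whose entries are supported in degrees $[-N,N]$ can then have nonzero coordinates $X_{i,j}$ with $Nn<j\le Nn+n$, which lie outside $V_N$. You can fix this in either of two ways:
\begin{itemize}
\item take $V_N=\{X_{i,j+ln}\mid 1\le i,j\le n,\ |l|\le N\}$; or
\item note that you only ever evaluate at the matrices $g_c$ with $c\in\field^{V_N}$, whose coordinates outside $V_N$ vanish by construction; this is all the argument uses.
\end{itemize}
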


\begin{proof}
Let
\(
F\in
\overline{\widehat{\mathcal A}_{\mathpzc K}(n,r)},
\)
and let
\(
\widetilde F\in
\widehat{\mathcal S}_{\mathpzc K}(n,r)^*
\)
be the corresponding linear functional under the isomorphism given in \eqref{S*}.
For $g\in\widehat G_{\mathpzc K}(n)$, let
\[
\operatorname{ev}_g:
\widehat{\mathcal A}_{\mathpzc K}(n,r)\longrightarrow\mathpzc K,
\qquad
F\longmapsto F(g).
\]
Then we have
$\operatorname{ev}_g\in
\widehat{\mathcal A}_{\mathpzc K}(n,r)^\#$.
By the definition of $\phi_r$,
\(
\phi_r(\operatorname{ev}_g)
=
\widehat\zeta_{r,\mathpzc K}(g).
\)
Consequently, Theorem~\ref{res dual} gives
\[
\widehat\zeta_{r,\mathpzc K}(g)
=
\sum_{\substack{\lambda,\mu\in\Lambda_\vtg(n,r)\\
d\in\mathscr D_{\lambda,\mu}^\vtg}}
X_{\mathbf{i}_\lambda^\vtg d,\mathbf{i}_\mu^\vtg}(g)
e_{\jmath_\vtg(\lambda,d,\mu)}.
\]
It follows that
\begin{equation}\label{ti F}
F(g)
=
\widetilde F
\bigl(\widehat\zeta_{r,\mathpzc K}(g)\bigr).
\end{equation}
Suppose that $\overline{\Psi}_r(F)=0$. Then
\(
\widetilde F
\bigl(\widehat\zeta_{r,\mathpzc K}(g)\bigr)=0
\
\text{for all }g\in\widehat G_{\mathpzc K}(n).
\)
Since $\mathpzc K$ is infinite, Theorem~\ref{surjective2} implies that
$\widehat\zeta_{r,\mathpzc K}$ is surjective. Therefore
$\widetilde F=0$, and hence $F=0$.
\end{proof}

We denote the image of $\overline{\Psi}_r$ by
\(
\overline{\widehat A_{\mathpzc K}(n,r)}.
\)
Thus
\[
\overline{\Psi}_r:
\overline{\widehat{\mathcal A}_{\mathpzc K}(n,r)}
\xrightarrow{\ \sim\ }
\overline{\widehat A_{\mathpzc K}(n,r)}
\]
is a $\mathpzc K$-linear isomorphism. In particular,
\(
\widehat A_{\mathpzc K}(n,r)
\subseteq
\overline{\widehat A_{\mathpzc K}(n,r)}
\subseteq
\mathpzc K^{\widehat G_{\mathpzc K}(n)}.
\)
By  \eqref{S*}, we also have a
$\mathpzc K$-linear isomorphism
\begin{equation}\label{S*'}
\overline{\widehat A_{\mathpzc K}(n,r)}
\cong
\widehat{\mathcal S}_{\mathpzc K}(n,r)^*.
\end{equation}

Let $V$ be a  finite-dimensional $\field \Gk$-module. Let $\{v_i\mid i\in B \}$ be a $\field$-basis of $V$. We write
$$gv_j=\sum_{i\in B}c_{i,j}(g)v_i$$
for $g\in \Gk$ and $j\in B$, where $c_{i,j}(g)\in\field$. The functions $c_{i,j}:\Gk\ra\field$ $(i,j\in B)$ are called coefficient functions of the $\field \Gk$-module $V$. Let $\cf(V)$ be the $\field$-span of the functions $c_{i,j}$ for $i,j\in B$. It is easy to see that $\cf(V)$ is independent of the choice of the basis $\{v_i\mid i\in B\}$.

We call $V$ a  {  polynomial
$\mathpzc K\widehat G_{\mathpzc K}(n)$-module of homogeneous
degree $r$} if
\(
\operatorname{cf}(V)
\subseteq
\overline{\widehat A_{\mathpzc K}(n,r)}.
\)
Let
\(
\widehat{\mathcal P}_{\mathpzc K}(n,r)
\)
denote the category of finite-dimensional polynomial
$\mathpzc K\widehat G_{\mathpzc K}(n)$-modules of homogeneous
degree $r$.

\begin{Thm}\label{equivalence1}
There is an equivalence of categories
\[
\widehat{\mathcal S}_{\mathpzc K}(n,r)\text{-mod}
\ \simeq\
\widehat{\mathcal P}_{\mathpzc K}(n,r),
\]
where $\widehat{\mathcal S}_{\mathpzc K}(n,r)\text{-mod}$ denotes the
category of finite-dimensional
$\widehat{\mathcal S}_{\mathpzc K}(n,r)$-modules.
\end{Thm}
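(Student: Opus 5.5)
The plan is to mimic Green's classical argument, with $\widehat\zeta_{r,\mathpzc K}$ in place of the map $\mathpzc K GL_n\to\mathcal S_{\mathpzc K}(n,r)$, and with formula \eqref{ti F}, namely $F(g)=\widetilde F(\widehat\zeta_{r,\mathpzc K}(g))$, as the bridge. Since $\mathpzc K$ is infinite, Theorem~\ref{surjective2} makes $\widehat\zeta_{r,\mathpzc K}$ surjective.

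First I define the functor $\widehat{\mathcal S}_{\mathpzc K}(n,r)\text{-mod}\to\widehat{\mathcal P}_{\mathpzc K}(n,r)$ by pulling back along $\widehat\zeta_{r,\mathpzc K}$. Let $V$ be a finite-dimensional $\widehat{\mathcal S}_{\mathpzc K}(n,r)$-module with basis $\{v_i\}$. Write $sv_j=\sum_i \gamma_{i,j}(s)v_i$ with $\gamma_{i,j}\in\widehat{\mathcal S}_{\mathpzc K}(n,r)^*$. By \eqref{S*} each $\gamma_{i,j}$ equals $\widetilde F_{i,j}$ for a unique $F_{i,j}\in\overline{\widehat{\mathcal A}_{\mathpzc K}(n,r)}$. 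Then \eqref{ti F} gives $c_{i,j}(g)=\gamma_{i,j}(\widehat\zeta_{r,\mathpzc K}(g))=F_{i,j}(g)$, so $c_{i,j}=\overline\Psi_r(F_{i,j})\in\overline{\widehat A_{\mathpzc K}(n,r)}$. Hence the pullback is polynomial of degree $r$. Module maps are sent to $\mathpzc K\widehat G_{\mathpzc K}(n)$-maps, and this functor is clearly faithful.

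Conversely, let $V\in\widehat{\mathcal P}_{\mathpzc K}(n,r)$. Its coefficient functions are $c_{i,j}=\overline\Psi_r(F_{i,j})$, and \eqref{S*} associates functionals $\widetilde F_{i,j}$. Define $\rho:\widehat{\mathcal S}_{\mathpzc K}(n,r)\to\operatorname{End}_{\mathpzc K}(V)$ by $\rho(s)v_j=\sum_i\widetilde F_{i,j}(s)v_i$; this sum is finite because $V$ is finite-dimensional. By \eqref{ti F}, $\rho(\widehat\zeta_{r,\mathpzc K}(g))$ is the action of $g$, so $\rho\circ\widehat\zeta_{r,\mathpzc K}$ is the representation $\mathpzc K\widehat G_{\mathpzc K}(n)\to\operatorname{End}(V)$.

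The key step is that $\rho$ is an algebra homomorphism. The composite $\rho\circ\widehat\zeta_{r,\mathpzc K}$ is multiplicative and linear on $\mathpzc K\widehat G_{\mathpzc K}(n)$, and $\widehat\zeta_{r,\mathpzc K}$ is surjective. Given $s,s'$, choose $x,x'$ with $\widehat\zeta(x)=s$ and $\widehat\zeta(x')=s'$. Then
\[
\rho(ss')=\rho(\widehat\zeta(xx'))=(\rho\widehat\zeta)(x)(\rho\widehat\zeta)(x')=\rho(s)\rho(s').
\]
Unitality follows from $\widehat\zeta(1_G)=1$. So $V$ becomes an $\widehat{\mathcal S}_{\mathpzc K}(n,r)$-module whose pullback is the original $V$. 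Because the $\widehat{\mathcal S}$-action factors uniquely (surjectivity again), a linear map between two such modules is a $\widehat G$-map if and only if it is an $\widehat{\mathcal S}$-map. Therefore the functor is full and essentially surjective, and in fact an isomorphism of categories.

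The step needing care is the identification $c_{i,j}=\overline\Psi_r(F_{i,j})$ in both directions. It relies on the injectivity of $\overline\Psi_r$ and on \eqref{ti F} holding for arbitrary, possibly infinite, formal sums. That formula rests on writing $\widehat\zeta_{r,\mathpzc K}(g)$ as a finite combination of the $e_A$ with coefficients $X_{\mathbf i^\vtg_\lambda d,\mathbf i^\vtg_\mu}(g)$, which holds because $g$ has Laurent polynomial entries. I would verify this finiteness explicitly before concluding.
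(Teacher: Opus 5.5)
Your proposal is correct and is essentially the paper's proof: you pull back along $\widehat\zeta_{r,\mathpzc K}$, identify coefficient functions with functionals on $\widehat{\mathcal S}_{\mathpzc K}(n,r)$ via \eqref{S*'} and \eqref{ti F}, and use the surjectivity from Theorem~\ref{surjective2} for the converse. The only difference is cosmetic: you build the factored action $\rho$ directly from the functionals and check multiplicativity through surjectivity, whereas the paper shows that $\ker\widehat\zeta_{r,\mathpzc K}$ annihilates $V$; your explicit check on morphisms also fills in a detail the paper leaves implicit.
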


\begin{proof}
Let $V$ be a finite-dimensional
$\widehat{\mathcal S}_{\mathpzc K}(n,r)$-module.
Let $\{v_i\mid i\in I\}$ be a $\mathpzc K$-basis of $V$.
For $i,j\in I$, define
$\lambda_{i,j}\in\widehat{\mathcal S}_{\mathpzc K}(n,r)^*$ by
\[
sv_j=\sum_{i\in I}\lambda_{i,j}(s)v_i
\qquad
(s\in\widehat{\mathcal S}_{\mathpzc K}(n,r)).
\]
Let $F_{i,j}\in
\overline{\widehat{A}_{\mathpzc K}(n,r)}$
correspond to $\lambda_{i,j}$ under the identification
\(
\overline{\widehat{A}_{\mathpzc K}(n,r)}
\cong
\widehat{\mathcal S}_{\mathpzc K}(n,r)^*
\) given in \eqref{S*'}.
Regard $V$ as a
$\mathpzc K\widehat G_{\mathpzc K}(n)$-module via
$\widehat\zeta_{r,\mathpzc K}$, and write
\[
gv_j=\sum_{i\in I}c_{i,j}(g)v_i
\qquad
(g\in\widehat G_{\mathpzc K}(n)).
\]
Then by \eqref{ti F} we have
\(
c_{i,j}(g)
=
\lambda_{i,j}
\bigl(\widehat\zeta_{r,\mathpzc K}(g)\bigr)
=
F_{i,j}(g).
\)
Thus
\(
\operatorname{cf}(V)
\subseteq
\overline{\widehat A_{\mathpzc K}(n,r)},
\)
and $V\in \widehat{\mathcal P}_{\mathpzc K}(n,r)$.

Conversely, let
$V\in\widehat{\mathcal P}_{\mathpzc K}(n,r)$.
For each coefficient function $c_{i,j}$ of $V$, we have
\(
c_{i,j}\in
\overline{\widehat A_{\mathpzc K}(n,r)}.
\)
Using the identification
\(
\overline{\widehat A_{\mathpzc K}(n,r)}
\cong
\widehat{\mathcal S}_{\mathpzc K}(n,r)^*,
\)
let
\(
\lambda_{i,j}\in
\widehat{\mathcal S}_{\mathpzc K}(n,r)^*
\)
be the corresponding linear functional.
 Then by \eqref{ti F} we have
\(
c_{i,j}(g)
=
\lambda_{i,j}
\bigl(\widehat\zeta_{r,\mathpzc K}(g)\bigr)
\)
for all $g\in\widehat G_{\mathpzc K}(n)$.
If $u\in\ker\widehat\zeta_{r,\mathpzc K}$, then
\(
c_{i,j}(u)
=
\lambda_{i,j}
\bigl(\widehat\zeta_{r,\mathpzc K}(u)\bigr)
=
0
\)
for all $i,j\in I$. Hence
\(
uV=0.
\)
By  Theorem~\ref{surjective2},
$\widehat\zeta_{r,\mathpzc K}$ is surjective since $\mathpzc K$ is  infinite.
Therefore the action of
$\mathpzc K\widehat G_{\mathpzc K}(n)$ on $V$ factors uniquely
through $\widehat{\mathcal S}_{\mathpzc K}(n,r)$.
\end{proof}

Combining Theorems \ref{classification} and \ref{equivalence1} yields the
following consequence.

\begin{Thm}\label{classification Mnr}
Assume that $\field$ is an algebraically closed field. Then the set
\[
\left\{
L(\mathbf Q)
\ \middle|\
\mathbf Q\in\mathcal Q_{\mathpzc K}(n,r)
\right\}
\]
forms a complete set of pairwise non-isomorphic irreducible
objects of
$\widehat{\mathcal P}_{\mathpzc K}(n,r)$.
\end{Thm}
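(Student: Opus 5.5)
The plan is to transport Theorem~\ref{classification} along the equivalence of Theorem~\ref{equivalence1}. An algebraically closed field is infinite, so the hypotheses of \S10.2 hold. In particular $\widehat\zeta_{r,\mathpzc K}$ is surjective by Theorem~\ref{surjective2}.

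\textbf{Step 1: the functors.} First I will make the equivalence explicit as inflation and restriction. An $\widehat{\mathcal S}_{\mathpzc K}(n,r)$-module $V$ is sent to the $\mathpzc K\widehat G_{\mathpzc K}(n)$-module obtained by pulling back along $\widehat\zeta_{r,\mathpzc K}$. The proof of Theorem~\ref{equivalence1} shows that this pullback lies in $\widehat{\mathcal P}_{\mathpzc K}(n,r)$. In the other direction, every object of $\widehat{\mathcal P}_{\mathpzc K}(n,r)$ is annihilated by $\ker\widehat\zeta_{r,\mathpzc K}$, so it descends uniquely along $\widehat\zeta_{r,\mathpzc K}$. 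Because $\widehat\zeta_{r,\mathpzc K}$ is surjective, a $\mathpzc K$-linear map between two such modules commutes with $\widehat G_{\mathpzc K}(n)$ if and only if it commutes with $\widehat{\mathcal S}_{\mathpzc K}(n,r)$. Hence the two functors are mutually inverse and fully faithful.

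\textbf{Step 2: irreducibles correspond.} The key point is that, since $\widehat\zeta_{r,\mathpzc K}$ is onto, the $\mathpzc K\widehat G_{\mathpzc K}(n)$-submodules of an inflated module $V$ are exactly its $\widehat{\mathcal S}_{\mathpzc K}(n,r)$-submodules. Therefore $V$ is irreducible in $\widehat{\mathcal P}_{\mathpzc K}(n,r)$ if and only if it is an irreducible $\widehat{\mathcal S}_{\mathpzc K}(n,r)$-module. We also need that irreducibility inside the category agrees with irreducibility as a $\mathpzc K\widehat G_{\mathpzc K}(n)$-module. For this, note that a submodule of an object of $\widehat{\mathcal P}_{\mathpzc K}(n,r)$ again lies in $\widehat{\mathcal P}_{\mathpzc K}(n,r)$: it is killed by $\ker\widehat\zeta_{r,\mathpzc K}$, so it is an $\widehat{\mathcal S}_{\mathpzc K}(n,r)$-module, and the first part of the proof of Theorem~\ref{equivalence1} places it in the category. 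By full faithfulness, isomorphism classes also match on both sides.

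\textbf{Step 3: conclusion.} Theorem~\ref{classification} says that the $L(\mathbf Q)$ with $\mathbf Q\in\mathcal Q_{\mathpzc K}(n,r)$ form a complete set of pairwise non-isomorphic finite-dimensional irreducible $\widehat{\mathcal S}_{\mathpzc K}(n,r)$-modules. Inflating along $\widehat\zeta_{r,\mathpzc K}$ and applying Steps~1 and~2 then gives the statement.

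The only delicate point is Step~2, specifically that the closure of $\widehat{\mathcal P}_{\mathpzc K}(n,r)$ under submodules comes from the kernel argument. Everything else is formal once surjectivity is in hand.
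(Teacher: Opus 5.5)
Your proposal is correct and follows the paper, whose proof obtains the result simply by combining Theorem~\ref{classification} with the equivalence of Theorem~\ref{equivalence1}. Your Steps~1 and~2 make explicit what that one-line proof leaves implicit: the equivalence is inflation along the surjection $\widehat\zeta_{r,\mathpzc K}$, so submodules, irreducibility and isomorphism classes correspond on the two sides.
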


 \subsection{Polynomial representations over finite fields}

Let $q=p^f>r$ and let $R$ be an infinite field containing
$\mathbb F_q$. We have a natural inclusion
\(
\widehat G_{\mathbb F_q}(n)
\subseteq
\widehat G_R(n).
\)

A finite-dimensional
$R\widehat G_{\mathbb F_q}(n)$-module $V$ is called a
 {polynomial $R\widehat G_{\mathbb F_q}(n)$-module of
homogeneous degree $r$} if
\(
\operatorname{cf}(V)
\subseteq
\operatorname{res}_{q,R}
\left(
\overline{\widehat A_R(n,r)}
\right),
\)
where
\[
\operatorname{res}_{q,R}:
R^{\widehat G_R(n)}
\longrightarrow
R^{\widehat G_{\mathbb F_q}(n)}
\]
is the restriction map of functions.
We denote by
\(
\widehat{\mathcal P}_{q,R}(n,r)
\)
the category of finite-dimensional polynomial
$R\widehat G_{\mathbb F_q}(n)$-modules of homogeneous degree
$r$.

Recall the surjective algebra homomorphism
\(
\widehat\zeta^{\,q}_{r,R}:
R\widehat G_{\mathbb F_q}(n)
\longrightarrow
\widehat{\mathcal S}_R(n,r)
\)
defined in \eqref{hat zeta^q_{r,R}}.

\begin{Thm}\label{equivalence2}
Assume that $q>r$. Then the category
\(
\widehat{\mathcal P}_{q,R}(n,r)
\)
is equivalent to the category of finite-dimensional
$\widehat{\mathcal S}_R(n,r)$-modules.
\end{Thm}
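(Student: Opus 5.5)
We follow the proof of Theorem~\ref{equivalence1}, with the surjection $\widehat\zeta^{\,q}_{r,R}$ of \eqref{hat zeta^q_{r,R}} in place of $\widehat\zeta_{r,\mathpzc K}$. The key point is an analogue of the formula \eqref{ti F} for elements of $\widehat G_{\mathbb F_q}(n)$. For $g\in\widehat G_{\mathbb F_q}(n)\subseteq\widehat G_R(n)$, the element $\widehat\zeta^{\,q}_{r,R}(g)$ is the image of $\widehat\zeta_{r,\mathbb F_q}(g)$ under base change, so it equals $\widehat\zeta_{r,R}(g)$, both being the operator of $g$ on $\Omega_R^{\otimes r}$. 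Hence, for every $F\in\overline{\widehat A_R(n,r)}$ with corresponding functional $\widetilde F\in\widehat{\mathcal S}_R(n,r)^*$ under \eqref{S*'}, the identity \eqref{ti F} over $R$ gives
\[
\operatorname{res}_{q,R}(F)(g)=F(g)=\widetilde F\bigl(\widehat\zeta^{\,q}_{r,R}(g)\bigr),\qquad g\in\widehat G_{\mathbb F_q}(n).
\]

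First, let $V$ be a finite-dimensional $\widehat{\mathcal S}_R(n,r)$-module, regarded as an $R\widehat G_{\mathbb F_q}(n)$-module via $\widehat\zeta^{\,q}_{r,R}$. Fix a basis of $V$ and define the matrix-coefficient functionals $\lambda_{i,j}\in\widehat{\mathcal S}_R(n,r)^*$. Let $F_{i,j}\in\overline{\widehat A_R(n,r)}$ correspond to $\lambda_{i,j}$ under \eqref{S*'}. The displayed identity then gives $c_{i,j}=\operatorname{res}_{q,R}(F_{i,j})$, so $V\in\widehat{\mathcal P}_{q,R}(n,r)$.

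Conversely, let $V\in\widehat{\mathcal P}_{q,R}(n,r)$. Then each coefficient function has the form $c_{i,j}=\operatorname{res}_{q,R}(F_{i,j})$ for some $F_{i,j}\in\overline{\widehat A_R(n,r)}$, and therefore $c_{i,j}(g)=\lambda_{i,j}(\widehat\zeta^{\,q}_{r,R}(g))$, where $\lambda_{i,j}$ corresponds to $F_{i,j}$. Extending $R$-linearly to $u\in R\widehat G_{\mathbb F_q}(n)$, we get that $u\in\ker\widehat\zeta^{\,q}_{r,R}$ acts with all matrix coefficients zero, so $uV=0$. Since $q>r$, Theorem~\ref{surjective2} applied to $\mathbb F_q$, followed by base change, shows that $\widehat\zeta^{\,q}_{r,R}$ is surjective. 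Consequently the action factors uniquely through $\widehat{\mathcal S}_R(n,r)$.

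These two constructions are mutually inverse. Morphisms correspond because a linear map commutes with $R\widehat G_{\mathbb F_q}(n)$ if and only if it commutes with the image $\widehat{\mathcal S}_R(n,r)$. This yields the equivalence.

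The main obstacle is the one subtle point above: the representing functions $F_{i,j}$ live on $\widehat G_R(n)$, while the module only sees $\widehat G_{\mathbb F_q}(n)$. The argument never needs the $F_{i,j}$ to be unique, only the identity $c_{i,j}=\lambda_{i,j}\circ\widehat\zeta^{\,q}_{r,R}$ on $\widehat G_{\mathbb F_q}(n)$, so non-injectivity of $\operatorname{res}_{q,R}$ causes no trouble. The identity itself follows from the compatibility $\widehat\zeta^{\,q}_{r,R}(g)=\widehat\zeta_{r,R}(g)$, which must be checked by unwinding the base change in \eqref{hat zeta^q_{r,R}}.
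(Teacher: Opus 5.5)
Your proposal is correct and follows essentially the same route as the paper. The forward direction restricts to $\widehat G_{\mathbb F_q}(n)$ the polynomial module given by Theorem~\ref{equivalence1}, and the converse restricts \eqref{ti F} over $R$ to $\widehat G_{\mathbb F_q}(n)$ to show that $\ker\widehat\zeta^{\,q}_{r,R}$ annihilates $V$; your explicit checks of the compatibility $\widehat\zeta^{\,q}_{r,R}(g)=\widehat\zeta_{r,R}(g)$ and of the correspondence on morphisms spell out what the paper leaves implicit.
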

\begin{proof}
Let $M$ be a finite-dimensional
$\widehat{\mathcal S}_R(n,r)$-module. By Theorem~\ref{equivalence1},
$M$ gives rise to a polynomial
$R\widehat G_R(n)$-module of homogeneous degree $r$.
Restricting the action to the subsemigroup
\(
\widehat G_{\mathbb F_q}(n)
\subseteq
\widehat G_R(n),
\)
we obtain an
$R\widehat G_{\mathbb F_q}(n)$-module.
We claim that this restricted module is polynomial.
Indeed, the coefficient functions of $M$ as an
$R\widehat G_R(n)$-module belong to
\(
\overline{\widehat A_R(n,r)}
\)
by Theorem~\ref{equivalence1}. After restriction to
$\widehat G_{\mathbb F_q}(n)$, these coefficient functions belong to
\(
\operatorname{res}_{q,R}
\left(
\overline{\widehat A_R(n,r)}
\right).
\)
Hence the restricted module is polynomial.

Conversely, let
\(
V\in\widehat{\mathcal P}_{q,R}(n,r).
\)
We prove that $\ker\widehat\zeta^q_{r,R}$ annihilates $V$.
Choose an $R$-basis
\(
\{v_i\mid i\in I\}
\)
of $V$. For
$g\in\widehat G_{\mathbb F_q}(n)$, write
\[
gv_j=\sum_{i\in I}c_{i,j}(g)v_i ,
\]
where
\(
c_{i,j}:\widehat G_{\mathbb F_q}(n)\rightarrow R
\)
is a coefficient function of $V$.
By the assumption on $V$, we have
\(
c_{i,j}\in
\operatorname{res}_{q,R}
\left(
\overline{\widehat A_R(n,r)}
\right).
\)
Hence there exists
\(
\widetilde c_{i,j}
\in
\overline{\widehat A_R(n,r)}
\)
such that
\(
c_{i,j}
=
\operatorname{res}_{q,R}
(\widetilde c_{i,j}).
\)

Using the identification
\(
\overline{\widehat A_R(n,r)}
\cong
\widehat{\mathcal S}_R(n,r)^*,
\) given in \eqref{S*'},
let
\(
\lambda_{i,j}\in
\widehat{\mathcal S}_R(n,r)^*
\)
be the linear functional corresponding to
$\widetilde c_{i,j}$.
By \eqref{ti F}, for every
\(
g\in\widehat G_R(n),
\)
we have
\(
\widetilde c_{i,j}(g)
=
\lambda_{i,j}
\bigl(
\widehat\zeta_{r,R}(g)
\bigr).
\)
Restricting to
\(
g\in\widehat G_{\mathbb F_q}(n),
\)
we obtain
\(
c_{i,j}(g)
=
\lambda_{i,j}
\bigl(
\widehat\zeta^q_{r,R}(g)
\bigr).
\)
Both sides extend $R$-linearly to
$R\widehat G_{\mathbb F_q}(n)$. Therefore, for every
\(
u\in R\widehat G_{\mathbb F_q}(n),
\)
we have
\(
c_{i,j}(u)
=
\lambda_{i,j}
\bigl(
\widehat\zeta^q_{r,R}(u)
\bigr).
\)
Now let
\(
u\in \ker\widehat\zeta^q_{r,R}.
\)
Since
\(
\widehat\zeta^q_{r,R}(u)=0,
\)
we get
\(
c_{i,j}(u)=0
\)
for all $i,j$. Consequently, $uV=0$.
Hence
\(
\ker\widehat\zeta^q_{r,R}V=0.
\)
Therefore the action of
$R\widehat G_{\mathbb F_q}(n)$ on $V$ factors through
\(
R\widehat G_{\mathbb F_q}(n)/\ker\widehat\zeta^q_{r,R}
\simeq
\widehat{\mathcal S}_R(n,r).
\)
\end{proof}

Combining Theorems \ref{classification} and \ref{equivalence2} yields the
following consequence.

\begin{Thm}\label{classification of polynomial representations}
Let $q=p^f>r$, and let $R$ be an algebraically closed field
containing $\mathbb F_q$. Then
\[
\{L_R(\mathbf Q)\mid
\mathbf Q\in\mathcal Q_R(n,r)\}
\]
forms a complete set of pairwise non-isomorphic finite-dimensional
irreducible polynomial
$R\widehat G_{\mathbb F_q}(n)$-modules of homogeneous degree $r$.
\end{Thm}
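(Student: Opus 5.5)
The plan is to deduce the statement from the equivalence of categories in Theorem~\ref{equivalence2} together with the classification in Theorem~\ref{classification}, applied over the algebraically closed field $R$. Since $R$ is algebraically closed, it is infinite. It contains $\mathbb F_q$ with $q>r$, so the hypotheses of Theorem~\ref{equivalence2} hold. The category $\widehat{\mathcal P}_{q,R}(n,r)$ is therefore equivalent to the category of finite-dimensional $\widehat{\mathcal S}_R(n,r)$-modules.

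I will make the equivalence explicit, because irreducibles must correspond under it. The functor from $\widehat{\mathcal S}_R(n,r)$-mod to $\widehat{\mathcal P}_{q,R}(n,r)$ is pullback along the surjection $\widehat\zeta^{\,q}_{r,R}$ of \eqref{hat zeta^q_{r,R}}. The converse direction in the proof of Theorem~\ref{equivalence2} shows that every object of $\widehat{\mathcal P}_{q,R}(n,r)$ is annihilated by $\ker\widehat\zeta^{\,q}_{r,R}$. Hence $\widehat{\mathcal P}_{q,R}(n,r)$ is identified with a full subcategory of $R\widehat G_{\mathbb F_q}(n)$-modules that is closed under submodules. This is the key point: submodules of a module factoring through $\widehat{\mathcal S}_R(n,r)$ also factor through it. Consequently a module in $\widehat{\mathcal P}_{q,R}(n,r)$ is irreducible as an $R\widehat G_{\mathbb F_q}(n)$-module if and only if it is irreducible as an $\widehat{\mathcal S}_R(n,r)$-module, because the two algebras have the same image in $\operatorname{End}_R(V)$. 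Isomorphisms are likewise preserved in both directions, since the functor is fully faithful: module maps commute with the action of the image, which is the same for both algebras.

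Theorem~\ref{classification} says that $\{L_R(\mathbf Q)\mid \mathbf Q\in\mathcal Q_R(n,r)\}$ is a complete set of pairwise non-isomorphic finite-dimensional irreducible $\widehat{\mathcal S}_R(n,r)$-modules. Pulling back along $\widehat\zeta^{\,q}_{r,R}$ then gives exactly the irreducible objects of $\widehat{\mathcal P}_{q,R}(n,r)$, pairwise non-isomorphic, and every irreducible polynomial module of degree $r$ is isomorphic to one of them. The main point needing care is that ``irreducible polynomial module'' refers to irreducibility as an $R\widehat G_{\mathbb F_q}(n)$-module rather than as an object of the category. The factorization argument above settles this, and it relies on the surjectivity of $\widehat\zeta^{\,q}_{r,R}$, which is the base change of Theorem~\ref{surjective2} for $\mathbb F_q$ with $q>r$.
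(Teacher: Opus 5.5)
Your proposal is correct and follows the paper's proof, which simply combines Theorem~\ref{classification} (over the algebraically closed field $R$) with the equivalence of Theorem~\ref{equivalence2}. Your extra check fills in a detail the paper leaves implicit: via surjectivity of $\widehat\zeta^{\,q}_{r,R}$, irreducibility and isomorphism as $R\widehat G_{\mathbb F_q}(n)$-modules agree with irreducibility and isomorphism as $\widehat{\mathcal S}_R(n,r)$-modules.
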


\end{document}